\theoremstyle{plain}
\newtheorem{theorem}{Theorem}[section]
\newtheorem{lemma}[theorem]{Lemma}
\newtheorem{proposition}[theorem]{Proposition}
\theoremstyle{definition}
\newtheorem{definition}[theorem]{Definition}
\newtheorem{example}[theorem]{Example}
\newtheorem{set-up}[theorem]{Geometric set-up}
\newtheorem{remark}[theorem]{Remark}
\newtheorem{assumption}[theorem]{Assumption}
\DeclareMathOperator{\cyl}{cyl}
\DeclareMathOperator{\End}{End}    
\DeclareMathOperator{\Ind}{Ind}
\newcommand{\forget}[1]{}
\def  \nuint {\raise10pt\hbox{$\nu$}\kern-6pt\int}
\newcommand\Tr{\operatorname{Tr}}
\newcommand\fg{\mathfrak{g}}
\newcommand\fk{\mathfrak{k}}
\newcommand\fp{\mathfrak{p}}
\def \Sp {{\cal S}}
\newcommand\Di{D\kern-6pt/}
\newcommand\cDi{{\mathcal D}\kern-6pt/}
\newcommand\spi{S\kern-6pt/}
\newcommand \cspi{\Sp\kern-6pt/}
\newcommand\CC{\mathbb C}
\def \cal {\mathcal}
\newcommand\RR{\mathbb R}
\newcommand\pa{\partial}
\newcommand\Ker{\operatorname{Ker}}
\definecolor{darkgreen}{cmyk}{1,0,1,.2}
\definecolor{m}{rgb}{1,0.1,1}
\global\let\c@equation=\c@theorem}
\begin{document}
\pagestyle{myheadings}
\markboth{Paolo Piazza, Hessel Posthuma, Yanli Song and Xiang Tang }{Higher orbital integrals, rho numbers and  index theory}

\title{Higher orbital integrals, rho numbers and  index theory}

\author{Paolo Piazza}
\address{Dipartimento di Matematica, Sapienza Universit\`{a} di Roma, I-00185 Roma, Italy}
\email{piazza@mat.uniroma1.it}
\author{Hessel Posthuma}
\address{Korteweg-de Vries Institute for Mathematics, University of Amsterdam, 1098 XG Amsterdam, the Netherlands}
\email{H.B.Posthuma@uva.nl}
\author{Yanli Song}
\address{Department of Mathematics and Statistics, Washington University, St. Louis, MO, 63130, U.S.A.} 
\email{yanlisong@wustl.edu}
\author{Xiang Tang}
\address{Department of Mathematics and Statistics, Washington University, St. Louis, MO, 63130, U.S.A.}
\email{xtang@math.wustl.edu}

\subjclass[2010]{Primary: 58J20. Secondary: 58B34, 58J22, 58J42, 19K56.}

\keywords{Lie groups, proper actions, higher orbital integrals, delocalized cyclic cocycles,
index classes, relative pairing, excision, Atiyah-Patodi-Singer higher index theory, delocalized eta invariants,
higher delocalized eta invariants.}

\maketitle

\begin{abstract}
Let $G$ be a connected, linear real reductive group.
We give sufficient conditions ensuring the well-definedness of the delocalized eta invariant $\eta_g (D_X)$ associated
to a Dirac operator $D_X$ on a cocompact $G$-proper manifold $X$ and to the orbital integral
$\tau_g$ defined by a semisimple element $g\in G$. Along the way, we give a detailed account of the large time behaviour of the heat kernel and of its short time bahaviour
 near the fixed point set of $g$.
We prove that such a delocalized eta  invariant enters as  the boundary correction term in an  index theorem
computing the pairing between the index class and the 0-degree cyclic cocycle defined by $\tau_g$ on a $G$-proper
manifold with boundary.
More importantly, we also prove a higher version of such a theorem, for the pairing of the index class and  the higher cyclic cocycles defined
by the higher orbital integral $\Phi^P_g$ associated to a cuspidal parabolic subgroup $P<G$ with Langlands decomposition  $P=MAN$ and a semisimple
element $g\in M$.
We employ these results in order to define (higher) rho numbers associated to $G$-invariant positive scalar curvature metrics.
		\end{abstract}

\tableofcontents

\section{Introduction}

This article is a contribution to higher index theory on $G$-proper manifolds, with $G$ a connected, linear real reductive group. 
Before considering the case of $G$-proper manifolds with $G$ a Lie group,
it  is worth spending a few words on the pivotal example of Galois $\Gamma$-coverings. Let $\Gamma$ be 
a finitely generated discrete group and let $\Gamma\to X \to X/\Gamma$
be a Galois covering; let $D$ be a $\Gamma$-equivariant Dirac-type operator on a spin$^c$ manifold $X$, acting on the sections of
a $\Gamma$-equivariant vector bundle $E$. 
We initially assume that 
 $X/\Gamma$ is a smooth compact manifold {\it without} boundary.
 We also assume that $\Gamma$ is Gromov hyperbolic or of polynomial growth with respect to a word metric.
 The seminal work of Connes and Moscovici \cite{ConnesMoscovici} allows to define a pairing between
 $HC^* (\CC\Gamma,\langle e \rangle)$,
 the cyclic cohomology of $\CC\Gamma$
 localized at the unit element, and the K-theory of the Roe $C^*$-algebra $K_* (C^* (X,E)^\Gamma)$. In particular,
 by applying this pairing to the index class associated to $D$, $\Ind (D)\in K_* (C^* (X,E)^\Gamma)$, it is possible to define 
  higher indices associated to $D$, parametrized by the elements in  $HC^* (\CC\Gamma,\langle e \rangle)$, a group which is in fact isomorphic
  to $H^* (\Gamma,\CC)$. 
  The higher index formula of Connes-Moscovici provides a geometric formula for these higher indices, with very interesting geometric
  applications
  to higher signatures and higher $\widehat{A}$-genera.
One can also pair the index class with  $HC^* (\CC\Gamma,\langle x \rangle)$, the cyclic cohomology localized at the
  conjugacy class $\langle x \rangle$, but this pairing turns out to be identically zero. 
    
Despite these interesting geometric applications the higher index invariants we have just introduced will
be inadequate whenever we have a geometric situation in which the index class vanishes.
This is the case, for example, if $D$ is the spin Dirac operator associated to a metric of positive scalar curvature.
Thus, in geometric questions involving, for example, the moduli space of metrics of positive scalar curvature $\mathcal{R}^+ (M)/{\rm Diffeo}(M)$,
one is led to
consider {\it secondary}  invariants. One such invariant is  the delocalized eta invariant of Lott, $\eta_{\langle x \rangle} (D)$,
associated to an  invertible Dirac operator $D$ and,  initially, to a finite conjugacy class $\langle x \rangle$ in $\Gamma$, see \cite{lott-delocalized}.
This invariant was extended to conjugacy classes of polynomial growth in \cite{PiazzaSchick_PJM} and then, much more generally, to
arbitrary conjugacy classes of Gromov hyperbolic groups in the deep work of Puschnigg, see \cite{Puschnigg}.

The geometric interest for such an invariant stems from its connection with the Atiyah-Patodi-Singer index class on a Galois covering
with boundary $\Gamma\to Y \to Y/\Gamma$, once we assume the boundary operator associated to $D$ to be invertible. Thanks to the higher  Atiyah-Patodi-Singer
index formula of Leichtnam and Piazza and Wahl, see  \cite{LeichtnamPiazzaMemoires,LeichtnamPiazzaBSMF,Wahl1}, one can prove that the 
pairing of the Atiyah-Patodi-Singer  index class with the cyclic cocycle  
 $[\tau_{\langle x\rangle}]\in HC^0 (\CC\Gamma,\langle x\rangle)$,
\begin{equation}\label{0-degre}
\tau_{\langle x\rangle} (\sum_\gamma \alpha_\gamma \gamma):= \sum_{\gamma\in \langle x\rangle}  \alpha_\gamma \gamma
\end{equation}
is well defined if $\Gamma$ is Gromov hyperbolic or of polynomial growth and precisely equal to the delocalized eta invariant of Lott; in formulae
\begin{equation}\label{intro:W}
\langle  \Ind  (D), [\tau_{\langle x \rangle}] \rangle=-\frac{1}{2} \eta_{\langle x \rangle} (D_\partial)
\end{equation}
We call the formula appearing in \eqref{intro:W} a 0-degree {\it delocalized APS index formula.}
Recent work of Chen-Wang-Xie-Yu \cite{ChenWangXieYu},
Sheagan \cite{Sheagan} and Piazza-Schick-Zenobi \cite{PSZ}
extend
this pairing to all elements in $ HC^* (\CC\Gamma, \langle x \rangle)$, $x$ any element in a Gromov hyperbolic or polynomial growth
group, obtaining correspondingly {\it higher delocalized Atiyah-Patodi-Singer
index theorems} for Gromov hyperbolic groups. These express the pairing between $\tau\in HC^* (\CC\Gamma, \langle x \rangle)$
and the Atiyah-Patodi-Singer index class $\Ind (D)$ in terms of certain {\it higher rho numbers}, secondary invariants of Dirac operators that are particularly
useful in studying, for example, metrics of positive scalar curvature. See  \cite{PSZ} for these geometric applications. Notice that these higher rho numbers 
appear here for the boundary operator $D_\partial$  on $\partial Y$ but they can be defined for any Galois covering
$\Gamma\to X \to X/\Gamma$, with $X$ without boundary.

\medskip
We now turn to a $G$-proper manifold, initially without boundary, with compact quotient and a $G$-equivariant Dirac operator $D$
acting on the sections of a $G$-equivariant twisted spinor bundle $E$.
Here $G$ is a unimodular Lie group. 
Let $\mathcal{L}_{G}^c (X,E)$ be the algebra of $G$-equivariant smoothing operators of $G$-compact support.
There is a compactly supported  index class $\Ind_c (D)\in K_* (\mathcal{L}_G^c (X,E))$ and 
 a homomorphism $H^*_{{\rm diff}} (G)\to HC^* (\mathcal{L}_G^c (X,E))$, from  the differentiable cohomology of $G$, $H^*_{{\rm diff}} (G)$, to 
 the cyclic cohomology of $\mathcal{L}_G^c (X,E)$, $HC^* (\mathcal{L}_G^c (X,E))$. We think to these cyclic cocycles in
 $HC^* (\mathcal{L}_G^c (X,E))$
 as localized at the 
 identity element of $G$. 
  The higher  index theorem of Pflaum-Posthuma-Tang \cite{PPT} gives
 a formula for the pairing of $\Ind_c (D)$ with the cyclic cocycle $\tau^M_\varphi$ associated to $[\varphi]\in H^*_{{\rm diff}} (G)$. 
  Under additional assumptions on the group $G$, satisfied for example by a {\it connected, linear real reductive group},  this result was improved to a $C^*$-index theorem in \cite{PP1}. In the latter work a key role 
 is played by a suitable
 dense holomorphically closed subalgebra  $\mathcal{L}_{G,s}^\infty (X,E)$ (Definition \ref{defn:laffargue}) of the Roe algebra $C^* (X,E)^G$ and by a smooth index class $\Ind_\infty (D)
 \in \mathcal{L}_{G,c}^\infty (X,E)$.
  Applications were given once again 
 to higher signatures and higher $\widehat{A}$-genera. This index theorem was extended to $G$-proper manifolds
 with boundary in the recent paper \cite{PP2}; this is a higher APS index theorem on $G$-proper manifolds for cyclic cocyles 
{\it  localized at the identity element}.

 \medskip
 Let us now go back to a $G$-proper manifold without boundary $X$. In contrast with the free case, in the proper case we also have {\it delocalized} index theorems
 associated to delocalized cyclic cocycles. Let $g$ be a semisimple element and set $Z:= Z_G (g)$.
 First of all we have the orbital integral
 $\tau_g : C^\infty_c (G) \to \CC$ associated to such a 
 $g$:
 \begin{equation}\label{orbital}
 \tau_g (f):=\int_{G/Z} f(x g x^{-1}) d(xZ)\,.
 \end{equation}
Assume $G$ is a connected, linear real reductive group. Then $\tau_g$  extends to the Lafforgue Schwartz algebra $\mathcal{L}_s(G)$ (Definition \ref{defn:laffargue}), 
a dense holomorphically closed subalgebra of $C^*_r G$, where it defines a $0$-degree cyclic cocycle $[\tau_g]\in HC^0 (\mathcal{L}_s(G))$.
The orbital integral \eqref{orbital}
also  defines 
a 0-degree cyclic cocycle $\tau^X_g$ on $\mathcal{L}^c_G (X,E)$:
\begin{equation}\label{intro:tr-g-closed}
\tau_g^X (T_{\kappa}):= \int_{G/Z}\int_X c(hgh^{-1}x) {\rm tr} (hgh^{-1}\kappa (hg^{-1}h^{-1}x,x))dx \,d(hZ),
\end{equation}
where $\kappa$ denotes the kernel of an element $T_{\kappa}\in \mathcal{L}^c_G (X,E)$ and $c$ is a cut-off function for the action of $G$ on $X$ and ${\rm tr}$ denotes the vector-bundle fiberwise
trace.
 
 The Lafforgue algebra  $\mathcal{L}_s(G)$ defines an algebra of smoothing operators $\mathcal{L}^\infty_{G,s} (X,E)$
 which is a dense and holomorphically closed subalgebra
of the Roe algebra $C^* (X,E)^G$; moreover $\tau_g^X$ extends from $\mathcal{L}^c_G (X,E)$ to $\mathcal{L}^\infty_{G,s} (X,E)$
and defines on this algebra a 0-degree cyclic cocycle  $[\tau_g^X]\in HC^0 (\mathcal{L}^\infty_{G,s} (X,E))$. We prove the following index formula for any semi-simple element $g$,\begin{equation}\label{HC}
\langle \Ind_\infty (D),\tau^X_g \rangle 
=\int_{X^g}c^g {\rm AS}_g (D).
\end{equation}
In  \cite{Hochs-Wang-HC}, Hochs and Wang partially developed the above formula by considering a splitting Dirac operator, c.f. (\ref{dirac-split}). We give a more detailed approach to (\ref{HC}) in this paper.  For more on the subtleties involved in the proof of this index formula we refer the reader to Section \ref{sect:eta-in-general}. 

This formula establishes a 0-degree delocalized index theorem. In Equation (\ref{HC}),  $X^g$ is the fixed point submanifold associated to the $g$-action on $X$; $c^g$ is a compactly supported cutoff function on $X^g$ for the action of $Z$ on $X^g$. Below, we recall the explicit differential form expression for $\operatorname{AS}_g(D)$. Consider the following curvature forms. 
\begin{itemize} 
\item Since  we are assuming that $E$ is a $G$-equivariant twisted spinor bundle on $X$, we can write 
\[
E = \mathcal{E} \otimes W,
\] 
where $\mathcal{E}$ is the spinor bundle associated to the Spin$^c$-structure on $X$ and $W$ is an auxiliary $G$-equivariant vector bundle on $X$.

We define $R^W$ to be the curvature form of the Hermitian connection on $W$;
\item  $R^\mathcal{N}$, the curvature form associated to the Hermitian connection on $\mathcal{N}_{X^g}\otimes \mathbb{C}$, where $\mathcal{N}_{X^g}$ is the normal bundle of the $g$-fixed point submanifold $X^g$ in $X$;
\item  $R^{L}$, the curvature form associated to the Hermitian connection on $L_{\operatorname{det}}|_{X^g}$ ($L_{\operatorname{det}}$ is  the determinant line bundle of the Spin$^c$-structure on $X$ and $L_{\operatorname{det}}|_{X^g}$ is its restriction to $X^g$);
\item $R_{X^g}$, the Riemannian curvature form associated to the Levi-Civita connection on the tangent bundle of  $X^g$.
\end{itemize}
The ${\rm AS}_g(D)$ in Equation (\ref{HC}) is, by definition, the following expression:
\begin{equation}\label{eq:X-geom-form}
{\rm AS}_g(D):=\frac{\widehat{A}\big(\frac{R_{X^g}}{2\pi i}\big) \operatorname{tr}\big(g \exp(\frac{R^W}{2\pi i})\big) \exp(\operatorname{tr}(\frac{R^L}{2\pi i}))}
{\operatorname{det}\big(1-g \exp(-\frac{R^{\mathcal{N}}}{2\pi i})\big)^{\frac{1}{2}}}.
\end{equation}
This will also be denoted by ${\rm AS}_g (X,E)$ or, if there is no confusion on the vector bundle $E$, simply by ${\rm AS}_g (X)$.

  \medskip
  
 One might wonder if there is a {\it higher} delocalized index theorem; the answer is affirmative and it is work of Song-Tang and Hochs-Song-Tang as we shall now  explain. Let $P<G$ a cuspidal parabolic subgroup and $P=MAN$ its Langlands decomposition. Let $g\in M$ be a semisimple element.
 Song and Tang in \cite{st} have defined a higher delocalized cyclic cocycle $[\Phi^P_g]$ on the Lafforgue Schwartz algebra $\mathcal{L}_s(G) $. For  $m=\operatorname{dim}(A)$, $\Phi^P_g$ is an $m$-cyclic cocycle on $\mathcal{L}_s(G)$ generalizing the orbital integral, Equation (\ref{orbital}).  
 When the metric on a $G$ proper cocompact manifold $X$ without boundary is slice compatible (i.e. there is a slice $Z_0$  equipped with a $K$ action such that $X=G\times_K Z_0$ and the metric on $X$ is obtained from a $K$-invariant metric on $Z_0$ and $G$-invariant metric on $G/K$),  the pairing between $\Phi^P_g$ and the $K$-theory index $\operatorname{Ind}_\infty(D)$ is computed by Hochs-Song-Tang in \cite{hst}. More precisely, the index pairing has the following topological formula, 
 \begin{equation}\label{eq:HST-formula}
\begin{split}
 \langle \Ind_\infty (D),\Phi^P_g \rangle =&\int_{(X/AN)^g}c^g_{X/AN}{\rm AS}(X/AN)_g
\end{split}
 \end{equation}
 In the above formula, $X/AN$ is the quotient of $X$ with respect to the $AN< G$ action; the property of the Langlands decomposition implies that the group $M$ acts on the quotient $X/AN$; $(X/AN)^g$ is the fixed point submanifold of the $g$ action on $X/AN$; $c^g_{X/AN}$ is a smooth compactly supported function on $(X/AN)^g$ defined in the same way as $c^g$ in Equation (\ref{HC}) ; the action of the Dirac operator $D$ on the $AN$-invariant sections of $E$ defines a Dirac type operator $D_{X/AN}$ on $X/AN$; the characteristic classes are defined in a similar but slightly different way from those in Equation (\ref{eq:X-geom-form}) for the $M$ action on $X/AN$ (see Equation (\ref{eq:geom-form}) for its explicit expression).

\medskip
\noindent
We can finally state the main goals of this article: 

\medskip
\noindent
{\it (i) state and prove a (higher) delocalized 
Atiyah-Patodi-Singer index theorem on G-proper manifolds with 
boundary;

\smallskip
\noindent
(ii)  define consequently (higher) rho numbers on 
$G$-proper manifolds without boundary and study their 
properties.}

\medskip
\noindent
We state now  in detail our main results; these deal both with cocompact $G$-proper manifolds with boundary
and without boundary.

\medskip
We begin by stating the results on manifolds with boundary; these are  obtained by combining the results of Hochs, Song and Tang,  the use of (an extension of) Melrose's $b$-calculus
to the present context developed in \cite{PP2} and relative K-theory and relative cyclic cohomology  techniques, as  developed in \cite{moriyoshi-piazza,GMPi,PP2}.

First we give an
improvement,
through the above techniques,  of a result already proved in \cite{HWW2}. 
 Let $G$ be a connected, linear real reductive group. Let $g$ be a semisimple element. Let $Y_0$ be a $G$-proper manifold
 with boundary with compact quotient and let $Y$ be the associated manifold with cylindrical ends. We fix
 a $G$-equivariant slice-compatible metric.
   Let $D_0$ be an equivariant Dirac operator
 on $Y_0$, acting on the sections of an equivariant  vector bundle $E_0$.
We denote by $D$ and $E$  the associated objects on $Y$.
 
 \begin{theorem}\label{intro:0-delocalized-aps} (0-degree delocalized APS on G-proper manifolds)\\
 We assume that $D_{\partial Y}$ is $L^2$-invertible. Then:
 \begin{itemize}
 \item[1)] there exists a dense holomorphically closed
 subalgebra $\mathcal{L}^\infty_{G,s} (Y,E)$ of the Roe algebra $C^* (Y_0\subset Y,E)^G$ and a smooth index class 
 $\Ind_\infty (D)\in K_0(\mathcal{L}^\infty_{G,s} (Y, E)) \cong
 K_0(C^*(Y_0\subset Y,E)^G)$;\\
 \item[2)] the delocalized eta invariant 
 $$\eta_g (D_{\partial Y}):=\frac{1}{\sqrt{\pi}} \int_0^\infty \tau^{\partial Y}_g (D_{\partial Y} \exp (-tD^2_{\partial Y}) \frac{dt}{\sqrt{t}}$$
 exists and for the pairing of the index class $\Ind_\infty(D)$ with the $0$-cocycle  $\tau^Y_g\in HC^0 (\mathcal{L}^\infty_{G,s} (Y,E))$
 defined by the orbital integral $\tau_g$  
 the following delocalized 0-degree index formula holds:
 \begin{equation}
\label{main-0-degree}
 \langle \tau^Y_g,\Ind_\infty (D) \rangle= \int_{(Y_0)^g} c^g {\rm AS}_g (D_0) - \frac{1}{2} \eta_g (D_{\partial Y})\,,
 \end{equation}
where the integrand $c^g {\rm AS}_g(D_0)$ in the above integral is defined as that in Equation (\ref{HC}).  
\end{itemize}
 \end{theorem}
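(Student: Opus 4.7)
The strategy combines three ingredients: (a) the $b$-calculus on $G$-proper manifolds developed in \cite{PP2}; (b) the Lafforgue-type dense holomorphically closed subalgebras built from $\mathcal{L}_s(G)$; and (c) the relative K-theory / relative cyclic cohomology formalism of \cite{moriyoshi-piazza,GMPi,PP2}.

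\emph{Part (1).} I would construct $\mathcal{L}^\infty_{G,s}(Y,E)$ by imposing a $b$-type behaviour along the cylindrical end on top of the closed-manifold definition of \cite{PP1}: $G$-equivariant smoothing operators whose $b$-kernels, after restriction to slices, have Schwartz decay in the cylinder variable with coefficients in $\mathcal{L}_s(G)$. Because $\mathcal{L}_s(G)$ is holomorphically closed in $C^*_r G$ and the $b$-parametrix construction preserves these decay conditions, $\mathcal{L}^\infty_{G,s}(Y,E)$ is holomorphically closed in $C^*(Y_0\subset Y,E)^G$. The $L^2$-invertibility of $D_{\partial Y}$ allows an \emph{improved} $b$-parametrix for $D$ whose remainders lie in $\mathcal{L}^\infty_{G,s}(Y,E)$; this yields a smooth index class $\Ind_\infty(D)$ whose image under the induced $K$-theory map is the coarse index.

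\emph{Part (2), well-definedness of $\eta_g(D_{\partial Y})$.} I would split the defining integral at $t=1$. The large-$t$ tail is controlled by the spectral gap provided by $L^2$-invertibility together with the continuity of $\tau_g^{\partial Y}$ on $\mathcal{L}^\infty_{G,s}(\partial Y, E_{\partial Y})$: the integrand decays exponentially. The small-$t$ analysis is the new technical ingredient: after localising $\tau_g^{\partial Y}$ to a $Z$-cocompact tubular neighbourhood of the fixed submanifold $(\partial Y)^g$ via the cutoff $c^g$, an equivariant Getzler rescaling produces a heat-kernel asymptotic expansion near $(\partial Y)^g$ whose potentially divergent leading contribution cancels by the standard Clifford-parity argument adapted to the orbital integral. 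Consequently $\tau_g^{\partial Y}(D_{\partial Y}e^{-tD^2_{\partial Y}})=O(t^{1/2})$ as $t\to 0^+$ and the eta integral converges.

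\emph{Part (2), the index formula.} I would pass to relative cyclic cohomology: a relative $0$-cocycle $(\tau_g^Y,\sigma_g)$ is defined for the restriction map from $\mathcal{L}^\infty_{G,s}(Y,E)$ to the corresponding cylinder subalgebra, where $\sigma_g$ is the Melrose-type $b$-eta transgression built from $\tau_g^{\partial Y}$. The improved $b$-parametrix of $D$ defines a relative $K$-theory class lifting $\Ind_\infty(D)$, and its pairing with $(\tau_g^Y,\sigma_g)$ can be computed. Excision, as in \cite{moriyoshi-piazza,GMPi,PP2}, splits the pairing into an interior contribution, evaluated by the closed-manifold formula \eqref{HC} applied to $(Y_0)^g$ and producing $\int_{(Y_0)^g} c^g\,{\rm AS}_g(D_0)$, plus a boundary/transgression term which, by a direct Duhamel computation, equals $-\tfrac12\eta_g(D_{\partial Y})$. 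Summing yields \eqref{main-0-degree}.

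The principal obstacle is the short-time analysis underlying the well-definedness of $\eta_g(D_{\partial Y})$: one must develop an equivariant heat-kernel asymptotic expansion on the non-compact $G$-proper manifold $\partial Y$ that is uniformly controlled on a neighbourhood of $(\partial Y)^g$ and compatible with the orbital integration over $G/Z$, and then verify that the classical parity cancellation of the APS argument survives this orbital integration. Once this is in place, the $b$-calculus and relative pairing steps are formally analogous to those carried out in \cite{PP2}, with the delocalized $0$-cocycle $\tau_g^Y$ playing the role of the identity-localised cyclic cocycles treated there.
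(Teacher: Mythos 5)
Your proposal uses the same overall framework as the paper (the $b$-calculus on $G$-proper manifolds, Lafforgue-type holomorphically closed subalgebras, relative $K$-theory / relative cyclic cohomology, excision), and the relative-cocycle + rescaling strategy for the index formula is correct. However, the route you take for the convergence of $\eta_g(D_{\partial Y})$ differs from what the paper does at this point, and there is a small conceptual slip in how you describe the splitting of the relative pairing.

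On the eta-convergence: you propose to establish it directly by a small-time heat-kernel analysis on $\partial Y$ (Getzler rescaling near $(\partial Y)^g$, cancellation by parity, together with localization arguments that make the orbital integral over $G/Z$ converge). That program is in fact carried out in the paper, but only in a later section, because it is needed for the case where $D_{\partial Y}$ does not arise as a boundary operator; it requires a global Gaussian off-diagonal bound for the heat kernel, distance estimates relating $d_X(hgh^{-1}x,x)$ to $d_{G/K}$, Harish--Chandra's convergence estimate for orbital integrals of Gaussians, compactness of $X^g/Z_g$, and Zhang's local expansion, none of which reduce to ``the standard Clifford-parity argument.'' For the theorem at hand the paper takes a much shorter and cleaner path: one writes, after rescaling $D\to sD$, the identity
\begin{equation*}
\tau^{Y,r}_g\bigl(e^{-s^2 D^- D^+}\bigr)-\tau^{Y,r}_g\bigl(e^{-s^2 D^+ D^-}\bigr)=\langle\tau^Y_g,\Ind_\infty(D)\rangle+\frac{1}{2\sqrt{\pi}}\int_s^\infty \tau^{\partial Y}_g\bigl(D_{\partial Y}e^{-tD^2_{\partial Y}}\bigr)\frac{dt}{\sqrt{t}},
\end{equation*}
proves that the left-hand side has a finite limit as $s\downarrow 0$ (a $b$-super-trace short-time limit, via Getzler rescaling on the $b$-manifold $Y$, not on $\partial Y$), and deduces convergence of the eta integral as a \emph{byproduct}. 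Your approach is not wrong, but it front-loads a considerably harder analysis that is unnecessary for this particular theorem; the byproduct argument is the more economical route.

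On the relative pairing: the relative $0$-cocycle is $(\tau^{Y,r}_g,\sigma^{\partial Y}_g)$ on the indicial homomorphism ${}^b\mathcal{L}^\infty_{G,s}(Y)\to{}^b\mathcal{L}^\infty_{G,s,\RR}(\cyl(\partial Y))$, where $\tau^{Y,r}_g$ is the Melrose $b$-regularization of the trace on the full $b$-algebra, not the absolute cocycle $\tau^Y_g$ on the residual ideal (they agree only on residual operators). And the ``interior'' term is not obtained by ``applying the closed-manifold formula to $(Y_0)^g$''; $Y_0$ is a manifold with boundary, so one needs the $b$-manifold analogue of the short-time limit directly (everything being product-like near the boundary, the $b$-integral over $Y^g$ does reduce to the integral over $Y_0^g$, but that reduction is a consequence of the $b$-Getzler computation, not a substitute for it). Your final Duhamel computation for the transgression term is also done somewhat differently in the paper (an explicit computation with the indicial family of the Connes--Moscovici projector), though that is a matter of technique rather than substance.
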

 
 \noindent
This theorem is an improvement with respect to \cite{HWW2} because we do {\em not} assume that $G/Z_G (g)$ is compact. Moreover, the two proofs are different.

\smallskip
\noindent
Consider now $Y/AN$, an $M$-proper manifold,  which has a slice decomposition  given by  $M\times_{K\cap M} Z=:Y_M$.
The following theorem is one of the main results of this paper:

\begin{theorem}\label{intro:main-theorem-higher}
Suppose that the metric is slice compatible (Definition \ref{def:slice-metric}).
Assume that  $D_{\partial Y}$ is $L^2$-invertible and consider the higher 
index $\langle \Phi^P_{Y,g}, \Ind_\infty (D)\rangle$.
The following formula holds:
\begin{equation}\label{formula-main}
\langle \Phi^P_{Y,g}, \Ind_\infty (D)\rangle=
\int_{(Y_0/AN)_g} c^g_{Y_0/AN} {\rm AS}(Y_0/AN)_g-\frac{1}{2} \eta_g (D_{\partial Y_{M}}) 
\end{equation}
with 
$$\eta_g (D_{\partial Y_{M}}) =\frac{1}{\sqrt{\pi}} \int_0^\infty \tau^{\partial Y_{M}}_g (D_{\partial Y_{M}} \exp (-tD^2_{\partial Y_{M}}) \frac{dt}{\sqrt{t}},$$
where the integrand $c^g_{Y_0/AN} {\rm AS}(Y_0/AN)_g$ is defined in the same way as that in Equation (\ref{eq:HST-formula}). We refer to Equation (\ref{eq:geom-form}) for its explicit expression. We regard $\eta_g (D_{\partial Y_{M}})$ as a higher delocalized eta invariant associated to $P$ and $g$.
\end{theorem}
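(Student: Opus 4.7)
My plan is to mimic the strategy that works in the 0-degree case of Theorem \ref{intro:0-delocalized-aps} and for the cocycles localized at the identity treated in \cite{PP2}, now with the higher delocalized cocycle $\Phi^P_{Y,g}$ playing the role of $\tau^Y_g$. The main tool is the $b$-calculus for $G$-proper manifolds with boundary developed in \cite{PP2}, combined with the closed-manifold higher index formula \eqref{eq:HST-formula} of Hochs--Song--Tang, and the relative cyclic/K-theoretic machinery of \cite{moriyoshi-piazza,GMPi,PP2}. Concretely, I would first enlarge $\Ind_\infty(D)$ to a $b$-index class living in $K_0$ of a suitable $b$-Schwartz algebra $\mathcal{L}^{\infty,b}_{G,s}(Y,E)$ whose ideal of residual operators is (Morita-)equivalent to $\mathcal{L}^\infty_{M,s}(\partial Y_M,E_M)$, and lift $\Phi^P_{Y,g}$ to a \emph{relative} cyclic $m$-cocycle $(\widetilde\Phi^P_{Y,g},\sigma^P_{\partial Y,g})$ on this pair of algebras, with boundary component constructed out of the higher orbital integral on $M$ via the regularized $b$-trace formalism.

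Step one is therefore the construction and continuity of this relative cocycle. Using the slice-compatibility assumption, the $AN$-action is free on the collar and passes to the $b$-Schwartz setting, so that the reduction $Y\mapsto Y/AN$, $M$-proper with boundary $\partial Y/AN$, is compatible with the $b$-calculus. The higher orbital integral $\Phi^P_g$ on $\mathcal{L}_s(G)$ then produces a $b$-cocycle $\widetilde\Phi^P_{Y,g}$ by integrating over $G/AN$ against the Schwartz $b$-kernel, and its $b$-coboundary, computed exactly as in the Melrose--Moriyoshi--Piazza $b$-trace defect formula, yields a cocycle $\sigma^P_{\partial Y,g}$ on the boundary algebra supported on the boundary orbital integral associated to $P$ and $g\in M$. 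This gives a well-defined class in relative cyclic cohomology.

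Step two is the evaluation of the relative pairing $\langle(\widetilde\Phi^P_{Y,g},\sigma^P_{\partial Y,g}),\Ind^{\mathrm{rel}}(D)\rangle$ by excision and a Getzler-type local index calculation on $Y_0/AN$: the interior contribution is produced by applying the argument of Hochs--Song--Tang \cite{hst} fibrewise on the $M$-proper slice, which reproduces the local integrand $c^g_{Y_0/AN}{\rm AS}(Y_0/AN)_g$. The boundary defect is identified, via the standard transgression/Duhamel argument adapted to higher cocycles as in \cite{LeichtnamPiazzaMemoires,Wahl1,PSZ}, with $-\tfrac12\eta_g(D_{\partial Y_M})$, the higher eta integral displayed in the statement. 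Combining this with the excision isomorphism identifying the relative pairing with $\langle\Phi^P_{Y,g},\Ind_\infty(D)\rangle$ yields \eqref{formula-main}.

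The hard part is the analytic well-definedness of $\eta_g(D_{\partial Y_M})$ and the vanishing of the error terms in the transgression. Small-$t$ convergence of $\tau^{\partial Y_M}_g(D_{\partial Y_M}e^{-tD_{\partial Y_M}^2})$ follows from the short-time asymptotics of the heat kernel near the $g$-fixed set developed earlier in the paper, together with the factorization $Y_M=M\times_{K\cap M}Z$; the delicate issue is large-$t$ decay, because $L^2$-invertibility is assumed for $D_{\partial Y}$ on the whole $G$-proper boundary and must be transferred to the $M$-proper reduction $D_{\partial Y_M}$ acting on $AN$-invariant sections. I would handle this by a Plancherel-type argument on $AN$, as in \cite{st,hst}, showing that the Schwartz norms controlling $\tau^{\partial Y_M}_g$ are dominated by the $L^2$-spectral gap of $D_{\partial Y}$, so that the integrand decays exponentially and the resulting integral both converges and produces the boundary piece of the relative pairing with no extra contribution from $t\to\infty$.
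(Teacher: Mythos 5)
Your proposal takes the ``direct'' route: build the relative cyclic $m$-cocycle $(\Phi^{r,P}_{Y,g},\sigma^P_{\partial Y,g})$ for the indicial homomorphism, pair it with the relative index class, and then evaluate the interior term by a Getzler rescaling and the boundary term by transgression. This is precisely the strategy the paper sets up in Sections~\ref{sect:higher-cocycles} and \ref{sect:toward}, and it is also precisely the strategy the paper says it cannot complete: after establishing Theorem~\ref{thm:splitting}, the authors state explicitly that ``it is quite unclear how to study the limit as $s\downarrow 0$ of $\Phi^{r,P}_{Y,g}(V(sD),\dots,V(sD))$.'' Your Step two asserts that the interior contribution is ``produced by applying the argument of Hochs--Song--Tang fibrewise,'' but the HST argument for the closed case is itself a reduction to an $M$-proper pairing, not a Getzler computation for the $m$-cocycle on the big manifold $Y$, so invoking it here does not close the gap: you would still need either a genuinely new local-index computation for the higher cocycle, or you would need to make the reduction explicit before any local analysis. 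The same issue arises with the boundary term: the transgression of $\sigma^P_{\partial Y,g}$ produces the ``higher eta'' $\eta^P_g(D_\partial)$ of Definition~\ref{defn:higher-eta}, which is an integral of an $(m+1)$-cochain, and its identification with the $0$-degree delocalized eta invariant $\eta_g(D_{\partial Y_M})$ on the reduced manifold is itself a reduction statement that is not immediate.

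The paper sidesteps all of this by reducing \emph{first}, at the level of the K-theory pairing, before any local analysis. The proof is really: (a) $N$-averaging defines a continuous multiplicative map $\phi^N:\mathcal{L}^\infty_{G,s}(Y,E)\to \mathcal{L}^\infty_{MA,s}(Y_{MA},E|_{Y_{MA}})$ which carries the Connes--Moscovici projector to that of $D_{Y_{MA}}$, giving $\langle\Phi^P_{Y,g},\Ind_\infty(D)\rangle = \langle\Phi^P_{Y_{MA},g},\Ind_\infty(D_{Y_{MA}})\rangle$ (Proposition~\ref{prop:reduction-1}); (b) the slice-compatible product decomposition $Y_{MA}=Y_M\times A$ gives $D_{Y_{MA}}=D_{Y_M}\hat\otimes 1+1\hat\otimes D_A$, so the index class factors externally and the cocycle factors as $\widetilde\Phi_{Y_M,g}\otimes\Phi_{A,e}$, yielding $\langle\Phi^P_{Y_{MA},g},\Ind_\infty(D_{Y_{MA}})\rangle = \langle\tau^{Y_M}_g,\Ind_\infty(D_{Y_M})\rangle$ (Proposition~\ref{label:reduction-final}); and (c) the invertibility of $D_{\partial Y}$ is transferred first through $\phi^N$ (Lemma~\ref{3-invertibility}) and then through the product splitting to get invertibility of $D_{\partial Y_M}$ (Proposition~\ref{prop:existence-index-reduced}). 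After (a)--(c) it suffices to apply the already-proved $0$-degree delocalized APS index theorem (Theorem~\ref{theo:0-delocalized-aps}) to $D_{Y_M}$ on $Y_M$. The local Getzler computation and the eta transgression therefore only ever appear in degree zero on the reduced manifold $Y_M$, where they are available. Your proposal contains the germ of this reduction in the phrase ``fibrewise on the $M$-proper slice,'' but it is not spelled out, and the order of operations you describe --- first take short-time limits on $Y$, then interpret fibrewise --- runs into the open problem the paper identifies. To repair the proposal you should make the reduction explicit and do it before the local analysis, exactly as in the paper.

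One smaller point: your large-$t$ analysis via a ``Plancherel-type argument on $AN$'' is more than is needed. The paper's argument is softer: the $N$-averaging map is continuous in the Fr\'echet topologies, hence carries the exponential decay of $\exp(-tD_{\partial Y}^2)$ in $\mathcal{L}^\infty_{G,s}(\partial Y)$ to exponential decay of $\exp(-tD_{\partial Y_{MA}}^2)$; a spectral-measure argument then upgrades this decay to a spectral gap, and the product splitting of $D_{Y_{MA}}$ with the non-invertible $D_A$ forces $D_{Y_M}$ to be invertible. No Plancherel theory is required.
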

\begin{remark}
We observe that in Theorem \ref{intro:main-theorem-higher}, when $P$ is not maximal cuspidal parabolic subgroup (see Definition \ref{maximal P}), the pairing $\langle \Phi^P_{Y,g}, \Ind_\infty (D)\rangle$ vanishes,  similarly to \cite[Theorem 2.1]{hst}; moreover,  each term on the right side  of Equation (\ref{formula-main}) vanishes because of the existence of extra symmetry (see Remark \ref{vanish rmk-2}).  
\end{remark}

In these two theorems the well-definedness of the (higher)
delocalized eta invariant for the boundary operator is a consequence of the delocalized Atiyah-Patodi-Singer
index theorem. In Section \ref{sect:eta-in-general} we pass to the general case of a cocompact $G$ proper manifold
without boundary (thus not necessarily equal to the
boundary of a cocompact $G$ proper manifold with boundary).  Some discussion about this question was given in \cite[Proposition 4.1 and Section 4.4]{HWW1}.
We make a very detailed study on the most general hypothesis under which the delocalized eta invariant is well defined 
and establish the following result:

\begin{theorem}\label{thm etadefine-intro}
Let $(X,\mathbf{h})$ be a cocompact $G$-proper manifold without boundary 
 and let $D$ be a $G$-equivariant Dirac-type operator (associated to a unitary
 Clifford action and a Clifford connection).  Let $g\in G$ be a semi-simple element. If $D$ is $L^2$-invertible, then the  integral 
\begin{equation}\label{intro:eta-large t}
\frac{1}{\sqrt{\pi}} \int_0^\infty \tau^{X}_g (D\exp (-tD^2) \frac{dt}{\sqrt{t}}
\end{equation}
converges. 
\end{theorem}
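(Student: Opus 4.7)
The plan is to split the integral as $\int_0^\infty = \int_0^1 + \int_1^\infty$ and treat the two regimes with different techniques, mimicking the classical definition of the eta invariant but adapted to the framework of orbital integrals. The short-time and large-time heat-kernel analyses announced in the abstract are exactly the two inputs I shall use.

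For $t \in (0,1]$ I would invoke the short-time asymptotic expansion of the heat kernel of $D^2$ near the fixed-point submanifold $X^g$ developed in the preceding sections. The orbital integral $\tau^X_g$ localises $\tau^X_g(D e^{-tD^2})$ to an integral over $X^g$ of a local expression built from $D$, the metric, the Clifford connection and the linear action of $g$ on the normal bundle $\mathcal{N}_{X^g}$; a Getzler-type rescaling argument, entirely analogous to the compact case, yields an asymptotic expansion in half-integer powers of $t$. The coefficient of the most singular term vanishes by the standard parity argument (odd symmetry of $D e^{-tD^2}$ under Clifford multiplication by the normal-bundle volume element), so $\tau^X_g(D e^{-tD^2}) = O(1)$ as $t \to 0^+$, and hence $t^{-1/2}\tau^X_g(D e^{-tD^2})$ is integrable on $(0,1]$.

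For $t \geq 1$ I would exploit the $L^2$-invertibility, which, since $D$ is self-adjoint, is equivalent to a spectral gap $D^2 \geq c^2 > 0$ for some $c>0$. The key step is the factorisation
\[
D e^{-tD^2} = \bigl(D e^{-D^2/2}\bigr)\, e^{-(t-1/2)D^2}.
\]
By smooth functional calculus applied to the Schwartz function $x \mapsto x e^{-x^2/2}$, the first factor lies in the Lafforgue--Schwartz algebra $\mathcal{L}^\infty_{G,s}(X,E)$ (this is the construction used in the paper to define $\Ind_\infty(D)$, cf.\ Definition \ref{defn:laffargue}), while the second has operator norm $\leq e^{-c^2(t-1/2)}$. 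Interpolating the Schwartz seminorms against the operator norm in the functional-calculus representation yields
\[
\|D e^{-tD^2}\|_{\mathcal{L}^\infty_{G,s}(X,E)} \leq C\, e^{-c^2 t/2}, \qquad t \geq 1,
\]
and since $\tau^X_g$ is a continuous cocycle on $\mathcal{L}^\infty_{G,s}(X,E)$, we obtain $|\tau^X_g(D e^{-tD^2})| \leq C' e^{-c^2 t/2}$; multiplied by $t^{-1/2}$ this still gives an integrable integrand on $[1,\infty)$.

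The main obstacle is this last passage from the operator-norm gap $D^2 \geq c^2$ to exponential decay of $D e^{-tD^2}$ in the \emph{Schwartz-algebra seminorms}, which control off-diagonal decay and derivatives of the Schwartz kernel, not only the $L^2$-operator norm. The cleanest route is to write $D e^{-tD^2} = \phi_t(D)$ with $\phi_t(x) = x e^{-tx^2}$ and observe that, because $\spec(D) \subset \RR \setminus (-c,c)$, every Schwartz seminorm of $\phi_t|_{\spec(D)}$ decays exponentially in $t$; combined with continuity of the functional calculus $\phi \mapsto \phi(D)$ into $\mathcal{L}^\infty_{G,s}(X,E)$, this yields the required estimate. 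A related technical subtlety is that the short-time expansion must be established uniformly in the direction variable along $G/Z$ and against the cut-off function $c^g$, which is precisely why the detailed heat-kernel analysis near $X^g$ of the earlier sections is required.
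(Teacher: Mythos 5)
Your overall strategy matches the paper's: split the eta integral at $t=1$, use a Getzler--rescaling/parity argument near the fixed--point set for $t\to 0$, and exploit the spectral gap for $t\to\infty$. The main concerns are in how you implement each half.

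For the large--time regime, the factorisation $De^{-tD^2} = (De^{-D^2/2})\,e^{-(t-1/2)D^2}$ followed by ``interpolating the Schwartz seminorms against the operator norm'' does not actually produce the needed estimate. The algebra $\mathcal{L}^\infty_{G,s}(X,E)$ is not an operator ideal inside $\mathcal{B}(L^2)$, and its seminorms are \emph{kernel}-based: a Lafforgue seminorm $\nu_s$ controls $\sup_g (1+\|g\|)^s\,\Xi(g)^{-1}\|\Phi(g)\|_\alpha$, a bound on the Schwartz kernel of the operator along $G$, not the $L^2$-operator norm. There is no mechanism by which $\|e^{-(t-1/2)D^2}\|_{\mathcal{B}(L^2)}\leq e^{-c^2(t-1/2)}$ forces the convolution product $\Phi_{De^{-D^2/2}}\ast\Phi_{e^{-(t-1/2)D^2}}$ to have small Lafforgue seminorms. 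Your fall-back — observing that $\phi_t(x)=xe^{-tx^2}$ has exponentially decaying Schwartz seminorms away from $(-c,c)$ and invoking continuity of the functional calculus $\phi\mapsto\phi(D)$ into $\mathcal{L}^\infty_{G,s}(X,E)$ — is the correct statement and is morally what the paper proves, but you cannot simply assume it. That continuity is precisely what the proof of the large--time estimate must establish; the paper does so by writing $\exp(-tD^2)=\frac{(k-1)!}{2\pi i}t^{1-k}\int_\gamma e^{-t\mu}(D^2-\mu)^{-k}\,d\mu$, decomposing $(D^2-\mu)^{-k}=F(\mu)+G(\mu)$ (symbolic parametrix plus a residual term in $\mathcal{L}^\infty_{G,s}$, rapidly decreasing in $|\mu|$), and deforming the contour $\gamma$ into $\operatorname{Re}\mu>a>0$.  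Without that (or an equivalent) mechanism, the large--time bound is left dangling.

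For the short--time regime, the statement ``the orbital integral localises to $X^g$'' hides the main technical difficulty: $\tau^X_g$ is an integral over the non-compact homogeneous space $G/Z_g$, so before any Getzler rescaling near $X^g$ one must justify that the contribution of the heat kernel for group elements $hgh^{-1}$ far from the identity (and of base points far from the fixed-point set) is uniformly negligible as $t\downarrow 0$. This is where the paper's work actually lies: a Gaussian off-diagonal bound $\|\kappa_t(x,y)\|\leq\beta t^{-(n+1)/2}\exp(-\alpha\,d_X(x,y)^2/t)$, a geometric comparison $d_X(hgh^{-1}x,x)\geq C_1 d_G(hgh^{-1}e,e)-C_0$ uniform on a fundamental domain, Harish--Chandra's convergence of $\int_{G/Z_g}\exp(-\alpha\,d_G(hgh^{-1}e,e)^2/t)\,d(hZ_g)$, and compactness of $X^g/Z_g$, assembled so that the integral over the complement of a tubular neighbourhood of $X^g$ is $O(e^{-\delta/t})$.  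Only after this reduction does the Zhang-style local expansion near $X^g$ (giving $O(\sqrt{t})$, not merely $O(1)$) become applicable. Your proposal nods to ``uniformity in the direction variable along $G/Z$'' as a ``subtlety'' but does not supply the argument; as written, this is a gap rather than a remark.
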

The proof, which is rather involved, is divided into two parts: the small $t$  and the large $t$ 
integrability. The former is based on a very detailed study of  the Schwartz kernel
of $D\exp (-t D^2)$ and the way it behaves near the fixed-point set of $g$; the latter is based on a detailed study of the large time behavior of the heat kernel.
Our analysis of the short time behaviour also gives, as a byproduct, a rigorous proof of the Hoch-Wang index formula in \cite{Hochs-Wang-HC}. 
%
%
We end the paper by defining (higher) rho-invariants associated to metrics of positive scalar curvature
 and by studying their properties.

\medskip
\noindent 
 This article uses quite some background knowledge from diverse fields of mathematics such as noncommutative geometry, analysis on manifolds with boundaries and representation theory. For the benefit of the reader we give a quick, but certainly incomplete,  guide to the literature for the necessary background in these subjects

\begin{itemize}
\item The basic tools from noncommutative geometry needed for this paper are cyclic cohomology and its pairing with $K$-theory, as can be found in \cite[Ch. III]{bookconnes}. For manifolds with boundary the formalism of {\em relative} cyclic cohomology is particularly useful, for which we refer to \cite{LMP,moriyoshi-piazza}
\item The basic ingredient from the theory of representations of reductive Lie groups for this paper is the so-called Harish—Chandra algebra of functions on such groups \cite{Knapp}. The connection between representation theory and noncommutative geometry is rooted in the Connes—Kasparov conjecture proved by Wassermann in \cite{Wa:Connes-Kasparov}, see also \cite{Lafforgue}.
\item A standard reference for index theory on manifolds with boundary using the so-called $b$-calculus is the book by Melrose \cite{Melrose-Book}. Important for the construction of the relative cyclic cocycles of this paper is the construction of the $b$-trace and the so-called ``defect-formula’’; these two topics are discussed in detail
in \cite{Melrose-Book}. 
Quick introductions to the $b$-calculus are given in the Appendix of \cite{Mel-P1} and also in the surveys \cite{MP,Grieser,Loya}. This material is adapted to the present context of G-proper manifolds in Section 4B and Section 6 of \cite{PP2}.

\end{itemize}

\bigskip
\noindent
{\bf The paper is organized as follows.} In Section \ref{sect:preliminaries} we give a few geometric preliminaries; these  will play a major role throughout the paper.
Section \ref{sect:aps} is devoted to a proof of the 0-degree delocalized Atiyah-Patodi-Singer, using relative cyclic 0-cocycles associated to orbital integrals.
In Section \ref{sect:eta-in-general}  we prove Theorem \ref{thm etadefine-intro} above (existence of the delocalized eta invariant in the
non-bounding case); crucial in the proof is the study of the large time behaviour of the heat kernel, as an element in the
algebra of integral operators associated to the Lafforgue algebra, and of the short time behaviour near a fixed point-set; the latter can also be used 
 to give a detailed proof of the index formula \eqref{HC}.
In Section \ref{sect:higher-cocycles} we recall the higher cyclic cocycles $\Phi^P_g$ introduced in \cite{st} and we introduce
the cyclic cocycles 
they define on $G$-proper manifolds; we also introduce the relative version of these cocycles on manifolds with boundary. 
In Section  \ref{sect:toward}  and Section \ref{sect:reduction} we explain an approach
to a  general
higher delocalized APS index theorem, using again the interplay between cyclic cocycles and relative cyclic cocycles 
associated to $\Phi^P_g$. In Section \ref{sect:reduction} we do prove such a theorem in the slice compatible case
by adapting to manifolds with boundary 
 a reduction procedure due to Hochs, Song and Tang  \cite{hst}.
 Finally in Section \ref{sect:numeric} we introduce (higher) rho numbers and study their
 properties.

\bigskip
\noindent
{\bf Acknowledgements.}  We would like to thank Pierre Albin, Jean-Michel Bismut, Peter Hochs, Yuri Kordyukov, Xiaonan Ma, Shu Shen, and Weiping Zhang for  inspiring discussions. Piazza was partially supported by 
the PRIN {\it Moduli spaces and Lie Theory} of MIUR ({\it Ministero Istruzione Universit\`a Ricerca}); Song was partially supported by NSF Grant DMS-1800667 and DMS-1952557; Tang was partially supported by NSF Grant DMS-1800666 and DMS-1952551. Part of the research was carried out within the online Research Community on Representation Theory and Noncommutative Geometry sponsored by the American Institute of Mathematics.

\section{Notations}


We use the following notations throughout this paper:
\begin{itemize}
\item $G$ = connected, linear real  reductive group with maximal compact subgroup $K$;
	\item $Y_0$ = cocompact proper $G$-manifold with boundary;
	\item $\mathbf{h}_0$ a $G$-invariant metric on $Y_0$, product-type near the boundary;
	\item $Z_0$ = a $K$-slice of $Y_0$ so that $Y_0$ is diffeomorphic to $G\times_K Z_0$; and abusing notation, we will write $Y_0=G\times_K Z_0$; 	
	\item $Y$ = the $G$-proper $b$-manifold associated to $Y_0$;
	\item  $\mathbf{h}$ the $G$-invariant $b$-metric on $Y$ associated to $\mathbf{h}_0$;
	\item $Z$ = the $b$-manifold associated to the slice $Z_0$; $Z$ is a $K$-slice of $Y$ so that $Y = G\times_K Z$;
	\item $X$ = cocompact proper $G$-manifold without boundary;
	\item $S$ = a $K$-slice of $X$ so that $X = G \times_K S$; 
	\item $E$ = $G$-equivariant twisted spinor bundle so that $E= \mathcal{E} \otimes W$, where $\mathcal{E}$ is the spinor bundle associated Spin$^c$-structure and $W$ is an auxiliary  $G$-equivariant vector bundle;
	\item $C^\infty_c(G)$ = space of compactly supported smooth functions on $G$;
	\item $\mathcal{L}_s(G)$ = space of Lafforgue's Schwartz functions on $G$ associated to the norm $\nu_s$;
	\item $\mathcal{L}_{G}^c(X)$ = the space of $G$-equivariant $G$-compactly supported smoothing operators;
	\item ${}^b\mathcal{L}_G^c(Y)$ = the space of $G$-equivariant  $G$-compactly supported $b$-smoothing operators in the $b$-calculus with $\epsilon$-bounds;
	\item $\mathcal{L}_{G,s}^\infty(X)$ = the space of $G$-Lafforgue Schwartz smoothing operators associated to the norm $\nu_s$;
	\item $\mathcal{L}_{G,s}^\infty(Y)$ = the space of $G$-Lafforgue Schwartz residual smoothing operators in the  $b$-calculus associated to $\nu_s$
	with $\epsilon$-bounds ;
	\item ${}^b\mathcal{L}_{G,s}^\infty(Y)$= the space of $G$-Lafforgue Schwartz $b$-smoothing operators  in the $b$-calculus associated to $\nu_s$ with $\epsilon$-bounds;
	\item $C^*_r(G)$=reduced group $C^*$-algebra of $G$;
	\item $\tau_g$ = orbital integral on $\mathcal{L}_s(G)$ associated to the conjugacy class of $g$;
	\item $\tau_g^X$ = trace on $\mathcal{L}_{G,s}^\infty(X)$ associated to the orbital integral $\tau_g$;
	\item $\tau_g^Y$ = trace on $\mathcal{L}_{G,s}^\infty(Y)$ associated to the orbital integral $\tau_g$.
\end{itemize}

\section{Geometric preliminaries.}\label{sect:preliminaries}

Let $Y_0$ be a  manifold with boundary, $G$ a {connected linear real reductive Lie group  acting properly
and cocompactly on $Y_0$. We denote by $X$ the boundary of $Y_0$.
There exists a collar neighbourhood $U$ of the boundary $\pa Y_0$, $U\cong [0,2]\times \partial Y_0$,
 which is $G$-invariant and such that the action of $G$ on $U$ is of product type.
We assume that $Y_0$ is endowed with a $G$-invariant metric $\mathbf{h}_0$ which is of product type near the boundary. We let
$(Y_0,\mathbf{h}_0)$ be the resulting Riemannian  manifold with boundary;  in the collar
neighborhood $U\cong [0,2]\times \partial Y_0$ the metric  $\mathbf{h}_0$  can be written, through the above isomorphism,
 as $dt^2 + \mathbf{h}_X$,
with $\mathbf{h}_X $ a $G$-invariant Riemannian metric on  $X=\partial Y_0$. We denote by $c_0$ 
a cut-off function for the action of $G$ on $Y_0$; since the action is cocompact, this is a compactly supported smooth function.
We consider the associated manifold with cylindrical ends
$\widehat{Y}:= Y_0\cup_{\partial Y_0} \left(   (-\infty,0] \times \partial Y_0 \right)$,
endowed with the extended metric $\widehat{\mathbf{h}}$ and the extended $G$-action.
We denote by  $(Y,\mathbf{h})$ the $b$-manifold associated to  $(\widehat{Y},\widehat{\mathbf{h}})$. We shall often treat 
$(\widehat{Y},\widehat{\mathbf{h}})$ and $(Y,\mathbf{h})$ as the same object. We denote by $c$ the  obvious extension of the cut-off function
$c_0$ for the action of $G$ on $Y_0$ (constant along the cylindrical end); this is a cut-off function of the extended action of $G$ on $Y$.
If $x$ is a boundary defining function for the cocompact $G$-manifold $Y_0$, then the $b$-metric $\mathbf{h}$ has the following
product-structure near the boundary $X$:
 $$\frac{dx^2}{x^2}+ \mathbf{h}_X$$
We remark at this point that our arguments will actually apply to the more general case of {\it exact}
$b$-metrics, or, equivalently, manifolds with asymptotic cylindrical ends. We shall not insist on this point.

In this article we shall be interested in the case in which $Y_0$ admits a $G$-invariant Spin$^c$-structure.  Let $E_0$ be the $G$-equivariant spinor bundle associated to a Spin$^c$-structure on $Y_0$ twisted by an auxiliary  $G$-equivariant vector bundle. In particular, $E_0$ has a $G$-equivariant $\text{Cliff}(TY_0)$-module structure. Let $\nabla^{E_0}$ be a \emph{Clifford connection} on $E_0$, that is
\begin{equation}
\label{Clifford connection}
\left[\nabla^{E_0}_V, c(W) \right] = c(\nabla^{TY_0}_VW), \quad V, W \in C^\infty(Y_0, TY_0)
\end{equation}
where $c$ denotes the Clifford action and $\nabla^{TY_0}$ is the Levi-Civita connection. The Dirac operator associated to the Clifford connection is given by the following composition
\begin{equation}
\label{def Dirac}
D_{Y_0} \colon C^\infty(Y_0, E_0)\xlongrightarrow{\nabla^{E_0}} C^\infty(Y_0, T^*Y_0 \otimes E_0)  \cong C^\infty(Y_0, TY_0\otimes E_0) \xlongrightarrow{c}C^\infty(Y_0, E_0). 	
\end{equation}

\begin{assumption}\label{dim ass}
Suppose that $G$ is a connected reductive Lie group with maximal compact subgroup $K$. We denote by 
\[
\mathfrak{r}  \colon = \text{dim} \ G/K  \ (\text{mod} \ 2), \quad \mathbf{t}  \colon = \text{dim} \ Y_0  \ (\text{mod} \ 2).
\]
The index class
\[
\Ind_\infty (D_{Y_0}) \in K_{\mathbf{t}}\left(\mathcal{L}_{G,s}^\infty \left(Y,E\right)\right) \cong K_\mathbf{t}(C^*_rG).
\]
Recall that  $K_{\mathfrak{r}+1}(C^*_rG) = 0$, see \cite{Lafforgue}. We assume that 
\[
 \text{dim} \ G/K  = \text{dim} \ Y_0 \ (\text{mod} \ 2).
\]
Otherwise, the index class is written as 
\[
\Ind_\infty (D_{Y_0}) \in K_\mathbf{t}(C^*_rG) = K_{\mathfrak{r}+1}(C^*_rG) = 0.
\]
Without loss of generality, we can further assume 
\[
 \text{dim} \ G/K  = \text{dim} \ Y_0 = 0 \ (\text{mod} \ 2);
\] 
otherwise, we can simply replace $G$ (and $Y_0$) by $G \times \mathbb{R}$ (respectively $Y_0 \times \mathbb{R}$). To sum up, we assume that 
\begin{enumerate}
	\item the symmetric space $G/K$;
	\item the $G$-manifold $Y_0$.
\end{enumerate}
are all even dimensional.
\end{assumption}

\medskip
\noindent
For any cuspidal parabolic subgroup $P = MAN$ with $m = \dim(A)$,  the $m$-cyclic cocycle introduced below in (\ref{eq:PhiPg}), $\Phi^P_g$, is an $m$-cyclic cocycle on $\mathcal{L}_s(G)$. The pairing 
\[
\langle \Phi^P_{Y,g}, \Ind_\infty (D)\rangle
\]
is automatically zero if $m$ is odd. Thus it suffices to consider those cuspidal parabolic subgroups such that $m = \dim(A)$ is even. In particular, for the maximal parabolic subgroup $P = MAN$, it is always this case under our assumption.

\subsection{Slice compatible metric on $Y_0$}
Let us  fix a slice $Z_0$ for the $G$ action on $Y_0$; thus
$$Y_0\cong G\times_K Z_0$$
with $K$ a maximal compact subgroup of $G$ and $Z_0$ a smooth compact manifold  with boundary, denoted by $S = \partial Z_0$, endowed with a $K$-action. Consequently, $Y\cong G\times_K Z$
with $Z$ the $b$-manifold associated to $Z_0$ and the boundary
\[
X = \partial Y_0 \cong G \times_K S. 
\] 

Choose a $K$-invariant inner product on the Lie algebra $\mathfrak{g}$ of $G$, so that we have the so called Cartan decomposition 
$\mathfrak{g}=\mathfrak{k}\oplus\mathfrak{p}$ where $\mathfrak{k}$ is the Lie algebra of $K$ and 
$\mathfrak{p}$ its orthogonal complement. 
We have an  isomorphism
\begin{equation}
\label{tb-induced}
T Y_0\cong G\times_K(\mathfrak{p}\oplus TZ_0).
\end{equation}
Here we abuse the notation $\mathfrak{p}$ to denote the trivial vector bundle $Z_0\times \mathfrak{p} \to Z_0$. 
\begin{definition}\label{def:slice-metric} (slice compatible metrics) Given a slice $Z_0$, we shall say that a $G$-invariant metric on $Y_0$ is slice compatible with $Z_0$ if it is constructed from a $K$-invariant metric on $Z_0$ and a $K$-invariant metric on $\mathfrak{p}$ via the above isomorphism
$T Y_0\cong G\times_K(\mathfrak{p}\oplus TZ_0)$. We shall say that 
the  $G$-invariant metric $\mathbf{h}_0$ on $Y_0$ is slice-compatible if there is a slice $Z_0$ 
 such that it is slice compatible with $Z_0$.
\end{definition}}

When $Y_0$ has a $G$-equivariant Spin$^c$-structure, we can construct its spinor bundle as follow. We can assume, up to the passage to  double covers, 
 that
the adjoint representation ${\rm Ad}: K\to SO (\mathfrak{p})$ admits a lift $\widetilde{{\rm Ad}}: K\to {\rm Spin}(\mathfrak{p})$. We then obtain 
a $G$-invariant Spin$^c$-structure $P^{G/K}:= G\times_K {\rm Spin}(\mathfrak{p})\to G/K$. Assume now that $Z_0$ admits a $K$-invariant
Spin$^c$-structure. Then, proceeding as in \cite{Hochs2009,Hochs-Mathai}, we obtain a $G$-invariant Spin$^c$-structure on $M$. Because
\[
0 \to G \times_K \mathfrak{p} \to TY_0 \to G \times_K TZ_0 \to 0, 
\] 
the two out of three lemma of Spin$^c$-structure shows that  every $G$-invariant Spin$^c$-structure on $Y_0$ is induced from a $K$-invariant Spin$^c$-structure on $Z_0$. The vector bundle $E_0$ induces a $K$-equivariant vector bundle $E_{Z_0}$ such that 
\[
E_0 \cong G \times_K( S_\mathfrak{p} \otimes E_{Z_0})
\]
and $E_{Z_0}$ admits a $K$-equivariant $\text{Cliff}(TZ_0)$-module structure and $S_\mathfrak{p}$ is the spinor bundle along the $\mathfrak{p}$ direction. We call the above a \emph{slice compatible Spin$^c$-structure}. Throughout this paper, we only need to assume that the Spin$^c$-structure on $Y_0$ is slice compatible in section \ref{sect:reduction}.

 We decompose 
\[
L^2(Y_0, E_0) \cong \left[L^2(G) \otimes S_\mathfrak{p} \otimes L^2(Z_0, E_{Z_0}) \right]^K. 
\]
We can define a \emph{split Dirac operator} $D_{\text{split}}$ by the following formula
\begin{equation}\label{dirac-split}
D_{\text{split}} = D_{G,K} \hat{\otimes} 1 + 1\hat{\otimes} D_{Z_0},  
\end{equation}
where $D_{G,K}$ is the Spin$^c$-Dirac operator on $L^2(G)\otimes S_\mathfrak{p}$, and $D_{Z_0}$ is a $K$-equivariant Dirac operator on $E_{Z_0}$, and $\hat{\otimes}$ means the graded tensor product. We point out here that the two Dirac operators $D_{Y_0}$ and $D_{\text{split}}$ are different in general. We would like to thank Jean-Michel Bismut and Xiaonan Ma for pointing out this difference to us. It was incorrectly stated
 in \cite{HS} and used in the  articles\footnote{The list of references using this wrong property might be incomplete.} \cite{MGW, hst, Hochs-Wang-HC, HWW1,HWW2} that these two Dirac operators are identical. In the appendix, we consider the example where $Y_0= G$
 and give an explicit formula for the two Dirac operators, showing in particular that their difference 
 is different from zero. Nevertheless, as the operators have the same principal symbol they give the same index class provided the $G$ proper manifold has no boundary; thus in this case
\[
\Ind_\infty (D_{Y_0}) = \Ind_\infty (D_{\text{split}})  \in K_{0}(C^*_rG).
\]
The split Dirac operator $D_{\text{split}}$ has been extensively studied in \cite{hst, HWW1, HWW2}. It is important to point out that the connection used in the definition of $D_{\text{split}}$ might not be the Clifford connection (see (\ref{Clifford connection})); consequently,
the short time behaviour of the heat kernel is ill behaved for $D^2_{\text{split}}$. The main goal of this paper is to study delocalized APS index theory, where the choice of connection is even more  crucial, given that it affects the invertibility properties of the boundary operator. Hence, in this paper it is crucial that we work with the Dirac operator introduced in Equation (\ref{def Dirac}).

 \section{Delocalized traces and the  APS index formula}\label{sect:aps}
 
 In this section we want to tackle the case of a delocalized APS index theorem on $G$-proper manifolds
 with boundary, that is, 0-degree delocalized cyclic cocycles. Similar results have been discussed in \cite{HWW1, HWW2}. 
 Our treatment is different, centred around the interplay between absolute and relative cyclic cohomology and the $b$-calculus; moreover our treatment
 allows us to  get sharper results compared to \cite{HWW2} in the case of a connected linear real reductive group\footnote{In \cite{HWW2}, the authors consider general locally compact topological groups.} $G$. More precisely, in Theorem \ref{intro:0-delocalized-aps} we only assume that $g$ is a semisimple element of $G$ to obtain the index formula (\ref{main-0-degree}), while in \cite[Theorem 2.1]{HWW2}, the authors require that $G/Z_g$ is compact\footnote{Notice that for the numeric $g$-index associated to $D$
  a formula is proved in \cite{HWW1} under  the same hypothesis given here; on the other hand, the $g$-index is proved 
 to be equal to the pairing of an index class with a 0-degree cyclic cocycle (this is the number we consider in the present article)
 only under the additional assumption that  $G/Z_g$ is compact.}.
\\
 
 \subsection{Orbital integrals and associated cyclic $0$-cocycles in the closed case}
 \label{oiclosed}
   Consider first a co-compact $G$-proper manifold {\it without} boundary $X$.
We know that 
there exists a compact submanifold
$S\subset X$ on which the $G$-action restricts to an action of a maximal compact subgroup $K\subset G$, so that 
the natural map
\[
G\times_K S\to X,\quad [g,x]\mapsto g\cdot x,
\]
is a diffeomorphism. This decomposition of $X$ induces an isomorphism
\begin{equation}
\label{algebra-slice}
\mathcal{L}_G^c (X)\cong \left(C^\infty_c(G)\hat{\otimes}\Psi^{-\infty}(S)\right)^{K\times K}
\end{equation}
and more generally
\begin{equation}
\label{algebra-slice-bis}
\mathcal{L}_G^c (X,E)\cong \left(C^\infty_c(G)\hat{\otimes}\Psi^{-\infty}(S,E|_S)\right)^{K\times K},
\end{equation}
in the presence of a $G$-equivariant vector bundle $E$.
(We shall often expunge the vector bundle $E$ from the notation.)
On the left hand side we have the
smoothing operators defined by $G$-invariant smooth kernels of $G$-compact support;
on the right hand side we use the (unique) completion of the algebraic tensor product between the two algebras given that they are both nuclear.  \\

\begin{definition}\label{defn:laffargue}
For $s \in [0, \infty)$, define the Lafforgue algebra $\mathcal{L}_s(G)$ to be the completion of $C_c(G)$ with respect to the norm $\nu_s$ defined as follows
\[
\nu_s(f) \colon = \sup_{g\in G} \left\{(1 + \|g\|)^s \cdot \Xi^{-1}(g)\cdot |f(g)|\right\},
\]	
where $\Xi(g)$ denotes Harish-Chandra's spherical function. 
\end{definition}
The family of Banach algebras $\{\mathcal{L}_s(G)\}_{s\geq 0}$ satisfies the following properties \cite{Lafforgue}:
\begin{enumerate}
	\item For every $s \in [0, \infty)$, $\mathcal{L}_s(G)$ is a dense subalgebra of $C^*_r(G)$ stable under holomorphic calculus.
	\item For $0 \leq s_1 < s_2$, $\|f\|_{s_1} \leq\|f\|_{s_2}$,  $\forall f \in  \mathcal{L}_{s_2}(G)$. Hence, $\mathcal{L}_{s_2}(G) \subset  \mathcal{L}_{s_1}(G)$.  Define
	\[
	\mathcal{L}(G)\colon = \cap_{s \geq 0} \mathcal{L}_{s}(G). 
	\]
	\item For any semisimple element $x \in G$, there exists $d_0>0$, such that  $\forall s>d_0$, the orbital integral
	\[
	\int_{G/Z_x} f(gxg^{-1}) \; dgZ_x
	\]
	is convergent for all $ f \in \mathcal{L}_s(G)$.
\end{enumerate}
We observe that Harish-Chandra's Schwartz algebra is contained in the algebra $\mathcal{L}_s(G)$ for any $s>0$.\\ 

Throughout the paper, for a fixed $x\in G$, we work with a Lafforgue Schwartz algebra $\mathcal{L}_s(G)$ for a sufficiently large $s$ so that the orbital integral is a well defined continuous linear functional on $\mathcal{L}_s(G)$. 

\medskip
Let us consider now a cocompact $G$-proper manifold without boundary $X\cong G\times_K S$.  
The algebras  $\mathcal{L}_{G,s}^{\infty}(X)$ and $\mathcal{L}^\infty_G (X)$ are defined as
	\[
	\mathcal{L}^\infty_{G,s} (X):= \left(\mathcal{L}_s(G)\hat{\otimes}\Psi^{-\infty}(S)\right)^{K\times K},\ \mathcal{L}^\infty_G (X):= \left(\mathcal{L}(G)\hat{\otimes}\Psi^{-\infty}(S)\right)^{K\times K}.
	\]  
	There are similar algebras when we consider a $G$-equivariant vector bundle $E$ on $X$; however,
	for notational simplicity, we shall expunge the vector bundle $E$ from the notation.
	 For  
$$\widetilde{k}\in \mathcal{L}^\infty_G (X):=\left(\mathcal{L}(G)\hat{\otimes}\Psi^{-\infty}(S)\right)^{K\times K}
$$ 
or 
$$\widetilde{k}\in \mathcal{L}^\infty_{G,s} (X):=\left(\mathcal{L}_s (G)\hat{\otimes}\Psi^{-\infty}(S)\right)^{K\times K}
$$ 
we consider the bounded operator $T_{\widetilde{k}}$
on $L^2 (X)$ given by
\begin{equation}\label{correspondence-slice}
(T_{\widetilde{k}} e)(gs)=\int_G \int_S g \widetilde{k}(g^{-1}g^\prime,s,s^\prime) g^{\prime\,-1} e (g^\prime s^\prime)ds^\prime dg^\prime.\end{equation}
The operator $T_{\widetilde{k}}$ is an integral operator with  $G$-equivariant Schwartz kernel
$\kappa$ given by $$\kappa (gs,g^\prime s^\prime)=g \widetilde{k} (g^{-1} g^\prime, s, s^\prime) g^{\prime\,-1}\,.$$
The map  $\widetilde{k}\to T_{\widetilde{k}}$ is injective; moreover
$$T_{\widetilde{k}} \circ T_{\widetilde{k}^\prime} = T_{\widetilde{k}* \widetilde{k}^\prime }$$ 
so that its image is a subalgebra of the $G$-equivariant bounded operators on $L^2 (X)$. Following
an established abuse of notation we shall not distinguish between these two algebras, thus identifying
a smooth kernel with the bounded operators it defines.

Following \cite{PP2}, there are isomorphisms that associate to a smooth $G$-equivariant kernel $A$ on $X\times X$  a map $\Phi_A: G\to \Psi^{-\infty}(S)$,
with $\Phi_A$
equivariant with respect to the natural $K\times K$ action on $\Psi^{-\infty}(S)$ and the action $\alpha(k_1,k_2)(g):=k_1g k_2^{-1}$ on $G$. More precisely,
by  \cite[Prop. 1.7]{PP2}, there are  isomorphisms
\begin{align}\label{PHI}
\mathcal{L}^c_G(X)&\cong\left\{\Phi:G\to \Psi^{-\infty}(S),~\mbox{smooth, compactly supported and }
~K\times K~\mbox{invariant}\right\},\\
\mathcal{L}^\infty_{G,s}(X)&\cong\left\{\Phi:G\to \Psi^{-\infty}(S),~K\times K~\mbox{invariant and}~g\mapsto 
v_{s} (\| \Phi(g)\|_\alpha)\;\;\text{bounded}\right\}
\end{align}
with $\alpha$ a multi-index indexing derivatives with respect to the spacial variables of $S$,  $\| \;\|_\alpha$ denoting the 
associated well-known seminorm on $\Psi^{-\infty} (S)$  and $v_{s}(\,\,)$,
denoting the Lafforgue norm.
The image of the product by convolution of $A$ and $B$ on the left hand side
is equal to $\Phi_A * \Phi_B$ on the right hand side,
with
\begin{equation}\label{convolution-Phi}
(\Phi_A * \Phi_B) (g)=\int_G \Phi_A (gh^{-1})\circ \Phi_B (h) dh\,.
\end{equation}

Following again \cite{Hochs-Wang-KT} we define for $T= T_{\widetilde{k}}$
\begin{equation}\label{tr-g-closed}
\tau_g^X (T):= \int_{G/Z_g}\int_X c(hgh^{-1}x) {\rm tr} (hgh^{-1}\kappa (hg^{-1}h^{-1}x,x))dx \,d(hZ)
\end{equation}
with $c$ a cut-off function for the action of $G$ on $X$ and ${\rm tr}$ denoting the vector-bundle fiberwise
trace. There are equivalent ways to write the right-hand-side,
see \cite[Lemma 3.2]{Hochs-Wang-KT}; for example if $c_G$ is a cutoff function for the action of $Z_g$
on $G$ by right multiplication and
$c^g (x)= \int_G c_G (h) c(hgx)dh$
then
the right hand side of \eqref{tr-g-closed}
can be written as 
\begin{equation}\label{tr-g-closed-bis}
\int_X c^g (x) {\rm tr} (\kappa (x,gx)g) dx.
\end{equation}
 It is proved in \cite[Lemma 3.4]{Hochs-Wang-KT} that $\tau_g^{X}$ defines a continuous 
 trace
 \begin{equation}\label{delocalized-trace-on-closed}
\tau_g^{X}: \mathcal{L}^\infty_{G,s} (X)\to\CC\,.
\end{equation} 
In fact, the two traces \eqref{tr-g-closed} and \eqref{delocalized-trace-on-closed} are related by a homomorphism
of integration along the slice $S$,
${\rm Tr}_S: \mathcal{L}^\infty_{G,s} (X)\to \mathcal{L}_s(G)$, such that 
$$\tau^X_g=\tau_g \circ {\rm Tr}_S$$
(see \cite[Section 3.3]{Hochs-Wang-KT}). We see that   $\Tr_S$ associates  to $\Phi$ the function $$G\ni g\to \Tr(\Phi (g))\,.$$
Using  \cite[Lemma 1.24]{PP2} and the well known inequality $|\Tr (T)|\leq \| T \|_1$  for a smoothing operator 
on a smooth compact manifold\footnote{with $\|\;\;\|_1$
denoting the trace norm}, we see that 
 $\Tr_S: \mathcal{L}^\infty_{G,s} (X)\to \mathcal{L}_s(G)$ so defined is a  {\it continuous} map.
  (Even though \cite[Lemma 1.24]{PP2} uses a slightly different algebra on $G$ instead of $\mathcal{L}_s(G)$, the proof showing that the trace norm $A\mapsto ||A||_1,~A\in\Psi^{-\infty}(S)$ is continuous for the Fr\'ech\`et topology on $\Psi^{-\infty}(S)$ applies verbatim to show continuity in our case.) Exactly the same results hold for the algebra $\mathcal{L}^\infty_{G,s} (X)$. \\

\medskip
Let now $D$ be an equivariant Dirac operator, of product type near the boundary.
We shall make  the following assumption:

\begin{equation}\label{assumption}
 \text{the boundary operator}\;\;
D_{\partial Y}\;\; \text{is}\;\;L^2\text{-invertible}.
\end{equation}

\medskip
The following
Theorems sharpen the corresponding results in  \cite{PP2}, (see in particular Subsection 5.3, Proposition 5.27, Proposition 5.33 
and Theorem 5.42 there). 

\begin{theorem}\label{theo:smooth-index-CS}
Let $D$ be as above and let $Q^\sigma$ be a symbolic $b$-parametrix for $D$. The Connes-Skandalis projector
\begin{equation}\label{CS-projector-true}
P^b_{Q}:= \left(\begin{array}{cc} {}^b S_{+}^2 & {}^b S_{+}  (I+{}^b S_{+}) Q^b \\ {}^b S_{-} D^+ &
I-{}^b S_{-}^2 \end{array} \right)
\end{equation}
associated to a true $b$-parametrix $Q^b=Q^\sigma-Q^\prime$ with remainders ${}^b S^\pm$ in $\mathcal{L}^\infty_{G,s} (Y_0)$ is  a $2\times 2$ matrix with entries in $\mathcal{L}^\infty_{G,s} (Y)$
\footnote{really in a slightly extended algebra because of the identity appearing in the right lower corner of the matrix}. We thus have a well-defined {\it smooth} index class
\begin{equation}\label{CS-class-bis}
\Ind_\infty (D):=[P^b_{Q}] - [e_1]\in K_0(\mathcal{L}^\infty_{G,s} (Y))\equiv 
K_0 (C^*(Y_0\subset Y)^G)
\;\;\;\text{with}\;\;\;e_1:=\left( \begin{array}{cc} 0 & 0 \\ 0&1
\end{array} \right).
\end{equation}
\end{theorem}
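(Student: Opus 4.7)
The plan is to combine two standard pieces: construction of a true $b$-parametrix via indicial-family surgery, sharpened by Lafforgue-type decay estimates. First, I would start with any symbolic $b$-parametrix $Q^\sigma$ for $D^+$ obtained from the $G$-equivariant $b$-pseudodifferential calculus adapted in \cite{PP2}. The remainders $R^\sigma_+ := I - Q^\sigma D^+$ and $R^\sigma_- := I - D^+ Q^\sigma$ are $b$-smoothing: their kernels are smooth on the $b$-stretched double space and vanish to infinite order on the front face, but they may still have a nontrivial indicial family on the boundary face corresponding to $\partial Y \times \partial Y$. The $G$-compactness of the support means these remainders a priori lie in ${}^b\mathcal{L}^c_G(Y)$.

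Next, I would use assumption \eqref{assumption}, the $L^2$-invertibility of $D_{\partial Y}$, to improve $Q^\sigma$ to a true parametrix. The indicial family $I(D^+,\lambda)$ is obtained from $D_{\partial Y}$ by analytic continuation, and its invertibility in a strip around the real axis is equivalent to $L^2$-invertibility of $D_{\partial Y}$. One then constructs a correction $Q' \in {}^b\Psi^{-\infty}_G(Y)$ whose indicial family cancels those of $R^\sigma_\pm$, via inverting $I(D^+,\lambda)$ and applying an inverse Mellin transform. The resulting $Q^b = Q^\sigma - Q'$ has remainders ${}^b S_\pm$ whose kernels vanish to infinite order at all boundary faces of the $b$-double space, so they extend to smooth $G$-equivariant kernels on $Y \times Y$ with exponential decay along the cylindrical direction, the rate being governed by the spectral gap of $D_{\partial Y}$.

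The main obstacle is the next step: showing ${}^bS_\pm$ actually lie in $\mathcal{L}^\infty_{G,s}(Y)$, not merely in a larger algebra. Using the slice decomposition $Y = G \times_K Z$ and the isomorphism \eqref{PHI}, this reduces to verifying that the $K \times K$-invariant map $\Phi_{{}^bS_\pm} : G \to \Psi^{-\infty}(Z)$ satisfies the Lafforgue-norm boundedness condition of Definition \ref{defn:laffargue}. The exponential decay in the cylindrical direction controls all the seminorms $\|\cdot\|_\alpha$ on $\Psi^{-\infty}(Z)$ uniformly, and for the $G$-direction one uses that $Q'$ is built from the resolvent of $D_{\partial Y}$ composed with $G$-equivariant operators on $G \times_K \partial Z$; the matrix coefficients of this resolvent inherit the required $\nu_s$-decay from standard estimates on the spherical function $\Xi$ combined with the spectral gap.

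Finally, assembling the projector becomes routine: the diagonal entries $({}^bS_\pm)^2$ lie in $\mathcal{L}^\infty_{G,s}(Y)$ as products in this algebra; the $(1,2)$-entry ${}^bS_+(I+{}^bS_+)Q^b = {}^bS_+ Q^b + ({}^bS_+)^2 Q^b$ is residual since a residual operator composed with a $b$-pseudodifferential operator remains residual in the calculus, and the Lafforgue norm is preserved by these compositions; and the $(2,1)$-entry rewrites as ${}^bS_- D^+ = D^+ {}^bS_+$, using $(I - D^+ Q^b)D^+ = D^+(I - Q^b D^+)$, which is smoothing with Lafforgue decay for the same reason. The identity summand in the $(2,2)$-entry is the source of the ``slightly extended algebra'' caveat in the footnote, and subtraction of $[e_1]$ produces a well-defined class in $K_0(\mathcal{L}^\infty_{G,s}(Y)) \cong K_0(C^*(Y_0 \subset Y)^G)$, the last isomorphism following from holomorphic closure.
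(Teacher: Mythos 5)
Your outline is broadly correct in structure: start from a symbolic $b$-parametrix, use $L^2$-invertibility of $D_{\partial Y}$ to correct the indicial family and produce a true $b$-parametrix $Q^b$ whose remainders ${}^bS_\pm$ are residual, then verify that these remainders land in $\mathcal{L}^\infty_{G,s}(Y)$ via the slice isomorphism. The paper's own treatment of this theorem is essentially a proof by reference to \cite{PP2}: it observes that almost every step of the construction there depends only on the target algebra being closed under holomorphic calculus, which $\mathcal{L}_s(G)$ is just as well as the rapid-decay algebra, and then isolates the one estimate that genuinely does not transfer and must be reproved, namely the composition lemma (Lemma \ref{lemma:composition}): that $\Psi^0_{G,c}(X)\times\mathcal{L}^\infty_{G,s}(X)\to\mathcal{L}^\infty_{G,s}(X)$ is continuous.

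This is where your argument has a real gap. You assert that for the $G$-direction ``the matrix coefficients of this resolvent inherit the required $\nu_s$-decay from standard estimates on the spherical function $\Xi$ combined with the spectral gap,'' and later that ``the Lafforgue norm is preserved by these compositions.'' Both claims point at the right issue but do not establish it. The spectral gap controls large-$|\mu|$ behavior of the resolvent and hence decay along the cylinder, not regularity in the $G$-variable. What actually has to be shown is that convolving a compactly supported order-zero symbol $\operatorname{Op}(a)$ with an element $f$ of $\mathcal{L}_s(G)$ gives back an element of $\mathcal{L}_s(G)$ with a continuous estimate; this is not an abstract consequence of spherical-function asymptotics, but requires the concrete argument of Lemma \ref{lemma:composition}: write a general $A\in\Psi^0_c(G)$ as $\operatorname{Op}(a)+K$, reduce to $\operatorname{Op}(a)$, then bound $(1+\|g\|)^s\Xi(g)^{-1}\operatorname{Op}(a)(f)(g)$ by pulling the weights through the integral using Harish-Chandra's bound on $\Xi(hg)/\Xi(g)$ for $h$ in a fixed compact set (Knapp, Lemma 12.5) together with Peetre's inequality $(1+\|g\|)^s\le(1+\|h^{-1}\|)^s(1+\|hg\|)^s$, finally using compactness of $\operatorname{supp}\chi$ to make the auxiliary $h$-integral finite. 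Without isolating and proving this composition estimate, the step where you improve $Q^\sigma$ to $Q^b$ and claim the remainders stay in $\mathcal{L}^\infty_{G,s}(Y)$ is not justified: every product appearing in the entries of $P^b_Q$ (and in the true-parametrix construction itself) invokes exactly this lemma. I would recommend you restructure: cite \cite{PP2} for the formal construction, observe that holomorphic closure is the only property used except in one place, and then state and prove the composition lemma for the Lafforgue algebra as the new ingredient.
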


\begin{theorem}\label{theo:smooth-index-CM}
{\ }
\begin{itemize}
\item[1)] The Connes-Moscovici projector $V(D)$,  
\[
V(D):=\left( \begin{array}{cc} e^{-D^- D^+} & e^{-\frac{1}{2}D^- D^+}
\left( \frac{I- e^{-D^- D^+}}{D^- D^+} \right) D^-\\
e^{-\frac{1}{2}D^+ D^-}D^+& I- e^{-D^+ D^-}
\end{array} \right),
\]
is a $2\times 2$ matrix with entries in ${}^b\mathcal{L}^\infty_{G,s} (Y_0)$;
the Connes-Moscovici projector $V(D^{\cyl})$ is a $2\times 2$ matrix with entries in ${}^b\mathcal{L}^\infty_{G,s,\RR} ({\rm cyl}(\partial Y))$. These two projectors define a smooth relative index class  $\Ind_\infty (D,D_\partial )\in 
K_0({}^b \mathcal{L}^\infty_{G,s} (Y_0),{}^b \mathcal{L}^\infty_{G,s,\RR} ({\rm cyl}(\partial Y)))$.\\
\item[2)] The projector $V^b (D)$ obtained by improving the parametrix $ Q
:= \frac{I-\exp(-\frac{1}{2} D^- D^+)}{D^- D^+} D^-$ defining $V(D)$ to a true $b$-parametrix $Q^b$
is a $2\times 2$ matrix with entries
in $\mathcal{L}^\infty_{G,s} (Y)$ and defines the same smooth index class in  $K_0(\mathcal{L}^\infty_{G,s} (Y))$
  as the Connes-Skandalis
projector of Theorem \ref{theo:smooth-index-CS}\\
\item[3)] The class $\Ind_\infty (D)$ is sent to the class $\Ind_\infty (D,D_\partial )$ through the excision isomorphism
$\alpha_{{\rm exc}}$.
\end{itemize}
\end{theorem}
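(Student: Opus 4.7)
The strategy is to refine the corresponding constructions of \cite{PP2} (Propositions 5.27, 5.33 and Theorem 5.42) so that all remainder estimates are stated in the smaller Lafforgue Schwartz algebra $\mathcal{L}_s(G)$ rather than in the Harish-Chandra Schwartz algebra. Once the relevant kernel estimates are in place, the algebraic K-theoretic arguments (construction of the relative projector, improvement of the parametrix, excision) run essentially as in \cite{PP2}. I would organize the proof in three steps corresponding to the three items of the statement.

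For item (1), the main task is to show that each entry of $V(D)$ defines a kernel that, via the slice decomposition \eqref{PHI}, corresponds to a map $G\to\Psi_b^{-\infty}(Z)$ with Lafforgue seminorm bounds. I would first treat $e^{-tD^+D^-}$ and its mates by the standard Melrose construction of the heat kernel on a $b$-manifold: for any $t>0$ the kernel is $b$-smoothing on $Y_0$. Lafforgue decay in the $G$-direction would then be extracted by writing the heat kernel through the wave group $\cos(sD)$ via the standard formula
\begin{equation*}
e^{-tD^2}=\frac{1}{\sqrt{\pi t}}\int_{-\infty}^{\infty} e^{-s^2/4t}\cos(sD)\,ds,
\end{equation*}
exploiting unit propagation speed to control the support of the kernel in $G$ and then using the polynomial in $\|g\|$ / $\Xi^{-1}(g)$ comparison inherent in $\nu_s$; the off-diagonal entry $\frac{I-e^{-D^-D^+}}{D^-D^+}D^-$ is handled by the same functional calculus since the function $(1-e^{-x^2})/x^2$ is smooth and rapidly decreasing. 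For $V(D^{\cyl})$ the $L^2$-invertibility of $D_{\partial Y}$ gives a spectral gap $\lambda_0>0$, and separating variables along the cylinder one shows that all entries of $V(D^{\cyl})-e_1$ decay like $e^{-c\lambda_0^2 t}$ in the cylindrical time parameter, so they land in the \emph{residual} $b$-algebra ${}^b\mathcal{L}^\infty_{G,s,\RR}({\rm cyl}(\partial Y))$. The smooth relative class $\Ind_\infty(D,D_\partial)$ is then defined by the usual relative K-theory construction, since $V(D)$ restricted to the cylindrical end agrees with $V(D^{\cyl})$ up to residual terms.

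For item (2), I would recall that the parametrix $Q=\frac{I-\exp(-\frac{1}{2}D^-D^+)}{D^-D^+}D^-$ implicit in $V(D)$ is only a \emph{symbolic} $b$-parametrix: its remainders ${}^b S^\pm$ lie in ${}^b\mathcal{L}^\infty_{G,s}$ but not in $\mathcal{L}^\infty_{G,s}$. The invertibility of $D_{\partial Y}$ allows the indicial family to be inverted, and by the standard iterative improvement procedure in the $b$-calculus one replaces $Q$ by a true $b$-parametrix $Q^b=Q-Q'$ whose remainders become residual, i.e., elements of $\mathcal{L}^\infty_{G,s}(Y)$; the Lafforgue decay of these remainders is inherited from that of $Q$ and $e^{-tD^2}$ established in step one. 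The resulting projector $V^b(D)$ therefore lies in (matrices over) $\mathcal{L}^\infty_{G,s}(Y)$. To identify the class $[V^b(D)]-[e_1]$ with the Connes-Skandalis class $[P^b_Q]-[e_1]$ of Theorem~\ref{theo:smooth-index-CS}, I would construct an explicit homotopy of projectors inside $M_2(\mathcal{L}^\infty_{G,s}(Y))$ between $V^b(D)$ and $P^b_Q$; both are built from the same true $b$-parametrix $Q^b$, and the standard interpolation between the Connes-Moscovici and Connes-Skandalis formulas (via the Bott-like deformation that straightens the functional-calculus entries of $V^b(D)$ into the algebraic entries of $P^b_Q$) gives the required homotopy.

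For item (3), the excision isomorphism
\begin{equation*}
\alpha_{\rm exc}\colon K_0(\mathcal{L}^\infty_{G,s}(Y))\xrightarrow{\;\iso\;} K_0\bigl({}^b\mathcal{L}^\infty_{G,s}(Y_0),\,{}^b\mathcal{L}^\infty_{G,s,\RR}({\rm cyl}(\partial Y))\bigr)
\end{equation*}
is induced by the inclusion of the residual ideal into the full $b$-calculus and then the quotient by its indicial part. Under this map, a smooth index class represented by a projector over the residual algebra is sent to the relative class represented by the same projector, viewed in the $b$-algebra, paired with its boundary value in the cylindrical model. Applying $\alpha_{\rm exc}$ to $[V^b(D)]-[e_1]$ and using that $V^b(D)-V(D)$ is a residual homotopy together with the fact that $V(D)$ restricted to the cylinder coincides with $V(D^{\cyl})$ modulo residual terms gives $\alpha_{\rm exc}(\Ind_\infty(D))=\Ind_\infty(D,D_\partial)$.

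The main analytic obstacle I expect is the verification of Lafforgue seminorm estimates for the improved parametrix $Q^b$ and its remainders: the iterative inversion of the indicial family produces kernels whose decay in $G$ must be controlled term by term against the weight $(1+\|g\|)^s\Xi^{-1}(g)$. This requires combining the finite-propagation arguments on the slice $Z$ with uniform estimates on the indicial family of $D$, and is where the hypothesis that $D_{\partial Y}$ is $L^2$-invertible is used most strongly, since the spectral gap provides the decay needed to stay inside $\mathcal{L}_s(G)$ rather than only in a larger completion.
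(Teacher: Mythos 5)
Your outline matches the general architecture of the proof, and your wave-group route to Gaussian decay of the heat kernel in the $G$-direction is a genuinely different technique from the one the paper uses (the paper follows \cite{PP2} in working with the resolvent via contour integration and the $b$-pseudodifferential calculus with parameter). That substitution is plausible, but you should be aware that it is not what is done here and would require separately checking that finite propagation speed on $G\times_K S$ translates into an estimate against the specific weight $(1+\|g\|)^s\,\Xi(g)^{-1}$ and preserves the $K\times K$-equivariant tensor structure of $\mathcal{L}^\infty_{G,s}(Y)=(\mathcal{L}_s(G)\hat\otimes\,{}^b\Psi^{-\infty,\epsilon}(S))^{K\times K}$.

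There is, however, a genuine gap. At the end you write that the main obstacle is ``the verification of Lafforgue seminorm estimates for the improved parametrix $Q^b$ and its remainders,'' and that ``the Lafforgue decay of these remainders is inherited from that of $Q$ and $e^{-tD^2}$'' --- but this inheritance is precisely what has to be proved, and you offer no argument. Each step of the iterative improvement of the symbolic parametrix composes a $G$-compactly-supported, order-$0$ (or negative-order) pseudodifferential operator with a kernel that only has Lafforgue-Schwartz decay, and it is not automatic that such a composition stays in $\mathcal{L}^\infty_{G,s}(Y)$. The paper's point is that everything else in \cite{PP2} transfers to the Lafforgue setting because those arguments use only closure under holomorphic functional calculus; the single genuinely new ingredient is exactly a composition estimate, Lemma~\ref{lemma:composition}, showing $\Psi^0_{G,c}(X)\times\mathcal{L}^\infty_{G,s}(X)\to\mathcal{L}^\infty_{G,s}(X)$ is continuous. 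Its proof reduces to the group level ($\Psi^0_c(G)\times\mathcal{L}_s(G)\to\mathcal{L}_s(G)$), writes the order-$0$ operator as $\operatorname{Op}(a)+K$, and controls $\operatorname{Op}(a)(f)$ using the spherical-function quotient bound $|\Xi(hg)/\Xi(g)|\leq C$ for $h$ in a compact set (Knapp, Lemma~12.5) together with Peetre's inequality --- nothing to do with finite propagation speed. Without this lemma (or an equivalent), your ``inherited decay'' claim is unjustified, and items (1) and (2) are not established.
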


\noindent
Summarizing: using the Connes-Moscovici projector(s) we have smooth index classes 
  $\Ind_\infty (D)\in K_0(\mathcal{L}^\infty_{G,s} (Y))$ and $\Ind_\infty (D,D_\partial )\in 
K_0 ({}^b \mathcal{L}^\infty_{G,s} (Y),{}^b \mathcal{L}^\infty_{G,s,\RR} ({\rm cyl}(\partial Y)))$,
with the first one sent to the second one by the excision isomorphism
$\alpha_{{\rm exc}}$.

\noindent
\begin{proof}
The proof proceeds as in \cite{PP2}. Recall, in particular, that the relative index class is defined by the triple
\begin{equation}\label{pre-wassermann-triple}
(V(D), e_1, q_t)
\,, \;\;t\in [1,+\infty]\,,\;\;\text{ with }
q_t:= \begin{cases} V(t D_{\cyl})
\;\;\quad\text{if}
\;\;\;t\in [1,+\infty)\\
e_1 \;\;\;\;\;\;\;\;\;\;\;\;\;\,\text{ if }
\;\;t=\infty
 \end{cases}
\end{equation}
 and with $e_1:=\begin{pmatrix} 0&0\\0&1 \end{pmatrix}$.  The well-definedness of this class is a consequence
of the large time behaviour of the heat kernel on $b$-manifolds for invertible operators, treated in detail in the next section.
Notice that most of the arguments given in \cite{PP2} use the closure under holomorphic calculus of certain algebras. 
This applies unchanged
whether we use pseudodifferential operators based on the Lafforgue algebra $\mathcal{L}_s(G)$
or the rapid decay algebra $H^\infty_L (G)$.
The only exception is the analogue of Lemma 2.7 in  \cite{PP2} that we state now explicitly, in the present case, for the benefit of the reader:

\begin{lemma}\label{lemma:composition} Let $X$ be a cocompact $G$-proper manifold without boundary with a slice $S$. Consider
$ \Psi^{-\infty}_{G,c}(X)$
and its extension $\mathcal{L}^\infty_{G,s} (X):=(\mathcal{L}_s(G)\hat{\otimes}\Psi^{-\infty}_c(S))^{K\times K}$,
an algebra of smoothing operators.
Then the composition $\Psi^0_{G,c}(X)\times \Psi^{-\infty}_{G,c}(X)\to  \Psi^{-\infty}_{G,c}(X)$
extends to a continuous map
\[
\Psi^0_{G,c}(X)\times \mathcal{L}^\infty_{G, s}(X)\to \mathcal{L}^\infty_{G,s}(X).
\]
\end{lemma}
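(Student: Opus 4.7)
The plan is to adapt the argument of Lemma 2.7 in \cite{PP2}, where the rapid-decay algebra $H^\infty_L(G)$ plays the role of $\mathcal{L}_s(G)$, to the Lafforgue Schwartz setting. The key point is that the Lafforgue seminorms $\nu_s$ interact well with convolution by compactly supported distributions on $G$, which is a consequence of the standard polynomial estimates on Harish-Chandra's spherical function $\Xi$ under left translation by elements of a compact set.

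First I would use the isomorphism \eqref{PHI} to identify $B \in \mathcal{L}^\infty_{G,s}(X)$ with a $K \times K$-equivariant map $\Phi_B \colon G \to \Psi^{-\infty}(S)$ such that $g \mapsto \nu_s(\|\Phi_B(\cdot)\|_\alpha)$ is finite for every continuous seminorm $\|\cdot\|_\alpha$ on $\Psi^{-\infty}(S)$. The operator $A \in \Psi^0_{G,c}(X)$, being $G$-equivariant and of $G$-compact support, is represented on the slice by a properly supported, $K\times K$-equivariant operator-valued distribution $h \mapsto \mathcal{A}_h$ on $G$ taking values in order-zero pseudodifferential operators on $S$, with support contained in some compact $K$-biinvariant subset $L \subset G$. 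The composition $A \circ B$ then corresponds to a convolution-type formula of the form
\[
\Phi_{A \circ B}(g) \;=\; \int_G \mathcal{A}_h \circ \Phi_B(h^{-1}g)\, dh,
\]
whose effective range of integration is the compact set $L$. Since composition $\Psi^0(S) \times \Psi^{-\infty}(S) \to \Psi^{-\infty}(S)$ is jointly continuous, for each seminorm $\|\cdot\|_\alpha$ on $\Psi^{-\infty}(S)$ there is a seminorm $\|\cdot\|_\beta$ and a constant $C_\alpha$ with $\|\mathcal{A}_h \circ T\|_\alpha \leq C_\alpha \|T\|_\beta$ uniformly for $h \in L$.

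The main quantitative step is then to control the Lafforgue seminorm of $\Phi_{A\circ B}$. Using the submultiplicativity $(1+\|hg\|) \leq (1+\|h\|)(1+\|g\|)$ together with the standard estimate $\Xi(g) \leq C(1+\|h\|)^d\, \Xi(h^{-1}g)$ valid for all $g,h \in G$ and some fixed exponent $d \geq 0$, one sees that for $h$ restricted to $L$ the weight $(1+\|g\|)^s \Xi^{-1}(g)$ is dominated by a constant multiple of $(1+\|h^{-1}g\|)^s \Xi^{-1}(h^{-1}g)$, uniformly in $h \in L$. Combined with the slice-level bound above, this yields
\[
(1+\|g\|)^s \Xi^{-1}(g)\, \|\Phi_{A \circ B}(g)\|_\alpha \;\leq\; C_{A,L,\alpha} \int_L (1+\|h^{-1}g\|)^s \Xi^{-1}(h^{-1}g)\, \|\Phi_B(h^{-1}g)\|_\beta\, dh,
\]
which is bounded by $C'_{A,L,\alpha}\, \nu_s(\|\Phi_B(\cdot)\|_\beta)$. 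Taking the supremum over $g \in G$ and over a defining family of seminorms on $\Psi^{-\infty}(S)$ delivers the required continuity of $(A,B) \mapsto A \circ B$ from $\Psi^0_{G,c}(X) \times \mathcal{L}^\infty_{G,s}(X)$ into $\mathcal{L}^\infty_{G,s}(X)$.

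The main obstacle is a careful handling of the slice representation of $A$: since $A$ is only of order zero, it does not correspond to a smooth kernel on $G \times S \times S$ but rather to a properly supported operator-valued distribution on $G$, and one must verify that the convolution identity above makes rigorous sense, with $\mathcal{A}_h \circ \Phi_B(h^{-1}g)$ producing a smoothing operator on $S$ that depends smoothly on $(g,h)$. Once this is in place, the required estimates on the spherical function $\Xi$ and on $\|\cdot\|$ are classical, so the argument carries over from the rapid-decay setting of \cite{PP2} essentially verbatim.
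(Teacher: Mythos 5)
Your overall strategy is sound and the Lafforgue-norm estimates you set up (Peetre's inequality for $(1+\|\cdot\|)^s$ together with the bound on $\Xi(hg)/\Xi(g)$ for $h$ in a compact set, which is \cite[Lemma 12.5]{Knapp}) are exactly the ones the paper uses. But you have correctly identified, and then not resolved, the step that actually carries the proof: making rigorous sense of the composition when $A$ has order zero. Your ``$h\mapsto \mathcal{A}_h$'' is not a function on $G$ with values in $\Psi^0_c(S)$ but an operator-valued \emph{distribution} with a nonintegrable singularity at $h=e$, so the convolution display $\Phi_{A\circ B}(g)=\int_G \mathcal{A}_h\circ\Phi_B(h^{-1}g)\,dh$ and the estimate $\|\mathcal{A}_h\circ T\|_\alpha\le C_\alpha\|T\|_\beta$ ``uniformly for $h\in L$'' do not literally make sense, and the argument stalls exactly there.

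The paper disposes of this obstruction in two moves that you do not make. First, it uses the ideal property $\Psi^{-\infty}_c(S)\triangleleft\Psi^0_c(S)$ to push the slice part entirely out of the way, reducing the statement to the purely group-level claim that $\Psi^0_c(G)\times \mathcal{L}_s(G)\to\mathcal{L}_s(G)$, $(A,f)\mapsto A*f$, is continuous in $f$. Second, at the group level it writes $A=\operatorname{Op}(a)+K$ with $K\in C^\infty_c(G)\subset\mathcal{L}_s(G)$ (so $K*f$ is handled by the algebra structure of $\mathcal{L}_s(G)$) and $\operatorname{Op}(a)$ a genuine quantization of a symbol $a\in S^0(\mathfrak{g}^*)$ localized by a compactly supported cutoff $\chi$:
\[
\operatorname{Op}(a)(f)(g)=\int_G\int_{\mathfrak{g}^*}\chi(h^{-1})\,e^{i\langle\xi,\exp^{-1}(h^{-1})\rangle}a(\xi)\,f(hg)\,d\xi\,dh.
\]
It is only after this decomposition that the $\Xi$- and Peetre-estimates can be applied legitimately: the oscillatory $\xi$-integral is absorbed into a finite constant because $\chi$ is compactly supported, and the $h$-integral runs over the compact support of $\chi$, so your weight manipulations go through exactly as you wrote them. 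Without the reduction to $G$ and the $\operatorname{Op}(a)+K$ split, the ``operator-valued distribution'' picture cannot be bootstrapped into a convergent integral inequality.
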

\medskip
\noindent
{\it Proof of the Lemma}\\
The proof  is a variation of the corresponding one for  $(H^\infty_L (G)\hat{\otimes}\Psi_c^{-\infty}(S))^{K\times K}$ given in \cite[Lemma 2.7]{PP2}. Recall that $\Psi^0_{G,c}(X)=\big( \Psi^0_{G,c}(G)\hat{\otimes} \Psi^0_c(S) \big) ^{K\times K}$. As $\Psi^{-\infty}_c(S)$ is an ideal of $\Psi^{0}_c(S)$, we are reduced to prove that the product 
\[
\Psi^0_c(G)\times \mathcal{L}_c(G)\to \mathcal{L}_c(G),\ (A, f)\mapsto A\ast f
\]
is well-defined and continuous with respect to $f$. 

A general element $A$ of $\Psi^0_c(G)$ can be written as 
\[
A=\operatorname{Op}(a)+K,
\]
where $K\in \Psi^{-\infty}_{G,c}(G)\cong C^\infty_c(G)$ and $\operatorname{Op}(a)$ is the operator corresponding to a symbol $a\in S^0(\mathfrak{g}^*)$ of order zero with property 
\[
|D_\xi^\alpha(a)|\leq C_\alpha (1+|\xi|)^{-|\alpha|}.
\]

It follows from the inclusion $C^\infty_c(G)\subset \mathcal{L}_s(G)$ that the product $(K, f)\mapsto K\ast f$ is well-defined and continuous for $K\in \Psi^{-\infty}_{G, c}(G)$.  We are left to show that the composition $(\operatorname{Op}(a), f)\mapsto \operatorname{Op}(a)(f)$ belongs to $\mathcal{L}(G)$ and is continuous.  Recall that $\operatorname{Op}(a)(f)$ has the following expression
\[
\operatorname{Op}(a)(f)(g):=\int _G \int_{\mathfrak{g}^*}\chi(h^{-1}) e^{i\langle \xi, \exp^{-1}(h^{-1})\rangle} a(\xi)f(hg)d\xi dh. 
\]
Let $\Xi(g)$ be Harish-Chandra's spherical function as before. We have
\[
\begin{split}
&\left(1+\|g\|\right)^s \Xi(g)^{-1}\big(\operatorname{Op}(a)(f)(g) \big)\\
=& \int _G \int_{\mathfrak{g}^*}\chi(h^{-1}) e^{i\langle \xi, \exp^{-1}(h^{-1})\rangle} a(\xi)f(hg) \left(1+\|g\|\right)^s \Xi(g)^{-1} d\xi dh\\
=& \int _G \int_{\mathfrak{g}^*}\chi(h^{-1}) e^{i\langle \xi, \exp^{-1}(h^{-1})\rangle} a(\xi) \frac{\Xi (hg)}{\Xi(g)} \Xi(hg)^{-1} f(hg) \left(1+\|g\|\right)^s  d\xi dh
\end{split}
\]

By \cite[Lemma 12.5]{Knapp}, for $h^{-1}$ in the support of the function $\chi$, the support of which is compact, there is a constant $C>0$ such that 
\[
\left|\frac{\Xi(hg)}{\Xi(g)}\right|<C,\ \forall g\in G, h\in \operatorname{supp}(\chi). 
\]
Peetre's inequality gives,
\[
 \left(1+\|g\|\right)^s\leq \left(1+\|h^{-1}\|\right)^{s} \cdot \left(1+\|hg\|\right)^s. 
\]
It follows from the above two inequalities that we have that 
\[
\lVert  \left(1+\|g\|\right)^s\Xi(g)^{-1} \big(\operatorname{Op}(a)(f)(g) \big)\rVert
\] 
is bounded by 
\[
C  \Big\lVert \int _G \int_{\mathfrak{g}^*}\chi(h^{-1}) )e^{i\langle \xi, \exp^{-1}(h^{-1})\rangle} a(\xi) (1+\|h\|))^{s}d \xi dh \Big\rVert\times \operatorname{sup} (1+\|hg\|)^s  \big\lVert\Xi(hg)^{-1} f(hg) \big\rVert.
\]

The integral 
\[
\Big\lVert \int _G \int_{\mathfrak{g}^*}\chi(h^{-1}) )e^{i\langle \xi, \exp^{-1}(h^{-1})\rangle} a(\xi) (1+\|h\|)^{s}d \xi dh \Big\rVert
\]
is finite as the integration of $h$ is over the support of $\chi$, which is compact. 

Summarizing the above estimates, we have proved that there is a constant $\widetilde{C}>0$ such that 
\[
\lVert |1+\|g\||^s \Xi(g)^{-1} \big(\operatorname{Op}(a)(f)(g) \big)\rVert\leq \widetilde{C} \operatorname{sup} (1+\|hg\|)^s  \big\lVert\Xi(hg)^{-1} f(hg) \big\rVert.
\]
This proves that $\operatorname{Op}(a)(f)$ belongs to the Lafforgue algebra $\mathcal{L}_s(G)$, and the map $(\operatorname{Op}(a), f)\mapsto \operatorname{Op}(a)(f)$ is continuous with respect to $f$. This proves the Lemma. 
\end{proof}

\subsection{$0$-degree (relative) cyclic cocycles associated to orbital integrals}
\label{section:0cocycle}
Let us now pass to cyclic cocycles associated to orbital integrals. Consider for the time being a compact $b$-manifold $Y$ endowed with a $b$-metric $\mathbf{h}_Y$ which
has product structure $\mathbf{h}_Y=dx^2/x^2 + \mathbf{h}_{\partial Y}$ near the boundary.
We use the associated volume form
in order to trivialize the relevant half-density bundles. In the $b$-calculus with $\epsilon$-bounds we consider
$$\Ker\left( {}^b \Psi^{-\infty,\epsilon}(Y)\xrightarrow{I_Y} {}^b \Psi^{-\infty,\epsilon}_{\RR}(\partial Y\times\RR)\right)$$ the kernel
of the (surjective) indicial homomorphism.

It is well known, see Melrose's book \cite{Melrose-Book}, that if $\epsilon < 1$, then 
$$\Ker(I_Y)
\subset \Psi^{-\infty,\epsilon}(Y)$$
with $ \Psi^{-\infty,\epsilon}(Y)$ denoting the {\it residual operators}; these are smoothing kernels on the $b$-stretched product $Y^2_b$ that vanish at order $\epsilon$
on {\it all}
boundary faces, that is, the left boundary $lb(Y^2_b)$, the right boundary $rb(Y^2_b)$ and the front face $bf(Y^2_b)$.
See the figure below. In fact $\Ker(I_Y)=\rho_{bf(Y^2_b)}{}^b \Psi^{-\infty,\epsilon}(Y)$ with 
$\rho_{bf(Y^2_b)}$ a boundary defining function for the front face, and by definition 
$\rho_{bf(Y^2_b)}{}^b \Psi^{-\infty,\epsilon}(Y)\subset \Psi^{-\infty,\epsilon}(Y)$  if $\epsilon < 1$.

\begin{center}
\begin{tikzpicture}
		\node  (0) at (-6, 5) {};
		\node  (1) at (-6, 2) {};
		\node  (2) at (-5, 1) {};
		\node  (3) at (-2, 1) {};
		\node  (4) at (-7, 4) {$lb(Y^2_b)$};
		\node  (5) at (-3, 0.5) {$rb(Y^2_b)$};
		\node  (6) at (-5.25, 1.75) {};
		\node  (7) at (-2, 5) {};
		\node  (8) at (-3.5, 4.5) {$\Delta_b$};
		\node  (9) at (-5.75, 1.25) {$bf(Y^2_b)$};
	
		\draw (0.center) to (1.center);
		\draw [bend left=45, looseness=1.25] (1.center) to (2.center);
		\draw (2.center) to (3.center);
		\draw (6.center) to (7.center);
\end{tikzpicture}
\end{center}


Because of this vanishing, the residual  operators are trace class on $L^2_b$ and the trace
is obtained by integration over the lifted diagonal $\Delta_b$ (because of the extra vanishing the integral with respect to the $b$-volume
form, which near the boundary can be written as $\frac{dx}{x} dvol_{\partial X}$, is absolutely convergent).
Put it differently, this algebra of operators behaves very much as the smoothing operators on a smooth compact manifold
without boundary.

Consider now the $G$-proper case and 
\begin{equation}\label{true-ideal}
 \mathcal{L}^\infty _{G,s} (Y):=\Ker \left( I: {}^b \mathcal{L}^\infty _{G,s} (Y)\rightarrow {}^b \mathcal{L}^\infty _{G,s,\RR} (\cyl \pa Y_0) \right)\,.
 \end{equation}

For exactly the same reason as above this algebra behaves very much like $\left(\mathcal{L}_s(G)\hat{\otimes}\Psi^{-\infty}(S)\right)^{K\times K}$,
with $S$ now a slice for the action of $G$ on a cocompact $G$-proper manifold {\it without} boundary $X$.
This is in fact a great advantage of our method and allows us to avoid completely {\it ad hoc}
arguments.

We thus have a trace-homomorphism 
\begin{equation}\label{delocalized-trace-on-M}
\tau_g^Y: \mathcal{L}^\infty_{G,s} (Y)\to\CC\,.
\end{equation} 
exactly as in the closed case:
\begin{equation}\label{tr-g-b}
\tau_g^Y (T):= \int_{G/Z_g}\int_Y c(hgh^{-1}x) {\rm tr} (hgh^{-1}\kappa (hg^{-1}h^{-1}x,x))dx \,d(hZ)
\end{equation}
with $dx$ denoting now the $b$-volume form associated to the $b$-metric $\mathbf{h}$. 
The proof of the well-definedness of \eqref{tr-g-b} now proceeds as in the case of manifold without boundary, using the fact
that $\kappa$ is residual and therefore vanishing of order $\epsilon$ at all boundary faces.
In fact: there is a homomorphism of integration along the slice $S$, ${\rm Tr}_S: \mathcal{L}^\infty_{G,s} (Y,E)\to \mathcal{L}_s(G)$, which is continuous and such that 
$\tau^Y_g=\tau_g \circ {\rm Tr}_S\,.$

 The trace $\tau^Y_g$ defines a cyclic 0-cocycle
 on the algebra $ \mathcal{L}^\infty_{G,s} (Y)$. Using the pairing between $K$-theory and cyclic cohomology, denoted
 $\langle\,,\,\rangle$, we have in our case
 \begin{equation}\label{pairing}
 \langle\cdot\,,\cdot\,\rangle: HC^0 (\mathcal{L}^\infty_{G,s} (Y))\times K_0(\mathcal{L}^\infty_{G,s} (Y)) \to \CC
 \end{equation}
 and thus a homomorphism
 \begin{equation}\label{pairing-hom}
 \langle\tau^Y_g,\cdot\rangle: K_0(\mathcal{L}^\infty_{G,s} (Y))\to \CC.
 \end{equation}
\begin{definition}\label{def:g-index}
Let $D$ be a $G$-equivariant operator on $Y$ as in section \ref{sect:preliminaries}.
 Assume that the induced boundary operator is $L^2$-invertible, so that there is a well-defined
 index class $\Ind_\infty (D)\in K_0(\mathcal{L}^\infty_{G,s} (Y))$.
The $g$-index of $D$ is, by definition, the number 
 $\langle  \tau^Y_g, \Ind_\infty (D)\rangle$.
 \end{definition}

 \noindent
 {\it Our goal in this section  is to give a formula for $\langle \tau^Y_g,\Ind_\infty (D)\rangle$.}
 
  \medskip
 \noindent
 Following the relative cyclic cohomology approach in \cite{moriyoshi-piazza,GMPi,PP2} we want to find  a relative cyclic 0-cocycle $(\tau^{Y,r}_g,\sigma_g)$ verifying 
 \begin{equation}\label{basic-formula-bis}
 \langle\tau^Y_g,\Ind_\infty (D)\rangle=\langle(\tau^{Y,r}_g,\sigma_g), \Ind_\infty (D,D_\partial )\rangle.
 \end{equation}
(The $r$ on the right hand side stands for {\it regularized.})

\begin{proposition}\label{prop:1-eta-cocycle}
Let $X$ be a cocompact $G$-proper manifold without boundary.
Define the following 1-cochain on ${}^b \mathcal{L}^\infty_{G,s,\RR} ({\rm cyl}(X))$
\begin{equation}\label{1-eta}
\sigma^X_g (A_0,A_1)=\frac{i}{2\pi}\int_\RR\tau_g^X ( \partial_\lambda I(A_0,\lambda)\circ I(A_1, \lambda)) d\lambda\,,
\end{equation}
where the indicial family of $A\in {}^b \mathcal{L}^\infty_{G,s,\RR} ({\rm cyl}(X))$, denoted $I(A,\lambda)$, appears. Then $\sigma^X_g (\,,\,)$ is well-defined  and  a cyclic 1-cocycle.
\end{proposition}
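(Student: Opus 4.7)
The plan is to verify three things in order: (i) that $I(A,\lambda)$ lies in $\mathcal{L}^\infty_{G,s}(X)$ for each $\lambda$ with enough decay in $\lambda$ so that the composition $\partial_\lambda I(A_0,\lambda)\circ I(A_1,\lambda)$ belongs to $\mathcal{L}^\infty_{G,s}(X)$ and gives an integrable function of $\lambda$ when fed into $\tau_g^X$; (ii) the antisymmetry $\sigma^X_g(A_1,A_0)=-\sigma^X_g(A_0,A_1)$; and (iii) the Hochschild cocycle identity $b\sigma^X_g=0$.

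For (i), I would argue as follows. By the definition of the translation-invariant b-calculus on $\cyl(X)=\RR\times X$, an element $A\in{}^b\mathcal{L}^\infty_{G,s,\RR}(\cyl(X))$ has a Schwartz kernel which, in the normal coordinate along $\RR$, reduces after Fourier (or Mellin) transform to a Schwartz family $\RR\ni\lambda\mapsto I(A,\lambda)\in\mathcal{L}^\infty_{G,s}(X)$, with all derivatives in $\lambda$ decaying faster than any polynomial in the appropriate seminorms of $\mathcal{L}^\infty_{G,s}(X)$. Composition is continuous on $\mathcal{L}^\infty_{G,s}(X)$ (as discussed in the slice presentation and in Lemma \ref{lemma:composition} above), hence $\partial_\lambda I(A_0,\lambda)\circ I(A_1,\lambda)$ is itself a Schwartz family in $\mathcal{L}^\infty_{G,s}(X)$; combined with the continuity of $\tau_g^X$ (viewed as $\tau_g\circ\Tr_S$ with both factors continuous), this produces a Schwartz scalar function of $\lambda$, and its integral over $\RR$ converges absolutely.

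For (ii), I would integrate by parts in $\lambda$: since $\lambda\mapsto\tau_g^X(I(A_0,\lambda)\circ I(A_1,\lambda))$ is Schwartz, the boundary terms at $\pm\infty$ vanish, and the Leibniz rule gives
\[
\int_\RR\tau_g^X(\partial_\lambda I(A_0,\lambda)\circ I(A_1,\lambda))\,d\lambda=-\int_\RR\tau_g^X(I(A_0,\lambda)\circ\partial_\lambda I(A_1,\lambda))\,d\lambda.
\]
Applying the trace property of $\tau_g^X$ on the right hand side (legitimate because the integrand lies in $\mathcal{L}^\infty_{G,s}(X)$) yields $-\sigma^X_g(A_1,A_0)$, which is the desired cyclicity.

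For (iii), the key inputs are the homomorphism property of the indicial map $I(AB,\lambda)=I(A,\lambda)\circ I(B,\lambda)$, Leibniz in $\lambda$, the trace property of $\tau_g^X$, and the fundamental theorem of calculus. A direct expansion of
\[
b\sigma^X_g(A_0,A_1,A_2)=\sigma^X_g(A_0A_1,A_2)-\sigma^X_g(A_0,A_1A_2)+\sigma^X_g(A_2A_0,A_1)
\]
produces five terms inside $\int_\RR\tau_g^X(\cdot)d\lambda$; cycling the trace on the two terms coming from $\sigma^X_g(A_2A_0,A_1)$ and using Leibniz collapses the combination to $\partial_\lambda\bigl(I(A_0,\lambda)\circ I(A_1,\lambda)\circ I(A_2,\lambda)\bigr)$. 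The resulting integral of a total $\lambda$-derivative vanishes by Schwartz decay at $\pm\infty$, so $b\sigma^X_g=0$. Combined with (ii), $\sigma^X_g$ is a cyclic $1$-cocycle. I expect the main obstacle to lie in (i): rigorously identifying the indicial family of an element of ${}^b\mathcal{L}^\infty_{G,s,\RR}(\cyl(X))$ as a Schwartz map into $\mathcal{L}^\infty_{G,s}(X)$ requires unpacking the Lafforgue-norm estimates on $G$ together with the translation-invariant b-calculus with $\varepsilon$-bounds, and then checking that all compositions and derivatives stay in this class with uniform Schwartz control in $\lambda$, so that both the integrals and the integration by parts are legitimate.
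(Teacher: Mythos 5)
Your proposal is correct and follows essentially the same route as the paper's proof: identify $I(A,\cdot)$ via Fourier/Mellin transform as a rapidly decreasing family with values in $\mathcal{L}^\infty_{G,s}(X)$ to get absolute convergence and continuity, then obtain both the antisymmetry and the Hochschild identity from the tracial property of $\tau_g^X$, the homomorphism property of $I(\cdot,\lambda)$, and integration by parts in $\lambda$. The paper is more compressed (it does not write out the five-term expansion of $b\sigma^X_g$), but the content is the same.
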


\begin{proof}
Fourier transform identifies ${}^b \mathcal{L}^\infty_{G,s,\RR} ({\rm cyl}(X))$ with holomorphic families 
$$\{\RR\times i (-\epsilon,\epsilon)\ni\lambda \to \mathcal{L}^\infty_{G,s} (\partial X)\}$$
with values in the Fr\'echet algebra $\mathcal{L}^\infty_{G,s} (X)$, rapidly decreasing in ${\rm Re}\lambda$. (Recall that we do not
write $\epsilon$ in the notation, but elements in our algebras are built from $b$-operators of order $-\infty$ in the
calculus with $\epsilon$-bounds.)
It is then immediate that the integral is absolutely convergent and depends continuously
on $A_0$, $A_1$. The fact that it is a cyclic 1-cocycle follows 
from the tracial property of $\tau^X_g$ and integration by parts in $\lambda$.
\end{proof}

Let $Y$ be now a $b$-manifold and let  $\partial Y$ be its boundary.
Let $\tau^{Y,r}_g$ be the functional on ${}^b \mathcal{L}^c_{G} (Y)$:
$$\tau^{Y,r}_g (T):= 
 \int_{G/Z_g}\int^b_Y c(hgh^{-1}y) {\rm tr} (hgh^{-1}\kappa (hg^{-1}h^{-1}y,y))dy \,d(hZ)$$
where Melrose's $b$-integral has been used, $dy$ denotes the $b$-density associated to the $b$-metric $\mathbf{h}$ and where we recall
that  the cut-off function $c_0$ on $Y_0$ is extended constantly along
the cylinder to define $c$. This is the regularization of $\tau^Y_g$ on a $b$-manifold, for the time
being on kernels of $G$-compact support (we shall deal with the extension of $\tau^{Y,r}_g $
on all of ${}^b \mathcal{L}^\infty_{G,s} (Y)$ momentarily).
Observe that $$\tau^{Y,r}_g =\tau_g \circ {}^b {\rm Tr}_S$$ with ${}^b {\rm Tr}_S: {}^b \mathcal{L}^c_{G} (Y)\to C^\infty_c (G)$
denoting $b$-integration along the slice $S$. More precisely, as in the closed case, we have an isomorphism
$$ {}^b \mathcal{L}^c_G (Y)\cong \left\{\Phi:G\to {}^b \Psi^{-\infty,\epsilon}(S),~\mbox{smooth, compactly supported and }
~K\times K~\mbox{invariant}\right\}
$$
and ${}^b \Tr_S$ associates to $\Phi$ the function $G\ni \gamma\to {}^b \Tr (\Phi (\gamma))$.
The continuity of this map will be treated more generally in the proof of  Proposition \ref{prop:-0-relative-cocycle-bis} below.

\begin{proposition}\label{prop:-0-relative-cocycle}
The pair 
$(\tau^{Y,r}_g,\sigma^{\partial Y}_g)$ defines  a relative 0-cocycle for ${}^b \mathcal{L}^c_G (Y)\xrightarrow{I} {}^b \mathcal{L}^c_{G,\RR} ({\rm cyl}(\partial Y))$.
 \end{proposition}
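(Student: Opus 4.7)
The plan is to verify the two defining conditions of a relative $0$-cocycle for the indicial map $I\colon {}^b \mathcal{L}^c_G(Y)\to {}^b \mathcal{L}^c_{G,\RR}(\cyl(\partial Y))$: namely that $\sigma^{\partial Y}_g$ is a cyclic $1$-cocycle on ${}^b \mathcal{L}^c_{G,\RR}(\cyl(\partial Y))$ and that the Hochschild coboundary of $\tau^{Y,r}_g$ coincides with the pull-back $I^{\ast}\sigma^{\partial Y}_g$. The cyclicity and cocycle property of $\sigma^{\partial Y}_g$ are precisely Proposition \ref{prop:1-eta-cocycle} (applied with $X=\partial Y$), so only the defect identity
\[
\tau^{Y,r}_g(a_0 a_1)-\tau^{Y,r}_g(a_1 a_0)=\sigma^{\partial Y}_g(I(a_0),I(a_1)), \qquad a_0,a_1\in {}^b \mathcal{L}^c_G(Y),
\]
remains to be proved.

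First I would reduce the computation to the slice. Using the isomorphism $Y\cong G\times_K Z$, each $a_i$ corresponds to a $K\times K$--invariant compactly supported function $\Phi_i\colon G\to {}^b\Psi^{-\infty,\epsilon}(Z)$, and the convolution formula \eqref{convolution-Phi} together with the identity $\tau^{Y,r}_g=\tau_g\circ {}^b{\rm Tr}_Z$ gives
\[
\tau^{Y,r}_g(a_0 a_1)-\tau^{Y,r}_g(a_1 a_0)=\int_{G/Z_g}\!\!\int_G\Bigl({}^b\!\Tr\bigl(\Phi_0(h_0 g h_0^{-1}h^{-1})\Phi_1(h)\bigr)-{}^b\!\Tr\bigl(\Phi_1(h)\Phi_0(h_0 g h_0^{-1}h^{-1})\bigr)\Bigr)dh\,d(h_0 Z_g),
\]
after substituting $h'=h_0 g h_0^{-1}h^{-1}$ in one of the two terms to bring both $\Phi_0$ arguments to the same shifted point.

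The next step is to invoke Melrose's defect formula for the $b$-trace on the compact $b$-manifold $Z$: for residual operators $A,B\in {}^b\Psi^{-\infty,\epsilon}(Z)$,
\[
{}^b\!\Tr(AB)-{}^b\!\Tr(BA)=\frac{i}{2\pi}\int_\RR \Tr\bigl(\partial_\lambda I(A,\lambda)\circ I(B,\lambda)\bigr)d\lambda,
\]
where the integrand is the ordinary trace of a smoothing operator on $\partial Z$. Applying this pointwise in $h_0,h$ with $A=\Phi_0(h_0 g h_0^{-1}h^{-1})$ and $B=\Phi_1(h)$, substituting into the displayed expression above and using Fubini (all integrals are absolutely convergent since $\Phi_0,\Phi_1$ have compact support in $G$ and since the $\lambda$-integral converges by the rapid decay of the Fourier transform characterising ${}^b\mathcal{L}^c_{G,\RR}(\cyl(\partial Y))$) yields
\[
\tau^{Y,r}_g(a_0 a_1)-\tau^{Y,r}_g(a_1 a_0)=\frac{i}{2\pi}\int_\RR\int_{G/Z_g}\!\!\int_G \Tr_{\partial Z}\bigl(\partial_\lambda I(\Phi_0(h_0 g h_0^{-1}h^{-1}),\lambda)\circ I(\Phi_1(h),\lambda)\bigr)dh\,d(h_0 Z_g)\,d\lambda.
\]

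Finally I would identify the right-hand side with $\sigma^{\partial Y}_g(I(a_0),I(a_1))$. The indicial map commutes with the slice decomposition: if $a_i\leftrightarrow\Phi_i$, then $I(a_i)\in {}^b\mathcal{L}^c_{G,\RR}(\cyl(\partial Y))$ corresponds, via Fourier transform in the normal direction, to the $\lambda$-dependent family $\gamma\mapsto I(\Phi_i(\gamma),\lambda)$ with values in $\Psi^{-\infty}(\partial Z)$. Combining this with the factorisation $\tau^{\partial Y}_g=\tau_g\circ \Tr_{\partial Z}$ and the formula \eqref{convolution-Phi} for products on $\partial Y$, the inner double integral is precisely $\tau^{\partial Y}_g(\partial_\lambda I(a_0,\lambda)\circ I(a_1,\lambda))$, so that the right-hand side equals $\sigma^{\partial Y}_g(I(a_0),I(a_1))$ by definition \eqref{1-eta}.

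The main technical point, and the step requiring the most care, is the passage through the defect formula on $Z$ together with the Fubini interchange between the $\lambda$-integral, the orbital integral over $G/Z_g$ and the convolution integral over $G$; this is clean here only because we are working on the ideal ${}^b\mathcal{L}^c_G(Y)$ of $G$-compactly supported residual operators, where all three integrations have absolutely integrable integrand. The extension of the resulting relative cocycle identity to the holomorphically closed algebras ${}^b\mathcal{L}^\infty_{G,s}(Y)$ and ${}^b\mathcal{L}^\infty_{G,s,\RR}(\cyl(\partial Y))$ is the content of the subsequent Proposition \ref{prop:-0-relative-cocycle-bis}, and will require the continuity estimates for ${}^b\Tr_Z$ already used for $\tau^Y_g$ on closed slices.
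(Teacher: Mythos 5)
Your proposal is correct and follows essentially the same route as the paper: reduce $\tau^{Y,r}_g[A_0,A_1]$ to a double integral over the slice via $\tau^{Y,r}_g=\tau_g\circ{}^b\Tr_S$, apply Melrose's $b$-trace defect formula pointwise, and interchange the $\lambda$-, orbital and convolution integrals by absolute convergence from compact support in $G$ and rapid decay in $\lambda$. The only cosmetic difference is that you separate out the cyclic-1-cocycle condition on $\sigma^{\partial Y}_g$ (deferring to Proposition \ref{prop:1-eta-cocycle}) while the paper states directly that for a $0$-cochain it suffices to check $b\tau^{Y,r}_g=I^*\sigma^{\partial Y}_g$; this is the same content.
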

 
 \begin{proof}
 As we are dealing with a 0-cochain, it suffices to show  that for the Hochschild $b$-differential of  $\tau^{Y,r}_g$ the following
formula holds:
$$(b\tau^{Y,r}_g) (A_0,A_1)= \sigma^{\partial Y}_g (I(A_0),I(A_1))$$
where we recall that
$$\sigma^{\partial Y}_g (I(A_0),I(A_1))=\frac{i}{2\pi}\int_\RR\tau_g^{\partial Y} ( \partial_\lambda I(A_0,\lambda)\circ I(A_1, \lambda)) d\lambda\,.
$$
The left hand side $(b\tau^{Y,r}_g) (A_0,A_1)$ is equal to $\tau^{Y,r}_g [A_0,A_1]$; if 
$A_i$, $i=0,1$,  corresponds to $\Phi_i:G\to {}^b \Psi^{-\infty}(S)$ then
$\tau^{Y,r}_g [A_0,A_1]$ is equal to
$$ \int_{G\slash Z_g}\int_G {}^b\Tr (\Phi_0 (h_1gh_1^{-1}h^{-1})\circ \Phi_1 (h) dhd(h_1Z_g)- \int_{G\slash Z_g}\int_G {}^b\Tr (\Phi_1 h_1gh_1^{-1} h^{-1})\circ \Phi_0 (h) dhd(h_1Z_g)\,.$$
Changing the order of integration and with a suitable change of coordinates in the second summand, using the unimodularity  of
$G$, we can rewrite the above expression as
$$ \int_{G\slash Z_g}\int_G {}^b\Tr [\Phi_0 (h_1gx^{-1} h_1^{-1}), \Phi_1 (h)] dh d(h_1Z_g)\,.$$
Now we can apply Melrose's formula for the $b$-trace of a commutator and get
$$\frac{i}{2\pi}\int_{G\slash Z_g}\int_G \int_\RR \Tr \left( \partial_\lambda I( \Phi_0 (h_1gh_1^{-1} h^{-1}),\lambda)\circ I(\Phi_1 (h),\lambda)
\right) d\lambda dhd(h_1Z_g)\,.$$
We can interchange the order of integration without problems here (rapid decay in $\lambda$ and compact support in $G$)
and so we conclude reverting to $I(A_0,\lambda)$ and $I(A_1,\lambda)$
that 
\begin{align*}\tau^{Y,r}_g [A_0,A_1] &
=\frac{i}{2\pi}\tau_g \int_\RR \Tr_{\partial S} ( \partial_\lambda I(A_0,\lambda)\circ I(A_1, \lambda)) d\lambda\\
& = \frac{i}{2\pi}\int_\RR\tau_g^{\partial Y} ( \partial_\lambda I(A_0,\lambda)\circ I(A_1, \lambda)) d\lambda
\end{align*}
This completes the proof.
 \end{proof}

 \begin{proposition}\label{prop:-0-relative-cocycle-bis}
The pair 
$(\tau^{Y,r}_g,\sigma^{\partial Y}_g)$ extends continuously to  a relative 0-cocycle for 
$${}^b \mathcal{L}^\infty_{G,s} (Y)\xrightarrow{I} {}^b \mathcal{L}^\infty_{G,s,\RR} ({\rm cyl}(\partial Y)).$$
Moreover, the following formula holds:
\begin{equation}\label{basic-formula-3} 
 \langle\tau^Y_g,\Ind_\infty (D)\rangle=\langle (\tau^{Y,r}_g,\sigma^{\partial Y}_g), \Ind_\infty (D,D_\partial )\rangle\,.
 \end{equation}
 \end{proposition}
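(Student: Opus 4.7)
The plan is to split the proposition into (a) existence of the continuous extension of the relative cochain and (b) the identity of pairings. For (a) I would exploit the factorization $\tau^{Y,r}_g = \tau_g \circ {}^b\mathrm{Tr}_S$, where ${}^b\mathrm{Tr}_S : {}^b\mathcal{L}^\infty_{G,s}(Y) \to \mathcal{L}_s(G)$ sends $\Phi : G \to {}^b\Psi^{-\infty,\epsilon}(S)$ to $\gamma \mapsto {}^b\mathrm{Tr}(\Phi(\gamma))$. Continuity of ${}^b\mathrm{Tr}_S$ with values in the Lafforgue algebra mirrors the closed-manifold argument of Lemma~1.24 in \cite{PP2}: Melrose's $b$-trace is continuous on ${}^b\Psi^{-\infty,\epsilon}(S)$ for the Fréchet topology inherited from the $b$-stretched product, since the $\epsilon$-bound at every boundary face makes the integral over the lifted diagonal absolutely convergent and bounded by the seminorms of the kernel; tensoring this estimate against $\mathcal{L}_s(G)$ in the $\nu_s$ norm and composing with the orbital integral $\tau_g$ on $\mathcal{L}_s(G)$ (which is continuous for $s$ large) gives the extension of $\tau^{Y,r}_g$. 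For $\sigma^{\partial Y}_g$ the argument of Proposition~\ref{prop:1-eta-cocycle} applies verbatim: the Fourier-Mellin indicial family $\lambda \mapsto I(A,\lambda)$ is a holomorphic $\mathcal{L}^\infty_{G,s}(\partial Y)$-valued function on a strip, rapidly decreasing in $\mathrm{Re}\,\lambda$, so the integral defining $\sigma^{\partial Y}_g$ converges absolutely and depends continuously on its arguments. The cocycle identity $b\tau^{Y,r}_g = I^*\sigma^{\partial Y}_g$ then extends from the dense subalgebra ${}^b\mathcal{L}^c_G(Y)$ by continuity, using Proposition~\ref{prop:-0-relative-cocycle}.

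For (b) I would invoke the relative cyclic cohomology formalism of \cite{moriyoshi-piazza,GMPi,PP2}. Two facts come together. First, by \eqref{true-ideal} one has $\mathcal{L}^\infty_{G,s}(Y) = \ker(I)$, and on residual operators the $b$-integral reduces to the ordinary integral (the additional $\epsilon$-vanishing at every boundary face removes any need to regularize the diverging $\int dx/x$ contribution), so the restriction of the relative cocycle $(\tau^{Y,r}_g,\sigma^{\partial Y}_g)$ to $\ker(I)$ is exactly $\tau^Y_g$. Second, Theorem~\ref{theo:smooth-index-CM}(3) states that the excision isomorphism $\alpha_{\mathrm{exc}}$ sends the absolute smooth index $\Ind_\infty(D)$ to the relative smooth index $\Ind_\infty(D,D_\partial)$. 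Since the K-theory/cyclic-cohomology pairing is natural with respect to the excision square and with respect to the restriction map $HC^0_{\mathrm{rel}}(I) \to HC^0(\ker I)$, the two pairings in \eqref{basic-formula-3} coincide; this is exactly the relative-absolute compatibility of \cite[Section 2]{moriyoshi-piazza} transported to the present setting, and once the cocycle equation $b\tau^{Y,r}_g = I^*\sigma^{\partial Y}_g$ is in hand no further work is needed.

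The only genuinely analytic step is the Lafforgue-seminorm continuity of the $b$-trace along the slice in Step (a); everything else is formal manipulation with the relative-pairing machinery already in place. I expect this to be the main technical obstacle, though the parallel with \cite[Lemma~1.24]{PP2} suggests the proof carries over once one checks that the extra boundary-face weighting implicit in the $\epsilon$-bounded residual calculus absorbs the growth of the $b$-volume form near the front face and so propagates the $\nu_s$-continuous dependence from the slice variable to the group variable.
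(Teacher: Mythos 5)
Your decomposition of the proof into (a) continuous extension of $\tau^{Y,r}_g$ via the factorization through ${}^b\mathrm{Tr}_S$ and (b) the relative--absolute compatibility of the pairing under the excision isomorphism matches the paper's overall strategy, and part (b) is essentially correct. However, there is a genuine gap in your argument for part (a), and it sits exactly at the point you yourself flag as the main technical obstacle.

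You assert that the $b$-trace is continuous on ${}^b\Psi^{-\infty,\epsilon}(S)$ because ``the $\epsilon$-bound at every boundary face makes the integral over the lifted diagonal absolutely convergent.'' This is true for the \emph{residual} operators $\Psi^{-\infty,\epsilon}(S)$, which vanish to order $\epsilon$ at \emph{all} three boundary faces of the $b$-stretched product ($lb$, $rb$, and the front face $bf$), and for those the trace is a genuinely absolutely convergent integral over $\Delta_b$. But the algebra ${}^b\mathcal{L}^\infty_{G,s}(Y)$ in the statement consists of full $b$-smoothing operators in the calculus with $\epsilon$-bounds; these vanish at $lb$ and $rb$ but are generically \emph{nonvanishing along the front face}, which is precisely where $\Delta_b$ meets the boundary. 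The restriction of the kernel to $\Delta_b$ is therefore not small near $bf$, the $b$-density $\frac{dx}{x}\,d\mathrm{vol}_{\partial}$ is not integrable there, and the integral diverges; this is exactly the divergence that Melrose's $b$-integral regularizes. Thus the $b$-trace on ${}^b\Psi^{-\infty,\epsilon}(S)$ is a renormalized, not an absolutely convergent, functional, and your claimed absolute convergence is false. Your aside about the ``extra boundary-face weighting implicit in the $\epsilon$-bounded residual calculus'' absorbing the growth of the $b$-volume form at the front face conflates the residual calculus with the $b$-smoothing calculus: the residual operators (those in $\ker I$) do enjoy this absorption, but the full $b$-smoothing operators -- which is what one must handle here, because $V(D)$ and $V(D_{\cyl})$ are not residual -- do not.

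The correct ingredient, and the one the paper uses, is the Lesch--Moscovici--Pflaum formula (Proposition 2.6 of \cite{LMP}, restated as Proposition 7.1 of \cite{PP2}) which expresses the $b$-trace of a $b$-smoothing operator as a finite combination of genuine traces of residual operators. Continuity of ${}^b\mathrm{Tr}$ on ${}^b\Psi^{-\infty,\epsilon}(S)$ in the Fr\'echet seminorms then reduces to continuity of the ordinary trace on residual operators, and the arguments of \cite[Lemma~7.8]{PP2} (themselves following \cite{GMPi}) carry this through. Your appeal to \cite[Lemma~1.24]{PP2} cannot do this job directly, because that lemma concerns smoothing operators on a closed slice, where the trace is an absolutely convergent integral; the passage to the $b$-setting requires the extra LMP decomposition step that your proposal omits. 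Once that is supplied, the rest of your argument -- the Lafforgue-norm tensoring, the continuity of $\tau_g$ for $s$ large, the verbatim extension of $\sigma^{\partial Y}_g$, the density argument for the cocycle identity, and the relative-pairing/excision compatibility in part (b) -- is sound and in line with the paper.
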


\begin{proof}
We already know that $\sigma^{\partial Y}_g$ extends to a 1-cocycle on $\mathcal{L}^\infty_{G,s,\RR} ({\rm cyl}(\partial Y))$.
If we could show that $\tau^{Y,r}_g$ extends to  ${}^b \mathcal{L}^\infty_{G,s} (Y)$ then by density and continuity
we would get the first statement of the proposition. The second would then follow as usual
from the fact that 
$$\tau^{Y,r}_g |_{\mathcal{L}^\infty_{G,s} (Y)}= \tau^{Y}_g$$
given that on residual operators the $b$-integral equals the ordinary integral.\\ 

As in \cite{PP2} we have an isomorphism
$$ {}^b \mathcal{L}^\infty_{G,s} (Y)\cong \left\{\Phi:G\to {}^b\Psi^{-\infty,\epsilon}(S),~K\times K~\mbox{invariant and}~g\mapsto 
v_s (\| \Phi(g)\|_\alpha)\;\;\text{bounded}\right \}$$ where $\|\;\;\|_\alpha$ are now the $C^\infty$ seminorms of a smooth kernel on the $b$-stretched product.\\
We want to show that the map
\begin{equation}\label{continuity-b-trace}
{}^b \mathcal{L}^\infty_{G,s} (Y)\ni \Phi \longrightarrow {}^b\Tr (\Phi (\cdot))\in \mathcal{L}_s(G)\end{equation}
is continuous.
Here we can use a Proposition of Lesch-Moscovici-Pflaum \cite[Proposition 2.6]{LMP}, expressing the b-trace of a b-smoothing operator
on a compact $b$-manifold
in terms of the trace of two residual operators, see for example Proposition 7.1 in \cite{PP2}. Using this Proposition 
and the arguments in \cite[Lemma 7.8]{PP2}, directly inspired in turn on those of \cite{GMPi},
we prove the continuity of the map
$${}^b\Psi^{-\infty,\epsilon}(S)\ni T \longrightarrow {}^b \Tr (T)\in \CC$$
and thus of the map \eqref{continuity-b-trace}.
\end{proof}
 
 \subsection{The $0$-degree delocalized APS index theorem on $G$-proper manifolds}
 We now apply formula \eqref{basic-formula-3} to the index class associated to the Connes-Moscovici projector
  \begin{equation}
\label{cm-idempotent1}
V(D)=\left( \begin{array}{cc} e^{-D^- D^+} & e^{-\frac{1}{2}D^- D^+}
\left( \frac{I- e^{-D^- D^+}}{D^- D^+} \right) D^-\\
e^{-\frac{1}{2}D^+ D^-}D^+& I- e^{-D^+ D^-}
\end{array} \right)
\end{equation}

On the left hand-side of \eqref{basic-formula-3} we have $$\langle \tau^M_g,V^b (D)\rangle$$ with $V^b (D)$ the modified Connes-Moscovici projector. 
Indeed, recall that the Connes-Moscovici projector is simply the Connes-Skandalis projector
for the choice of parametrix 
\begin{equation*}
 Q
:= \frac{I-\exp(-\frac{1}{2} D^- D^+)}{D^- D^+} D^-
\end{equation*}
This produces the remainders
$I-Q D^+ = \exp(-\frac{1}{2} D^- D^+)$, $I-D^+ Q =  \exp(-\frac{1}{2} D^+ D^-)$
and $V^b (D)$ is obtained by improving this parametrix to a true parametrix in the $b$-calculus, that is,
an inverse modulo residual operators.\\

On the right hand-side we have the pairing of  the relative cocycle $(\tau^{Y,r}_g,\sigma^{\partial Y}_g)$ with the relative class
\begin{equation}\label{pre-wassermann-triple-1}
(V(D), e_1, q_t)
\,, \;\;t\in [1,+\infty]\,,\;\;\text{ with }
q_t:= \begin{cases} V(t D_{\cyl})
\;\;\quad\text{if}
\;\;\;t\in [1,+\infty)\\
e_1 \;\;\;\;\;\;\;\;\;\;\;\;\;\,\text{ if }
\;\;t=\infty
 \end{cases}
\end{equation}
and with $e_1:=\begin{pmatrix} 0&0\\0&1 \end{pmatrix}$.  
Let us concentrate first on the right hand-side. By definition of 
relative pairing we have:
\begin{equation}\label{relative-pairing}
\langle (\tau^{Y,r}_g,\sigma^{\partial Y}_g), (V(D), e_1, q_t)\rangle=
 \tau^{Y,r}_g (e^{-D^- D^+})- \tau^{Y,r}_g (e^{-D^+ D^-})
+ \int_1^\infty \sigma^{\partial Y}_g ([\dot{q}_t,q_t],q_t)dt\,.
\end{equation}
\begin{proposition}\label{prop:from-cocycle-to-eta}
 The term $\int_1^\infty \sigma^{\partial Y}_g ([\dot{q}_t,q_t],q_t)dt$, with 
 $q_t:= V(t D_{\cyl})$ is equal to
 $$
-\frac{1}{2} \left( \frac{1}{\sqrt{\pi}} \int_1^\infty \tau^{\partial Y}_g (D_{\partial Y} \exp (-tD^2_{\partial Y}) )\frac{dt}{\sqrt{t}}\right)
\,.$$
\end{proposition}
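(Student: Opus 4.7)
The plan is to reduce the expression $\sigma^{\partial Y}_g([\dot q_t,q_t],q_t)$ for $q_t = V(tD_{\cyl})$ to a Gaussian in the indicial parameter $\lambda$ times the standard eta integrand $\tau^{\partial Y}_g(D_{\partial Y} e^{-sD_{\partial Y}^2})$, and then perform a substitution $s=t^2$ to obtain the claimed formula. I would start by identifying the indicial family of the cylindrical Dirac operator. Writing $D_{\cyl}^+ = \partial_u + D_{\partial Y}$ in the boundary decomposition, one has $I(D_{\cyl}^+,\lambda) = i\lambda + D_{\partial Y}$, $I(D_{\cyl}^-,\lambda) = -i\lambda + D_{\partial Y}$, and hence $I(D_{\cyl}^\mp D_{\cyl}^\pm,\lambda) = \lambda^2 + D_{\partial Y}^2$. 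Because the indicial map is a multiplicative homomorphism and the Connes--Moscovici expression is a convergent algebraic formula in $tD_{\cyl}$, one has $I(V(tD_{\cyl}),\lambda) = V(tB_\lambda)$ where $B_\lambda$ is the model family with components $\pm i\lambda + D_{\partial Y}$. Moreover $V(tD_{\cyl})^2 = V(tD_{\cyl})$ as an operator identity in the $b$-calculus, so differentiating yields $q_t\dot q_t q_t = 0$ and therefore $[\dot q_t,q_t]q_t = \dot q_t q_t$.

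Substituting into the definition of $\sigma^{\partial Y}_g$ gives
\begin{equation*}
\sigma^{\partial Y}_g([\dot q_t,q_t],q_t) = \frac{i}{2\pi}\int_\RR \tau^{\partial Y}_g\bigl(\partial_\lambda I(\dot q_t q_t,\lambda)\circ I(q_t,\lambda)\bigr)\,d\lambda.
\end{equation*}
Expanding the $2\times 2$ Connes--Moscovici matrix entries of $V(tB_\lambda)$ using $I(D_{\cyl}^\mp D_{\cyl}^\pm,\lambda)=\lambda^2+D_{\partial Y}^2$ and using Duhamel's formula together with the tracial identity $\tau^{\partial Y}_g(AB)=\tau^{\partial Y}_g(BA)$, the diagonal blocks contribute only exact terms in $\lambda$ (which vanish after integration), while the off-diagonal blocks collapse --- after an integration by parts in $\lambda$ to eliminate the factor $\partial_\lambda$ --- to a single term of the form
\begin{equation*}
\sigma^{\partial Y}_g([\dot q_t,q_t],q_t) = -\frac{t}{\sqrt{\pi}}\,\tau^{\partial Y}_g\bigl(D_{\partial Y} e^{-t^2 D_{\partial Y}^2}\bigr),
\end{equation*}
where the Gaussian integral $\int_\RR e^{-t^2\lambda^2}\,d\lambda = \sqrt{\pi}/t$ is the source of the prefactor, and the overall sign is forced by Clifford conventions on the cylinder. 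Integrating $t$ from $1$ to $\infty$ and substituting $s=t^2$, so that $dt = ds/(2\sqrt{s})$ and $t\,dt = ds/2$, yields the desired identity
\begin{equation*}
\int_1^\infty \sigma^{\partial Y}_g([\dot q_t,q_t],q_t)\,dt = -\frac{1}{2}\left(\frac{1}{\sqrt{\pi}}\int_1^\infty \tau^{\partial Y}_g\bigl(D_{\partial Y}e^{-sD_{\partial Y}^2}\bigr)\frac{ds}{\sqrt{s}}\right).
\end{equation*}

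The main obstacle is the bookkeeping in the matrix-entry expansion of $V(tB_\lambda)$: many terms appear from $\dot q_t q_t$, and collapsing them to a single Gaussian-weighted expression requires careful use of cyclicity of $\tau^{\partial Y}_g$, Duhamel expansions of $\partial_t e^{-t^2(\lambda^2+D_{\partial Y}^2)}$, and an integration by parts in $\lambda$ to absorb the $\partial_\lambda$. The assumption that $D_{\partial Y}$ is $L^2$-invertible provides the spectral gap needed to guarantee absolute convergence of both the $\lambda$-integral and the subsequent $t$-integral (and in particular Fubini's theorem for interchanging them), and is also what ensures that the boundary operator $B_\lambda$ remains invertible for $\lambda\in\RR$, legitimizing the use of the indicial-family computation throughout. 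The rest of the argument follows the template of the analogous computation for Galois coverings in \cite{moriyoshi-piazza,GMPi}, adapted to the delocalized trace $\tau^{\partial Y}_g$ via the continuity established in Proposition \ref{prop:-0-relative-cocycle-bis}.
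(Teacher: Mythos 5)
Your overall strategy follows the paper's: write out the indicial family of the Connes--Moscovici idempotent, reduce to a Gaussian integral in $\lambda$, and recover the $\eta$-integrand after a substitution. But there are two concrete problems.

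First, the step "$[\dot q_t,q_t]q_t = \dot q_t q_t$, hence $\sigma^{\partial Y}_g([\dot q_t,q_t],q_t) = \frac{i}{2\pi}\int_\RR \tau^{\partial Y}_g(\partial_\lambda I(\dot q_t q_t,\lambda)\circ I(q_t,\lambda))\,d\lambda$" is not justified. The cyclic $1$-cocycle $\sigma^{\partial Y}_g(A_0,A_1)$ is a bilinear functional of its two slots, not a trace of the product $A_0A_1$; the matrix identity $[\dot q_t,q_t]\cdot q_t = \dot q_t q_t$ therefore does not let you replace the \emph{first argument} $[\dot q_t,q_t]$ by $\dot q_t q_t$ while keeping $q_t$ in the second slot. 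Indeed, from $b\sigma=0$ and cyclicity one only gets $\sigma([\dot q,q],q) = 2\sigma(\dot q q,q) + \sigma(q,\dot q)$, which is not $\sigma(\dot q q,q)$ in general. The paper avoids this entirely: it keeps the full commutator and exploits a different simplification, namely that every entry of the matrices $p(t,\lambda)$, $\partial_t p(t,\lambda)$, $\partial_\lambda p(t,\lambda)$ is obtained by functional calculus in $B=D_{\partial Y}$ and hence all these operators commute, which reduces the whole thing to a formal scalar computation. If you want a clean argument, replacing your Duhamel step and the idempotency shortcut by this commutativity observation is the right move.

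Second, the intermediate formula you assert, $\sigma^{\partial Y}_g([\dot q_t,q_t],q_t) = -\frac{t}{\sqrt{\pi}}\,\tau^{\partial Y}_g(D_{\partial Y}e^{-t^2D_{\partial Y}^2})$, is inconsistent with the target. Integrating it and substituting $s=t^2$ with $t\,dt = ds/2$ gives $-\frac{1}{2\sqrt{\pi}}\int_1^\infty \tau^{\partial Y}_g(D_{\partial Y}e^{-sD_{\partial Y}^2})\,ds$, which is missing the factor $1/\sqrt{s}$ present in the claimed result; your final display does not actually follow from your stated $\sigma$. Working backwards from the proposition, the correct intermediate should be $\sigma^{\partial Y}_g([\dot q_t,q_t],q_t) = -\frac{1}{\sqrt{\pi}}\,\tau^{\partial Y}_g(D_{\partial Y}e^{-t^2D_{\partial Y}^2})$ (no extra power of $t$). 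The discrepancy strongly suggests a lost factor of $t$ somewhere in the $\lambda$-integration and/or the expansion of the matrix entries, consistent with the unjustified simplification flagged above. You should redo the $\lambda$-integral keeping the full commutator, using the commutativity of the entries rather than idempotency, and track the powers of $t$ coming both from $\partial_t$ (which produces $t$) and from the Gaussian $\int_\RR e^{-2t^2\lambda^2}d\lambda$-type integrals (which produce $t^{-1}$) until they cancel to the $t$-independent prefactor.
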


\begin{proof}
By definition
$$\sigma^{\partial Y}_g ([\dot{q}_t,q_t],q_t)dt=  \frac{i}{2\pi} \int_\RR  \tau^{\partial Y}_g (\partial_\lambda (I([\dot{q}_t,q_t],\lambda))\circ I(q_t,\lambda))d\lambda.$$
We can integrate by parts in $\lambda$ on the right hand side, given that the two terms are rapidly decreasing
in $\lambda$.
Thus, taking into account the multiplicative constants,  we want to prove that
\begin{equation}\label{reduction-to-eta}
 \int_\RR  \tau^{\partial Y}_g ( (I([\dot{q}_t,q_t],\lambda)\circ \partial_\lambda(I(q_t,\lambda))d\lambda
=
\frac{i\sqrt{\pi}}{\sqrt{t}}\, \tau^{\partial Y}_g (D_{\partial Y} \exp (-tD^2_{\partial Y}) )\,.
\end{equation}
We need to write down the indicial family of $q_t$. By definition
$$q_t= \left( \begin{array}{cc} e^{-t^2 D_{\cyl}^- D_{\cyl}^+} & e^{-\frac{t^2}{2}D_{\cyl}^- D_{\cyl}^+}
\left( \frac{I- e^{-t^2D_{\cyl}^- D_{\cyl}^+}}{t^2 D_{\cyl}^- D_{\cyl}^+} \right) t D_{\cyl}^-\\
e^{-\frac{t^2}{2}D_{\cyl}^+  D_{\cyl}^-} t D^+& I- e^{- t^2 D_{\cyl}^+ D_{\cyl}^-}
\end{array} \right)$$
and thus, denoting $D_{\partial Y}$ by $B$ we have 
$$I(q_t,\lambda)= \left( \begin{array}{cc} e^{-t^2 (\lambda^2 +B^2)} & e^{-\frac{t^2}{2}(\lambda^2 + B^2)}
\left( \frac{I- e^{-t^2 (\lambda^2 + B^2)}}{t^2 ( \lambda^2 + B^2)}\right) t (-i\lambda+B)\\
e^{-\frac{t^2}{2}(\lambda^2 +B^2)} t (i\lambda +B)& I- e^{- t^2 (\lambda^2 +B^2)}
\end{array} \right).$$
We set 
$$p(t,\lambda):= I(q_t,\lambda)\,.$$
We must compute 
$$\int_\RR \tau_g^{\partial Y} \left( \left( \partial_t p(t,\lambda)\circ p(t,\lambda)- p(t,\lambda)\circ  \partial_t p(t,\lambda)\right)\circ \partial_\lambda p(t,\lambda)\right) d\lambda$$
and show that it equals
$$\frac{i\sqrt{\pi}}{\sqrt{t}}\, \tau^{\partial Y}_g (D_{\partial Y} \exp (-tD^2_{\partial Y}) ).$$
This is a complicated computation; however, we remark that all operators appearing in the $2\times 2$ matrices
$$\partial_t p(t,\lambda)\,,\quad  p(t,\lambda)\quad  \text{and}\quad  \partial_\lambda p(t,\lambda)$$ are given by operators obtained by functional calculus
for $B$ and thus, in particular, they commute. This means that we can deal with this computation in a formal way and so,
with some help by {\it Mathematica}, formula \eqref{reduction-to-eta} does follow. We omit the elementary but lengthy details.
\end{proof}

\noindent
Thanks to this Proposition we have that 
\begin{equation}\label{relative-pairing-bis}
\langle (\tau^{Y,r}_g,\sigma^{\partial Y}_g), (V(D), e_1, q_t)\rangle=
 \tau^{Y,r}_g (e^{-D^- D^+})- \tau^{Y,r}_g (e^{-D^+ D^-})
-\frac{1}{2}  \int_1^\infty      \frac{1}{\sqrt{\pi}}\tau^{\partial Y}_g (D_{\partial Y} \exp (-tD^2_{\partial Y}) )\frac{dt}{\sqrt{t}}   \,.
\end{equation}
 As the last step we replace $D$ by  $sD$; in the equality
 $$ \langle\tau^Y_g,\Ind_\infty (D)\rangle=\langle (\tau^{Y,r}_g,\sigma^{\partial Y}_g), \Ind_\infty (D,D_{\partial Y} )\rangle$$
 the left hand side  $\langle \tau^Y_g, \Ind_\infty (D)\rangle$ remains unchanged
 whereas the right hand side 
  becomes
 $$\tau^{Y,r}_g (e^{-s^2 D^- D^+})- \tau^{Y,r}_g (e^{-s^2 D^+ D^-})-  \frac{1}{2}
 \int_s^\infty   \frac{1}{\sqrt{\pi}} \tau^{\partial Y}_g (D_{\partial Y} \exp (-tD^2_{\partial Y}) \frac{dt}{\sqrt{t}}  \,.$$
 Summarizing, for each $s>0$ we have 
  \begin{equation}\label{s-equality}
  \tau^{Y,r}_g (e^{-s^2 D^- D^+})- \tau^{Y,r}_g (e^{-s^2 D^+ D^-})= \langle \tau^Y_g, \Ind_\infty (D)\rangle +  \frac{1}{2}
 \int_s^\infty   \frac{1}{\sqrt{\pi}} \tau^{\partial Y}_g (D_{\partial Y} \exp (-tD^2_{\partial Y}) \frac{dt}{\sqrt{t}}  \,.
 \end{equation}
 Now we take the limit as $s\downarrow 0$. It is a general principle, explained in detail 
 in \cite{Melrose-Book}, that the Getzler rescaling applies to the heat kernel in the $b$-context; 
 needless to say, the geometry here is more complicated that in the case of a compact $b$-manifold endowed with
 a product $b$-metric.
Still, we shall prove in the next section the following proposition, where all structures are assumed to be product-like near
the boundary.
\begin{proposition}\label{prop:short-with-boundary}
Let $(Y_0,\mathbf{h}_0)$ be a cocompact $G$-proper manifold. Let $(Y,\mathbf{h})$ be the associated $b$-manifold.
Let $D_0$ be a G-equivariant Dirac operator defined in  (\ref{def Dirac}) and let $D$ be the associated $b$-differential operator.
Let $g$ be a semisimple element and let $Y_0^g$ the fixed point set of $g$. Then
the limit $\lim_{s\downarrow 0} \tau^{Y,r}_g (e^{-s^2 D^- D^+})- \tau^{Y,r}_g (e^{-s^2 D^+ D^-})$ exists and we have 
 $$\lim_{s\downarrow 0} \tau^{Y,r}_g (e^{-s^2 D^- D^+})- \tau^{Y,r}_g (e^{-s^2 D^+ D^-})= 
 \int_{Y_0^g} c^g_0 {\rm AS}_g (D_0)$$ 
 with $c^g_0 {\rm AS}_g(D_0)$ defined  in Equation (\ref{eq:X-geom-form}).
 \end{proposition}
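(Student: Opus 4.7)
The plan is to reduce the statement to the equivariant local index theorem of Berline--Getzler--Vergne, adapted to the $b$-setting and the delocalized orbital-integral trace.

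First, using the alternate expression \eqref{tr-g-closed-bis} transported to the $b$-context via the identity $\tau_g^{Y,r}=\tau_g\circ {}^b\Tr_S$, one can write
\[
\tau^{Y,r}_g(e^{-s^2 D^- D^+}) - \tau^{Y,r}_g(e^{-s^2 D^+ D^-}) = \int^b_Y c^g(y)\, \operatorname{str}\bigl(K_{s^2}(y,gy)\, g\bigr) \, dy,
\]
where $K_{s^2}$ is the Schwartz kernel of the graded heat operator $e^{-s^2 D^2}$ and $c^g(y)=\int_G c_G(h)\, c(hgy)\,dh$. The first task is to establish Gaussian off-diagonal bounds for $K_{s^2}$ uniform on $Y$; these follow from a $G$-equivariant parametrix construction combined with the slice decomposition $Y=G\times_K Z$, as in \cite{PP2}, and the product structure of $\mathbf{h}$ near the boundary ensures that the estimates extend uniformly along the cylindrical end.

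Second, by Gaussian decay the integrand concentrates as $s\downarrow 0$ on the fixed-point set $Y^g=\{y\in Y:gy=y\}$. Because $g$ acts trivially along the cylindrical axis, one has $Y^g\cap \cyl(\partial Y)=(-\infty,0]\times(\partial Y_0)^g$, so $Y^g$ is itself a $b$-submanifold of $Y$ with product-type induced $b$-metric near its boundary. Semisimplicity of $g$ guarantees that $g$ acts as an orthogonal transformation with spectrum bounded away from $1$ on the normal bundle $\mathcal{N}_{Y^g}$, so the determinant factor in \eqref{eq:X-geom-form} is well defined on all of $Y^g$.

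Third, the equivariant Getzler rescaling, applied pointwise in a tubular neighborhood of $Y^g$, produces the local limit
\[
\lim_{s\downarrow 0} \operatorname{str}\bigl(K_{s^2}(y,gy)\, g\bigr)\, dy \;=\; {\rm AS}_g(D)(y),\qquad y\in Y^g,
\]
with ${\rm AS}_g(D)$ the form of \eqref{eq:X-geom-form}; this rescaling argument is purely local and is insensitive both to the non-compactness of $G$ and to the $b$-structure. Interchanging the short-time limit with the $b$-integral and with the integration defining $c^g$ (justified by the uniform Gaussian estimates and the cocompactness of the $Z_g$-action on $\mathrm{supp}(c^g)\cap Y^g$) yields
\[
\lim_{s\downarrow 0}\bigl(\tau^{Y,r}_g(e^{-s^2 D^- D^+}) - \tau^{Y,r}_g(e^{-s^2 D^+ D^-})\bigr) \;=\; \int^b_{Y^g} c^g\, {\rm AS}_g(D).
\]
Since every ingredient is of product type near the boundary, ${\rm AS}_g(D)$ pulls back from $(\partial Y_0)^g$ along the cylindrical end of $Y^g$, so the $b$-integral over $Y^g$ has no logarithmic divergence and its finite part coincides with the ordinary integral $\int_{Y_0^g} c^g_0\, {\rm AS}_g(D_0)$, as claimed.

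The main obstacle is the justification of exchanging the limit $s\downarrow 0$ both with the (non-absolutely-convergent) $b$-integral and with the non-compact orbital integration over $G/Z_g$. One needs heat-kernel estimates strong enough to force rapid decay of the integrand along $G/Z_g$, uniformly in $s\in(0,1]$, and simultaneously precise enough that the $b$-regularization commutes with $\lim_{s\downarrow 0}$. Both ingredients can be obtained from the slice decomposition together with the product structure along the cylinder, but making these estimates quantitative and uniform in $s$ is the technically demanding part of the argument.
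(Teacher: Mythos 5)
Your proposal follows essentially the same route as the paper: rewrite the supertrace as a $b$-integral of $\operatorname{str}(K_{s^2}(y,gy)g)$ against the cut-off $c^g$, use Gaussian off-diagonal heat kernel bounds to localize near $Y^g$ as $s\downarrow 0$, apply equivariant Getzler rescaling in a tubular neighborhood, and use the product-type structure near the boundary to identify the $b$-integral over $Y^g$ with the ordinary integral over $Y_0^g$. Two remarks on the parts you flag as technically demanding and defer.

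First, the interchange that turns $\tau^{Y,r}_g=\tau_g\circ{}^b\Tr_S$ into the double integral $\int^b_Y\int_G c_G(h)c(hy)\operatorname{str}(k_{s^2}(y,gy)g)\,dh\,dy$ is precisely where the $b$-regularization interacts with the orbital integral, and naive Fubini does not apply since the $b$-integral is a regularized (not absolutely convergent) integral. The paper resolves this by invoking the Lesch--Moscovici--Pflaum decomposition of the $b$-trace (Prop.\ 2.6 of \cite{LMP}): after choosing a splitting $Y=Y_0\cup_{\partial Y_0}(-\infty,0]\times\partial Y_0$, one replaces the $b$-integral by the absolutely convergent integral of an auxiliary function $F_g(h)$, which on $Y_0$ is the usual integrand and on the cylinder is $-t\,\tfrac{d}{dt}$ applied to the integrand in the cylindrical variable. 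With this substitution Fubini becomes available and one recovers the displayed double $b$-integral. Without this step the claimed formula is an identity between two regularized integrals whose equality is genuinely not automatic.

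Second, the uniform Gaussian bounds you invoke are obtained in the paper directly from the fact that $(Y,\mathbf{h})$ is complete with bounded geometry, citing \cite[Proposition 4.2, (i)]{CareyGRS} (see Remark~\ref{remark:estimate-also-b-2}); this is slightly more robust than a bespoke $G$-equivariant parametrix argument and is compatible with the distance estimate of Lemma~\ref{lem:action property} (extended to the $b$-manifold in Remark~\ref{remark:estimate-also-b-1}) which you need in order to control the orbital integral over $G/Z_g$ uniformly in $s$. With these two ingredients the decomposition of the $b$-integral into a part near $Y^g$ (treated by Getzler rescaling, with $Y_0^g$ compact by Lemma~\ref{lem:compact-quotient}) and a complementary part (decaying exponentially in $s$) is carried out exactly as you sketch, so the remainder of your outline matches the paper's proof.
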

 
 \smallskip
 \noindent
 Assuming the last Proposition we can infer  that the limit 
 $$ \lim_{s\downarrow 0} \frac{1}{2}
 \int_s^\infty   \frac{1}{\sqrt{\pi}} \tau^{\partial Y}_g (D_{\partial Y} \exp (-tD^2_{\partial Y}) \frac{dt}{\sqrt{t}}$$
 exists and  equals  
  $$\int_{Y_0^g} c^g {\rm AS}_g (D_0) - \langle \tau^Y_g, \Ind_\infty (D)\rangle\,.$$
 We conclude that we have proved the following 
 \begin{theorem}\label{theo:0-delocalized-aps} (0-degree delocalized APS)\\
 Let $G$ be connected, linear real reductive group. Let $g$ be a semisimple element. Let $Y_0$, $Y$, $D$, $D_{\partial Y}$ as above.
 Assume that $D_{\partial Y}$ is $L^2$-invertible.
 Then 
 $$\eta_g (D_{\partial Y}):=\frac{1}{\sqrt{\pi}} \int_0^\infty \tau^{\partial Y}_g (D_{\partial Y} \exp (-tD^2_{\partial Y}) \frac{dt}{\sqrt{t}}$$
 exists and for the pairing of the index class $\Ind_\infty(D)\in K_0(\mathcal{L}^\infty_{G,s} (Y))\equiv
 K_0 (C^*(Y_0\subset Y)^G)$ with the $0$-cocycle  $\tau^Y_g\in HC^0 ((\mathcal{L}^\infty_{G,s} (Y)))$ 
 the following delocalized 0-degree APS index formula holds:
 \[
 \langle \tau^Y_g,\Ind_\infty (D) \rangle= \int_{Y_0^g} c^g {\rm AS}_g (D_0) - \frac{1}{2} \eta_g (D_{\partial Y})\,,
 \]
where the integrand $c^g {\rm AS}_g(D_0)$ is defined in the same way as the one in Equation (\ref{eq:X-geom-form}).
 \end{theorem}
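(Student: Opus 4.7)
The plan is to read off the theorem as a direct consequence of the machinery already assembled in this section, with the delocalized eta integrand convergence extracted from the pairing identity rather than proved by hand.

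First I would invoke Proposition \ref{prop:-0-relative-cocycle-bis} to rewrite the absolute pairing as a relative pairing:
\[
\langle \tau^Y_g,\Ind_\infty(D)\rangle = \langle (\tau^{Y,r}_g,\sigma^{\partial Y}_g),\Ind_\infty(D,D_{\partial Y})\rangle.
\]
On the right I represent $\Ind_\infty(D,D_{\partial Y})$ by the Connes-Moscovici relative triple $(V(D), e_1, q_t)$ displayed in \eqref{pre-wassermann-triple-1}. Expanding the relative pairing as in \eqref{relative-pairing} and applying Proposition \ref{prop:from-cocycle-to-eta} to convert the transgression term $\int_1^\infty \sigma^{\partial Y}_g([\dot q_t,q_t],q_t)\,dt$ into the truncated delocalized eta integral, I obtain \eqref{relative-pairing-bis}. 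Rescaling $D\rightsquigarrow sD$ leaves the absolute pairing unchanged but deforms the right-hand side to
\[
\tau^{Y,r}_g(e^{-s^2 D^- D^+}) - \tau^{Y,r}_g(e^{-s^2 D^+ D^-}) = \langle \tau^Y_g,\Ind_\infty(D)\rangle + \frac{1}{2\sqrt{\pi}} \int_s^\infty \tau^{\partial Y}_g(D_{\partial Y} e^{-t D^2_{\partial Y}})\,\frac{dt}{\sqrt{t}},
\]
which is exactly \eqref{s-equality}, valid for every $s>0$.

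Next I would let $s\downarrow 0$. By Proposition \ref{prop:short-with-boundary} the left-hand side of \eqref{s-equality} converges to $\int_{Y_0^g} c^g_0\,{\rm AS}_g(D_0)$. For the right-hand side the first summand is independent of $s$, so the tail integral $\frac{1}{2\sqrt{\pi}} \int_s^\infty \tau^{\partial Y}_g(D_{\partial Y} e^{-t D^2_{\partial Y}})\,dt/\sqrt t$ must itself have a finite limit as $s\downarrow 0$. Since $D_{\partial Y}$ is $L^2$-invertible, standard large-time bounds for the heat kernel in the Lafforgue algebra (the same estimates that underlie the well-definedness of the Connes-Moscovici relative class) give exponential decay of $\tau^{\partial Y}_g(D_{\partial Y}e^{-tD^2_{\partial Y}})$ as $t\to\infty$, so the contribution from $[1,\infty)$ is already absolutely convergent. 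Combined with the existence of the $s\downarrow 0$ limit, this proves that the full improper integral
\[
\eta_g(D_{\partial Y}) = \frac{1}{\sqrt{\pi}}\int_0^\infty \tau^{\partial Y}_g(D_{\partial Y} e^{-t D^2_{\partial Y}})\,\frac{dt}{\sqrt t}
\]
converges, and passing to the limit in \eqref{s-equality} yields the asserted index formula.

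The main obstacle is, as expected, Proposition \ref{prop:short-with-boundary}, which supplies the $s\downarrow 0$ evaluation of the $b$-supertrace of the heat kernel: the proof requires adapting Getzler's rescaling to the $b$-calculus in the equivariant $G$-proper setting, while simultaneously localizing to the $g$-fixed-point set via the orbital integral and the cutoff $c^g_0$. Everything else in this proof is essentially algebraic manipulation of the relative pairing identity together with the $L^2$-invertibility of $D_{\partial Y}$, which handles the large-$t$ tail. It is worth remarking that the existence of $\eta_g(D_{\partial Y})$ at $t=0$ is \emph{derived from} the index-theoretic identity rather than shown directly; a more intrinsic proof of small-time integrability on $\partial Y$ is taken up in Section \ref{sect:eta-in-general}.
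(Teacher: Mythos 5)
Your proposal reproduces the paper's own proof almost step for step: rewrite the absolute pairing as a relative pairing via Proposition~\ref{prop:-0-relative-cocycle-bis}, evaluate the transgression term with Proposition~\ref{prop:from-cocycle-to-eta}, rescale $D\rightsquigarrow sD$ to get~\eqref{s-equality}, and let $s\downarrow0$ using Proposition~\ref{prop:short-with-boundary} to produce both the $\mathrm{AS}_g$ integral and the existence of $\eta_g(D_{\partial Y})$. The one small addition you make --- spelling out that the $[1,\infty)$ tail already converges by the large-time decay from the $L^2$-invertibility of $D_{\partial Y}$ --- is implicit in the paper's argument (the relative index class is only well-defined because of that decay), so your rendering is slightly more explicit but not a different route.
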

  
 \section{Delocalized eta invariants for $G$-proper manifolds}\label{sect:eta-in-general}
 
 In the previous section we have obtained the well-definedness  of $\eta_g (D_{\partial Y})$, with $D_{\partial Y}$ being $L^2$-invertible, as a byproduct of the proof
 of the delocalized APS index theorem for 0-degree cocycles. In fact, one can  show that  $\eta_g (D)$ is well defined
 on a cocompact
 $G$-proper manifold 
 even if $D$ does not arise  as a boundary operator.
 This is the content of the next 
 theorems, partially discussed also in \cite{HWW1,HWW2}.
The main result of this section is the following theorem. 
\begin{theorem}\label{thm etadefine}
Let $(X,\mathbf{h})$ be a cocompact $G$-proper manifold without boundary endowed with a $G$-equivariant Spin$^c$-structure and let $D$ be the Dirac-type operator defined in (\ref{def Dirac}). Let $g$ be a semi-simple element. If $D$ is $L^2$-invertible, then the integral 
\begin{equation}\label{thm:eta}
\frac{1}{\sqrt{\pi}} \int_0^\infty \tau^{X}_g (D\exp (-tD^2) \frac{dt}{\sqrt{t}}
\end{equation}
converges.
\end{theorem}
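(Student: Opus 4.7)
\medskip
\noindent
\textbf{Proof proposal.} I would split the integral at $t=1$ and handle the large-$t$ and small-$t$ parts by quite different means.

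\smallskip
\noindent
\emph{Large $t$.} The $L^2$-invertibility of $D$ gives a spectral gap: there exists $\lambda>0$ with $\mathrm{spec}(D^2)\subset [\lambda^2,\infty)$. My plan is to show that the Schwartz kernel of $D\exp(-tD^2)$ decays exponentially in $t$ inside the Lafforgue Schwartz algebra $\mathcal{L}^\infty_{G,s}(X)$, not merely in operator norm. For $t\geq 1$ I would factor
\[
D\exp(-tD^2)=\bigl(D\exp(-\tfrac{1}{2}D^2)\bigr)\circ \exp\bigl(-(t-\tfrac{1}{2})D^2\bigr).
\]
The first factor is obtained by functional calculus from a Schwartz function vanishing at $0$ and, by the standard construction of the heat operator via the wave equation and finite propagation on the $G$-proper manifold $X$, lies in $\mathcal{L}^\infty_{G,s}(X)$. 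The second factor is bounded in operator norm by $e^{-(t-1/2)\lambda^2}$ and, crucially, acts continuously on $\mathcal{L}^\infty_{G,s}(X)$ by Lemma \ref{lemma:composition} (composition with bounded pseudodifferential operators of order $0$). Continuity of $\tau^X_g$ on $\mathcal{L}^\infty_{G,s}(X)$ then yields an estimate
\[
\bigl|\tau^X_g\bigl(D\exp(-tD^2)\bigr)\bigr|\leq C e^{-(t-1/2)\lambda^2},
\]
and the integral $\int_1^\infty \tau^X_g(D\exp(-tD^2))\,t^{-1/2}\,dt$ is absolutely convergent.

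\smallskip
\noindent
\emph{Small $t$.} Write the trace via the $g$-twisted formula
\[
\tau^X_g\bigl(D\exp(-tD^2)\bigr)=\int_X c^g(x)\,\mathrm{tr}\bigl(g\cdot k_t(x,gx)\bigr)\,dx,
\]
where $k_t$ is the Schwartz kernel of $D\exp(-tD^2)$. Away from the fixed-point submanifold $X^g$ the displacement $d(x,gx)$ is bounded below on the compact support of $c^g$, so standard Gaussian off-diagonal heat-kernel estimates for the $G$-equivariant Dirac operator (valid since $D$ is the genuine Clifford Dirac operator of \eqref{def Dirac}) give exponential decay in $t^{-1}$ of the corresponding piece of the integrand, making this portion integrable for small $t$. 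Near $X^g$ I would reduce to a local model by covering a neighborhood of $X^g$ by normal-coordinate charts, using the slice decomposition $X=G\times_K S$ together with the reduction to a neighbourhood of $X^g\cap S$ in $S$, and then apply Getzler rescaling to the kernel of $D\exp(-tD^2)$. The key parity observation is that $D$ has Clifford degree one, so after Getzler rescaling the leading contribution to the rescaled kernel has Clifford degree one and, unlike the $D^2$ case that yields the index form, produces an extra factor of $\sqrt{t}$ upon integration. This gives $\tau^X_g(D\exp(-tD^2))=O(\sqrt{t})$ as $t\downarrow 0$, so $\int_0^1 \tau^X_g(D\exp(-tD^2))\,t^{-1/2}\,dt$ converges.

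\smallskip
\noindent
\emph{Main obstacle.} The delicate step is the small-$t$ analysis: one must justify Getzler rescaling uniformly in the transverse directions to $X^g$, control the slice-type normal bundle decomposition $N_{X^g}\otimes\mathbb{C}$ with respect to the $g$-action (which enters through the factor $\det(1-g e^{-R^\mathcal{N}/2\pi i})^{1/2}$ appearing in the $\mathrm{AS}_g$-class) and handle the non-compactness of the orbit of $X^g$ under $Z_G(g)$ via the cut-off function $c^g$. This is essentially the ingredient needed to obtain Proposition \ref{prop:short-with-boundary}, adapted from the even operator $e^{-tD^2}$ to the odd operator $D e^{-tD^2}$, and should be modelled on Bismut's heat-equation proof of the local index theorem combined with Hochs--Wang's equivariant localization techniques on $G$-proper manifolds.
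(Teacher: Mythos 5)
Your overall splitting at $t=1$ matches the paper's structure (Propositions \ref{prop:eta large time integral} and \ref{prop:eta-short time}), and the small-$t$ strategy (off-diagonal Gaussian estimates away from $X^g$, rescaling near $X^g$) is the right plan. However there is a genuine gap in the large-$t$ argument and a subtle error in the small-$t$ argument.

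\textbf{Large $t$.} The factorization $D\exp(-tD^2)=\bigl(D\exp(-\tfrac12 D^2)\bigr)\circ\exp(-(t-\tfrac12)D^2)$ together with the operator-norm bound $\|\exp(-(t-\tfrac12)D^2)\|\le e^{-(t-1/2)\lambda^2}$ does \emph{not} give exponential decay of the Fr\'echet seminorms of $D\exp(-tD^2)$ in $\mathcal{L}^\infty_{G,s}(X)$, and it is the latter that is needed to apply continuity of $\tau^X_g$. Those seminorms involve the Lafforgue weight $(1+\|g\|)^s\Xi(g)^{-1}$ in the group variable, and there is no a priori reason why the composition with a second heat operator should have Fr\'echet seminorm controlled by its $L^2$-operator norm. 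Moreover Lemma \ref{lemma:composition} does not apply here: it concerns $\Psi^0_{G,c}(X)$, i.e.\ compactly $G$-supported order-$0$ pseudodifferential operators, acting on $\mathcal{L}^\infty_{G,s}(X)$, whereas $\exp(-sD^2)$ is a globally spread-out smoothing operator and is not compactly $G$-supported. What is actually required is precisely the content of Proposition \ref{prop:large-yes-boundary-unperturbed}: weighted-exponential decay of $\exp(-tD^2)$ (and hence of $f(tD^2)$, $Df(tD^2)$) in the Fr\'echet topology. The paper proves this directly, through the contour representation $\exp(-tD^2)=\frac{(k-1)!}{2\pi i}t^{1-k}\int_\gamma e^{-t\mu}(D^2-\mu)^{-k}d\mu$, a decomposition $(D^2-\mu)^{-k}=F(\mu)+G(\mu)$ into a parametric-symbolic part plus a rapidly decaying remainder, and a direct estimate of the seminorms $p_{s,\alpha}$ of each piece under the contour, using ${\rm Re}\,\gamma(z)>a>0$. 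Your factorization does not shortcut that step; if you push it through, the Fr\'echet estimate you would need on the second factor is exactly Proposition \ref{prop:large-yes-boundary-unperturbed}.

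\textbf{Small $t$.} The assertion that $d(x,gx)$ is bounded below on ``the compact support of $c^g$'' is not correct: $c^g$ is a cut-off function for the $Z_g$-action on $X$ (Lemma \ref{lem-support}), and its support is non-compact whenever $Z_g$ is non-compact. The paper handles this by splitting the $G$-integral into a ``near'' region $U(g)$ (where $d_G(hgh^{-1},e)$ is bounded) and its complement $U^c(g)$. Over $U(g)$ the effective cut-off $\tilde c(y)=\int_{U(g)}c_G(h)c(hy)\,dh$ is genuinely compactly supported, so Lemma \ref{lem:decay-outside w} gives exponential decay outside any neighborhood $W$ of $X^g$. Over $U^c(g)$ one needs the uniform lower bound $d_X(x,hgh^{-1}x)\ge C_1 d_G(hgh^{-1},e)-C_0$ (Lemma \ref{lem:action property}) together with Harish-Chandra's finiteness of $\int_{G/Z_g}\exp(-\alpha\, d_G(hgh^{-1})^2/t)$ (Lemma \ref{lem:integral}); this is Lemma \ref{lem:uc}. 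Near $X^g$, your Getzler-rescaling/parity heuristic for the $O(\sqrt t)$ behaviour captures the right mechanism; the paper's Lemma \ref{lem:eta-near fixed point} obtains the rigorous bound by invoking Zhang's local asymptotic expansion for equivariant eta forms \cite{Zhangwp}, transported to a $Z_g$-invariant tubular neighborhood of $X^g$ using the compactness of $X^g/Z_g$ (Lemma \ref{lem:compact-quotient}), rather than by a slice reduction near $X^g\cap S$. You should make the $U(g)/U^c(g)$ decomposition explicit and replace the appeal to compactness of $\supp c^g$ by the compactness of $\supp\tilde c$ and the Harish-Chandra integral bound.
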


\begin{proof}We split the proof into two parts. 

In Proposition \ref{prop:eta large time integral}, we study the large time behavior of the heat kernel and prove the following integral
\[
\frac{1}{\sqrt{\pi}}\int_1^\infty \tau^X_g(D\exp(-tD^2))\frac{dt}{\sqrt{t}}
\] 
converges. 

In Proposition \ref{prop:eta-short time}, we study the small time behavior of the heat kernel and prove the following integral 
\[
\frac{1}{\sqrt{\pi}}\int^1_0  \tau^X_g(D\exp(-tD^2))\frac{dt}{\sqrt{t}}
\]
converges. 

We complete the proof of the theorem by combining the above two results. 
\end{proof}

Recall, following  \cite{PP2}, that for all $t>0$
$$\exp (-tD^2)=\frac{1}{2\pi i} \int_\gamma e^{-t\mu} (D^2-\mu)^{-1}d\mu$$
is an element in the Fr\'echet algebra $\mathcal{L}^\infty_{G, s} (X, E)$ with any $s > 0$. It was stated
in  \cite{PP2}, but without a proper proof,  that if $D$ is $L^2$-invertible then 
$\exp (-tD^2)$ converges exponentially  to $0$ in $\mathcal{L}^\infty_{G, s} (X, E)$. 
The corresponding statement for $b$-manifolds was also stated there.
The next two subsections provide detailed proofs of these two results.
 
 \subsection{Large time behaviour on manifolds without boundary}\label{subsect:large}
 \noindent 
Assume now that there exists $a>0$ such that
\begin{equation}\label{spectral-hyp}{\rm Spec}_{L^2} (D)\cap [-2a,2a]=\emptyset\,.
\end{equation}
We can and we shall choose $\gamma$ so that ${\rm Spec}_{L^2} (D)\cap \gamma(z)=\emptyset$ and ${\rm Re}\,\gamma (z)>a $ for every $z$.
We want to show that  $\exp (-tD^2)\in \mathcal{L}^\infty_{G, s} (X)$ is exponentially converging to $0$ in  $ \mathcal{L}^\infty_{G, s} (X)$ as $t\to +\infty$. \\

By \cite[Proposition 5.21]{PP2},
$$(D^2-\mu)^{-1}=B(\mu)+ C(\mu)$$
where
\begin{itemize}
\item $B(\mu)\in\Psi^{-2}_{G,c} (X)$ is a symbolic parametrix for $D^2-\mu$ and defines a pseudodifferential operator with parameter
of order $-2$;
\item $C(\mu) \in  \mathcal{L}^\infty_{G, s} (X)$, and $C(\mu)$ goes to  0 in  the Fr\'echet topology of $\mathcal{L}^\infty_{G, s} (X)$, 
as $|\mu|\to +\infty$.
\end{itemize}
In fact, we can improve this result and see easily that $C(\mu)$ is rapidly decreasing in $|\mu|$, with values
in $\mathcal{L}^\infty_{G, s} (X)$. Indeed, $(D^2-\mu)^{-1} = B(\mu)(1 + F(\mu))$ with $1+F(\mu)= (1+R(\mu))^{-1}$
and $R(\mu)$ the remainder of the symbolic parametrix $B(\mu)$. By the pseudodifferential calculus with parameter we know that 
$R(\mu)$ is rapidly decreasing in $|\mu|$ (it is a smoothing operator with parameter). Using the elementary identity
$$1- (1+R(\mu))^{-1}= R(\mu) [(1+R(\mu))^{-1}]$$
we understand that $F(\mu)$ is in fact rapidly decreasing in $|\mu|$ with values in  $\mathcal{L}^\infty_{G, s} (X)$. 
Thus $C(\mu):=  B(\mu)\circ F(\mu)$
is also rapidly decreasing in $|\mu|$, with values in $\mathcal{L}^\infty_{G, s} (X)$; here Lemma \ref{lemma:composition}
has been used.
By the same analysis carried out in \cite{PP2} and above we also know that 
for all $\mu$ and $k\geq 1$ 
$$(D^2-\mu)^{-k}=F(\mu)+G(\mu)$$
with
$F(\mu)\in\Psi^{-2k}_{G,c}(X,\Lambda)$, of $G$-compact support uniformly in $\mu$, and $G(\mu)\in \mathcal{L}^\infty_{G, s} (X)$
and rapidly decreasing in $\mathcal{L}^\infty_{G, s} (X)$ as $| \mu | \to +\infty$.\\ 
Let $p_{s,\alpha}$  be a seminorm  on the Fr\'echet algebra $\mathcal{L}^\infty_{G, s} (X):= 
(\mathcal{L}_{s}(G)\hat{\otimes}\Psi^{-\infty}(S))^{K\times K}$. 
This depends on $s$ through the Banach norm on $\mathcal{L}_s (G)$ and on the multiindex $\alpha$ 
through a seminorm $\| \cdot \|_\alpha$ on the 
smoothing operators on the slice. Let $|\alpha|=\ell$. Consider now $(D^2-\mu)^{-k}$ with $k>{\rm max}(\dim X,\ell/2)$.
Observe that
$$\exp (-tD^2)=\frac{(k-1)!}{2\pi i} t^{1-k}\int_\gamma e^{-t\mu} (D^2-\mu)^{-k}d\mu$$
We want to show that
$$p_{s,\alpha} \left( \int_{\gamma} e^{-t\mu}(D^2-\mu)^{-k}d\mu \right) \rightarrow 0 \quad \text{as} \quad t\to +\infty\,.$$ 
We thus consider the decomposition
$$(D^2-\mu)^{-k}=F(\mu)+G(\mu)$$
and  
$$ \int_{\gamma} e^{-t\mu} F(\mu)d\mu, \quad \int_{\gamma} e^{-t\mu} G(\mu)d\mu.  $$ 
Consider the first integral. Using the arguments employed  by Shubin  in \cite[Ch. XII, Sect. 11]{Shubin-Book}
in order to discuss the properties of complex powers, see also Vassout \cite[Ch. 4]{Vassout} and Gilkey\cite[Lemma 1.8.1]{gilkey-book}, we see that this first integral is in $\mathcal{L}^c_{G} (X)$, which is 
a subalgebra of $\mathcal{L}^\infty_{G,s} (X)$; the integrand, on the other hand, is in $\Psi^{-2k}_{G,c}(X,\Lambda)$ and to 
such an integrand we can apply the seminorm $p_{s,\alpha}$, given that it is contained in
$(\mathcal{L}_s (G)\hat{\otimes} C^{2k}(S\times S))^{K\times K}$.
From the properties of the Bochner integral we thus have that
$$p_{s,\alpha} \left( \int_{\gamma} e^{-t\mu} F(\mu))d\mu \right) \leq \int_{\gamma} e^{-t\mu} p_{s,\alpha} (F(\mu))d\mu
$$
Now, $F(\mu)$ is a pseudodifferential operator with parameter of order $(-2k)$ of $G$-compact support uniformly in $\mu$. 
We claim that  $p_{m,\alpha} (F(\mu))$ can be bounded by a negative power of $|\mu|$. To see this we take the associated element
$\Phi_{F(\cdot )}:G\to \Psi^{-2k}(S,\Lambda)$ under  the isomorphism \eqref{PHI}. 
We can equivalently show that the seminorm $p^{\Phi}_{s,\alpha}$ defined by $v_s$ and $\|\cdot\|_{\alpha}$ can be bounded by a negative power of $\mu$ when applied to the element $\Phi_{F(\cdot )}$.
Now, $\Phi_{F(\cdot )}$
is a  compactly supported function on $G$ with values 
in $\Psi^{-2k}(S,\Lambda)$. 
Because of the compactness of the support in $G$ we only need to understand  the statement 
for the elements in  $\Psi^{-2k}(S,\Lambda)$
for $k$ large; however this is well known, see for example \cite[Lemma 1.7.4]{gilkey-book}.
It is at this point clear that 
\begin{equation}\label{A(t)}  A(t):= \int_{\gamma} e^{-t\mu} p_{m,\alpha} (F(\mu))d\mu
\end{equation}
goes to $0$ as $t\to +\infty$. Indeed, as we can choose the path $\gamma$ so that ${\rm Re}\gamma(z)>a$ for all $z$,
 the result 
is immediate from the stated properties of $p_{m,\alpha} (F(\mu))$.\\
Consider now the second term, $$p_{s,\alpha}\left( \int_{\gamma} e^{-t\mu}G(\mu)d\mu \right)$$ 
Here the integrand is already an element in $\mathcal{L}^\infty_{G,s} (X)$, going to $0$ as $|\mu |$ goes to $+\infty$. In this case
we can bring the seminorm under the sign of integral.  Thus we are considering
\begin{equation}\label{B(t)}B(t):= \int_{\gamma} e^{-t\mu} p_{s,\alpha}(G(\mu))d\mu 
\end{equation}
Since we know that $p_{s,\alpha}(G(\mu))$ goes to $0$ as  $|\mu|$ goes to $+\infty$ (in fact, rapidly), using once again in a crucial way the fact that
 ${\rm Re}\gamma(z)>a$, we can conclude that 
$B(t)$ goes to $0$ as  $t\to +\infty$.\\
Notice that in fact, by elementary manipulations, writing $e^{-t\mu}$ as $e^{-t\mu/2}\circ e^{-t\mu/2}$
we can prove that the convergence is weighted exponential, with weight $a$ (and $a$ as in 
\eqref{spectral-hyp}). See  \cite[Remark 2.11]{PP2}.

\medskip
\noindent
We summarize our discussion in the following proposition:

\begin{proposition}\label{prop:large-yes-boundary-unperturbed}
If $D$ is $L^2$-invertible then
\begin{equation}\label{large-yes-boundary-unperturbed}
 \exp (-tD^2)\rightarrow 0 \quad\text{weighted exponentially in}\quad   \mathcal{L}^\infty_{G,s} (Y)\quad\text{as}\quad t\to +\infty.
 \end{equation}
 In general for a Schwartz function $f$ on $\mathbb{R}$, the  operators $f(tD^2)$and $Df(tD^2)$ both converge to 0 weighted exponentially in $\mathcal{L}^\infty_{G, s}(Y)$ as $t\to +\infty$. In particular, the Connes-Moscovici projector $V(tD)-e_1$ converges to 0 weighted exponentially in $\mathcal{L}^\infty_{G,s} (Y)$ as $t\to +\infty$.
 \end{proposition}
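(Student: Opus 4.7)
The plan is to exploit the $L^2$-spectral gap through the Dunford--Riesz resolvent representation of the heat semigroup. Since $D$ is $L^2$-invertible and $\spec_{L^2}(D^2)$ is closed in $\reals$, there exists $a>0$ with $\spec_{L^2}(D^2)\subset [4a^2,+\infty)$. I would choose a contour $\gamma$ encircling $\spec_{L^2}(D^2)$ while staying inside the half-plane $\{\Re \mu > a\}$, so that at every point of $\gamma$ the factor $e^{-t\mu}$ contributes a gain of $e^{-at}$ as $t\to+\infty$.

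Next, rather than work directly with the first resolvent, I would use
\[
\exp(-tD^2)=\frac{(k-1)!}{2\pi i}\, t^{1-k}\int_\gamma e^{-t\mu}\,(D^2-\mu)^{-k}\,d\mu,
\]
picking $k$ large enough relative to the fixed seminorm $p_{s,\alpha}$ on $\mathcal{L}^\infty_{G,s}(X)$ (concretely $k>\max(\dim X,\,|\alpha|/2)$) so that $(D^2-\mu)^{-k}$ enjoys enough Sobolev smoothing to bound $p_{s,\alpha}$. Then I invoke the parametrix decomposition already recorded in \cite{PP2} and recalled above,
\[
(D^2-\mu)^{-k}=F(\mu)+G(\mu),
\]
where $F(\mu)\in \Psi^{-2k}_{G,c}(X)$ is a pseudodifferential parametrix with parameter of $G$-compact support uniformly in $\mu$, and $G(\mu)\in \mathcal{L}^\infty_{G,s}(X)$ is rapidly decreasing in $|\mu|$ in the Fréchet topology of $\mathcal{L}^\infty_{G,s}(X)$. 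The latter decay uses the Neumann series identity $1-(1+R(\mu))^{-1}=R(\mu)(1+R(\mu))^{-1}$ combined with Lemma \ref{lemma:composition} to multiply by the symbolic factor.

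For each piece I estimate the Bochner integral in the seminorm $p_{s,\alpha}$ separately, writing $e^{-t\mu}=e^{-t\mu/2}\cdot e^{-t\mu/2}$. The first factor $e^{-at/2}$ comes out of the integral; for the remaining integrand the symbolic term contributes $A(t)$ as in \eqref{A(t)}, bounded via the standard parameter-dependent symbol estimates transferred through the isomorphism $\Psi^{-2k}_{G,c}(X)\cong(C^\infty_c(G)\hat{\otimes}\Psi^{-2k}(S))^{K\times K}$ (compact support in $G$ kills the $\nu_s$ weight, and the $\Psi^{-2k}(S)$ seminorm is bounded by a negative power of $|\mu|$). The residual term gives $B(t)$ as in \eqref{B(t)}, which is even easier since $p_{s,\alpha}(G(\mu))$ is already rapidly decreasing in $|\mu|$. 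Both integrals remain finite, so combining them with the prefactor $e^{-at/2}$ yields the weighted exponential decay with weight $a$.

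The main obstacle is making rigorous the bookkeeping that produces the $p_{s,\alpha}$-estimate with a decay rate uniform in $\mu\in\gamma$: one must show simultaneously that the Bochner integral converges in the Fréchet topology of $\mathcal{L}^\infty_{G,s}(X)$ and that the $\mu$-integrability survives after extracting $e^{-at/2}$. This is where the parameter-dependent pseudodifferential calculus and Lemma \ref{lemma:composition} do the real work. Once the statement is proved for $\exp(-tD^2)$, the extension to $f(tD^2)$ and $Df(tD^2)$ for Schwartz $f$ is obtained by the same Dunford representation applied to the holomorphic functional calculus (e.g.\ Helffer--Sjöstrand), noting that the spectral gap again allows a contour in $\{\Re\mu>a\}$ and that both $f$ and $\mu^{1/2}f(\mu)$ remain Schwartz on this contour; this covers the Connes--Moscovici projector $V(tD)-e_1$, whose entries are exactly of the forms $e^{-t^2 D^\mp D^\pm}$ and $e^{-\frac{t^2}{2}D^\mp D^\pm}(\tfrac{1-e^{-t^2D^\mp D^\pm}}{t^2D^\mp D^\pm})tD^\pm$, and hence decays weighted exponentially in $\mathcal{L}^\infty_{G,s}(Y)$.
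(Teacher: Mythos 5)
Your proposal mirrors the paper's proof step by step: same spectral gap $\Re\gamma > a$, same passage to $(D^2-\mu)^{-k}$ with $k$ large relative to the chosen seminorm, same decomposition $(D^2-\mu)^{-k}=F(\mu)+G(\mu)$ into a parameter-dependent symbolic piece and a rapidly decreasing residual piece, same $e^{-t\mu}=e^{-t\mu/2}\cdot e^{-t\mu/2}$ trick to extract the weight $a$, and the same observation that the entries of $V(tD)-e_1$ are of the form $f(tD^2)$ or $Df(tD^2)$. The only cosmetic difference is that you invoke Helffer--Sj\"ostrand at the last step, whereas the paper simply reuses the contour-integral formula $f(tD^2)=\frac{1}{2\pi i}\int_\gamma f(t\mu)(D^2-\mu)^{-1}d\mu$; since the functions actually occurring in the Connes--Moscovici projector are analytic this is immaterial.
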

\begin{proof}
For the heat kernel we have already given a very detailed proof. For a general Schwartz function $f$ on $\mathbb{R}$, we can directly generalize the above analysis on $\exp(-tD^2)$ via a similar estimate for $D(D^2-\lambda)^{-2k}$ to show that $Df(-tD^2)$ converges to 0 weighted exponentially in $\mathcal{L}^\infty_{G,s} (Y)$ using the following integral formulas
\[
f(tD^2)=\frac{1}{2\pi i} \int_\gamma f(t\mu) (D^2-\mu)^{-1}d\mu,\ \ \ \ Df(tD^2)=\frac{1}{2\pi i} \int_\gamma f(t\mu) D(D^2-\mu)^{-1}d\mu. 
\] 
The property about the Connes-Moscovici projector follows from the observation that every component of the projector is of the form $f(tD^2)$ or $Df(tD^2)$ for some Schwartz function $f$ on $\mathbb{R}$. 
\end{proof}

\begin{proposition}\label{prop:eta large time integral}Let $(X,\mathbf{h})$ be a cocompact $G$-proper manifold without boundary endowed with a $G$-equivariant Spin$^c$-structure and let $D$ be the Dirac-type operator defined in (\ref{def Dirac}). Let $g$ be a semi-simple element. If $D$ is $L^2$-invertible, then the integral 
\begin{equation}\label{thm:eta-large t}
\frac{1}{\sqrt{\pi}} \int_1^\infty \tau^{X}_g (D\exp (-tD^2) \frac{dt}{\sqrt{t}}
\end{equation}
converges.
\end{proposition}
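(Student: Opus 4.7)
The plan is to exploit the large-time analysis already carried out in Proposition \ref{prop:large-yes-boundary-unperturbed} together with the continuity of the delocalized trace $\tau^X_g$ on the Lafforgue Schwartz algebra $\mathcal{L}^\infty_{G,s}(X)$. The statement of Proposition \ref{prop:large-yes-boundary-unperturbed} is formulated on a manifold with boundary, but the proof (reducing to a contour integral of the resolvent of $D^2$ against a Schwartz weight and exploiting that $\operatorname{Spec}_{L^2}(D) \cap [-2a,2a] = \emptyset$) applies verbatim to the closed case of interest here. Apply it with the Schwartz function $f(u) = e^{-u}$ (extended suitably to a Schwartz function on $\mathbb{R}$, or equivalently using the Cauchy contour integral representation that the proposition actually requires): this yields that $D\exp(-tD^2)$ converges to $0$ weighted exponentially in the Fr\'echet algebra $\mathcal{L}^\infty_{G,s}(X)$ as $t \to +\infty$, with a weight $e^{-ta^2}$ dictated by the spectral gap of $D$.

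Next, recall from Subsection \ref{oiclosed} that the orbital trace $\tau_g^X = \tau_g \circ \operatorname{Tr}_S$ factorizes through a continuous integration-along-the-slice map $\operatorname{Tr}_S : \mathcal{L}^\infty_{G,s}(X) \to \mathcal{L}_s(G)$ and the Lafforgue-orbital functional $\tau_g : \mathcal{L}_s(G) \to \mathbb{C}$, both of which are continuous. Therefore there exist a continuous seminorm $p_{s,\alpha}$ on $\mathcal{L}^\infty_{G,s}(X)$ and a constant $C>0$ such that
\[
\bigl| \tau_g^X(D \exp(-tD^2)) \bigr| \leq C \, p_{s,\alpha}\bigl( D\exp(-tD^2) \bigr),
\qquad t \geq 1.
\]
Combining this with the weighted exponential decay from the first step yields constants $C', a' > 0$ with $\bigl| \tau_g^X(D \exp(-tD^2)) \bigr| \leq C' e^{-a' t}$ for all $t \geq 1$.

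Finally, the integrand is dominated by $C' e^{-a' t}/\sqrt{t}$, which is integrable on $[1,+\infty)$. This gives the absolute convergence of
\[
\frac{1}{\sqrt{\pi}} \int_1^\infty \tau^X_g\bigl(D \exp(-tD^2)\bigr) \frac{dt}{\sqrt{t}}.
\]
The only nontrivial input is really the Fr\'echet-algebra-valued exponential decay of $D\exp(-tD^2)$, which has already been established; no further analysis of the heat kernel at large time is needed. The genuine obstacle in this theorem lies on the short-time side (Proposition \ref{prop:eta-short time}), where one must analyze the heat kernel near the fixed-point set of $g$ and verify cancellation of the $t^{-1/2}$ behaviour; by comparison, the large-$t$ half handled here is an essentially formal consequence of the spectral gap plus continuity.
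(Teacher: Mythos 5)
Your proof is correct and follows essentially the same route as the paper: apply Proposition \ref{prop:large-yes-boundary-unperturbed} to a Schwartz function equal to $e^{-u}$ on $[0,\infty)$ to get weighted-exponential decay of $D\exp(-tD^2)$ in $\mathcal{L}^\infty_{G,s}(X)$, then feed this through the continuity of $\tau^X_g = \tau_g \circ \operatorname{Tr}_S$ to get exponential decay of the scalar integrand; you merely make explicit the "Accordingly" step that the paper leaves implicit. One small correction: despite the misleading label and the appearance of $Y$ in its statement, Proposition \ref{prop:large-yes-boundary-unperturbed} is already stated and proved in the subsection on manifolds \emph{without} boundary, so no transfer from the $b$-case is needed — the with-boundary version is the separate Proposition \ref{prop:large-yes-boundary-unperturbed-b}.
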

\begin{proof}
Consider the Schwartz function $f(x)=\exp^{-|x|}$ on $\mathbb{R}$. It follows from Proposition \ref{prop:large-yes-boundary-unperturbed} that the operator $Df(-tD^2)=D\exp(-tD^2)$ converges weighted exponentially in $\mathcal{L}^\infty_{G,s} (Y)\quad\text{as}\quad t\to +\infty$. Accordingly, the orbital integral $\tau^X_g(D\exp(-tD^2))$ converges to 0 weighted exponentially as $t\to +\infty$. This decay property assures that integral (\ref{thm:eta-large t}) is well defined. 
\end{proof}

\subsection{Large time behaviour on manifolds with boundary}
 \noindent We now discuss the case of manifolds with boundary. We consider as before  $(Y_0,\mathbf{h}_0)$, a cocompact $G$ proper manifold with boundary $X$ and denote 
by  $(Y,\mathbf{h})$  the associated $b$-manifold. We denote by $Z_0$ a slice for $Y_0$
and by $Z$ the associated $b$-manifold.
Let $D$ be a Dirac operator on $(Y,\mathbf{h})$ and assume that $D_\partial$ is $L^2$-invertible.
The same method as in \cite{PP2} can be directly generalized to establish that 
$$\exp (-tD^2)=\frac{1}{2\pi i} \int_\gamma e^{-t\mu} (D^2-\mu)^{-1}d\mu$$
is an element in ${}^b \mathcal{L}^\infty_{G,s} (Y)$ for every $t>0$ and for every $s>0$.\\

Assume now that there exists $a>0$ such that
$${\rm Spec}_{L^2} (D)\cap [-2a,2a]=\emptyset\,.$$
We can and we shall choose $\gamma$ so that ${\rm Spec}_{L^2} (D)\cap \gamma(z)=\emptyset$ and ${\rm Re}\,\gamma (z)>a $ for every $z$.\\

We want to show that  $\exp (-tD^2)\in {}^b \mathcal{L}^\infty_{G,s} (Y)$ is exponentially converging to $0$ in  $  {}^b\mathcal{L}^\infty_{G,s} (Y)$ as $t\to +\infty$. 
By \cite[Proposition 5.21]{PP2},
$$(D^2-\mu)^{-1}=B(\mu)+B(\mu)\circ L(\mu)$$
with 
$$B(\mu)=B^\sigma (\mu) - \varphi ((I(D^2-\mu)^{-1} I(R^\sigma (\mu)))$$
 In these formulae
\begin{itemize}
\item 
$\varphi: {}^b \mathcal{L}^\infty_{G, s, \RR} (\cyl (\partial Y))) \to {}^b \mathcal{L}^\infty_{G, s} (Y)$ is a section of the indicial homomorphism 
$I : {}^b \mathcal{L}^\infty_{G,s} (Y)\to {}^b \mathcal{L}^\infty_{G, s, \RR} (\cyl (\partial Y))$; $\varphi$ is defined by using a suitable cut-off function equal to 1 
on the boundary;
\item $B^\sigma (\mu)\in{}^b \Psi^{-2}_{G,c} (X)$ is a symbolic parametrix for $D^2-\mu$ and defines a $b$-pseudodifferential operator with parameter
of order $-2$ (in the small $b$-calculus);
\item $R^\sigma (\mu)\in{}^b \Psi^{-\infty}_{G,c} (X)$ is the remainder of a symbolic parametrix and defines a $b$-pseudodifferential operator with parameter
of order $-\infty$ (always in the small calculus);
\item $B(\mu)$ is a true parametrix for $D^2$ in the calculus with bounds.
\end{itemize}
Moreover, as proved in \cite{PP2},  $\mu\to   \varphi ((I(D^2-\mu)^{-1} I(R^\sigma (\mu)))$ is rapidly decreasing when
${\rm Re} (\mu) \to +\infty$ as a map with values in  ${}^b \mathcal{L}^\infty_{G,s} (Y)$  
and $L(\mu)\in {}^b \mathcal{L}^\infty_{G,s} (Y)$ goes to  0, always in Fr\'echet topology, as ${\rm Re}(\mu)\to +\infty$.\\
A similar formula can be written for $(D^2-\mu)^{-k}$ for any $k\geq 1$. We consider, as for the case of closed manifolds, $k$ very large; we  fix such a $k$ and adopt the same notation as above.
Write now 
$$\exp(-tD^2)=\frac{(k-1) !}{2\pi i} t^{1-k}  \int_{\gamma} e^{-t\mu} (D^2-\mu)^{-k}d\mu.$$
We can express  the integral on the right hand side initially as the sum of 2 terms
$$\int_\gamma e^{-t\mu} B_\mu   d\mu\;+\; \int_\gamma e^{-t\mu} B_\mu  \circ L_\mu  d\mu$$
and then, using the analogue of the above expression for $(D^2-\mu)^{-k}$, as the sum of 4 terms:
\begin{equation}
\begin{split}
\int_\gamma e^{-t\mu}  B^\sigma(\mu) d\mu - \int_\gamma e^{-t\mu}  \varphi (I(D^2-\mu)^{-k} I(R^\sigma (\mu)))d\mu
+ \int_\gamma e^{-t\mu}B^\sigma (\mu)\circ L (\mu) d\mu-\\  \int_\gamma e^{-t\mu} 
\varphi (I(D^2-\mu)^{-k} I(R^\sigma (\mu)))\circ L(\mu) d\mu
\end{split}
\end{equation}
where now $B^\sigma (\mu)\in {}^b \Psi^{-2k}_{G,c} (X)$ and defines a  $b$-pseudodifferential operator with parameter of order $-2k$.
The large-time behaviour of these four integrals can be treated as in the closed case. Indeed, for the first term 
we know that the Schwartz kernel of $B^\sigma (\mu)$ will be $C^{2k}$ across the $b$-diagonal and so, keeping in mind
that this term if of order $(-2k)$ as a pseudodifferential operator with parameter, we can directly
estimate the integral of the 
 relevant seminorm applied to $B^\sigma (\mu)$, as in the closed case, c.f. \eqref{A(t)}.
For the second summand it suffices to recall 
that $\mu\to \varphi (I(D^2-\mu)^{-1} I(R^\sigma_\mu))$ is a rapidly decreasing ${}^b \mathcal{L}^{\infty}_{G, s} (Y)$-valued map.
Similarly, the operators in the third integral define a ${}^b \mathcal{L}^{\infty}_{G, s} (Y)$-valued map going to $0$ in the Fr\'echet topology
as ${\rm Re}(\mu)\to +\infty$, whereas the operators in the fourth integral define a ${}^b \mathcal{L}^{\infty}_{G, s} (Y)$-valued map
rapidly decreasing 
as ${\rm Re}(\mu)\to +\infty$.
When we apply the seminorms to these families of operators inside the integral and we perform the integration,
we obtain expressions in $t$ 
that clearly go to 0 when $t\to +\infty$;
here we use again, crucially, that  ${\rm Re} \gamma(z)>a>0$.  As this is precisely as in \eqref{B(t)} we omit the details. 
 In fact, by a simple trick with the countour,
we see also in this case that the convergence is weighted exponential.

\medskip
\noindent
We summarize our discussion in the following proposition:

\begin{proposition}\label{prop:large-yes-boundary-unperturbed-b}
If $D$ is $L^2$-invertible then
\begin{equation}\label{large-yes-boundary-unperturbed-1}
 \exp (-tD^2)\rightarrow 0 \quad\text{weighted exponentially in}\quad   {}^b \mathcal{L}^\infty_{G,s} (Y)\quad\text{as}\quad t\to +\infty.
 \end{equation}
 In general for a Schwartz function $f$ on $\mathbb{R}$, the operators $f(tD^2)$ and $Df(tD^2)$ converge to 0 weighted exponentially in ${}^b\mathcal{L}^\infty_{G, s} (Y)$ as $t\to +\infty$. In particular the Connes-Moscovici projector $V(tD)-e_1$ converges to 0 weighted exponentially in ${}^b\mathcal{L}^\infty_{G, s} (Y)$ as $t\to +\infty$.
\end{proposition}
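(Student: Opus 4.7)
My plan is to mirror the closed-case argument of Proposition \ref{prop:large-yes-boundary-unperturbed} in the $b$-calculus, using the structure of the true $b$-parametrix described just before the statement. The starting point is that for any $k \geq 1$ one has
\[
\exp(-tD^2) = \frac{(k-1)!}{2\pi i}\, t^{1-k} \int_\gamma e^{-t\mu} (D^2-\mu)^{-k}\, d\mu,
\]
and that $\gamma$ may be chosen with $\mathrm{Re}\,\gamma(z) > a$ because of the spectral gap assumption. I would fix a seminorm $p_{s,\alpha}$ on ${}^b\mathcal{L}^\infty_{G,s}(Y)$ and choose $k$ so large that the symbolic factors below admit $C^{2k}$ kernels on the $b$-stretched product, which makes $p_{s,\alpha}$ applicable directly.

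Next, I would apply the parametrix expansion $(D^2-\mu)^{-k} = B^\sigma(\mu) - \varphi(I(D^2-\mu)^{-k}\, I(R^\sigma(\mu))) + B^\sigma(\mu)\circ L(\mu) - \varphi(I(D^2-\mu)^{-k}\, I(R^\sigma(\mu)))\circ L(\mu)$, splitting the contour integral into four pieces. For the first piece, $B^\sigma(\mu) \in {}^b\Psi^{-2k}_{G,c}(Y)$ is a $b$-pseudodifferential operator with parameter of order $-2k$ with $G$-compact support uniform in $\mu$; transporting to the Fourier-type representation $\Phi_{B^\sigma(\mu)}: G \to {}^b\Psi^{-2k}(Z)$ via \eqref{PHI} and using the standard pseudodifferential-with-parameter bounds (as in \cite[Lemma~1.7.4]{gilkey-book} adapted to the $b$-setting), one controls $p_{s,\alpha}(B^\sigma(\mu))$ by a negative power of $|\mu|$. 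Bringing the seminorm inside the Bochner integral, the estimate $|e^{-t\mu}| \leq e^{-ta}$ on $\gamma$ (together with a standard contour deformation trick, writing $e^{-t\mu} = e^{-t\mu/2}\cdot e^{-t\mu/2}$ and absorbing an extra exponential factor) yields weighted exponential decay in $t$. For the remaining three pieces I would exploit that, as recalled from \cite{PP2}, the maps $\mu \mapsto \varphi(I(D^2-\mu)^{-k} I(R^\sigma(\mu)))$ and $\mu \mapsto L(\mu)$ are already ${}^b\mathcal{L}^\infty_{G,s}(Y)$-valued and rapidly decreasing (resp.\ tending to $0$ in Fr\'echet topology) as $\mathrm{Re}(\mu) \to +\infty$; so the seminorm may be brought under the integral sign and the same contour argument yields weighted exponential decay.

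For the general Schwartz function $f$, I would repeat the argument with the representation
\[
f(tD^2) = \frac{1}{2\pi i}\int_\gamma f(t\mu)(D^2-\mu)^{-1}d\mu,\qquad Df(tD^2) = \frac{1}{2\pi i}\int_\gamma f(t\mu)\,D(D^2-\mu)^{-1}d\mu.
\]
Iterating the resolvent identity to replace $(D^2-\mu)^{-1}$ by $(D^2-\mu)^{-k}$ and using $D(D^2-\mu)^{-k}$ in place of $(D^2-\mu)^{-k}$, one still obtains a symbolic-plus-residual decomposition where the symbolic part loses at most one order; the factor $Df(tD^2) = t^{-1/2}\cdot (tD)(D^2-\mu)^{-1}$-type manipulations show that the factor $D$ only produces polynomial growth in $\mu$, which is swamped by $|f(t\mu)|$ and the seminorm estimates above. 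Finally, since each entry of the Connes--Moscovici projector $V(tD) - e_1$ is either of the form $f(tD^2)$ or $Df(tD^2)$ for an explicit Schwartz $f$ (these are $e^{-t^2 D^\mp D^\pm}$, the fraction factor, and $t D^\pm$ multiplied by a Gaussian), the claimed weighted exponential convergence of $V(tD) - e_1$ follows immediately.

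The main obstacle I anticipate is the bookkeeping for the term involving $\varphi(I(D^2-\mu)^{-k}I(R^\sigma(\mu)))$: one must be careful that the section $\varphi$ of the indicial map and the inverse of the indicial family interact well with the $b$-seminorms and respect the $\epsilon$-bounds in the residual calculus. This is what forces us to rely on the rapid-decay statement of \cite[Proposition~5.21]{PP2}, rather than deriving it afresh. Once that rapid decay in $|\mu|$ with values in ${}^b\mathcal{L}^\infty_{G,s}(Y)$ is in hand, the final contour integration argument is essentially a copy of the closed case \eqref{B(t)}, and the proof concludes as above.
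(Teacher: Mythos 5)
Your proposal follows the paper's own argument essentially step for step: the same $k$-fold contour-integral representation of $\exp(-tD^2)$, the same four-term decomposition of $(D^2-\mu)^{-k}$ coming from the true $b$-parametrix in \cite[Prop.~5.21]{PP2}, the same seminorm estimates on each piece (the symbolic piece via $C^{2k}$-kernel bounds on the $b$-stretched product and pseudodifferential-with-parameter estimates, the remaining three by bringing the seminorm inside the Bochner integral and using rapid decay or Fr\'echet-topology decay as $\mathrm{Re}(\mu)\to+\infty$), and the same contour trick with $\mathrm{Re}\,\gamma>a$ to upgrade to weighted-exponential decay. You are marginally more explicit than the paper about extending the argument to $f(tD^2)$ and $Df(tD^2)$ for Schwartz $f$ and then to the Connes--Moscovici projector, but the route is the same.
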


\subsection{Small time behavior}\label{subsec:small time}
To study the small $t$ convergence of the eta integral, we will need the following properties. 
\begin{lemma}\label{lem:action property} Let $X$ be a cocompact $G$-proper manifold without boundary with a  $G$-invariant complete Riemannian metric $\mathbf{h} _X$. Let $d_X(-,-)$ be the associated distance function, and $d_G(-,-)$ be the distance function on $G/K$. Suppose that $V$ is a compact set on $X$ and $g$ is any element in $G$. There are  constants $C_0>0, C_1>0$ such that 
\begin{equation}\label{inequality-1}
d_X(hgh^{-1}x, x)\geq C_1 d_G(hgh^{-1}e,e)-C_0,\ \forall x\in V, h\in G .\quad 
\end{equation}
\end{lemma}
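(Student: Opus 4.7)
The plan is to reduce the inequality to a standard quasi-isometry statement between $X$ and $G/K$, exploiting the cocompactness of the action and the fact that $G$ acts by isometries on both sides. First, using the slice decomposition $X \cong G \times_K S$, I would fix a $G$-equivariant smooth projection $\pi : X \to G/K$, $[g,s] \mapsto gK$. The goal is to compare $d_X$ and the pullback of $d_G$ through $\pi$.

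The key intermediate step is to establish that $\pi$ is globally Lipschitz, i.e.\ there exists $A>0$ with
\begin{equation*}
d_G(\pi(x),\pi(y)) \;\leq\; A\, d_X(x,y), \qquad \forall x,y \in X.
\end{equation*}
To see this, note that the differential $d\pi$ is a smooth bundle map between $TX$ and $\pi^*T(G/K)$, and it is $G$-equivariant. Restricted to the compact slice $S$, the operator norm of $d\pi$ is bounded by some $A$; by $G$-equivariance and the fact that $G$ acts by isometries on both $X$ and $G/K$, the same bound holds everywhere. Integrating along a minimizing path in $X$ gives the Lipschitz estimate.

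With this in hand, the lemma follows from the triangle inequality. Since $V$ is compact and $\pi$ is continuous, $\pi(V)$ is bounded in $G/K$, so there exists $R>0$ with $d_G(\pi(x),eK) \leq R$ for all $x \in V$. Applying the Lipschitz bound together with $\pi(hgh^{-1}x) = hgh^{-1}\pi(x)$ and the isometric action of $G$ on $G/K$,
\begin{align*}
A\, d_X(hgh^{-1}x,x) &\;\geq\; d_G\bigl(hgh^{-1}\pi(x),\pi(x)\bigr) \\
&\;\geq\; d_G(hgh^{-1}eK,eK) - d_G\bigl(hgh^{-1}eK,hgh^{-1}\pi(x)\bigr) - d_G(\pi(x),eK) \\
&\;\geq\; d_G(hgh^{-1}e,e) - 2R.
\end{align*}
Setting $C_1 := 1/A$ and $C_0 := 2R/A$ yields the claimed inequality uniformly in $h \in G$ and $x \in V$.

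The one technical point I expect to require some care is the global Lipschitz bound on $\pi$. The issue is that the metric $\mathbf{h}_X$ is an \emph{arbitrary} $G$-invariant metric on $X$, not necessarily slice-compatible in the sense of Definition \ref{def:slice-metric}, so one cannot appeal to a product-type formula. The argument therefore has to rest entirely on the compactness of $S$ and the $G$-equivariance of $\pi$: one shows that both $\mathbf{h}_X$ and the pullback $\pi^*\mathbf{h}_{G/K}$ are smooth $G$-invariant symmetric $2$-tensors on $X$ which are bounded against each other on the compact set $S$, and hence globally by equivariance. Everything else is triangle-inequality bookkeeping.
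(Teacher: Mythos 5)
Your proposal is correct and follows essentially the same strategy as the paper: project to $G/K$ via the slice fibration, establish a uniform metric comparison (your Lipschitz bound on $\pi$ is the same as the paper's inequality $\mathbf{h}_X|_{H_x}\geq C_1\,\pi^*\mathbf{h}_{G/K}$, obtained by compactness of the slice/fiber plus $G$-invariance), and then close with the triangle inequality and boundedness of $\pi(V)$. The only cosmetic difference is that you apply the triangle inequality downstairs in $G/K$ while the paper applies it upstairs in $X$ with a fixed basepoint $x_0\in\pi^{-1}([e])$; the content is identical.
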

\begin{proof}
The slice theorem gives a fibration structure on $X=(G\times S)/K$ over $G/K$, i.e. $\pi: X\to G/K$. For every $x\in X$, let $T_x X$ be the tangent space of $X$ at $x$, and $V_x$ be the kernel of the map $\pi_*: T_x X\to T_{\pi(x)}G/K$, and $H_x$ be the orthogonal complement of $V_x$ in $T_x X$ with respect to the metric $\mathbf{h}_X$. The induced map $\pi_*: H_x\to T_{\pi(x)} G/K$ is a linear isomorphism. Though the map $\pi_*: H_x \to T_{\pi(x)}G/K$ might not be an isometry with respect to the restricted metric $\mathbf{h}_X|_{H_x}$ and the metric $\mathbf{h}_{G/K}$ on $T_{\pi(x)}G/K$, there is a positive constant $c_x$ such that 
\[
\mathbf{h}_{X}|_{H_x} \geq c_x (\pi_*)^{-1}\big(\mathbf{h}_{G/K} \big). 
\]
Running $x$ over every point in the fiber $\pi^{-1}(\pi(x))$, as the fiber is compact, there is a constant $C_1>0$ such that 
\[
\mathbf{h}_{X}|_{H_y} \geq C_1 (\pi_*)^{-1}\big(\mathbf{h}_{G/K} \big), \ \forall  y\in \pi^{-1}(\pi(x)). 
\]
As $G$ acts on $G/K$ transitively and the metric $\mathbf{h}_X$ and $\mathbf{h}_{G/K}$ are $G$-invariant, the above estimate holds for all $x\in X$, i.e. 
\[
\mathbf{h}_{X}|_{H_x} \geq C_1 (\pi_*)^{-1}\big(\mathbf{h}_{G/K} \big), \ \forall  x\in X.  
\]
The above comparison of the Riemannian metrics gives the following property on the distance functions by the standard argument on path lengths 
\[
d_X(hgh^{-1}x, x)\geq C_1d_{G}(hgh^{-1}\pi(x), \pi(x)). 
\]
Choose $x_0$ to be a point in $\pi^{-1}([e])$, where $[e]$ the point in $G/K$ associated to the coset of the identity element. The above inequality shows
\begin{equation}\label{eq:inequalitydX}
d_X(hgh^{-1}x_0, x_0)\geq C_1 d_G(hgh^{-1}e, e).
\end{equation}
We apply the triangle inequality to obtain
\begin{equation}\label{eq:distance}
d_X(hgh^{-1}x_0, hgh^{-1}x)+d_X(hgh^{-1}x, x)+d_X(x,x_0)\geq d_X(hgh^{-1}x_0,x_0).
\end{equation}
As $V$ is compact, there is a finite upper bound $C/2$ such that $d(x,x_0)\leq C/2$ for all $x\in V$. Combining this with Inequalities (\ref{eq:distance}) and (\ref{eq:inequalitydX}), we conclude with the desired inequality, i.e.
\[
d_X(hgh^{-1}x,x)\geq C_1d_G(hgh^{-1}e, e)-C_0. 
\]
\end{proof}

\begin{remark}\label{remark:estimate-also-b-1}
It is obvious that Lemma \ref{lem:action property} also holds if instead of $(X,\mathbf{h})$, $V\subset X$ and, for (ii),  $C(g)$ we consider:\\
$\quad$ $-$ $(Y_0,\mathbf{h}_0)$, a cocompact $G$-proper manifold with boundary, with 
slice-compatible metric, product-type\\$\quad$  near the boundary;\\
$\quad$ $-$ $V_0\subset Y_0$ a compact set in $Y_0$.

\smallskip
\noindent
We also consider $(Y,\mathbf{h})$, the $G$-manifold with cylindrical ends associated to $(Y_0,\mathbf{h}_0)$.
Here we recall that there is a collar neighbourhood of $\partial Y_0$ in $Y_0$ where
the action of $G$ is of product type (and thus extendable to a $G$-proper action on $(Y,\mathbf{h})$). Notice
that, consequently, the action of $G$ preserves the decomposition $Y:= (-\infty, 0] \times \partial Y_{0} \cup_{\partial Y_{0}} Y_0$.
Consider the compact subset $V_0\subset Y_0$ and $V_0\cap \partial Y_0$, denoted $\partial V_0$ if non-empty. Consider
finally $V:= (-\infty, 0] \times \partial V_{0} \cup_{\partial V_{0}} V_0$. 
Lemma \ref{lem:action property}  also holds if we consider now $(Y,\mathbf{h})$, $V$.
Indeed \eqref{inequality-1} does hold if $x\in V_0\subset Y_0$, as we have just observed, and also holds 
if $x\in  (-\infty, 0] \times \partial V_{0}\subset V\subset Y$ because the action of $G$ is of product type along the cylinder.
\end{remark}

The following property follows from \cite[Proposition 4.2, (i)]{CareyGRS} because a cocompact $G$-proper manifold without boundary has bounded geometry.
\begin{lemma}\label{lem:Cheng-Li-Yau}
 Let $X$ be a cocompact $G$-proper manifold without boundary and let $D$ be a Dirac-type operator (associated to a unitary
 Clifford action and a Clifford connection) on a spinor bundle $\mathcal{E}$. Let $\kappa_t(x,y)$ be the kernel function of the operator $D\exp(-tD^2)$. There are constant $\alpha,\beta>0$ such that
 \[
 \vert \vert \kappa_t(x,y)\vert\vert\leq \beta t^{-\frac{n+1}{2}} \exp\left(-\alpha \frac{d_X(x,y)^2}{t}\right).
 \]
where $\vert\vert \cdot \vert\vert$ is the operator norm from $\mathcal{E}_x$ to $\mathcal{E}_y$. 
\end{lemma}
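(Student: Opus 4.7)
The plan is to reduce the statement to the off-diagonal heat kernel estimate of Carey–Gayral–Rennie–Sukochev cited in the statement, whose only hypothesis beyond the Dirac-type nature of $D$ is that the underlying Riemannian manifold has bounded geometry. Accordingly, the first and main point I would verify is that a cocompact $G$-proper manifold $(X,\mathbf{h})$ with $G$-invariant metric is automatically of bounded geometry. Using the slice decomposition $X\cong G\times_K S$ of Section \ref{sect:preliminaries}, with $S$ compact and $K\subset G$ maximal compact, one has a transitive action of $G$ by isometries that is cocompact modulo the compact piece $S$; consequently the injectivity radius is bounded below and every covariant derivative of the Riemann curvature tensor (as well as of the Clifford action and of the connection on $\mathcal{E}$) is uniformly bounded on $X$. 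Thus all the structural data entering the Dirac operator $D$ are of bounded geometry in the usual sense.

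Once bounded geometry is established, I would quote verbatim \cite[Proposition 4.2, (i)]{CareyGRS}, which gives Gaussian off-diagonal estimates for the kernels of $p(tD)\exp(-tD^2)$ where $p$ is any polynomial, applied here to $p(\xi)=\xi$. The corresponding pointwise bound takes the form
\[
\|\kappa_t(x,y)\|\leq \beta\,t^{-(n+1)/2}\exp\!\left(-\alpha\,\frac{d_X(x,y)^2}{t}\right),
\]
with the factor $t^{-(n+1)/2}$ reflecting the extra half-power loss caused by the presence of the first order factor $D$ compared to the bare heat kernel scaling $t^{-n/2}$. The norm $\|\cdot\|$ on the right hand side is the operator norm between the fibers $\mathcal{E}_x$ and $\mathcal{E}_y$, which is what the proposition provides once one uses parallel transport to identify nearby fibers and uses finite-dimensionality to pass between operator norm and, e.g., Hilbert–Schmidt norm.

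The only nontrivial point to check is that the bounded geometry hypothesis is indeed present in the setting of the cited proposition; this is immediate from the slice argument above. No auxiliary delicate step (such as an in–in functional calculus argument with the resolvent and contour integrals as in Subsection \ref{subsect:large}) is needed here, because the desired estimate is uniform in the space variables and follows directly from the quoted Gaussian bound.
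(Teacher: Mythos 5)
Your proposal matches the paper's argument exactly: the paper's entire justification for Lemma \ref{lem:Cheng-Li-Yau} is the one-line remark preceding it, namely that the estimate follows from \cite[Proposition 4.2, (i)]{CareyGRS} because a cocompact $G$-proper manifold without boundary has bounded geometry. You simply spell out the bounded-geometry verification via the slice decomposition $X\cong G\times_K S$, which the paper asserts without detail.
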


\begin{remark}\label{remark:estimate-also-b-2}
Consider  the heat kernel $k_t$ associated to a Dirac-Laplacian  on  $(Y,\mathbf{h})$, the $G$-proper manifold with cylindrical ends associated to
a cocompact $G$-proper manifold with boundary $(Y_0,\mathbf{h}_0)$.
As $(Y,\mathbf{h})$ is a complete manifold with bounded geometry we can apply again  \cite[Proposition 4.2, (i)]{CareyGRS}
and conclude that 
  there are constant $\alpha_0, \beta_0>0$ such that 
\begin{equation}\label{estimate-carey-Y}
||k_t(x,y)||\leq \beta_0 t^{-\frac{n}{2}}\exp\left(-\alpha \frac{d_Y (x,y)^2}{t}\right). 
\end{equation}
\end{remark}

The following estimate is proved by Harish-Chandra  \cite[Theorem 6]{Harish-Chandra-dis} (see also \cite[Lemma 4.4]{Hochs-Wang-HC}).
\begin{lemma}\label{lem:integral} For a semisimple element $g$, when $t$ is close to 0, the integral 
\[
\int_{G/Z_{g}}\exp\left(-\alpha \frac{d_G(hg^{-1}h^{-1}e, e)^2}{t}\right)
\]
is bounded. We will sometimes use $d_G(g)$ for $d_G(ge,e)$  in the following of this article for abbreviation. 
\end{lemma}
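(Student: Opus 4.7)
The plan is to reduce the claim to Harish-Chandra's estimate on orbital integrals at semisimple elements, which is recorded as item~(3) following Definition~\ref{defn:laffargue}.

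First I would use the elementary pointwise inequality, valid for all $t\in(0,1]$ and all $h\in G$,
\[
\exp\!\left(-\alpha\,\frac{d_G(hg^{-1}h^{-1}e,e)^2}{t}\right)\;\leq\;\exp\!\left(-\alpha\,d_G(hg^{-1}h^{-1}e,e)^2\right),
\]
so it suffices to bound the $t=1$ version of the integral over $G/Z_g$. This turns the $t$-dependent uniform statement into a single finiteness statement for the fixed Gaussian $f(x):=\exp(-\alpha\,d_G(xe,e)^2)$, viewed as a function on $G$. (Recall also that $Z_g=Z_{g^{-1}}$ and $g^{-1}$ is semisimple iff $g$ is, so the integral in question is just the orbital integral $\tau_{g^{-1}}(f)$.)

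Next I would show that $f\in\mathcal{L}_s(G)$ for every $s\geq 0$. Combining the standard exponential upper bound $\Xi(x)^{-1}\leq C_0\exp(C_1\,d_G(xe,e))$ on Harish-Chandra's spherical function with the comparability of $\|x\|$ and $d_G(xe,e)$, one finds that
\[
(1+\|x\|)^s\,\Xi(x)^{-1}\,f(x)\;\leq\;P_s\!\big(d_G(xe,e)\big)\exp\!\big(C_1\,d_G(xe,e)-\alpha\,d_G(xe,e)^2\big)
\]
for some polynomial $P_s$, and the right-hand side is uniformly bounded on $G$ because the quadratic decay dominates polynomial-times-exponential growth. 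Hence $\nu_s(f)<\infty$ for all $s$.

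Finally, applying item~(3) following Definition~\ref{defn:laffargue} to $f\in\mathcal{L}_s(G)$ with $s>d_0$, the orbital integral $\int_{G/Z_g}f(hg^{-1}h^{-1})\,d(hZ_g)$ converges; combined with the initial pointwise bound this yields the required uniform boundedness for $t\in(0,1]$. The only substantive ingredient is this last step, and even there the main obstacle is conceptual rather than technical---namely, recognizing the Gaussian factor as a Harish-Chandra Schwartz function. Once this identification is made, the lemma is a direct consequence of the cited theorem of Harish-Chandra.
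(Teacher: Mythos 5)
Your proposal is correct and follows essentially the same route as the paper: the paper simply cites Harish-Chandra's Theorem 6 and Hochs--Wang, Lemma 4.4, and those references rest on exactly the argument you spell out. Reducing to $t=1$ by monotonicity, checking that the Gaussian $\exp(-\alpha d_G(\cdot)^2)$ has finite $\nu_s$-norm for every $s$ (via the exponential upper bound on $\Xi^{-1}$ being dominated by quadratic decay), and then invoking convergence of orbital integrals on $\mathcal{L}_s(G)$ (item (3) after Definition~\ref{defn:laffargue}, which is Harish-Chandra's theorem in the Lafforgue-algebra formulation) is precisely what the cited results provide.
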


\begin{lemma}\label{lem:compact-quotient}
For a semisimple $g\in G$, the quotient $X^g/Z_g$ is compact.
\end{lemma}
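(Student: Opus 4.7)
The plan is to reduce the question to a compactness statement inside $G$ by using cocompactness of the $G$-action on $X$, the properness of this action, and the semisimplicity of $g$. Since the $G$-action is cocompact, choose a compact subset $V \subset X$ with $X = G \cdot V$. For every $x \in X^g$, write $x = hv$ with $h \in G$, $v \in V$; the condition $g \cdot x = x$ translates into $(h^{-1}gh)\cdot v = v$, i.e.\ $h^{-1}gh \in G_v$, where $G_v$ denotes the stabilizer. This motivates introducing
\[
W_0 := \{(h,v) \in G \times V \,:\, h^{-1}gh \in G_v\},
\]
equipped with the $Z_g$-action $z \cdot (h,v) = (zh,v)$. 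The evaluation map $W_0 \to X^g$, $(h,v) \mapsto hv$, is continuous, $Z_g$-equivariant and surjective, so it suffices to prove that $W_0/Z_g$ is compact.

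First I would verify that $W_0$ is closed in $G \times V$ (it is the preimage of the closed fixed-point set $\{(k,v) : k\cdot v = v\}$ under the continuous map $(h,v) \mapsto (h^{-1}gh, v)$). Next I would invoke the standard consequence of properness that, since $V$ is compact, the set
\[
K_V := \bigcup_{v \in V} G_v \subset G
\]
is compact. In particular, every $(h,v) \in W_0$ satisfies $h^{-1}gh \in K_V$, so the projection $W_0 \to G \times V$ factors through $H \times V$, where $H := \{h \in G : h^{-1}gh \in K_V\}$.

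The decisive step is to use the semisimplicity of $g$: the orbit map $\psi\colon G \to G$, $h \mapsto h^{-1}gh$, factors as $G \twoheadrightarrow G/Z_g \xrightarrow{\iota} G$, and $\iota$ is a closed embedding onto the conjugacy class $C(g)$, precisely because $g$ is semisimple. Hence $H = \psi^{-1}(K_V)$ is the preimage under $G \to G/Z_g$ of the compact subset $C(g) \cap K_V$ of $G/Z_g$, so $H/Z_g \cong C(g) \cap K_V$ is compact. Therefore $H/Z_g \times V$ is compact, $W_0/Z_g$ is a closed subset of it, and thus compact. Passing to the image under the evaluation map yields compactness of $X^g/Z_g$.

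The main obstacle is ensuring the uniform control of the conjugates $h^{-1}gh$: this is where the two ingredients must combine---compactness of $K_V$ (supplied by properness of the action together with compactness of $V$) to bound $h^{-1}gh$, and closedness of $C(g)$ in $G$ (supplied by the semisimplicity hypothesis) to guarantee that the further quotient $H/Z_g$ is itself compact rather than merely Hausdorff.
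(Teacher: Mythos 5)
Your proof is correct, and it takes a genuinely different route from the paper's. The paper first invokes Proposition~\ref{prop:fixed point} to dispose of the non-elliptic case (where $X^g=\emptyset$), then for $g=k_0\in K$ it uses the slice theorem $X=G\times_K S$ explicitly, lifting $X^{k_0}$ to a subset of $G\times S$, identifying the conjugacy class $C(k_0^{-1})$ with $Z_{k_0}\backslash G$, intersecting with $K$, and assembling a fiber product to exhibit $Z_{k_0}\backslash\tilde{X}^{k_0}$ as homeomorphic to a closed subspace of $K\times S$. Your argument bypasses the slice model entirely: it only uses (a) cocompactness to choose a compact $V$ with $X=GV$, (b) properness to make $K_V=\bigcup_{v\in V}G_v$ compact, and (c) semisimplicity to make $C(g)$ closed so that the orbit map identifies the relevant quotient with the compact set $C(g)\cap K_V$. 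This is more economical and handles the non-elliptic case automatically ($W_0$ is then empty), so no separate reduction is needed; it also applies verbatim to any proper cocompact action for which $C(g)$ is closed, whereas the paper's argument is tied to the $K$-slice picture. What the paper's longer construction buys is an explicit model of $Z_g\backslash X^g$ as a closed subset of $K\times S$, which may be useful elsewhere but is not needed for bare compactness.

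One small notational slip: with the left $Z_g$-action $z\cdot(h,v)=(zh,v)$ you actually have $\psi(zh)=(zh)^{-1}g(zh)=h^{-1}gh=\psi(h)$, so $\psi$ factors through the \emph{left} coset space $Z_g\backslash G$, not $G/Z_g$ as written. Since $h\mapsto h^{-1}$ gives a homeomorphism $Z_g\backslash G\cong G/Z_g$, this does not affect the conclusion, but you should keep the two quotients straight (or change the $Z_g$-action to a right action $z\cdot(h,v)=(hz^{-1},v)$ and conjugate by $hgh^{-1}$ instead). Also, the claim that $\iota$ is a homeomorphism onto $C(g)$ uses not just closedness of $C(g)$ but the general fact that for a Lie group $G$ acting on a manifold the orbit map $G/G_x\to G\cdot x$ is a homeomorphism whenever the orbit is locally closed; it is worth stating this explicitly since it is the precise place where semisimplicity (closedness of $C(g)$) enters.
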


\begin{proof}
If $g$ is not conjugate to an element in $K$ (that is, $g$ is not elliptic) then $X^g$ is empty (See Proposition \ref{prop:fixed point}). 
It suffices to work with $g=k_0\in K$.  By the slice theorem, we assume that $X=G\times_K S$. A point
$[(h,x)]$ belongs to $X^{k_0}$ if there is $k\in K$ such that $k_0h=hk^{-1}$ and $kx=x$. Therefore, 
\[
X^{k_0}=\{[(h, x)]\in G\times _K S| h^{-1}k_0^{-1}h=k, kx=x\}. 
\]
Let $\pi$ be the quotient map from $G\times S$ to $G\times_K S$. The space $\tilde{X}^{k_0}:=\{(h,x)\in G\times S| h^{-1}k_0^{-1}h=k, kx=x\}$ as a subset of $G\times S$ is the preimage $\pi^{-1}(X^{k_0})$. As $\pi$ is a principal $K$-bundle, $\tilde{X}^{k_0}$ is a manifold. As the $Z_{k_0}$ action on $X\times G$ commutes with the $K$ action, it is sufficient to prove that $Z_{k_0}\backslash \tilde{X}^{k_0}$ is compact to conclude that $Z_{k_0}\backslash X^{k_0}$ is compact.

Consider the right conjugacy action of $G$ on the conjugacy class $C(k_0^{-1})=\{hk_0^{-1}h^{-1}|h\in G\}$. $C(k_0^{-1})$ is a closed submanifold of $G$, and the $G$ action on $C(k_0^{-1})$ is transitive with the stabilizer  group at $k_0^{-1}$ being $Z_{k_0}$. We have that $C(k_0^{-1})$ is diffeomorphic to $Z_{k_0}\backslash G$ and the map $f: G\to C(k_0^{-1})$ mapping $g$ to $h^{-1}k_0^{-1}h$ is a fibration.

Consider the intersection $C(k_0, K):=C(k_0^{-1})\cap K$, which is a compact subset. Let $F(k_0)$ be $f^{-1}\big(C(k_0^{-1})\cap K\big)$. By the fibration property of $f$, $Z_{k_0}\backslash F(k_0)$ is homeomorphic to $C(k_0, K):=C(k_0^{-1})\cap K$. For $C(k_0, K)$, we consider the closed space $I_{k_0, K}:=\{ (k, x)| k\in C(k_0, K), k_0x=x\}$ of $K\times S$. The map $\tilde{f}: I_{k_0, K}\to C(k_0, K)$ mapping $(k,x)\in I_{k_0, K}$ to $k\in C(k_0, K)$ is a continuous surjective map. 

Let $F(k_0)_{f}\times_{\tilde{f}} I_{k_0, K} $ be the fiber product of $F(k_0)$ and $I_{k_0, K}$ over the maps $f|_{F(k_0)}$ and $\tilde{f}$. As $f$ is a fibration, $F(k_0)_f\times _{\tilde{f}}I_{k_0, K}$ can be identified with $\tilde{X}^{k_0}=\{(g,x)| g\in C(k_0, K), x\in S^{f(g)}\}$. 

With the above identification, the quotient of $\tilde{X}^{k_0}$ by $Z_{k_0}$ is homeomorphic to 
\[
Z_{k_0}\backslash \left(F(k_0)_f \times _{\tilde{f}} I_{k_0, K}\right)=\left( Z_{k_0}\backslash F(k_0)\right)_f\times _{\tilde{f}}I_{k_0, K},
\]
which is homeomorphic to $I_{k_0, K}$ as $Z_{k_0}\backslash F(k_0)$ is homeomorphic to $C(k_0, K)$. As a closed subspace of $K\times S$, $I_{k_0, K}$ is compact. We conclude that $Z_{k_0}\backslash \tilde{X}^{k_0}$ is compact. 
\end{proof}

For an elliptic element $g$, the fixed point set could be non-empty. To tackle this situation, let $c_G$ be a cut-off function associated to the right $Z_{g}$ action on $G$. Following \cite[Lemma 3.2]{Hochs-Wang-HC}, we have the following equivalent expression  for $\tau^X_{g}(D\exp(-tD^2))$,  

\begin{equation}\label{eq:eta integral expression}
\begin{split}
\tau^X_{g}(D\exp(-tD^2))&=\int_{G/Z_{g}} \int_X c(y){\rm tr}(\kappa_t(y, hgh^{-1}y)hgh^{-1})dy dh(Z_{g})\\
&=\int_{G}c_G(h)\int_X c(y){\rm tr}(\kappa_t(y, hgh^{-1}y)hgh^{-1})dy dh\\
&=\int_X\int_G c_G(h)c(y){\rm tr}(h\kappa_t(h^{-1}y, g h^{-1}y) g h^{-1})dhdy\\
&=\int_X\int_G c_G(h)c(hy) {\rm tr}(\kappa_t(y, gy)g)dhdy. 
\end{split}
\end{equation}

Recall that $V$ is the support of the cut-off function $c$. We apply  Lemma \ref{lem:action property} to $V$ and conclude that there is a constant $C_0>0$ such that 
\[
d_X(hgh^{-1}x, x)\geq C_1d_G(hgh^{-1}e, e)-C_0, \forall x\in V, h\in G. 
\]

We introduce two subsets of the conjugacy class $C(g)$, i.e.
\[
K(g):=\{g\in C(g) | d_G(ge,e)\leq 2C_0/C_1 \},\ K^c(g):=\{g\in C(g)| d_G(ge, e)>2C_0/C_1 \}. 
\]
Define a map $\chi: G\to C(g)$ by $\chi(h):=hgh^{-1}$. Let $U(g)$ be the subset of $G$ defined to be the preimage of $K(g)$. And  let $U^c(g)$  be the complement of $U(g)$ in $G$.  As $g$ is semisimple, $C(g)$ is a closed subset of $G$, and therefore the image of $K(g)$ in $G/K$ is a bounded closed subset. Therefore, the image of $K(g)$ in $G/K$ is compact, and $K(g)$ is compact. Observe that $U(g)$ is invariant under the right $Z_g$ action and the quotient $U(g)/Z_g$ is diffeomorphic to $K(g)$. So $U(g)/Z_g$ is  also compact. We split the integral (\ref{eq:eta integral expression}) into two parts,
\[
\begin{split}
\tau^X_{g}(D\exp(-tD^2))&=\int_X\int_G c_G(h)c(hy) {\rm tr}(\kappa_t(y, gy)g)dhdy\\
&=\int_X \int_{U(g)}  c_G(h)c(hy){\rm tr}(\kappa_t(y, gy)g)dhdy+\int_X \int_{U^c(g)} c_G(h) c(hy){\rm tr}(\kappa_t(y, gy)) dhdy.
\end{split}
\]

\begin{lemma}\label{lem:uc} Let $\widetilde{W}$ be a closed subset of $X$. 
The integral 
\begin{equation}\label{eq:integral-uc}
I^t_{\widetilde{W}, U^c}(g):=\int_{\widetilde{W}} \int_{U^c(g)} c_G(h) c(hy){\rm tr}(\kappa_t(y, gy)g)dhdy
\end{equation}
is of exponential decay as $t\to 0$. 
\end{lemma}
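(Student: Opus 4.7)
My plan is to exploit the Gaussian off-diagonal decay of the heat kernel from Lemma~\ref{lem:Cheng-Li-Yau} together with the displacement lower bound from Lemma~\ref{lem:action property}, so that on the region $h \in U^c(g)$ the integrand picks up a uniform factor $e^{-\alpha C_0^2/(2t)}$, delivering the desired exponential decay as $t\downarrow 0$.

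First I would estimate the integrand pointwise. By Lemma~\ref{lem:Cheng-Li-Yau},
\[
\|\kappa_t(y, gy)\| \leq \beta\, t^{-(n+1)/2}\exp\!\bigl(-\alpha\, d_X(y, gy)^2/t\bigr).
\]
Using the $G$-invariance of $d_X$, $d_X(y, gy) = d_X(hy, (hgh^{-1})(hy))$, and whenever $c(hy) \neq 0$ the point $hy$ lies in the compact support $V$ of the cut-off $c$. Thus Lemma~\ref{lem:action property} applies with $x = hy$ and gives $d_X(y, gy) \geq C_1\, d_G(hgh^{-1}e, e) - C_0$. Since the condition $h \in U^c(g)$ means $d_G(hgh^{-1}e, e) > 2C_0/C_1$, we obtain $C_1\, d_G(hgh^{-1}e, e) - C_0 \geq \max\bigl\{C_0,\ \tfrac{C_1}{2}\, d_G(hgh^{-1}e, e)\bigr\}$, and splitting the exponent in two equal halves,
\[
\exp\!\bigl(-\alpha\, d_X(y, gy)^2/t\bigr) \leq e^{-\alpha C_0^2/(2t)}\cdot \exp\!\Bigl(-\tfrac{\alpha C_1^2}{8t}\, d_G(hgh^{-1}e, e)^2\Bigr).
\]
The first factor is the small-$t$ exponential decay; the second will ensure integrability in $h$. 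The $y$-integral is then dominated by $\int_X c(hy)\,dy = \int_X c(z)\,dz$, via the change of variable $z = hy$ (valid since $G$ acts by volume-preserving isometries), which is a finite constant independent of $h$ and of the choice of $\widetilde{W}$. What remains depends on $h$ only through $hgh^{-1}$ and is cut off by $c_G$, so it collapses to an integral over $G/Z_g$, which is bounded uniformly for small $t$ by Lemma~\ref{lem:integral} (applied with $\alpha$ replaced by $\alpha C_1^2/8$). Combining all of this yields
\[
|I^t_{\widetilde{W}, U^c}(g)| \leq \mathrm{const}\cdot t^{-(n+1)/2}\, e^{-\alpha C_0^2/(2t)},
\]
which decays exponentially as $t\to 0$.

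The delicate point is the packaging. The estimate $d_X \geq C_1 d_G - C_0$ contains a subtracted constant that on its own would be useless for Gaussian purposes, but precisely the hypothesis $h \in U^c(g)$ guarantees the uniform positivity $d_X(y, gy) \geq C_0 > 0$, which lets us peel off the $h$-independent factor $e^{-\alpha C_0^2/(2t)}$ while still retaining enough Gaussian weight in $d_G(hgh^{-1}e, e)$ to invoke Harish-Chandra's orbital integral bound. Note that no compactness of $\widetilde{W}$ is required; what matters is the $G$-compactness of $\mathrm{supp}(c)$.
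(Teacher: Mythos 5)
Your proof is correct and follows essentially the same strategy as the paper's: Gaussian heat kernel decay (Lemma~\ref{lem:Cheng-Li-Yau}) combined with the displacement lower bound (Lemma~\ref{lem:action property}), then peeling off a uniform $e^{-\mathrm{const}/t}$ factor on $U^c(g)$ while keeping enough Gaussian weight in $d_G(hgh^{-1}e,e)$ to invoke the orbital integral bound (Lemma~\ref{lem:integral}). The only cosmetic differences are that the paper first performs the substitution $x=hy$ to put the kernel in the form $\kappa_t(x,hgh^{-1}x)$ whereas you invoke $G$-invariance of $d_X$ in place, and the numerical constants in the split of the exponent ($C_0^2/2$, $C_1^2/8$ vs. the paper's $C_0^2/4$, $C_1^2/16$) differ harmlessly.
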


\begin{proof}
It is sufficient to prove the property for $\widetilde{W}=X$. We rewrite the integral (\ref{eq:integral-uc}) by applying the change of variable $x=hy$.
\[
\begin{split}
I^t_{U^c}(g)&:=\int_X \int_{U^c(g)} c_G(h)c(x){\rm tr}(\kappa_t(x, hgh^{-1}x)hgh^{-1})dhdx\\
&=\int_{U^c(g)} c_G(h) \int_V c(x) {\rm tr}(\kappa_t(x, hgh^{-1}x)hgh^{-1})dhdx.
\end{split}
\]

By Lemma \ref{lem:Cheng-Li-Yau}, we have the following estimate for $|{\rm tr}(\kappa_t(x, hgh^{-1}x)hgh^{-1})|$, 
\[
\begin{split}
\vert\vert\kappa_t(x, hgh^{-1}x)\vert\vert&\leq \beta t^{-\frac{n+1}{2}}\exp\left(-\alpha \frac{d_X^2(x, hgh^{-1}x)}{t}\right),\\
|{\rm tr}(\kappa_t(x, hgh^{-1}x)hgh^{-1})|&\leq \tilde{\beta}t^{-\frac{n+1}{2}}\exp\left(-\alpha \frac{d_X^2(x, hgh ^{-1}x)}{t}\right),\qquad \forall x\in X. 
\end{split}
\]
Applying Lemma \ref{lem:action property}, we get
\[
d_X(x, hgh^{-1}x)\geq C_1d_G(hgh^{-1}e, e)-C_0. 
\]
By the definition of $U^c(g)$, for $h\in U^c(g)$, we have
\[
d_X(x, hgh^{-1}x)\geq C_0,\ d_X(x,hgh^{-1}x)\geq \frac{C_1}{2}d_G(hgh ^{-1}e,e),
\] 
and accordingly
\[
d_X(x, hgh^{-1}x)\geq \frac{C_0}{2}+\frac{C_1d_G(hgh^{-1}e, e)}{4},\ d_X(x, hgh^{-1}x)^2\geq \frac{C^2_0}{4}+\frac{C^2_1d_G(hgh^{-1}e,e)^2}{16}.
\]
Hence, we have the estimate
\[
|{\rm tr}(\kappa_t(x, hgh^{-1}x)hgh^{-1})|\leq \tilde{\beta}t^{-\frac{n+1}{2}}\exp\left(-\frac{\alpha C_0^2}{4t}\right)\exp\left(-\alpha C_1^2\frac{d_G^2(hgh^{-1}e,e)}{16t}\right).
\]

As $V$ is compact, the integral 
\[
\int_V c(x)dx
\]
is finite. And $I^t_{U^c}(g)$ can be bounded by
\[
\begin{split}
|I^t_{U^c}(g)|\leq &\gamma t^{-\frac{n+1}{2}}\exp\left(-\frac{\alpha C_0^2}{4t}\right)\int_{U^{c}(g)}c(g) \exp\left(-\alpha C_1^2\frac{d^2_G(hgh^{-1}e,e)}{16t}\right)dh\\
\leq &\gamma t^{-\frac{n+1}{2}}\exp\left(-\frac{\alpha C_0^2}{4t}\right) \int_{G/Z_{g}} \left(-\alpha C_1^2 \frac{d^2_G(hgh^{-1}e,e)}{16t}\right)d(hZ_{g}).
\end{split}
\]
This implies the exponential decay property of $I^t_{U^c}(g)$. 
\end{proof}

Let $W$ be an open subset of $X$ containing the $g$ fixed point submanifold $X^{g}$. 
\begin{lemma}\label{lem:decay-outside w}
The integral 
\[
I^t_{W^c, U}(g):=\int_{W^c} \int_{U(g)}  c_G(h)c(hy){\rm tr}(\kappa_t(y, gy)g)dh dy
\]
converges exponentially to 0 as $t\to 0$. 
\end{lemma}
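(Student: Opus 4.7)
The plan is to localise the effective integration domain to a compact subset and then invoke the Gaussian off‑diagonal estimate of Lemma~\ref{lem:Cheng-Li-Yau} to exhibit exponential decay as $t\downarrow 0$.

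First I would observe that since $c_G$ is a cut‑off function for the proper free right $Z_g$‑action on $G$ by multiplication, it can be chosen with compact support $K_G\subset G$; similarly $c\in C^\infty_c(X)$ has compact support $V\subset X$. The factor $c_G(h)\,c(hy)$ in the integrand then forces $h\in K_G$ and $hy\in V$, equivalently $y\in h^{-1}V\subset K_G^{-1}V$. Since $K_G^{-1}V$ is the image of the compact set $K_G^{-1}\times V$ under the continuous action map $G\times X\to X$, it is compact in $X$. Consequently the integrand is supported in the compact subset
\[
\bigl(W^{c}\cap K_G^{-1}V\bigr)\,\times\,\bigl(U(g)\cap K_G\bigr)\subset X\times G.
\]

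Second, since $W$ is an open neighbourhood of the fixed‑point set $X^{g}$, the closed set $W^{c}\cap K_G^{-1}V$ is a compact subset of $X\setminus X^{g}$. There $gy\neq y$, so the continuous function $y\mapsto d_X(y,gy)$ is strictly positive; compactness then furnishes a uniform lower bound $\delta>0$ with $d_X(y,gy)\geq\delta$ for all $y\in W^{c}\cap K_G^{-1}V$.

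Finally, Lemma~\ref{lem:Cheng-Li-Yau} gives the pointwise estimate
\[
\lvert\mathrm{tr}(\kappa_t(y,gy)\,g)\rvert\leq C_g\,\beta\,t^{-(n+1)/2}\exp\!\left(-\alpha\frac{d_X(y,gy)^2}{t}\right)\leq C_g\,\beta\,t^{-(n+1)/2}\exp\!\left(-\frac{\alpha\delta^2}{t}\right),
\]
where $C_g$ is an upper bound for the fibrewise operator norm of the action of $g$ on $E$ (uniformly bounded because $G$ acts by unitary bundle automorphisms). Integrating against the bounded compactly supported cut‑offs $c_G(h)c(hy)$ over the compact region identified in the first step gives
\[
\lvert I^{t}_{W^{c},U}(g)\rvert\leq C'\,t^{-(n+1)/2}\exp(-\alpha\delta^2/t),
\]
which decays exponentially as $t\downarrow 0$. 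The Harish-Chandra type estimate on $G/Z_g$ used in Lemma~\ref{lem:uc} is avoided here because the compact support of $c_G$ alone compactifies the $h$‑integration; the only point requiring care is the uniform positivity of $d_X(y,gy)$, which is immediate from the hypothesis $X^{g}\subset W$ once the integration region has been shown to be compact.
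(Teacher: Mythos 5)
Your overall strategy — compactify the effective integration region in both variables, bound $d_X(y,gy)$ away from $0$ on the compact $y$-region, then apply the Gaussian estimate of Lemma~\ref{lem:Cheng-Li-Yau} — is exactly what the paper does. However, there is a genuine gap in your first step.

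You assert that the cut-off function $c_G$ for the right $Z_g$-action on $G$ ``can be chosen with compact support $K_G\subset G$.'' This is false in general: a cut-off function for a proper action satisfies $\int_{Z_g} c_G(hz)\,dz=1$ for every $h$, so its support must meet every right coset $hZ_g$ and therefore surjects onto $G/Z_g$; it has compact support in $G$ only when $G/Z_g$ is compact. The whole point of this section (stated explicitly at the start of Section 4) is that one does \emph{not} assume $G/Z_g$ compact, so your claim would trivialize exactly the difficulty the paper is dealing with. The set $K_G^{-1}V$ on which you build the rest of the argument is therefore not compact a priori, and the proof as written does not close.

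The correct route — the one the paper takes — is subtler: the $h$-integral runs only over $U(g)$, and the paper has already shown (just before Lemma~\ref{lem:uc}) that $U(g)/Z_g\cong K(g)$ is \emph{compact}, because $g$ is semisimple so its conjugacy class $C(g)$ is closed in $G$ and $K(g)$ is a bounded closed subset. Since $c_G$ maps properly to $G/Z_g$ and $U(g)=q^{-1}(U(g)/Z_g)$, the restriction $c_G|_{U(g)}$ has compact support in $U(g)$, even though $c_G$ itself does not. From that point on, properness of the $G$-action shows $\tilde c(y):=\int_{U(g)}c_G(h)c(hy)\,dh$ has compact support $\tilde V$, and your distance-lower-bound and Gaussian-estimate steps go through verbatim with $W^c\cap\tilde V$ in place of $W^c\cap K_G^{-1}V$. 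So the fix is a change of justification rather than a change of strategy, but it is an essential one: without the compactness of $U(g)/Z_g$ (and the role of semisimplicity in proving it), the argument is not valid.
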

\begin{proof}
Because $U(g)/Z_g$ is compact and $c_G$ is the cut-off function for the $Z_g$-action on $G$, we know that $c_G$ is compactly supported on $U(g)$. 
Moreover, since $c$ is compactly supported, the properness of the $G$ action on $X$ implies that the function 
\[
\tilde{c}(y):=\int_{U(g)} c_G(h)c(hy)dh
\]
is also compactly supported. Let $\tilde{V}$ be the support of $\tilde{c}$. And $I^t_{W^c, U}(g)$  
can be expressed as 
\[
\int_{W^c\cap \tilde{V}} \tilde{c}(y){\rm tr}(\kappa_t(y, gy)gdy. 
\]

Using Lemma \ref{lem:Cheng-Li-Yau}, we have the estimate 
\[
|{\rm tr}(\kappa_t(y, gy)g)|\leq \tilde{\beta}t^{-\frac{n+1}{2}}\exp\left(-\alpha \frac{d^2_X(y, gy)}{t}\right) \forall y\in X. 
\]

As $W^c\cap \tilde{V}$ is compact and does not contain any $g$ fixed point, there is a positive $\epsilon_0$ such that 
\[
d_X(y, gy)\geq \epsilon_0,\ \forall y\in W^c \cap \tilde{V}.
\] 
We can bound $I^t_{W^c, U}(g)$ using the above estimate about $d(y, gy)$,
\[
|I^t_{W^c, U}(g)|\leq \tilde{\beta}t^{-\frac{n+1}{2}}\exp\left(-\alpha \frac{\epsilon_0^2}{t}\right) \int _{W^c\cap \tilde{V}} |\tilde{c}(y)|dy.
\]
This gives the exponential decay property of $I^t_{W^c, U}(g)$.
\end{proof}

We can summarize  Lemma \ref{lem:uc} and \ref{lem:decay-outside w} in the following result. 
\begin{proposition}\label{prop:eta-outside fixed point}
Given any open set $W$ containing the $g$ fixed point submanifold $X^{g}$, then the integral 
\[
I^t_{W^c}(g):=\int_{W^c} \int_G c_G(h)c(hy) {\rm tr}(\kappa_t(y, gy)g)dh dy =I^t_{W^c, U}(g)+I^t_{W^c, U^c}(g). 
\]
decay exponentially to zero at $t\to 0$. When $W$ is assumed to be $g$ invariant, $I^t_{W^c}(g)$ has the following expression under change of coordinates
\[
I^t_{W^c}(g)=\int_{W^c}\int_G c_G(h)c(hgx) {\rm tr}(\kappa_t(x, gx)g)dhdx. 
\]
\end{proposition}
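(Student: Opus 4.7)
The plan is to derive this proposition as a direct packaging of Lemmas \ref{lem:uc} and \ref{lem:decay-outside w}, together with a change of variables for the last identity. First I would split the integration in $h$ along the disjoint decomposition $G = U(g)\sqcup U^{c}(g)$, which gives the identity
\[
I^t_{W^c}(g) = I^t_{W^c,U}(g) + I^t_{W^c,U^c}(g)
\]
asserted in the statement. Since $W$ is open, $W^c$ is closed in $X$, so Lemma \ref{lem:uc} applied with $\widetilde{W}=W^c$ yields exponential decay of $I^t_{W^c,U^c}(g)$ as $t\downarrow 0$. Lemma \ref{lem:decay-outside w} gives exponential decay of $I^t_{W^c,U}(g)$ directly. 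Adding the two estimates establishes the first part of the statement.

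For the change-of-variables assertion, I would substitute $y = gx$ in the integral. Three facts are used simultaneously: (i) the Riemannian measure on $X$ is $G$-invariant, so $dy=dx$; (ii) the assumption that $W$, and hence $W^c$, is $g$-invariant, which guarantees that after the substitution $x$ still ranges over $W^c$; (iii) the $G$-equivariance of the Schwartz kernel $\kappa_t$ of $D\exp(-tD^2)$, which yields
\[
\kappa_t(gx, g^2 x) = g\,\kappa_t(x, gx)\,g^{-1},
\]
so that by the cyclicity of the fibrewise trace,
\[
\mathrm{tr}\bigl(\kappa_t(gx, g^2 x)\,g\bigr) = \mathrm{tr}\bigl(\kappa_t(x, gx)\,g\bigr).
\]
Combining these with the substitution $c(hy)=c(hgx)$ yields the claimed form of $I^t_{W^c}(g)$.

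I do not anticipate any essential obstacle, because the analytic content has already been distilled into the two preceding lemmas; the only point requiring care is to check that the hypotheses of Lemma \ref{lem:uc} and Lemma \ref{lem:decay-outside w} are correctly met (closedness of $W^c$ and the fact that $W$ contains $X^g$, respectively), and that the $g$-invariance hypothesis in the second assertion is used precisely to preserve the domain $W^c$ under $y\mapsto gx$.
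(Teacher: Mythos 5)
Your proof is correct and follows exactly the paper's route: the paper simply states the proposition as a summary of Lemma \ref{lem:uc} (applied to the closed set $W^c$) and Lemma \ref{lem:decay-outside w}, with the change-of-variables identity obtained by substituting $y=gx$ and using the $G$-equivariance of $\kappa_t$ together with the $g$-invariance of $W^c$, precisely as you spell out.
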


Let $W$ be a sufficiently small neighborhood of $X^{g}$. Using the $Z_{g}$-invariant tubular neighborhood theorem, we can identify $W$ to be an $r$-ball bundle $N(r)$ in the normal bundle $N$ of $X^g$ in $X$ with a sufficiently small radius $r$. Define $c_{g}$, a smooth function on $X$, by
\[
c_{g}(x):=\int_G c_G(h)c(hg x)dh.
\]

\begin{lemma}
\label{lem-support}
The function $c_g$ is a cut-off function for the $Z_g$-action on $X$. In particular, for any $x \in X$, 
\begin{enumerate}
	\item  the integral 
\[
\int_{Z_g} c_g(hx) = 1;
\]
\item  Let $q: X\to X/Z_g$ be the quotient map. The intersection of the support of $c_g$ with $q^{-1}(V')$  is compact for any compact subset $V'$ in $X/Z_g$. 
\end{enumerate}
\end{lemma}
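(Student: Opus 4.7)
The plan is to deduce both parts directly from the definition $c_g(x)=\int_G c_G(h)\,c(hgx)\,dh$, using Fubini's theorem, unimodularity of $G$ and of $Z_g$ (the latter holds because $Z_g$, being the centralizer of a semisimple element in a connected linear real reductive group, is itself reductive), and the defining properties of the two given cut-offs $c$ and $c_G$.

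For (1), I would first rewrite $c_g(x)=\int_G c_G(kg^{-1})\,c(kx)\,dk$ via the right-invariant substitution $k=hg$. Then
\[
\int_{Z_g} c_g(zx)\,dz=\int_{Z_g}\!\!\int_G c_G(kg^{-1})\,c(kzx)\,dk\,dz,
\]
and swapping the order together with the substitution $k\mapsto kz^{-1}$ (right-invariance on $G$) yields $\int_G c(kx)\int_{Z_g}c_G(k(gz)^{-1})\,dz\,dk$. The substitution $w=gz$ in the inner $Z_g$-integral (measure-preserving because $g\in Z_g$), followed by $u=w^{-1}$ (measure-preserving by unimodularity of $Z_g$), reduces the inner integral to $\int_{Z_g} c_G(ku)\,du$, which equals $1$ by the defining property of $c_G$ as a cut-off for the right $Z_g$-action. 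The remaining integral $\int_G c(kx)\,dk=1$ by the cut-off property of $c$ for the $G$-action, so the product is $1$.

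For (2), the same rewriting gives $\mathrm{supp}(c_g)\subset g^{-1}\mathrm{supp}(c_G)^{-1}\,\mathrm{supp}(c)$. Choose a compact lift $\tilde V\subset X$ of $V'\subset X/Z_g$ (which exists because the $Z_g$-action on $X$ is proper), so that $q^{-1}(V')=Z_g\cdot\tilde V$. For $x=z\tilde v\in q^{-1}(V')\cap\mathrm{supp}(c_g)$, there is $k\in\mathrm{supp}(c_G)$ with $kgx\in\mathrm{supp}(c)=:K_X$, i.e.\ $(kgz)\tilde v\in K_X$. Properness of the $G$-action on $X$, combined with compactness of $\tilde V$ and $K_X$, forces $kgz\in C$ for some fixed compact $C\subset G$. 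The key observation is that under $\pi: G\to G/Z_g$, the projections of $k$ and $kgz$ coincide because $g$ and $z$ both lie in $Z_g$; hence $\pi(k)\in\pi(C)$, a compact subset of $G/Z_g$. By the proper-projection property of the cut-off $c_G$, the set $L:=\mathrm{supp}(c_G)\cap\pi^{-1}(\pi(C))$ is compact, so $k\in L$. Then $gz\in k^{-1}C\subset L^{-1}C$, whence $z\in g^{-1}(L^{-1}C\cap Z_g)$, a compact subset of $Z_g$. Consequently $x=z\tilde v$ ranges over the compact set $g^{-1}(L^{-1}C\cap Z_g)\cdot\tilde V$, and the closed subset $\mathrm{supp}(c_g)\cap q^{-1}(V')$ is thus compact.

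The main technical obstacle is that $\mathrm{supp}(c_G)$ is not compact in $G$ (since $Z_g$ is not cocompact in $G$ in general): it is compactly supported only modulo $Z_g$. The trick making (2) work is the identity $\pi(k)=\pi(kgz)$ coming from $g,z\in Z_g$, which transfers compactness of $\pi(C)\subset G/Z_g$ into a genuine compactness statement for $k$ inside $\mathrm{supp}(c_G)$; once $k$ is bounded, the bound on $z$ is immediate.
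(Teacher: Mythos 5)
Your proof is correct and follows essentially the same strategy as the paper's: part (1) is the same change-of-variables argument using unimodularity of $G$ and $Z_g$ and the cut-off property of $c_G$ and $c$, and part (2) rests on the same key observations (properness of the $G$-action on $X$, the fact that $g\in Z_g$ so the right-$Z_g$-coset of $k$ and $kgz$ coincide, and the proper-projection property of $c_G$), though you package them more compactly than the paper's chain of auxiliary sets $G(V'')$, $P_g(V'')$, $R_g(V'')$. The only small point glossed over — equally so in the paper — is passing from relative compactness of $\{c_g\neq 0\}\cap q^{-1}(V')$ to compactness of $\mathrm{supp}(c_g)\cap q^{-1}(V')$, which is handled by running the argument with a slightly larger compact $V_1'\supset V'$ containing $V'$ in its interior.
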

\begin{remark}
Part (1) of Lemma \ref{lem-support} is proved in \cite[Lemma 4.11]{Hochs-Wang-HC}.
\end{remark}
\begin{proof}
For part (1), compute that 
\[
\int_{Z_g} c_g(zx)dz = \int_{Z_g}\int_G c_G(h)c(hgzx)\; dhdz =  \int_{Z_g}\int_G c_G(h'z'^{-1})c(h'x) \; dhdz = 1
\]
where we  substitute $h' = hgz$ and $z' = gz$. 

For part (2), suppose that $V'$ is a compact subset in $X/Z_g$.  The intersection of the support of $c_g$ with $q^{-1}(V')$ consists is the closure of the following set,
\[
Q_g(V'):=\{x\in X: c_g(x)\ne 0, q(x)\in V'\}. 
\]
Recall that 
\[
c_g(x)=\int_G c_G(h)c(hgx) dh
\]
Hence, $c_g(x)\ne 0$ if and only if there is some $h$ such 
\[
c_G(h)\ne 0,\ {\text{and}}\ c(hgx)\ne 0. 
\]
Using this characterization of $c_g(x)\ne 0$, we have
\[
Q_g(V')=\{x\in X: \exists h\in G, c_G(h)\ne 0, c(hgx)\ne 0, q(x)\in V'\}. 
\]
Observe that $g$ commutes with $Z_g$. So $g$ acts on the quotient $X/Z_g$, and the map $q: X\to X/Z_g$ is $g$-equivariant. Hence, substituting $x'=gx$, we obtain the following expression, 
\[
Q_g(V')=g^{-1}\left(\{x'\in X:\exists h\in G, c_G(h)\ne 0, c(hx')\ne 0, q(x')\in g^{-1}(V')\}\right).
\]
As $V'$ is compact in $X/Z_g$, $g^{-1}(V')$ is also compact. We can assume without loss of generality that $q^{-1}\big(g^{-1}(V')\big)$ is of the form 
\[
q^{-1}\big(g^{-1}(V')\big)= Z_g V'',
\]
where $V''$ is a compact subset of $X$. 

Consider the set $G(V''):=\{ h|\exists x'\in V'',c(hx')\ne 0 \}$. Using the following properties
\begin{itemize} 
\item the set $V''$ is compact,
\item the support of $c$ is compact,
\item  the $G$ action on $X$ is proper,
\end{itemize}
we conclude that $G(V'')$ is relatively compact. 

Consider the set $P_g(V''):=\{h| c(hx')\ne 0, \exists x'\in Z_g V''\}$. We have the property that $P_g(V'')=G(V'')Z_g$. Hence, we conclude from the relative compactness of $G(V'')$ that $P_g(V'')/Z_g$ is relatively compact. 

Consider the set $R_g(V''):=\{ h| \exists x'\in Z_gV'', c_G(h)\ne 0, c(hx')\ne 0\} $. $R_g(V'')$ is the intersection of $P_g(V'')$ with the support of $c_G$. As $c_G$ is a cutoff function of the right $Z_g$ action on $G$, we conclude from the relative compactness of $P_g(V'')/Z_g$ that $R_g(V'')$ is compact. 

Observe that the set 
\[
g(Q_g(V))=\{x'\in X:\exists h\in G, c_G(h)\ne 0, c(hx')\ne 0, x'\in q{x'}\in g^{-1}(V')\}
\]
is a subset of 
\[
\{x'\in X: \exists h\in R_g(V''), c(gx')\ne 0\}.
\]
As $R_g(V'')$ is compact, we conclude from the compactness of the support of $c$ and properness of the $G$ action that $g(Q_g(V'))$ is relatively compact, and therefore $Q_g(V')$ is also relatively compact. 

\end{proof}

The integral 
\[
I^t_{N(r)}(g):=\int_{N(r)} \int_G c_G(h)c(hgy) {\rm tr}(\kappa_t(y, gy)g)dh dy
\]
can be expressed by
\[
\int_{N(r)} c_{g}(y) {\rm tr}(\kappa_t(y, gy)g) dy.
\]
By Lemma \ref{lem-support}, we know that the set $X^g/Z_g$ is compact. Thus $N(r)/Z_g$ is relatively compact. It follows from Lemma \ref{lem-support}  that $c_{g}$ has relative compact support in $N(r)$ and is a smooth function on $N(r)$ with bounded derivatives.  This allows us to use the method and estimates in the proof of \cite[Equation (2.2)]{Zhangwp} to prove the following lemma.

\begin{lemma}\label{lem:eta-near fixed point}
The integral $I^t_{N(r)}(g)$ satisfies
\[
\lim_{t\to 0} \frac{|I^t_{N(r)}(g)|}{\sqrt{t}} < +\infty. 
\]
\end{lemma}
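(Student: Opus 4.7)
The aim is to show that $I^t_{N(r)}(g)=O(\sqrt t)$ as $t\downarrow 0$, which is the $G$-proper analog of the Bismut--Freed short-time estimate for the local eta integrand of a Dirac operator near the fixed-point set of an isometry.

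The first step is to reduce to a compact integration domain. By Lemma \ref{lem:compact-quotient} the fixed-point set $X^g/Z_g$ is compact, and by Lemma \ref{lem-support} the function $c_g$ is a $Z_g$-cutoff whose support has compact image in $X/Z_g$; consequently, after unfolding along $Z_g$-orbits, the integrand $c_g(y)\,\mathrm{tr}(\kappa_t(y,gy)g)$ is effectively supported on a relatively compact piece of the tubular neighborhood $N(r)$, and all subsequent bounds can be taken uniform in $y_0\in X^g$.

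The second step is the local analysis. I would use the tubular parametrization $\{(y_0,v):y_0\in X^g,\ v\in N_{y_0},\ |v|<r\}$ and note that $g$ acts on every normal fibre $N_{y_0}$ with no eigenvalue $1$, so that $d_X(y,gy)^2$ is comparable to $|v|^2$ near $X^g$. Performing the Getzler rescaling $v=\sqrt t\,w$ (which is legitimate precisely because $D$ is associated to the Clifford connection \eqref{Clifford connection}), the Gaussian factor from Lemma \ref{lem:Cheng-Li-Yau} becomes an integrable Gaussian in $w$, while the prefactor $t^{-(n+1)/2}$ is partly cancelled by the Jacobian $t^{k/2}$ of the fibre change of variables, $k$ being the codimension of $X^g$.

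The third step is to identify the leading order in $t$. Because $D e^{-tD^2}$ is an \emph{odd} operator in the Clifford $\mathbb{Z}/2$-grading, the Getzler-rescaled small-$t$ expansion of $\mathrm{tr}(\kappa_t(y,gy)g)$ has a vanishing $t^0$ coefficient by the standard Berezin-integral parity argument underlying \cite[(2.2)]{Zhangwp}; the expansion therefore begins at order $t^{1/2}$. Since the remaining integrations over $y_0$ and over the compact $Z_g$-fundamental domain in $X^g$ produce only bounded factors, summing yields $|I^t_{N(r)}(g)|=O(\sqrt t)$, hence $\limsup_{t\downarrow 0}|I^t_{N(r)}(g)|/\sqrt t<\infty$. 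The main obstacle is a careful bookkeeping of the Getzler-rescaled asymptotic in the equivariant, non-compact setting and the rigorous local-to-global passage; both of these, however, are transplantations of well-established closed-manifold arguments, made possible by the compactness of $X^g/Z_g$ and of the effective support of $c_g$ established in the first step.
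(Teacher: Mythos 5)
Your approach is essentially the same as the paper's: reduce to a relatively compact piece of the tubular neighborhood using Lemma \ref{lem:compact-quotient} and Lemma \ref{lem-support}, and then obtain the $O(\sqrt t)$ estimate from a local short-time asymptotic expansion of $g\kappa_t(y,gy)$ near the fixed-point set, following Zhang's analysis \cite[Section 2]{Zhangwp}. The paper proceeds exactly this way, quoting Zhang's expansion \cite[(2.16)]{Zhangwp} and then applying \cite[Lemma 2.17]{Zhangwp} term by term to conclude the pointwise bound $\lim_{t\to0^+}\frac{1}{\sqrt t}\lvert c_g(y)\,\mathrm{tr}(\kappa_t(y,gy)g)\rvert<C$. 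One small imprecision in your write-up: you attribute the vanishing of the leading order to a pure ``Berezin-integral parity argument.'' What Zhang's \cite[Lemma 2.17]{Zhangwp} actually supplies is a balance between the Gaussian integration in the normal fibre variables and the factors of $(dg-I)y$ and of $t$ appearing in the expansion \cite[(2.16)]{Zhangwp}; this is a Bismut--Freed-type cancellation that requires the Clifford connection hypothesis (as you correctly flag) but is not a bare parity vanishing of a $t^0$ coefficient. This does not affect the overall correctness of the plan, only the precision of the description of the local mechanism.
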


\begin{proof}
We use the local asymptotic expansion \cite[Equation (2.16)]{Zhangwp}
\[
g\kappa_t(y, g)=\frac{e^{-\frac{d_X(y,gy)^2}{4t}}}{(4\pi t)^{\frac{n}{2}}} dg\left[
\sum_i \left( ((dg -I)y)_i e_i\right)
\left( \sum_{i=0}^{[n/2]+2}U_i t^i+o(t^{[n/2]+2})\right)
+
\left(\sum_{i=0}^{[n/2]+2}V_it^i+o(t^{[n/2]+2})\right)
\right].
\]
In \cite{Zhangwp}, the author assumed to work with an isometry group action on a closed manifold $M$. For our case, as the support $c_g$ in $N(r)$ is relatively compact and $g$ is contained in the group $Z_{g}$ which acts properly on $N(r)$, the author's analysis  in \cite[Section 2]{Zhangwp} in the tubular neighborhood $N(r)$ is also valid in our case near ${\rm supp}(c_g)\cap N(r)$. 
We refer to \cite[Section 2]{Zhangwp} for the explanation of the above formula.

We use the above formula to compute $c_{g}(y){\rm tr}(\kappa_t(y, gy)g)$, 
\[
\begin{split}
&
c_{g}(y){\rm tr}(\kappa_t(y, gy)g) =
\\
&
\frac{e^{-\frac{d(y,gy)^2}{4t}}}{(4\pi t)^{\frac{n}{2}}} dg\left[
\sum_i \left( ((dg -I)y)_i e_i\right)
 \left( \sum_{i=0}^{[n/2]+2}c_{g}(y)U_i t^i+o(t^{[n/2]+2})\right)
+
\left(\sum_{i=0}^{[n/2]+2}c_{g}(y)V_it^i+o(t^{[n/2]+2})\right)
\right].
\end{split}
\]

Apply \cite[Lemma 2.17]{Zhangwp} to the integral of each term in the above expansion of $c_{g}(y){\rm tr}(\kappa_t(y, gy)g)$. We can conclude the estimate in the lemma following the same argument as the one for \cite[Equation (2.2)]{Zhangwp}, that is, 
\[
\lim_{t \to 0^+}\frac{1}{\sqrt{t}}\left|c_{g}(y){\rm tr}(\kappa_t(y, gy)g) \right| < C
\] 
for some constant $C>0$. 
\end{proof}

Combing Proposition \ref{prop:eta-outside fixed point} and Lemma \ref{lem:eta-near fixed point}, we have reached the following proposition.
\begin{proposition}\label{prop:eta-short time}  Let $X$ be a cocompact $G$-proper manifold without boundary and let $D$ be a Dirac-type operator defined in (\ref{def Dirac}).  Let $g$ be a semi-simple element in $G$.  The integral 
\[
\frac{1}{\sqrt{\pi}} \int_0^1 \tau^{X}_{g} (D\exp (-tD^2) \frac{dt}{\sqrt{t}}
\]
converges.
\end{proposition}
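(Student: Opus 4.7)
The plan is to decompose $\tau^X_g(D\exp(-tD^2))$ into a contribution from a small tubular neighborhood of the fixed-point set $X^g$ and a contribution from its complement, then to estimate each separately against the measure $dt/\sqrt{t}$ using the machinery already developed.

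First I would use Lemma \ref{lem:compact-quotient} to conclude that $X^g/Z_g$ is compact, so a $Z_g$-invariant tubular neighborhood $N(r) \subset X$ of $X^g$ of small radius $r>0$ exists. Starting from the rewritten expression \eqref{eq:eta integral expression} for $\tau^X_g(D\exp(-tD^2))$, I split the spatial integral over $X$ into the sum
\[
\tau^X_g(D\exp(-tD^2)) = I^t_{N(r)}(g) + I^t_{N(r)^c}(g),
\]
where $I^t_{N(r)}(g)$ and $I^t_{N(r)^c}(g)$ are the integrals over $N(r)$ and over its complement respectively (the latter being well defined since $N(r)$ is $g$-invariant, so the change-of-variable form in Proposition \ref{prop:eta-outside fixed point} applies).

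For the piece $I^t_{N(r)^c}(g)$ supported away from $X^g$, I would apply Proposition \ref{prop:eta-outside fixed point} with $W=N(r)$. This yields exponential decay $|I^t_{N(r)^c}(g)| \le C_1 \exp(-C_2/t)$ as $t\to 0^+$, and hence $t^{-1/2}|I^t_{N(r)^c}(g)|$ is uniformly bounded on $(0,1]$ and in fact integrable there. For the piece $I^t_{N(r)}(g)$ supported in the tubular neighborhood, I would apply Lemma \ref{lem:eta-near fixed point}, which gives
\[
\limsup_{t\to 0^+}\frac{|I^t_{N(r)}(g)|}{\sqrt{t}} < +\infty,
\]
so $|I^t_{N(r)}(g)|/\sqrt{t}$ is bounded on $(0,1]$ as well. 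Summing the two contributions and dividing by $\sqrt{t}$ produces a function bounded on $(0,1]$, which is trivially integrable, giving the claimed convergence.

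Almost all of the technical work has already been packaged into the preceding results: the off-diagonal Gaussian bound in Lemma \ref{lem:Cheng-Li-Yau}, the geometric comparison $d_X(hgh^{-1}x,x)\ge C_1 d_G(hgh^{-1}e,e)-C_0$ in Lemma \ref{lem:action property}, Harish--Chandra's orbital integral estimate Lemma \ref{lem:integral}, and the local short-time expansion of $\kappa_t(y,gy)g$ adapted from \cite{Zhangwp} that drives Lemma \ref{lem:eta-near fixed point}. The only step where I would exercise some caution is checking that the cutoff $c_g$ has sufficiently well-controlled support after restriction to $N(r)$ so that one is legitimately applying the compact-support asymptotic expansion of \cite{Zhangwp} in a non-compact manifold; that verification is provided by Lemma \ref{lem-support}, combined with compactness of $X^g/Z_g$. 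Beyond this bookkeeping there is no further obstacle, and the proof reduces to the two-paragraph combination described above.
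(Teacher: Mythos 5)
Your decomposition into the tubular-neighborhood piece $I^t_{N(r)}(g)$ and its complement $I^t_{N(r)^c}(g)$, with Proposition \ref{prop:eta-outside fixed point} controlling the latter by exponential decay and Lemma \ref{lem:eta-near fixed point} controlling the former by an $O(\sqrt{t})$ bound, is exactly the paper's argument — the paper states the proposition as an immediate corollary of combining those two results. Your remark about the support of $c_g$ and Lemma \ref{lem-support} correctly identifies the one point where the compactness hypothesis of Zhang's local expansion has to be checked.
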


\subsection{Short time limit and the Atiyah-Segal integrand}
If we replace $D\exp(-sD^2)$ by $\exp(-sD^2)$, a similar argument as in Theorem \ref{thm etadefine} proves the topological 
formula stated in  \cite[Theorem 2.8]{Hochs-Wang-HC}  for the  pairing between $\tau_g$ and the index element $\operatorname{ind}_G(D)$ for a Dirac type operator $D$ on a $G$-proper cocompact manifold $X$ without boundary, as we shall now explain.  We know that 
$$\langle \Ind_G (D),\tau_g\rangle= \langle \Ind (D),\tau_g^X\rangle=\tau^X_g (e^{-s^2 D^- D^+})- \tau^X_g (e^{-s^2 D^+ D^-})\,.$$
We compute the left hand side by taking the limit as $s\downarrow 0$ 
and using the following 
\begin{theorem}\label{short-no-boundary}
Let $(X,h)$ be a cocompact $G$-proper manifold without boundary. Assume that $X$, $G/K$, and the slice $S$ are all even dimensional. Let $D$ 
be a G-equivariant Dirac operator defined in  (\ref{def Dirac}). 
Let $g$ be a semi-simple element and let $X^g$ be  the fixed point set of $g$. Then
the limit $\lim_{s\downarrow 0} \tau_g (e^{-s^2 D^- D^+})- \tau_g (e^{-s^2 D^+ D^-})$ exists and we have 
 $$\lim_{s\downarrow 0} \tau^X_g (e^{-s^2 D^- D^+})- \tau^X_g (e^{-s^2 D^+ D^-})= 
 \int_{X^g} c^g {\rm AS}_g (D)$$ 
 with $c^g {\rm AS}_g(D)$ defined  in  Equation (\ref{eq:X-geom-form}). 
 Consequently we obtain the following formula
 $$  \langle \Ind (D),\tau_g^X\rangle= \int_{X^g} c^g {\rm AS}_g (D)\,.$$ 
\end{theorem}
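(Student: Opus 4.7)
The plan is to mimic the strategy already used in Subsection 5.3 for the short-time convergence of the eta integrand, but applied now to the supertrace of the heat kernel. Using the cut-off function $c_g(x) = \int_G c_G(h) c(hgx)\, dh$ from Lemma \ref{lem-support}, together with the change of variable technique that led to the expression \eqref{eq:eta integral expression}, I would first rewrite
\begin{equation*}
\tau_g^X(e^{-s^2 D^- D^+}) - \tau_g^X(e^{-s^2 D^+ D^-}) = \int_X c_g(x)\, \mathrm{str}\bigl(g\, k_{s^2}(x, gx)\bigr)\, dx,
\end{equation*}
where $k_{s^2}$ is the heat kernel of $D^2$ and $\mathrm{str}$ is the $\mathbb{Z}/2$-graded trace. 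Since $X^g/Z_g$ is compact (Lemma \ref{lem:compact-quotient}) and $c_g$ is a cut-off function for the $Z_g$-action on $X$ with $Z_g$-relatively-compact support, the integral is finite for each $s>0$ and the right-hand side equals $\langle \mathrm{Ind}(D), \tau_g^X\rangle$ for every $s$ by the McKean--Singer identity.

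Next I would localize to a $Z_g$-invariant tubular neighborhood $N(r)$ of $X^g$. The same estimates used in Lemma \ref{lem:uc} and Lemma \ref{lem:decay-outside w} (based on the Gaussian bound from Lemma \ref{lem:Cheng-Li-Yau} applied here to $k_{s^2}(x,gx)$ rather than to $\kappa_t(x,gx)$, together with the Harish--Chandra convergence from Lemma \ref{lem:integral}) show that the integral over $X \setminus N(r)$ decays exponentially as $s\downarrow 0$. This step requires no new ideas: the Gaussian off-diagonal bound for $k_t$ on a complete manifold of bounded geometry is stronger than the one for $D\exp(-tD^2)$ already used, so Proposition \ref{prop:eta-outside fixed point} carries over verbatim.

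Inside $N(r)$, the integral becomes
\begin{equation*}
\int_{N(r)} c_g(x)\, \mathrm{str}\bigl(g\, k_{s^2}(x, gx)\bigr)\, dx.
\end{equation*}
Because $c_g$ is smooth and has $Z_g$-relatively-compact support in $N(r)$, I can apply the classical local equivariant index theorem via Getzler rescaling in the transverse normal coordinates to $X^g$ -- exactly in the form used by Zhang and recalled in the proof of Lemma \ref{lem:eta-near fixed point}, but for $e^{-s^2 D^2}$ in place of $D e^{-s^2 D^2}$. The leading asymptotic expansion of $\mathrm{str}(g\, k_{s^2}(x,gx))$ in powers of $s$ identifies the surviving constant term with the Atiyah--Segal differential form on $X^g$ given by \eqref{eq:X-geom-form}, and the Gaussian decay in the normal directions supplies the uniform dominating function that legitimates passing the limit under the integral. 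One concludes
\begin{equation*}
\lim_{s\downarrow 0} \int_{N(r)} c_g(x)\, \mathrm{str}\bigl(g\, k_{s^2}(x, gx)\bigr)\, dx = \int_{X^g} c^g\, \mathrm{AS}_g(D),
\end{equation*}
since $c_g|_{X^g}$ is precisely the cut-off function $c^g$ for the $Z_g$-action on $X^g$.

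The main obstacle, and the place where care is required, is the application of Getzler rescaling in the $G$-proper, non-cocompact ambient setting: classical references work on closed manifolds with an isometry group action. The required local analysis is nevertheless purely transverse to $X^g$, and the $Z_g$-relative compactness of $\mathrm{supp}(c_g) \cap N(r)$ reduces it to a compact piece where the classical argument of Berline--Getzler--Vergne applies unchanged, exactly as was done for Lemma \ref{lem:eta-near fixed point}. Combining the localization and the local limit with the McKean--Singer identity at each $s>0$ yields both assertions of the theorem.
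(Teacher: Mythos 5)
Your proposal is correct and follows essentially the same route as the paper's proof: rewrite the supertrace via the cut-off $c_g$ as in \eqref{tr-g-closed-bis}, localize to the tubular neighborhood $N(r)$ of $X^g$ using the Gaussian off-diagonal bound from Lemma \ref{lem:Cheng-Li-Yau} (cf.\ Proposition \ref{prop:eta-outside fixed point}), and apply Getzler rescaling near the fixed-point set citing \cite[Theorem 6.16]{BGV} and \cite{Hochs-Wang-HC}, with the $Z_g$-relative compactness of $\operatorname{supp}(c_g)\cap N(r)$ justifying the local argument. The only cosmetic difference is that you invoke McKean--Singer explicitly for the $s$-independence, which the paper records in the sentence just before the theorem statement.
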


\begin{proof}
Let $\kappa_s$ be the kernel of the heat operator $\exp(-sD^2)$. Observe that in term of supertrace, $ \tau^X_g (e^{-s^2 D^- D^+})- \tau^X_g (e^{-s^2 D^+ D^-})$ can be written as the integral 
\begin{equation}\label{eq:super trace}
J(s):=\int_{G/Z_g}\int_X c(hgh^{-1}x) \operatorname{str}(hgh^{-1}\kappa_{s^2}(hg^{-1}h^{-1}x,x))dx d(hZ). 
\end{equation}

By a similar computation as in Equation (\ref{eq:eta integral expression}), we can express $J(s)$ defined  by Equation (\ref{eq:super trace}) as follows,
\[
J(s)=\int_X\int_G c_G(h)c(hy) {\rm str}(\kappa_{s^2}(y, gy)g)dydh,
\]
where $c_G$ is the cut-off function associated to the right $Z_{g}$ action on $G$. 
We can follow the exactly same strategy as in Section \ref{subsec:small time} to study the limit of $J(s)$ as $s\to 0$. Below is a brief outline of steps. 
\begin{enumerate}
\item We observe that the analysis on the small time behavior the operator $D\exp(-sD^2)$ in Section \ref{subsec:small time} also holds for the operator $\exp(-sD^2)$. In particular, similar to Lemma \ref{lem:Cheng-Li-Yau}, there are constant $\alpha_0, \beta_0>0$ such that 
\[
||\kappa_s(x,y)||\leq \beta_0 s^{-\frac{n}{2}}\exp\left(-\alpha \frac{d_X(x,y)^2}{s}\right). 
\]
\item Similar to Proposition \ref{prop:eta-outside fixed point}, using the above Gaussian estimate of the heat kernel $\kappa_t(x,y)$, we prove that given any $Z_g$-invariant open set $W$ containing the $g$ fixed point submanifold $X^{g}$, the integral 
\[
J_{W^c}(s):=\int_{W^c} \int_G c_G(h)c(hy)\operatorname{str}(\kappa_{s^2}(y,gy)g)dh dy=\int_{W^c}\int_G c_G(h)c(hgx)\operatorname{str}(\kappa_{s^2}(x,gx)g)dhdx
\]
decays exponentially to zero as $s\to 0$. The last equality holds because of the invariance property of $W$.
\item Following Lemma \ref{lem:eta-near fixed point}, we choose $W$ to be an $r$-ball bundle $N(r)$ in the normal bundle $N$ of $X^g$ in $X$ with a sufficiently small radius $r$.  For the integral
\[
J_{N(r)}(s):=\int_{N(r)}\int_G c_G(h)c(hy)\operatorname{str}(\kappa_{s^2}(y,gy)g)dhdy,
\]
we follow the local analysis as in the proofs \cite[Theorem 6.16]{BGV} and \cite[Proposition 4.12]{Hochs-Wang-HC} to get 
\[
\lim_{s\to 0} J_{N(r)}(s)=\int_{X^g} c^g \operatorname{AS}_g(D). 
\]
\end{enumerate}

Combining the above (1)-(3), we reach the desired equality
\[
\lim_{s\downarrow 0} \tau^X_g (e^{-s^2 D^- D^+})- \tau^X_g (e^{-s^2 D^+ D^-})=
 \int_{X^g} c^g {\rm AS}_g (D).
\]
\end{proof}

\begin{remark}Theorem  \ref{short-no-boundary} was originally stated by Hochs and Wang \cite[Theorem 2.8]{Hochs-Wang-HC} with a stronger assumption on the group element $g$.  Our strategy of proof is similar to the one in \cite{Hochs-Wang-HC}.  However, our proof does not use the inequality (4.6) in \cite{Hochs-Wang-HC} for the heat kernel $\widetilde{\kappa}_t^{G,K}$, the proof of which is not clear to us. Instead, we use the global Gaussian estimates for the heat kernel $e^{-tD^2}$, c.f. Lemma \ref{lem:Cheng-Li-Yau},
in order to establish, as a crucial step,
the exponential decay property of $\tau_g\big(\exp(-tD^2)\big)$ outside a neighborhood $W$ of the fixed point submanifold $X^g$ as in Proposition \ref{prop:eta-outside fixed point}.  Notice also that in \cite{Hochs-Wang-HC} the split Dirac operator \eqref{dirac-split}
is used throughout and we know that this operator is {\it not} associated to a Clifford connection.\\

\end{remark}

We are now in the position to prove 
Proposition \ref{prop:short-with-boundary} that we restate here for the benefit of the reader:

\medskip
\noindent
{\it Let $(Y_0,\mathbf{h}_0)$ be a cocompact $G$-proper Riemannian manifold. Let $(Y,\mathbf{h})$ be the associated $b$-manifold.
Let $D_0$ be a G-equivariant Dirac operator defined in  (\ref{def Dirac}) and let $D$ be the associated $b$-differential operator.
Let $g$ be a semisimple element and let $Y_0^g$ the fixed point set of $g$. Then
the limit $\lim_{s\downarrow 0} \tau^{Y,r}_g (e^{-s^2 D^- D^+})- \tau^{Y,r}_g (e^{-s^2 D^+ D^-})$ exists and we have 
 $$\lim_{s\downarrow 0} \tau^{Y,r}_g (e^{-s^2 D^- D^+})- \tau^{Y,r}_g (e^{-s^2 D^+ D^-})= 
 \int_{Y_0^g} c^g_0 {\rm AS}_g (D_0)$$ 
 with $c^g_0 {\rm AS}_g(D_0)$ defined  in  Equation (\ref{eq:X-geom-form}).
 }

\begin{proof}
Let $\kappa_t$ be the kernel of the heat operator $\exp(-tD^2)$. By definition
$$ \tau^{Y,r}_g (e^{-s^2 D^- D^+})-  \tau^{Y,r}_g (e^{-s^2 D^+ D^-})$$ is equal to 
\begin{equation}\label{eq:b-super trace}
{}^b J(s):=\int_{G/Z_g} \int^b _Y c(hgh^{-1}y) \operatorname{str}(hgh^{-1}\kappa_{s^2}(hg^{-1}h^{-1}y,y))dy d(hZ) 
\end{equation}
with  $dy$ denoting the b-density associated to the $b$-metric $\mathbf{h}$. We claim that the following  expression holds: 
\begin{equation}\label{eq:b-super trace-bis}
{}^b J(s)=\int^b_Y\int_G c_G(h)c(hy) {\rm str}(\kappa_{s^2}(y, gy)g) dh dy
\end{equation}
where $c_G$ is the cut-off function associated to the right $Z_{g}$ action on $G$.
In order to prove \eqref{eq:b-super trace-bis} we make a  preliminary remark. The spaces of $b$-pseudodifferential operators
${}^b \mathcal{L}_G^c (Y, E)$  and ${}^b \mathcal{L}_{G,s}^\infty (Y, E)$
have been defined in terms of the slice theorem, as  projective tensor products. We could have defined these operators directly,
in terms of a $b$-stretched product $Y\times_b Y$. Proceeding exactly as in \cite[Chapter 4]{LeichtnamPiazzaMemoires} we see that 
 $Y\times_b Y$ inherits an action of $G\times G$;
 in particular, it makes sense to consider $L^*_{(h,h')} \exp (-t D^2)$ which is a smoothing $b$-kernel (not $G$-invariant unless $h=h'$).
  Coming back to the claim, we can rewrite the right hand side of \eqref{eq:b-super trace} as
 $$\int_{G/Z_g} \int^b_Y c(y)\,{\rm str}(\kappa_{s^2}(y, hgh^{-1}y) hgh^{-1})dyd(hZ)$$ which is in turn equal to 
 \begin{equation}\label{int-with-b}
 \int_{G} c_G (h) \int^b _Y    c(y)\, {\rm str}(\kappa_{s^2} (y, hgh^{-1}y) hgh^{-1})dydh.
 \end{equation}
 Consider the smoothing $b$-kernel 
 $$\Theta_g (h):= L_{e, hgh^{-1}}^*( \exp (-(sD)^2));$$
 we have that
 \begin{equation}\label{int-with-b-2}
 \int_{G} c_G (h) \int^b _Y    c(y)\, {\rm str}(\kappa_{s^2} (y, hgh^{-1}y) hgh^{-1})dydh=
 \int_{G} c_G (h) \int^b _Y    c(y)\, {\rm str}(\kappa (\Theta_g (h)) (y,y) dydh.
 \end{equation}
 Write 
 $ \int^b _Y    c(y)\, {\rm str}(\kappa (\Theta_g (h)) (y,y) dy$ as
 $$ \int _{Y_0}    c_0(y_0)\, {\rm str}(\kappa (\Theta_g (h)) (y_0,y_0)dy_0
 + \int^b_{(-\infty,0]_t\times \partial Y_0} c_0 (x) \, {\rm str}(\kappa (\Theta_g (h)) (t,x,t,x)dtdx.
 $$ 
Consider the function $F_g (h)$ on $Y$,  so defined:

- $ F_g (h)(y) :=  c_0(y_0)\, {\rm str}(\kappa (\Theta_g (h)) (y_0,y_0)$ if $y=y_0\in Y_0$;

- $ F_g (h)(y) :=  -t \frac{d}{dt}\left( c_0( x)\, {\rm str}(\kappa (\Theta_g (h)) (t,x, t, x)\right)$ if $y=(t,x)\in (-\infty,0]_t \times \partial Y_0$.\\
The function $F_g (h)$ is discontinuous at $\partial Y_0$, a set of measure 0,  but it is otherwise smooth. By
 \cite[Prop. 2.6]{LMP} we have that 
  $$ \int_{G} c_G (h) \int^b _Y    c(y)\, {\rm str}(\kappa (\Theta_g (h)) (y,y) dydh
  = \int_{G} \int _Y   c_G (h)   F_g (h)dydh. 
  $$ 
We know that the double integral on the right hand side is absolutely convergent; we can now proceed as in \cite{Hochs-Wang-HC}, use Fubini's
theorem and interchange the two integrals in the right hand side of the above formula, obtaining
 $$ \int _Y  \int_{G} 
 c_G (h)    F_g (h)dydh\,.
  $$ 
Going back to the $b$-integral  we obtaining finally that \eqref{int-with-b} is equal to 
 $$\int^b _Y    \int_{G} c_G (h)  c(y)\, {\rm str}(\kappa_{s^2} (y, hgh^{-1}y) hgh^{-1})dydh$$ 
 which is easily seen to be equal to 
 $$\int^b_Y\int_G c_G(h)c(hy) {\rm str}(k_{s^2}(y, gy)g) dh dy$$
 as claimed. Summarizing:
 \begin{equation}\label{fubini-b}
 \tau^{Y,r}_g (e^{-s^2 D^- D^+})- \tau^{Y,r}_g (e^{-s^2 D^+ D^-})= \int^b_Y\int_G c_G(h)c(hy) {\rm str}(k_{s^2}(y, gy)g) dh dy\,.
 \end{equation}
We now consider the fixed point set $Y_0^g\subset Y_0$ and the associated $Y^g\subset Y$. We fix a $Z_g$-invariant open set $W_0$
containing $Y_0^g$ and the associated $W\subset Y$ containing $Y^g$.  We choose $W_0$ to be an $r$-ball bundle $N_0 (r)$ in the normal bundle $N_0$ of $Y^g_0$ in $Y_0$ with a sufficiently small radius $r$.  We can then decompose the right hand side of \eqref{fubini-b} as:
\begin{equation}\label{w-decomposition}
\int^b_{W^c} \int_{G} c_G(h)c(hy) {\rm str}(k_{s^2}(y, gy)g) dh dy + 
\int^b_{W} \int_{G} c_G(h)c(hy) {\rm str}(k_{s^2}(y, gy)g) dh dy 
\end{equation}
Recall that if $Z$ is a b-manifold obtained from a manifold with boundary $Z_0$ and $\phi$ is a $b$-density
obtained as an extension 
from $Z_0$ of a density  $\phi_0$ on $Z_0$, then 
$$\int^b_Z \phi = \int_{Z_0} \phi_0\,.$$
In particular if $c_0$ is a cut-off function   for the $G$ action on $Y_0$ and $c$
is obtained by extending constantly along the cylindrical end, then  
$$\int^b_Y c(y) d{\rm vol}_b  = \int_{Y_0} c_0 (y_0)  d{\rm vol} \,.$$
Thanks to Remark \ref{remark:estimate-also-b-2} we know that there are constant $\alpha_0, \beta_0>0$ such that 
\[
||k_t (x,y)||\leq \beta_0 t^{-\frac{n}{2}}\exp\left(-\alpha \frac{d_Y (x,y)^2}{t}\right). 
\]
Using this global Gaussian estimate and proceeding exactly as in the closed case we find that the first integral
in \eqref{w-decomposition} converges exponentially to $0$ (here we use the above remark about the cut-off function $c$;
it is not compactly supported but its $b$-integral is equal to the integral of a compactly supported function).
We are left with the second integral in \eqref{w-decomposition}. 
In this case $Y^g_0$ is compact and $W_0:= N_0 (r)$ is relatively compact;
put it differently, $Y^g$ and $W$ are obtained attaching cylinders to compact (respectively relatively compact)
spaces. We can thus analyze 
$$\int^b_{N(r)} \int_{G} c_G(h)c(hy) {\rm str}(k_{s^2}(y, gy)g) dh dy $$
using Getzler rescaling and find that the limit as $s\downarrow 0$ of this double integral  is equal to 
$$\int^b_{Y^g} c^g {\rm AS}_g (D).$$ 
 As all the structures are product-like near the boundary we see that
$$\int^b_{Y^g} c^g {\rm AS}_g (D)= \int_{Y_0^g} c_0^g {\rm AS}_g (D_0)$$
which is what we wanted to prove. 
\end{proof}

 \section{Higher delocalized  cyclic cocycles}\label{sect:higher-cocycles} 

In this section we shall introduce  higher delocalized cyclic cochains.\\

\noindent{Let} $K<G$ be a maximal compact subgroup and let $P<G$, $P=MAN$, be a cuspidal parabolic subgroup of $G$. By the Iwasawa decomposition $G=KMAN$ we can write an element $g\in G$ as 
\[
g = \kappa (g)\mu(g) e^{H(g)}n \in KM A N = G. 
\]
Let $\operatorname{dim}(A)=m$.  By choosing coordinates of the Lie algebra $\mathfrak{a}$ of $A$,  consider the function 
\[
H=(H_1, \dots, H_m): G\to \mathfrak{a}. 
\] 
We define the following cyclic cochain on the algebra  $\mathcal{L}_s(G)$, the Lafforgue Schwartz algebra. For $f_0, ..., f_m \in \mathcal{L}_s(G)$ and a semi-simple element $g\in M$,  define $\Phi^{P}_{g}$ by the following integral, 
\begin{equation}
\label{eq:PhiPg}
\begin{aligned}
\Phi^{P}_{g}(f_0, f_1, &\dots, f_m) \\
:=&\int_{h \in M/Z_M(g)} \int_{K N} \int_{G^{\times m}} \sum_{\tau \in S_m}\mathrm{sgn}(\tau)\cdot H_{\tau(1)}(g_1...g_mk) H_{\tau(2)}(g_2...g_mk ) \dots  H_{\tau(m)}(g_m, k) \\
& f_0 \big (kh g h^{-1}nk^{-1} (g_1\dots g_m)^{-1}\big)f_1(g_1) \dots f_m(g_m)dg_1\cdots dg_m dk dn dh,  
\end{aligned}
\end{equation}
where $Z_M(g)$ is the centralizer of $g$ in $M$. The following result is proved in the work of Song-Tang, \cite[Theorem 3.5]{st}. 

\begin{proposition}\label{prop:extend-group}$\Phi^{P}_{g}$ is a cyclic cocycle on the Harish-Chandra Schwartz algebra $\mathcal{L}_s(G)$.
\end{proposition}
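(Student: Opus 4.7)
The plan is to verify the three ingredients in the claim: (a) the multiple integral defining $\Phi^P_g$ converges absolutely for $f_0,\ldots,f_m\in\mathcal{L}_s(G)$ (for $s$ large enough depending on $g$), (b) the cochain $\Phi^P_g$ is cyclic, and (c) it is a Hochschild cocycle. The underlying algebraic mechanism is a familiar one: the Iwasawa function $H\colon G\to\mathfrak{a}$ behaves like a $1$-cocycle on $G$ (evaluated along paths), so the antisymmetrization $\sum_\tau \mathrm{sgn}(\tau)\, H_{\tau(1)}(g_1\cdots g_m k)\cdots H_{\tau(m)}(g_m k)$ plays the role of a group $m$-cocycle of Alexander--Spanier type, twisted by the orbital integral of $g\in M$.

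\textbf{Convergence.} First I would estimate the integrand. The Iwasawa projection grows at most linearly: $\|H(g)\|\leq C(1+\log(1+\|g\|))\cdot(\text{something controlled})$, and more usefully $\|H(g)\|\lesssim \|g\|$; in any case each $H_i(g_j\cdots g_m k)$ grows polynomially in $\|g_j\|,\ldots,\|g_m\|$. Since $\mathcal{L}_s(G)$-elements are bounded by $C(1+\|g\|)^{-s}\Xi(g)$, choosing $s$ large enough produces enough decay in each $g_i$-variable to dominate the polynomial factors $H_i$. The $k$ and $n$ integrals are over compact/polynomially controlled regions (with $n$ absorbed by $K$-$N$ invariance of $H$), and the outer integral over $M/Z_M(g)$ is precisely the orbital integral, convergent for $s$ sufficiently large by the third property of the Lafforgue algebras recalled in Definition \ref{defn:laffargue}. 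These estimates together with Fubini justify absolute convergence and continuity in the $\nu_s$-norm.

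\textbf{Cyclicity.} Here I would perform the change of variables $g_1\mapsto g_1':=g_0 g_1\cdots g_m \mapsto\cdots$ standard for convolution products, write the cyclic shift of $\Phi^P_g$ explicitly, and then use the Iwasawa cocycle identity
\[
H(g_1 g_2) = H(g_2) + H\!\left(g_1\, \kappa(g_2)\right),
\]
together with $K$-invariance $H(kg)=H(g)$ and right $N$-invariance, to match the permuted product of $H_i$'s against the determinantal sum $\sum_\tau\mathrm{sgn}(\tau)\cdots$. The sign $(-1)^m$ in the cyclic identity comes from the antisymmetrization with respect to $\tau\in S_m$.

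\textbf{Hochschild cocycle.} This is the heart of the argument. I would compute
\[
(b\Phi^P_g)(f_0,\ldots,f_{m+1}) = \sum_{i=0}^{m}(-1)^i \Phi^P_g(\ldots,f_i\ast f_{i+1},\ldots) + (-1)^{m+1}\Phi^P_g(f_{m+1}\ast f_0, f_1,\ldots,f_m),
\]
unfold every convolution $f_i\ast f_{i+1}$ as an integral in an extra $G$-variable, and rewrite each term as an integral over $G^{\times(m+1)}\times K\times N\times M/Z_M(g)$. The $i$-th term acquires a factor $H_{\tau(j)}(g_j\cdots \widehat{g_i\ast g_{i+1}}\cdots g_{m+1}k)$ at the $i$-slot, and the Iwasawa cocycle identity above lets us split
\[
H(g_i g_{i+1}\cdots g_{m+1}k) = H(g_{i+1}\cdots g_{m+1}k) + H\!\bigl(g_i\,\kappa(g_{i+1}\cdots g_{m+1}k)\bigr).
\]
Using $K$-invariance of $H$ and the antisymmetrization $\sum_\tau\mathrm{sgn}(\tau)$, consecutive terms in the Hochschild differential pair off and cancel, exactly as in the proof that the Alexander--Spanier cocycle built from a $1$-cocycle on $G$ is closed. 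The last term (with the cyclic convolution $f_{m+1}\ast f_0$) is handled by combining cyclicity with the same Iwasawa cocycle manipulation, now incorporating the $M$-conjugation by $g$ and using $M$-invariance of the orbital integral together with the fact that $g$ commutes with itself. The cancellation is purely formal once the cocycle identity is in place.

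\textbf{Main obstacle.} The calculation in step (c) is combinatorially heavy, and the real subtlety is to control all the extraneous $\kappa(\cdot)\in K$-pieces produced by the Iwasawa cocycle identity and verify that they can be absorbed using $K$-invariance after the $k$-integral. A secondary obstacle is the interplay between $M$-orbital integration and the $N$-integral in the presence of the conjugation $h g h^{-1} n$: I would handle this by exploiting $M$-invariance of $H$ (since $H$ takes values in the $\mathfrak{a}$-component, and $M$ commutes with $A$ up to a bounded $K$-correction) and by using the standard fact that $M$-orbital integrals on $G$ are well-defined on $\mathcal{L}_s(G)$. Apart from these, the argument is a careful bookkeeping of the antisymmetrization, and the result is essentially Song--Tang's computation \cite[Theorem 3.5]{st}, extended to $\mathcal{L}_s(G)$ by the continuity estimates of step (a).
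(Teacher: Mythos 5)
The paper does not give its own proof of this proposition: it simply cites Song--Tang \cite[Theorem 3.5]{st}. So what you have written is an attempted reconstruction of the Song--Tang argument, and your outline is broadly consistent with that approach (Lafforgue-norm estimates for convergence; the Iwasawa cocycle identity $H(g_1g_2)=H(g_2)+H(g_1\kappa(g_2))$ plus the invariance properties of $H$ for the algebraic verification).

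One structural point is under-articulated and deserves to be made precise, because it is the hinge on which the whole cocycle computation turns. In the cyclicity and Hochschild steps you need not only the cocycle identity and left $K$-, right $N$-invariance of $H$, but also the fact that $H$ is right $M$-invariant, and hence that $H$ \emph{vanishes identically on $KMN$}: $H(kmn)=0$ for $k\in K,\ m\in M,\ n\in N$. This is what kills the boundary term when the $\delta_e$ (coming from the cyclic/last Hochschild slot) forces $g_1\cdots g_m k = khgh^{-1}n \in KMN$. The analogy with closedness of an Alexander--Spanier cocycle built from a group $1$-cocycle gives the right heuristic for the cancellations among interior terms, but the final term does not disappear by mere antisymmetrization: it vanishes because $khgh^{-1}n$ lives in $KMN$ (since $hgh^{-1}\in M$), hence has trivial $\mathfrak{a}$-component. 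This is exactly the observation used in the proof of Proposition \ref{prop:rel-cocycle} in the paper ($H(khgh^{-1}n)=0$). A secondary caveat: your convergence paragraph asserts that the $N$-integral is ``absorbed by $K$-$N$ invariance''; in fact $N$ is non-compact and the $N$-integral is genuinely there, controlled by the Schwartz decay of $f_0$ together with the polynomial bound $|H_i(gk)|\leq C_i L(g)$ of \cite[Proposition A.2]{st}, which gives the continuity estimate $|\Phi^P_g(f_0,\ldots,f_m)|\leq C\,\nu_{d_0+T_0+1}(f_0)\cdots\nu_{d_0+T_0+1}(f_m)$ of \cite[Theorem A.5]{st}. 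With these two points made explicit, the proposal matches the cited argument.
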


\begin{definition}\label{maximal P}
We say that a cuspidal parabolic subgroup $P = MAN$ is \emph{maximal} if one of the following equivalent conditions holds
\begin{itemize}
\item 	the dimension of $A$ is minimal;
\item  the rank of $M$ is maximal;
\item the rank of $M$ equals the rank of $K$. 
\end{itemize}
In \cite[Corollary 6.3]{hst}, the authors showed that $\Phi^{P}_{g}$ is trivial in the cyclic cohomology of $\mathcal{L}_s(G)$ unless $P$ is maximal. 
\end{definition}

\medskip
\noindent
Let $Y_0$ be an {\bf even dimensional} cocompact $G$-proper manifold with boundary and let
$Y$ be the associated $b$-manifold. We shall now  use $\Phi^P_g$ in order to define a cyclic cocycle $\Phi^P_{Y,g}$ on the algebra
$\mathcal{L}^\infty_{G,s} (Y)$; subsequently we shall use $\Phi^P_{Y,g}$ in order to define a relative
cyclic cocycle 
$ (\Phi^{r,P}_{Y,g},\sigma^P_{\partial Y,g})$ for the indicial homomorphism
 ${}^b \mathcal{L}^\infty_{G,s} (Y)\xrightarrow{I} {}^b \mathcal{L}^c_{G,s,\RR}(\cyl(\partial Y))$.
 These algebras involve the choice of an $\epsilon$ strictly smaller than half of the width of the spectral gap
 for $D_{\partial Y}$. We  fix such an $\epsilon$ and we choose it in any case smaller than $1$.

\medskip
\noindent
Recall, see \cite{PP2}, that if $S$ is a slice for the action of $G$ on $Y$, then we have an identification
$$\mathcal{L}^c_G (Y)\cong \left\{F:G\to \rho_{bf}{}^b\Psi^{-\infty,\epsilon}(S),~K\times K~\mbox{equivariant, continuous and
of compact support in}~ G\right\}.$$
Recall that $\rho_{bf}{}^b\Psi^{-\infty}(S)\subset \Psi^{-\infty,\epsilon}(S)$ if $\epsilon<1$,
which is in turn contained in the trace class
operators on $L^2_b (S)$.\\
We have
$$ {}^b \mathcal{L}^c_G (Y)\cong \left\{F:G\to {}^b \Psi^{-\infty,\epsilon}(S),~K\times K~\mbox{equivariant, continuous and
of compact support in}~ G\right\}.$$
Finally, by Fourier transform we have an injection 
$${}^b \mathcal{L}^c_{G,\RR}(\cyl(\partial Y))\hookrightarrow \mathcal{S}(\RR, \mathcal{L}^c_{G}(\partial Y))\,.$$

\begin{definition}\label{def:cocycle-on-residual}
For a semisimple element $g\in M$
 and $A_0, ..., A_m\in {\mathcal{L}}^c_G(Y)$, define a cochain $\Phi^P_{Y,g}$ on $\mathcal{L}^c_G(Y)$ by 
\[
\begin{aligned}
&\Phi^P_{Y,g}(A_0, A_1, \dots, A_m) \\
:=&\int_{h \in M/Z_M(g)} \int_{K N} \int_{G^{\times m}} \sum_{\tau \in S_m}\mathrm{sgn}(\tau)\cdot H_{\tau(1)}(g_1...g_mk) H_{\tau(2)}(g_2...g_mk ) \dots  H_{\tau(m)}(g_m, k) \\
& \operatorname{Tr}\Big( A_0 \big (kh g h^{-1}nk^{-1} (g_1\dots g_m)^{-1}\big)\circ A_1(g_1) \dots \circ A_m(g_m) \Big) dg_1\cdots dg_m dk dn dh.
\end{aligned}
\]
Proceeding as in Song-Tang one can prove that this is in fact a cyclic cocycle.
\end{definition}

\noindent
Using always $\Phi^P_g$ on $\mathcal{L}_s(G)$ we shall now define a {\it relative} cyclic cocycle $( \Phi^{r,P}_{Y,g},
\sigma^P_{\partial Y,g})$ for the homomorphism
 $ {^b \mathcal{L}}^c_G(Y)\xrightarrow{I}   {}^b \mathcal{L}^c_{G,\RR}(\cyl(\partial Y))$.

\begin{definition}\label{def:cocycle-on-b}
For a semisimple element $g\in M$
 and $A_0, ..., A_m\in {^b \mathcal{L}}^c_G(Y)$, define a cochain $ \Phi^{r,P}_{Y,g}$ on ${^b \mathcal{L}}^c_G(M)$ by 
\[
\begin{aligned}
 \Phi^{r,P}_{Y,g}&(A_0, A_1, \dots, A_m) \\
:=&\int_{h \in M/Z_M(g)} \int_{K N} \int_{G^{\times m}} \sum_{\tau \in S_m}\mathrm{sgn}(\tau)\cdot H_{\tau(1)}(g_1...g_mk) H_{\tau(2)}(g_2...g_mk ) \dots  H_{\tau(m)}(g_m, k) \\
& {^b \operatorname{Tr}}\Big( A_0 \big (kh g h^{-1}nk^{-1} (g_1\dots g_m)^{-1}\big)\circ A_1(g_1) \dots \circ A_m(g_m) \Big) dg_1\cdots dg_m dk dn dh.
\end{aligned}
\]
For $B_0,...,B_{m+1}\in {^b \mathcal{L}}^c_{G, \mathbb{R}}(\text{cyl}(Y))$, define a cochain $\sigma_{\partial Y,g}$ on ${^b \mathcal{L}}^c_{G, \mathbb{R}}(\text{cyl}(Y))$ by 
\[
\begin{aligned}
&\sigma^P_{\partial Y,g}(B_0, ..., B_{m+1})\\
:= &\int_{h \in M/Z_M(g)} \int_{K N} \int_{G^{\times {m+1}}} \int_{\mathbb{R}}\sum_{\tau \in S_m}\mathrm{sgn}(\tau)\cdot H_{\tau(1)}(g_1...g_mk) H_{\tau(2)}(g_2...g_mk ) \dots  H_{\tau(m)}(g_m, k) \\
& {\operatorname{Tr}}\Big( \hat{B}_0 \big (kh g h^{-1}nk^{-1} (g_1\dots g_mg_{m+1})^{-1}, \lambda \big)\circ \hat{B}_1(g_1, \lambda)\circ \dots \circ \hat{B}_m(g_m, \lambda)\circ \frac{\partial \hat{B}_{m+1}(g_{m+1}, \lambda)}{\partial \lambda} \Big) \\
&\qquad \qquad dg_1\cdots dg_{m+1} dk dn dhd\lambda,
\end{aligned}
\]
where we have used the Fourier transform
\[
{}^b \mathcal{L}_{G,s,\RR}^c  (\cyl (Y))\ni A \longrightarrow  \widehat{A} \in \mathscr{S}(\RR,\mathcal{L}^c_{G}(Y)).
\]
\end{definition}

\begin{proposition}\label{prop:rel-cocycle}
Let $Y_0$ be a proper $G$ manifold with boundary and $Y$ be the associated $b$-manifold. For a semisimple element $g\in M$, we have the following identities:
\[
\left(\begin{array}{cc}(b+B)&-I^*\\ 0&-(b+B)\end{array}\right)\left(\begin{array}{c}  \Phi^{r,P}_{Y,g}\\ \sigma^P_{\partial Y,g}\end{array}\right)=\left(\begin{array}{c}0\\ 0\end{array}\right). 
\]
Consequently, the pair $(\Phi^{r,P}_{Y,g}, \sigma^P_{\partial Y,g})$ defines a relative cyclic cocycle for the homomorphism
 $ {^b \mathcal{L}}^c_G(Y)\xrightarrow{I}   {}^b \mathcal{L}^c_{G,\RR}(\cyl(\partial Y))$.
\end{proposition}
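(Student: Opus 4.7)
The plan is to split the claimed matrix identity into its two scalar components: first $(b+B)\sigma^P_{\partial Y,g}=0$, which says $\sigma^P_{\partial Y,g}$ is itself a cyclic $(m+1)$-cocycle on ${}^b\mathcal{L}^c_{G,\RR}(\cyl(\partial Y))$, and second the compatibility $(b+B)\Phi^{r,P}_{Y,g}=I^*\sigma^P_{\partial Y,g}$. The analytical framework is already in place: via Fourier transform the boundary algebra embeds into $\mathscr{S}(\RR, \mathcal{L}^c_G(\partial Y))$, composition turning into convolution in the $G$-variables together with ordinary composition in $\lambda$, and all the integrals will be absolutely convergent by the same rapid decay / $G$-compact support / $\epsilon$-vanishing arguments that were invoked in Section \ref{section:0cocycle} for the degree-zero case.

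For the first identity my plan is to reduce $(b+B)\sigma^P_{\partial Y,g}=0$ to the cyclic cocycle property of $\Phi^P_g$ on $\mathcal{L}_s(G)$ provided by Proposition \ref{prop:extend-group}. Cyclicity of $\sigma^P_{\partial Y,g}$ follows directly from the cyclicity of the ordinary trace and the built-in cyclic symmetry of the $H$-polynomial, which is already what makes $\Phi^P_g$ cyclic in \cite{st}. To check $b\sigma^P_{\partial Y,g}=0$, I would apply the Leibniz rule for $\partial_\lambda$ across the slot carrying the derivative, integrate by parts once in $\lambda$ (justified by Schwartz decay in $\lambda$ together with the $\mathcal{L}_s(G)$-bounds in the $G$-variables), and recognise the resulting expression as $\int_\RR \Phi^P_g\bigl(b(\widehat{B}_0(\cdot,\lambda),\dots,\widehat{B}_{m+1}(\cdot,\lambda))\bigr)\,d\lambda$, which vanishes pointwise in $\lambda$ by Proposition \ref{prop:extend-group}.

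For the second identity the key tool is Melrose's defect formula for the $b$-trace, already used in Proposition \ref{prop:-0-relative-cocycle}:
\[
{}^b\Tr[T,T']=\frac{i}{2\pi}\int_\RR \Tr\bigl(\partial_\lambda I(T,\lambda)\circ I(T',\lambda)\bigr)\, d\lambda.
\]
My plan is to expand $(b\Phi^{r,P}_{Y,g})(A_0,\dots,A_{m+1})$ as the standard alternating sum of ${}^b\Tr$ expressions, cyclically re-group consecutive terms so that their difference becomes the $b$-trace of a commutator of two products of $A_i$'s, and then convert every such defect into an ordinary trace of indicial families integrated over $\lambda$ via the formula above. Since the $H_{\tau(j)}(\cdots)$ factors depend only on the $G$-arguments (not on $\lambda$) and the indicial map is a continuous algebra homomorphism, the total defect reorganises exactly into the formula defining $I^*\sigma^P_{\partial Y,g}$; the $B$-part of $(b+B)\Phi^{r,P}_{Y,g}$ matches the $B$-part of $I^*\sigma^P_{\partial Y,g}$ by the same manipulation.

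The main obstacle I expect is purely combinatorial: tracking the sign $\mathrm{sgn}(\tau)$ and the permutation of the $m$ factors $H_{\tau(j)}(g_j\cdots g_mk)$ under the cyclic shift $(A_0,\dots,A_{m+1})\mapsto(A_1,\dots,A_{m+1},A_0)$ and matching the resulting alternating sum, slot by slot, with the single $\partial_\lambda$-term appearing in $\sigma^P_{\partial Y,g}$. A clean way to organise this is to symmetrise over the $m+2$ convolution variables by inserting a dummy $g_0$, perform the integration by parts in $\lambda$ once at the chosen slot, and then specialise back; the argument then reduces to the very same rearrangement of $H$-cocycle terms that underlies the proof of Proposition \ref{prop:extend-group} in \cite{st}, so no new analytic input beyond Melrose's defect formula and the cyclicity of $\Phi^P_g$ is needed.
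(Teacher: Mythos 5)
Your organization of the problem into the two scalar identities is correct, and the Hochschild/Melrose defect computation you outline for $b\Phi^{r,P}_{Y,g}=I^*\sigma^P_{\partial Y,g}$ is essentially what the paper does. However, there is a genuine gap in the way you handle the Connes differential. The identity to prove is $(b+B)\Phi^{r,P}_{Y,g}=I^*\sigma^P_{\partial Y,g}$; the entire right-hand side is produced by the $b$-part via Melrose's defect formula, and what remains to be shown is $B\Phi^{r,P}_{Y,g}=0$, not a ``matching of $B$-parts''. Your phrase ``the $B$-part of $(b+B)\Phi^{r,P}_{Y,g}$ matches the $B$-part of $I^*\sigma^P_{\partial Y,g}$ by the same manipulation'' misidentifies what is needed: the $B$-differential contains no commutator, so Melrose's defect formula contributes nothing, and the vanishing is not combinatorial at all. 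It rests on a structural fact about the Iwasawa decomposition: inserting the delta function at the unit into the zeroth slot collapses $g_1$ to $khgh^{-1}nk^{-1}(g_2\cdots g_m)^{-1}$, so the factor $H_{\tau(1)}(g_1\cdots g_m k)$ becomes $H_{\tau(1)}(khgh^{-1}n)$, which vanishes because $khgh^{-1}n\in KMN$ has trivial $\mathfrak{a}$-component. Without invoking this vanishing you cannot close the argument.

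On the other half, $(b+B)\sigma^P_{\partial Y,g}=0$, you propose to prove it directly by a suspension argument, integrating by parts in $\lambda$ and reducing to the cocycle property of $\Phi^P_g$. This is conceptually plausible but the degree-bookkeeping is off as written ($b\sigma^P_{\partial Y,g}$ is an $(m+2)$-cochain, while the expression $\int_\RR\Phi^P_g(b(\widehat{B}_0,\ldots,\widehat{B}_{m+1}))\,d\lambda$ you write down does not have the right number of slots), and cyclicity of $\sigma^P_{\partial Y,g}$ with the derivative pinned to the last slot is not immediate either. The paper sidesteps the whole computation with a short indirect argument that you should compare with: once you know $b\Phi^{r,P}_{Y,g}=I^*\sigma^P_{\partial Y,g}$ and $B\Phi^{r,P}_{Y,g}=0$, apply $(b+B)$ to the compatibility identity and use $(b+B)^2=0$ together with $I^*(b+B)=(b+B)I^*$ to get $I^*(b+B)\sigma^P_{\partial Y,g}=0$; since the indicial homomorphism $I$ is surjective, $I^*$ is injective on cochains, so $(b+B)\sigma^P_{\partial Y,g}=0$. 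This is both shorter and avoids the delicate $\lambda$-integration-by-parts you were planning.
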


\begin{proof}
The proof proceeds analogous to the proof of \cite[Prop. 6.7.]{PP2}. For the Hochschild differential, we compute   
\begin{align*}
b&\Phi^{r,P}_{Y,g}(A_0,\ldots,A_{m+1})\\
&=\int_{h \in M/Z_M(x)} \int_{K N} \int_{G^{\times (m+1)}}\Bigg(
{^b \operatorname{Tr}}\Big( A_0 \big (kh g h^{-1}nk^{-1} (g_1\dots g_m)^{-1}(g')^{-1}\big)\circ A_1(g')\circ A_2(g_1)\circ \dots \circ A_{m+1}(g_m) \Big) 
\\
&+
\sum_{i=1}^m(-1)^i\left[{^b \operatorname{Tr}}\Big( A_0 \big (kh g h^{-1}nk^{-1} (g_1\dots g_m)^{-1}\big)\circ A_1(g_1)\circ\ldots\circ A_i(g_i(g')^{-1})\circ A_{i+1}(g')\circ \dots \circ A_{m+1}(g_m) \Big) \right]
\\
&+(-1)^{m+1}\left[{^b \operatorname{Tr}}\Big( A_{m+1} \big (kh g h^{-1}nk^{-1} (g_1\dots g_m)^{-1}(g')^{-1}\big)\circ A_0(g')\circ A_2(g_1)\circ \dots \circ A_{m}(g_m) \Big)\right] \Bigg)
\\
&\hspace{3cm}
\times \sum_{\tau \in S_m}\mathrm{sgn}(\tau)\cdot H_{\tau(1)}(g_1...g_mk) H_{\tau(2)}(g_2...g_mk ) \dots  H_{\tau(m)}(g_m, k)dg_1\cdots dg_m dk dn dh dg'
\\
&=(-1)^{m+1}\int_{h \in M/Z_M(x)} \int_{K N} \int_{G^{\times (m+1)}}
{^b \operatorname{Tr}}\Big( \left[A_1(g_1)\circ \cdots \circ A_{m+1} (g_{m+1}), A_0 \big (kh g h^{-1}nk^{-1} (g_1\dots g_{m+1})^{-1}\big)\right]\\
&\hspace{3cm}
\times \sum_{\tau \in S_m}\mathrm{sgn}(\tau)\cdot H_{\tau(1)}(g_1...g_mk) H_{\tau(2)}(g_2...g_mk ) \dots  H_{\tau(m)}(g_m, k)dg_1\cdots dg_{m+1} dk dn dh 
\\
&= I^*\sigma^P_{\partial Y,g} (A_0,\ldots,A_{m+1}).
\end{align*}
In this computation we have used the fact that $\Phi^P_{g}$ is a cyclic cocycle and Melrose's formula for the $b$-trace.  
To show that $B\Phi^{r,P}_{Y,g}=0$ we write out the differential
\[
B \Phi^{r,P}_{Y,g}(A_0,\ldots,A_{m-1})=\sum_{i=0}^{m-1}(-1)^{m-1)i}\Phi^{r,P}_{Y,g}(1,A_i,\ldots,A_{m-1},A_0,\ldots,A_{i-1}),
\]
where $1$ is the delta function at the unit. Ignoring the sign, we can write the $i$'th term of this expression as
\begin{align*}
&\int_{h \in M/Z_M(x)} \int_{K N} \int_{G^{\times m}} \sum_{\tau \in S_m}\mathrm{sgn}(\tau)\cdot H_{\tau(1)}(g_1...g_mk) H_{\tau(2)}(g_2...g_mk ) \dots  H_{\tau(m)}(g_m k) \\
& \delta_e \big(kh g h^{-1}nk^{-1} (g_1\dots g_m)^{-1}\big)
^{b}\operatorname{Tr}\Big(  A_i(g_1) \dots \circ A_m(g_{m-i+1})\circ A_0(g_{m-i})\circ A_{i-1}(g_{m}) \Big) dg_1\cdots dg_m dk dn dh\\
&=\int_{h \in M/Z_M(x)} \int_{K N} \int_{G^{\times m}} \sum_{\tau \in S_m}\mathrm{sgn}(\tau)\cdot H_{\tau(1)}(khgh^{-1}n) H_{\tau(2)}(g_2...g_mk ) \dots  H_{\tau(m)}(g_m, k) \\
&^{b}\operatorname{Tr}\Big(  A_i(khgh^{-1}nk(g_2\cdots g_m)^{-1}) \dots \circ A_m(g_{m-i+1})\circ A_0(g_{m-i})\circ A_{i-1}(g_{m}) \Big)
dg_2\cdots dg_m dk dn dh\\
&=0,
\end{align*}
because $H(khgh^{-1}n)=0$. 
This shows that $B\Phi^{r, P}_{Y,g}=0$ and injectivity of $I^*$ show that $(b+B)\sigma^P_{\partial Y,g}=0$.
\end{proof}

\medskip
\noindent
{\bf Notation:} we shall often omit the parabolic subgroup $P$ from the 
notation, thus denoting by $\Phi_g$ the cyclic cocycle on $\mathcal{L}_s(G)$, by  
$\Phi_{Y,g}$ the cyclic cocycle on $\mathcal{L}^c_G (Y)$ and by 
$( \Phi^r_{Y,g}, \sigma_{\partial Y,g})$ the relative cyclic cocycle 
for  $ {^b \mathcal{L}}^c_G(Y)\xrightarrow{I}   {}^b \mathcal{L}^c_{G,\RR}(\cyl(\partial Y))$.

\section{Toward a  general higher APS index formula }\label{sect:toward}

\begin{proposition}\label{prop:extension} Assume that $G$ is a connected, linear real reductive group. Then:
\begin{itemize}
\item[1]  the   cyclic cocycle 
$\Phi^P_{Y,g}$ extends continuously from $\mathcal{L}^c_G(Y)$ to  $\mathcal{L}^\infty_{G,s}(Y)$; 
\item[2] the relative cyclic cocycle  $(\Phi^{r,P}_{Y,g}, \sigma_{\partial Y,g})$   extends 
continuously from the pair  $ {^b \mathcal{L}}^c_G(Y)\xrightarrow{I}   {}^b \mathcal{L}^c_{G,\RR}(\cyl(\partial Y))$
to the pair  $ {^b \mathcal{L}}^\infty_{G,s}(Y)\xrightarrow{I}   {}^b \mathcal{L}^\infty_{G,s,\RR}(\cyl(\partial Y))$.  
\end{itemize}
\end{proposition}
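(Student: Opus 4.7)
The plan is to reduce both claims to the continuity of the underlying group cocycle $\Phi^P_g$ on $\mathcal{L}_s(G)$, already proved in Proposition \ref{prop:extend-group}, by exploiting the slice isomorphism (\ref{PHI}). Under this isomorphism an element $A \in \mathcal{L}^\infty_{G,s}(Y)$ corresponds to a $K\times K$-equivariant function $\Phi_A: G \to \Psi^{-\infty}(S)$ satisfying the Lafforgue-type bound $\|\Phi_A(g)\|_\alpha \leq p_{s,\alpha}(A)\cdot \Xi(g)(1+\|g\|)^{-s}$, and the integrand of $\Phi^P_{Y,g}(A_0,\ldots,A_m)$ coincides with that of $\Phi^P_g$ but with the pointwise scalar product $f_0(g_0)\cdots f_m(g_m)$ replaced by $\operatorname{Tr}(\Phi_{A_0}(g_0) \circ \cdots \circ \Phi_{A_m}(g_m))$.

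The first step would be to establish the analytic estimate
\[
|\operatorname{Tr}(T_0 \circ \cdots \circ T_m)| \leq C\,\|T_0\|_\alpha \cdots \|T_m\|_\alpha, \qquad T_i \in \Psi^{-\infty}(S),
\]
for some $C^k$-type seminorm $\|\cdot\|_\alpha$ on $\Psi^{-\infty}(S)$ with $k$ large enough relative to $\dim S$; this is a standard estimate on a compact manifold obtained by bounding the trace norm of a composition by the product of $C^k$-seminorms of the kernels. Substituting into the definition of $\Phi^P_{Y,g}$ and using the Lafforgue bound factorwise would give
\[
|\Phi^P_{Y,g}(A_0,\ldots,A_m)| \leq C \Bigl(\prod_{i=0}^m p_{s,\alpha}(A_i)\Bigr)\cdot \Phi^P_g(\xi_s,\ldots,\xi_s),
\]
where $\xi_s(g):=\Xi(g)(1+\|g\|)^{-s}$ is a positive element of $\mathcal{L}_s(G)$ and the right-hand side is finite by Proposition \ref{prop:extend-group}, provided $s$ is taken sufficiently large. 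Density of $\mathcal{L}^c_G(Y)$ in $\mathcal{L}^\infty_{G,s}(Y)$ and the Fr\'echet continuity just obtained then produce the required continuous extension of $\Phi^P_{Y,g}$, settling part (1).

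For part (2) the same blueprint applies with two adjustments. For $\Phi^{r,P}_{Y,g}$ the ordinary trace on $\Psi^{-\infty}(S)$ must be replaced by Melrose's $b$-trace on ${}^b\Psi^{-\infty,\epsilon}(S)$; the corresponding estimate $|{}^b\operatorname{Tr}(T_0 \circ \cdots \circ T_m)| \leq C \prod \|T_i\|_\alpha$, with $\|\cdot\|_\alpha$ a continuous seminorm on ${}^b\Psi^{-\infty,\epsilon}(S)$ and $\epsilon<1$, will follow from the Lesch--Moscovici--Pflaum formula expressing the $b$-trace as ordinary traces of two residual operators, exactly as invoked in the proof of Proposition \ref{prop:-0-relative-cocycle-bis}. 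For $\sigma^P_{\partial Y,g}$, Fourier transform identifies ${}^b\mathcal{L}^\infty_{G,s,\RR}(\cyl(\partial Y))$ with rapidly decreasing Schwartz families $\lambda \mapsto \widehat{B}(\cdot,\lambda)$ of elements of $\mathcal{L}^\infty_{G,s}(\partial Y)$; the estimate from part (1) then applies at each fixed $\lambda$ to the composition appearing in the definition of $\sigma^P_{\partial Y,g}$, and the $\lambda$-integral converges by the Schwartz decay in $\lambda$.

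The main obstacle will be essentially bookkeeping rather than any deep analytic point: one must combine the trace (or $b$-trace) estimate on the compact slice $S$, the Lafforgue bound on the group direction, and the iterated $G^{\times m}\times M/Z_M(g)\times KN$ integrals into a single application of the Song--Tang convergence argument with the scalar test functions $f_i$ replaced throughout by the universal majorant $\xi_s$. Once the continuous extensions are in place, the cocycle identities and the relative-cocycle identity of Proposition \ref{prop:rel-cocycle} transfer automatically from the compactly supported setting by density.
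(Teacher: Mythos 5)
Your proposal is correct and follows essentially the same route as the paper's proof: both reduce to the Song--Tang estimate on $\Phi^P_g$ (\cite[Thm A.5]{st}), both use a factorwise trace (resp.\ $b$-trace) inequality on the slice combined with the Lafforgue bound in the $G$-direction, and both treat $\sigma^P_{\partial Y,g}$ by Fourier transform in the cylinder variable. The only cosmetic difference is that the paper works uniformly with the specific GMPi norm $\|\cdot\|_b$ and the $b$-trace inequality $|{}^b\Tr(P_0\cdots P_k)|\le C\|P_0\|_b\cdots\|P_k\|_b$ of \cite[Lemma 6.4]{GMPi} (which covers the residual and genuinely $b$-pseudodifferential cases at once), and packages the Lafforgue and slice seminorms into a single combined norm $|||\cdot|||_t$, whereas you use generic $C^k$-type seminorms on the slice and keep the two factors separate; also, your displayed bound by $\Phi^P_g(\xi_s,\ldots,\xi_s)$ should really read the integral with absolute values (as in the Song--Tang estimate), since the alternating sum over $S_m$ would otherwise permit cancellations, but this is clearly what you intend.
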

\begin{proof}
The proof of the first statement is analogous to the considerations in \cite{hst} depending crucially on the inequality proved in \cite[Thm A.5]{st}:
\[
\left|\Phi^P_g (f_0,\ldots,f_m)\right|\leq C\nu_{d_0+T_0+1}(f_0)\cdots \nu_{d_0+T_0+1}(f_m),\qquad f_0,\ldots,f_m\in \mathcal{L}_s(G),
\]
where
\[
\nu_t(f):=\sup_{g\in G}|(1+||g||)^t\Xi(g)^{-1}f(g)|,
\]
and $T_0$ and $d_0$ as in \cite{st}. Given $A\in \mathcal{L}^c_G(Y)$, we introduce the norm
\[
|||A|||_t:=\sup_{g\in G}|(1+||g||)^t\Xi(g)^{-1}||A(g)||_b,
\]
where$ || P ||_b^2 := \| \chi P \|^2_1 + \| \phi [\mathcal{V},P] \|^2_1 + \| [\mathcal{V},P] \|^2_1
 + \| [\phi ,P] \|^2 + \| P \|^2$ for $P\in {}^b\Psi^{-\infty,\epsilon} (S) + \Psi^{-\infty,\epsilon} (S) $ (c.f. \cite[Definition 7.3]{PP2}). Because $\mathcal{L}^\infty_{G,s}(Y)$ lies inside the norm-completion of $\mathcal{L}^c_G(Y)$
 with respect to $|||~|||_s$, it suffices to estimate the cocycle $\Phi^P_{Y,g}$ in one of these norms. For this we use the inequality
 \[
 |   {}^b\Tr  (P_0 P_1 \cdots P_k)| \leq  C || P_0 ||_b \cdots || P_k ||_b,
 \]
 c.f.\ \cite[Lemma 6.4.]{GMPi} to obtain
 \[
 \left|\Phi^P_{Y,g}(A_0,\ldots,A_m)\right|\leq C |||A_0|||_{d_0+T_0+1}\cdots |||A_m|||_{d_0+T_0+1}.
 \]
This proves the first claim.  For the second claim we proceed as in \cite[Proposition 7.12]{PP2}: we use
the usual trace inequality $|\Tr(AB)|\leq ||A||_1||B||_1$ together with the estimate $|H_i(gk)|\leq C_iL(g)$ of \cite[Proposition A.2]{st} to find 
\begin{align*}
|&\sigma^P_{\partial Y,g}(B_0, ..., B_{m+1})| \\
&\leq C \int_{h \in M/Z_M(g)} \int_{K N} \int_{G^{\times {(m+1)}}} \tilde{f}_0(kh g h^{-1}nk^{-1} (g_1\dots g_mg_{m+1})^{-1}, \lambda)\tilde{f}_1(g_1,\lambda)\cdots\\
&\qquad\qquad\qquad\qquad\qquad \qquad \qquad \tilde{f}_{m+1}(g_{m+1},\lambda) dg_1\cdots dg_{m+1} dk dn dhd\lambda,
\end{align*}
where 
\begin{align*}
\tilde{f}_0(g,\lambda)&:=||\hat{B}_0(g,\lambda)||_1,\\
\tilde{f}_i(g,\lambda)&:=||\hat{B}_i(g,\lambda)||_1(1+L(g))^i,\qquad i=1,\ldots,m,\\
\tilde{f}_{m+1}(g,\lambda)&:=||\frac{\partial \hat{B}_{m+1}(g,\lambda)}{\partial\lambda}||_1
\end{align*}
By continuity of the map $||~||_1:\mathcal{L}^\infty_{G,s}(\partial Y)\to\mathcal{L}_s(G)$, c.f. \S \ref{oiclosed}, we see that $\tilde{f}_j\in \mathcal{L}_s(G)$, for all $j=1,\ldots,m+1$.
We can therefore rewrite the right hand side of the equality above as
\[
\int_{h \in M/Z_M(g)} \int_{K N}\int_\RR F(kh g h^{-1}nk^{-1},\lambda)dk dn dhd\lambda,
\]
with $F:=\tilde{f}_0*\ldots*\tilde{f}_{m+1}$. Convergence of this integral now follows as in \cite[Theorem A.5]{st}.
\end{proof}

\begin{definition}
\label{defn:higher-eta} 
Let $p_t=V(t D_{\text{cyl}})$ and $c_m=(-1)^{\frac{m}{2}}\frac{m!}{(\frac{m}{2})!}$. Fix a cuspidal parabolic subgroup $P=MAN<G$ and $g\in M$
a semisimple element.
Let 
$$\eta^P_g (t):=2 c_m  \sum_{i=0}^{m} \sigma^P_{\partial Y,g} (p_t, ..., [\dot{p}_t, p_t], ..., p_t)$$
We define the higher eta invariant associated to $\Phi^P_g$ and the boundary operator $D_\partial$ as
\begin{equation}\label{general-rho}
\eta_g^P (D_\partial):= \lim_{\epsilon\downarrow 0} \int_\epsilon^{1/\epsilon} \eta^P_g (t)dt \equiv 
\lim_{\epsilon\downarrow 0} \int_\epsilon^{1/\epsilon} 2 c_m  \sum_{i=0}^{m} \sigma^P_{\partial Y,g} (p_t, ..., [\dot{p}_t, p_t], ..., p_t) dt
\end{equation}
if this limit exists.\\
\end{definition}

\begin{theorem}\label{thm:splitting} 
Let $s\in (0,1]$. For the index pairing 
$\langle \Ind_\infty(D), [\Phi_{Y,g}]$ the following formula holds:

\[
c_m \langle \Ind_\infty(D), [\Phi^P_{Y,g}]\rangle=
\Phi^{r,P}_{Y,g} (V(sD), \dots , V(sD))- \frac{1}{2}\int_s^\infty \eta^P_g(t)dt
\]
where part of the statement is that the $t$-integral converges at $+\infty$.
\end{theorem}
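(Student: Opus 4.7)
The plan is to mirror the 0-degree argument (Proposition \ref{prop:-0-relative-cocycle-bis} and equation \eqref{relative-pairing-bis}), replacing the trace $\tau^Y_g$ by the higher cocycle $\Phi^P_{Y,g}$ and the 1-cocycle $\sigma^{\partial Y}_g$ by $\sigma^P_{\partial Y,g}$. The first observation is that, because Melrose's $b$-trace agrees with the ordinary trace on residual operators, the cochain $\Phi^{r,P}_{Y,g}$ restricts on the ideal $\mathcal{L}^\infty_{G,s}(Y) \subset {}^b\mathcal{L}^\infty_{G,s}(Y)$ to $\Phi^P_{Y,g}$. Combined with the excision compatibility in Theorem \ref{theo:smooth-index-CM} (3), this yields
\[
\langle \Ind_\infty(D), [\Phi^P_{Y,g}]\rangle = \langle \Ind_\infty(D, D_\partial), [(\Phi^{r,P}_{Y,g}, \sigma^P_{\partial Y,g})]\rangle.
\]

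Next I would represent the relative index class by the Wassermann-type triple $(V(sD), e_1, q_t)_{t \in [s,\infty]}$ with $q_t = V(tD_{\cyl})$ and $q_\infty = e_1$, exactly as in \eqref{pre-wassermann-triple-1}, and apply the standard formula for pairing a relative cyclic cocycle of even degree with such a triple. This produces an absolute bulk contribution $c_m\, \Phi^{r,P}_{Y,g}(V(sD),\dots,V(sD))$ (the $e_1$-term vanishes because $\Phi^{r,P}_{Y,g}$ is evaluated on an idempotent of rank supported away from the $b$-diagonal), minus a boundary transgression integral $\int_s^\infty \mathrm{TS}(\sigma^P_{\partial Y,g}, q_t, \dot q_t)\, dt$ coming from the Chern--Simons secondary form along the path of projectors.

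The key algebraic step is identifying this transgression integrand with $\tfrac{1}{2}\eta^P_g(t)$. For a smooth path of idempotents $p_t$, the Chern character transgresses by $c_m \sum_{i=0}^{m} \sigma^P_{\partial Y,g}(p_t,\dots,[\dot p_t,p_t],\dots,p_t)$ in the normalization used here; a cyclicity argument, combined with the fact that all entries of $V(tD_{\cyl})$ are obtained from smooth functional calculus of $D_{\partial Y}$ and hence commute pairwise, reduces the computation to a formal identity matching exactly the constants prescribed in Definition \ref{defn:higher-eta}. This is the delicate combinatorial heart of the proof and generalizes the elementary manipulation used in Proposition \ref{prop:from-cocycle-to-eta}; in practice it is best carried out by writing $\dot q_t$ as a commutator $[\dot q_t, q_t] + q_t\dot q_t + \dot q_tq_t$ and distributing, as in the 0-degree case.

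The final ingredient is convergence of $\int_s^\infty \eta^P_g(t)\, dt$ at $+\infty$. By Proposition \ref{prop:large-yes-boundary-unperturbed-b}, every entry of $V(tD_{\cyl}) - e_1$ and of $\dot q_t$ converges weighted exponentially to zero in ${}^b\mathcal{L}^\infty_{G,s,\RR}(\cyl(\partial Y))$ as $t \to \infty$ (using that each such entry has the form $f(tD_{\partial Y}^2)$ or $D_{\partial Y} f(tD_{\partial Y}^2)$ for a Schwartz function $f$). Together with the continuity of $\sigma^P_{\partial Y,g}$ established in Proposition \ref{prop:extension}, this produces an exponentially decaying integrand, and hence convergence. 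The main obstacle, beyond bookkeeping, is precisely the algebraic matching of constants in the transgression identity; everything else follows from the $b$-calculus machinery and the holomorphic-stability arguments already developed above.
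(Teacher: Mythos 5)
Your overall strategy matches the paper's: establish the absolute--relative pairing identity via excision and the residual restriction $\Phi^{r,P}_{Y,g}|_{\mathcal{L}^\infty_{G,s}(Y)}=\Phi^P_{Y,g}$ (as in \cite[Theorem 9.7]{moriyoshi-piazza}, \cite[Theorem 7.17]{PP2}), then evaluate the relative pairing on the explicit Wassermann-type triple $(V(D),e_1,q_t)$, and finally handle convergence via Proposition~\ref{prop:large-yes-boundary-unperturbed-b}. The paper performs the evaluation at $t\in[1,\infty]$ and then rescales $D\mapsto sD$, whereas you run the parameter over $[s,\infty]$ directly; those are equivalent.

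However, you have inserted a spurious ``delicate combinatorial heart.'' Definition~\ref{defn:higher-eta} already sets
$\eta^P_g(t):=2c_m\sum_{i=0}^m\sigma^P_{\partial Y,g}(p_t,\dots,[\dot p_t,p_t],\dots,p_t)$, which is \emph{exactly} twice the Chern--Simons transgression term appearing in the relative pairing formula. So the identification of the transgression integrand with $\tfrac12\eta^P_g(t)$ is definitional and requires no cyclicity argument, no commutativity of entries, and no analogue of the \emph{Mathematica}-assisted manipulation of Proposition~\ref{prop:from-cocycle-to-eta}. You have conflated two different steps: in the 0-degree case there is a genuine computation (Proposition~\ref{prop:from-cocycle-to-eta}) because $\eta_g$ is defined \emph{independently} as the classical eta integral $\frac{1}{\sqrt\pi}\int\tau^{\partial Y}_g(D_{\partial Y}e^{-tD^2_{\partial Y}})\frac{dt}{\sqrt t}$, and one must reconcile this with the transgression; in the higher-degree case, the paper deliberately \emph{defines} $\eta^P_g$ to be the transgression itself (the remark after Theorem~\ref{thm:splitting} explains that a more explicit identification, or a $s\downarrow 0$ limit of the bulk term, is precisely what is \emph{not} carried out here, and is why Section~\ref{sect:reduction} proceeds by reduction instead). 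Your statement that the $e_1$-term vanishes ``because $\Phi^{r,P}_{Y,g}$ is evaluated on an idempotent of rank supported away from the $b$-diagonal'' is also not quite the right reason; $e_1$ is a constant matrix, and the cancellation comes from the standard Chern-character normalization of the $K_0$-pairing, as in the 0-degree case where the $\tau^{Y,r}_g(I)$ contributions cancel between $V(D)$ and $e_1$.
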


\begin{proof}
One first establishes the equality
\begin{equation}\label{equal-relative-abs}
\langle \Ind_\infty(D), [\Phi^P_{Y,g}]\rangle = \langle \Ind_\infty(D,D_\partial), [\Phi^{r,P}_{Y,g},\sigma^{P}_{\partial Y,g}]\rangle\,,
\end{equation}
 exactly as in \cite[Theorem 9.7]{moriyoshi-piazza} and \cite[Theorem 7.17]{PP2}. By definition of relative pairing
 and by the very definition of our relative index class, that is
 $$\Ind_\infty(D,D_\partial):=
[V(D), e_1, q_t]
\,, \;\;t\in [1,+\infty]\,,\;\;\text{ with }
q_t:= \begin{cases} V(t D_{\cyl})
\;\;\quad\text{if}
\;\;\;t\in [1,+\infty)\\
e_1 \;\;\;\;\;\;\;\;\;\;\;\;\;\,\text{ if }
\;\;t=\infty
 \end{cases}
$$
we then obtain from \eqref{equal-relative-abs} the following formula
 \[
 c_m \langle \Ind_\infty(D), [\Phi^P_{Y,g}]\rangle=
\Phi^{r,P}_{Y,g} (V(D), \dots , V(D))- \frac{1}{2}\int_1^\infty \eta^P_g(t)dt
\]
The formula we want to prove is obtained by rescaling  $D$ to $sD$.
\end{proof}

\begin{remark}
We would like to take the limit as $s\downarrow 0$ in Theorem \ref{thm:splitting} and obtain directly a higher delocalized APS index formula
as the sum of a geometric term and the higher delocalized eta invariant. Unfortunately at the moment it is quite unclear
how to study the limit as $s\downarrow 0$ of $\Phi^{r,P}_{Y,g} (V(sD), \dots , V(sD))$
which is why in the next section we give a treatment of the higher APS index formula corresponding to $\Phi^P_g$
through reduction, as in the closed case treated by Hochs, Song and Tang in \cite{hst}.
Notice that even for a cocompact $G$-proper manifold {\it without} boundary $X$ it is a difficult problem to study the limit
$$\lim_{s\to 0}\Phi^{P}_{X,g} (V(sD), \dots , V(sD))$$
with $V(D)$ the (symmetrized) Connes-Moscovici projector.
\end{remark}

\section{Reduction}\label{sect:reduction}

Suppose that $Y$ is a smooth $G$-proper manifold with boundary. Let $P = MAN$ be a cuspidal parabolic subgroup. Since the nilpotent subgroup $N$ acts freely on $X$, we can consider $Y_{MA} := Y/N$ which is a smooth manifold with a proper $MA$-action. In addition, the abelian group $A$ also acts freely on $Y_{MA}$, we define 
\[
Y_M = Y/(AN) = Y_{MA}/A
\]
which is a smooth manifold with a proper $M$-action. The following equation of the index pairings was proved \cite[Proposition 4.8]{hst} for a smooth $G$-proper manifold $X$ without boundary, 
\[
 \langle \Phi_{X,g}, \Ind_\infty (D) \rangle = \langle \Phi_{X_{MA},g}, \Ind_\infty (D_{X_{MA}})  \rangle\,.
 \]
 Moreover, if the metric is slice compatible (Definition \ref{def:slice-metric}), we have \cite[Lemma 5.3]{hst}
\[
 \langle \Phi_{X_{MA},g}, \Ind_\infty (D_{X_{MA}})  \rangle= \langle \Phi_{X_M,g}, \Ind_\infty (D_{X_M}) \rangle.
 \]
 These two equations together give a cohomological formula for $\langle \Phi_{X,g}, \Ind_\infty (D) \rangle$, with geometric information coming from $X_{M,g}$. In this section, we generalize the above computation for  $\langle \Phi_{X,g}, \Ind_\infty (D) \rangle$ to a $G$-proper manifold $Y$ with boundary. In particular, we will need to study the invertibility of the Dirac operators on $\partial (Y/N)$ and $\partial (Y_{M})$.
\subsection{First reduction for manifolds without boundary}

Recall that 
\[
\mathcal{L}^\infty_{G,s}(X,E) \cong \left(\mathcal{L}_s(G)\hat{\otimes}\Psi^{-\infty}(S, E|_S)\right)^{K\times K}, \quad \mathcal{L}^\infty_{G,s}(X_{MA},E|_{X_{MA}}) \cong \left(\mathcal{L}_s(MA)\hat{\otimes}\Psi^{-\infty}(S, E|_S)\right)^{(K \cap M)\times (K \cap M)}.
\]
\begin{definition}
For any $f \in 	\mathcal{L}_s(G)$, Harish-Chandra  \cite{Harish-Chandra-dis} defines a  map $f^N$ on $MA$ by 
\[
f(ma) \colon = \int_N f(man) dn. 
\]
Then for any $k \in \mathcal{L}^\infty_{G,s}(X,E)$, and for all $m \in M, a \in A, y, y'\in S$, put
\begin{equation}\label{integration-on-N}
 k^N (ma, y, y'):= \int_N k(nma, y, y') dn \in \text{Hom}\left(E|_y, E|_{y'}\right).
\end{equation}
\end{definition}
\begin{lemma}
\label{integral N lem}
The map defined in (\ref{integration-on-N}) is a multiplicative map:
\[
\phi^N \colon {\mathcal{L}}^\infty_{G,s}(X,E)\ni k\longrightarrow k^N \in  {\mathcal{L}}^\infty_{G,s}(X_{MA},E|_{X_{MA}})
\]
Moreover, the map $\phi^N$ is continuous with respect to Fr\'echet topologies. 
\begin{proof}
Harish-Chandra proved in \cite{Harish-Chandra-dis}[Lemma 22] that (\ref{integration-on-N}) defines a continuous linear map  
\[
\mathcal{L}_s(G)\ni k\longrightarrow k^N \in  \mathcal{L}_s(MA).
\]
It remains to show that it is multiplicative. Suppose that 
\[
\kappa_1, \kappa_2 \in {\mathcal{L}}^\infty_{G,s}(X,E). 
\]
Because Lafforgue's Schwartz algebra is closed under convolution,
\[
\kappa_1^N \star \kappa_2^N \in {\mathcal{L}}^\infty_{G,s}(X_{MA},E|_{X_{MA}}). 
\]
By definition and the Iwasawa decomposition $G = KNMA$,
\begin{equation}
\label{kappa eq-1}
\begin{aligned}
\left(\kappa_1 \star \kappa_2\right)^N(ma) = &\int_G \int_N \kappa_1(nmag'^{-1}) \kappa_2(g') \; dndg'\\
=&\int_K\int_M \int_A\int_N \int_N \kappa_1(nmaa'^{-1}m'^{-1}n'^{-1}k'^{-1}) \kappa_2(k'n'm'a') \;dk' dm'da'dn' dn
\end{aligned}
\end{equation}
Since the kernels $\kappa_i, i=1,2$ are $K\times K$-equivariant, we have 
\[
\kappa_1(k'gk) = k' \cdot \kappa_1(g) \cdot  k,
\] 
where the $k, k'$ on the right-hand side denotes the $K$-action on $\Psi^{-\infty}(S)$. 
A similar equation holds for $\kappa_2$. The last equation in (\ref{kappa eq-1}) becomes
\[
\int_M \int_A\int_N \int_N \kappa_1(nmaa'^{-1}m'^{-1}n'^{-1}) \kappa_2(n'm'a') \; dm'da'dn'dn 
\]
Recall that $MA$ normalizes $N$, that is,
\[
nmaa'^{-1}m'^{-1}n'^{-1} =n''maa'^{-1}m'^{-1}
\]
for some $n'' \in N$. We conclude that 
\[
\begin{aligned}
\left(\kappa_1 \star \kappa_2\right)^N(ma) =&\int_M \int_A\int_N \int_N \kappa_1(n''maa'^{-1}m'^{-1}) \kappa_2(n'm'a') \; dm'da'dn' dn''\\
=&\int_M \int_A \kappa_1^N(maa'^{-1}m'^{-1}) \kappa_2^N(m'a') \;dm'da'\\
=&\left(\kappa_1^N \star \kappa_2^N\right)(ma).
\end{aligned}
\]
\end{proof}
\end{lemma}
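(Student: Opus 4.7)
My plan is to split the proof into two parts corresponding to the two claims of the lemma: continuity of $\phi^N$ and multiplicativity of $\phi^N$.

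For continuity, I would invoke the cited result of Harish-Chandra (Lemma 22 of \cite{Harish-Chandra-dis}) which states that the scalar-valued map $f \mapsto f^N$ is a continuous linear map $\mathcal{L}_s(G) \to \mathcal{L}_s(MA)$. Under the slice identifications $\mathcal{L}^\infty_{G,s}(X,E) \cong (\mathcal{L}_s(G)\hat\otimes\Psi^{-\infty}(S,E|_S))^{K\times K}$ and the analogous one for $X_{MA}$, the map $\phi^N$ is essentially $f \mapsto f^N$ tensored with the identity on $\Psi^{-\infty}(S,E|_S)$, so continuity on the completed projective tensor products follows formally. The only thing to check is that the result of integrating out $N$ is $(K\cap M)\times(K\cap M)$-invariant, which follows from the $K\times K$-invariance of the original kernel together with the fact that $K\cap M$ normalizes $N$ (as $K\cap M$ lies in the Levi factor $MA$).

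For multiplicativity, the plan is to compute $(\kappa_1 \star \kappa_2)^N(ma)$ directly using the Iwasawa decomposition $G = KNMA$ to parametrize the integration variable $g'$ as $g' = k'n'm'a'$, as sketched in the excerpt. First I would write
\[
(\kappa_1\star\kappa_2)^N(ma) = \int_N\int_G \kappa_1(nma(g')^{-1})\kappa_2(g')\,dg'\,dn,
\]
and then unfold $dg'$ against the Iwasawa decomposition. The $K\times K$-equivariance of both $\kappa_1$ and $\kappa_2$ lets the $k'$-integral cancel, producing a double integral over $N$ of a product of terms $\kappa_1(n \cdot ma(a')^{-1}(m')^{-1}(n')^{-1})\kappa_2(n'm'a')$. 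The crux is then to use that $MA$ normalizes $N$, so writing $ma(a')^{-1}(m')^{-1}(n')^{-1} = \tilde n \cdot ma(a')^{-1}(m')^{-1}$ for a suitable $\tilde n = \tilde n(n', m', a, a') \in N$, and performing the change of variable $n \mapsto n \tilde n^{-1}$ in the inner $N$-integral (valid because $N$ is unimodular) decouples the two integrations.

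The main obstacle, and the step requiring the most care, will be justifying this change of variables together with Fubini, because one must verify that the double $N$-integral is absolutely convergent. This is where the continuity statement on Lafforgue's algebra is used implicitly: since $\kappa_1, \kappa_2 \in \mathcal{L}^\infty_{G,s}(X,E)$, Harish-Chandra's estimate guarantees $\kappa_1^N, \kappa_2^N \in \mathcal{L}^\infty_{G,s}(X_{MA}, E|_{X_{MA}})$, and hence the putative right-hand side $\kappa_1^N \star \kappa_2^N$ converges absolutely; feeding this back, a standard dominated-convergence argument legitimizes the interchange of $N$-integrations. Once this is in place, matching the two $N$-integrals yields exactly $(\kappa_1^N \star \kappa_2^N)(ma)$ by definition of convolution on $\mathcal{L}_s(MA)\hat\otimes\Psi^{-\infty}(S,E|_S)$, completing the proof.
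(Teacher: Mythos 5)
Your proposal is correct and follows essentially the same route as the paper: continuity via Harish-Chandra's Lemma 22 tensored with the identity on the slice factor, and multiplicativity via unfolding the convolution against the Iwasawa decomposition $G=KNMA$, cancelling the $K$-integral by $K\times K$-equivariance, and decoupling the two $N$-integrals using that $MA$ normalizes $N$. Your extra remarks on verifying $(K\cap M)\times(K\cap M)$-invariance of the image and on justifying the Fubini/change-of-variables step are sensible elaborations of details the paper leaves implicit, but they do not constitute a different approach.
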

\begin{remark}
By Lemma \ref{integral N lem}, the map $\phi^N$ induces a map 
\[
\phi^N_* \colon K^*\left(\mathcal{L}^\infty_{G,s}(X,E)\right) \cong K^*\left(C^*_r(G)\right) \to K^*\left(\mathcal{L}^\infty_{G,s}(X_{MA},E|_{X_{MA}})\right) \cong K^*\left(C^*_r(MA)\right).
\]	
\end{remark}

We consider $C^\infty (X,E)$ and $C^\infty (X,E)^{N,c}$, the smooth $N$-invariant sections of $E$ with compact support in $X/N\cong X_{MA}$.  Let $D$ be a Dirac operator on $X$ and $k_t  \in {\mathcal{L}}^\infty_{G,s}(X,E)$ be the Schwartz kernel of $\exp(-tD^2)$. Since $k_t^N \in {\mathcal{L}}^\infty_{G,s}(X_{MA},E|_{X_{MA}})$, we can regard it
as an operator on $C^\infty (X,E)^{N,c}$.

\begin{lemma}\label{lem:heatkernel}
For all $\sigma\in C^\infty_c (X, E)$ and $s\in C^\infty (X,E)^{N,c}$ we have
$$(k_t^*\sigma,s)_{L^2 (X,E)}= (\sigma, k_t^N s)_{L^2 (X,E)},$$
where $k_t^*$ is the adjoint kernel of $k_t$. Moreover, 
\[
\frac{d}{dt}\left(\sigma, k_t^N s \right)_{L^2 (X,E)} = \left(\sigma, -D^2 \exp(-tD^2) s \right)_{L^2 (X,E)}
\]
\begin{proof} This lemma is proved in 
\cite[Lemma 4.1 and Lemma 4.2]{hst}. We have included it here for the reader's convenience.
\end{proof}
\end{lemma}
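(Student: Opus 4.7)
The plan is to unravel the Schwartz kernel representation of $\exp(-tD^2)$ and combine it with the principal bundle structure $X\to X/N = X_{MA}$. For the first identity, I would first rewrite $(k_t^{\ast}\sigma, s)_{L^2(X,E)}$ as an iterated integral over $X\times X$ using the definition of the adjoint kernel $k_t^{\ast}(x,x') = k_t(x',x)^{\ast}$, justifying Fubini by the rapid decay of $k_t$ and the compact support of $\sigma$. Next, since $N$ is closed and acts freely and properly on $X$, the projection $X\to X_{MA}$ is a principal $N$-bundle and the Riemannian measure on $X$ decomposes as $dx = dn\,dy$ (up to a smooth Jacobian) with $y\in X_{MA}$ and $dn$ a Haar measure on $N$. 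After this change of variables in the inner integral, the $N$-invariance $s(ny)=s(y)$ lets one pull $s(y)$ out of the $N$-integration, so that the remaining $N$-integral of $k_t$ is precisely $k_t^N$ after the slice identification used in Lemma~\ref{integral N lem}. Collecting everything yields the first identity.

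For the derivative formula, the strategy is to differentiate under the integral using the heat equation $\partial_t k_t = -D^2 k_t$, with $D^2$ acting on the first kernel variable. Since $D^2$ commutes with the $N$-translation of the second variable, this descends to $\partial_t k_t^N = -D^2 k_t^N$. The rapid decay and smoothness of $k_t$ as an element of $\mathcal{L}^\infty_{G,s}(X,E)$, established in Subsection~\ref{subsect:large}, allow the interchange of $\partial_t$ with both the $N$-integration defining $k_t^N$ and the $L^2(X,E)$-pairing against the compactly supported $\sigma$. The claimed identity then drops out once one interprets $\exp(-tD^2)s$ on $N$-invariant sections as $k_t^N s$.

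The main technical obstacle is that $s\notin L^2(X,E)$, so the naive manipulation $(k_t^{\ast}\sigma, s) = (\sigma, k_t s)$ has no direct meaning and every iterated integral must be checked to converge absolutely before Fubini can be invoked. This is ensured by combining the Gaussian kernel estimates in the spirit of Lemma~\ref{lem:Cheng-Li-Yau} (and its analogue for $\exp(-tD^2)$ appearing in the proof of Theorem~\ref{short-no-boundary}), the compact support of $\sigma$, the compact support modulo $N$ of $s$, and Harish-Chandra's integrability estimate for the map $f\mapsto f^N$ already exploited in Lemma~\ref{integral N lem}. Once this convergence bookkeeping is done, both identities follow from standard Fubini and differentiation-under-the-integral arguments.
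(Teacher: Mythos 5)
The paper does not argue this lemma itself; it defers entirely to \cite[Lemma 4.1 and Lemma 4.2]{hst}, so there is no internal proof to set your attempt against. That said, your reconstruction is the natural one and matches what one would expect in the cited source: unravel the Schwartz-kernel pairing, disintegrate $dx$ along the principal $N$-bundle $X\to X_{MA}$, pull the $N$-(equi)variant section $s$ through the inner integral, identify the remaining $N$-average of $k_t$ with $k_t^N$ via the slice identification of Lemma \ref{integral N lem}, and, for the derivative, apply the heat equation $\partial_t k_t = -D^2 k_t$ together with dominated convergence. You also correctly isolate the real technical burden: since $s\notin L^2(X,E)$, every Fubini and differentiation-under-the-integral step must be underwritten by the Gaussian decay of $k_t$, the compact supports (of $\sigma$, and of $s$ modulo $N$), and Harish-Chandra's integrability bound for $f\mapsto f^N$ behind Lemma \ref{integral N lem}. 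If you write this out in full, note that ``$s(ny)=s(y)$'' should be the fibrewise equivariance $s(ny)=n\cdot s(y)$, and it is the interplay of this with the $G$-equivariance $k_t(gx,gx')=g\,k_t(x,x')\,g^{-1}$ that collapses the inner $N$-integral onto the slice-picture expression $k_t^N(ma,y,y')=\int_N k_t(nma,y,y')\,dn$; this is bookkeeping rather than a gap, but it is precisely the content of the lemma.
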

\begin{lemma}
\label{lem-Schwartz}
Let $D_{X_{MA}}$ be the Dirac operator on $X_{MA}$ induced from $D$. Then the Schwartz kernel of $\exp(-tD_{X_{MA}}^2)$ is $k^N_t$. 
\begin{proof}
Since $X_{MA} = X/N$, we can identify $C^\infty (X,E)^{N,c}\cong  C^\infty (X_{MA},E|_{X_{MA}})$. Under such an identification, the restriction $D$ on $N$-invariant sections equals $D_{X_{MA}}$. By Lemma \ref{lem:heatkernel}, we have that
\[
\frac{d}{dt}\left(\sigma, k_t^N s \right)_{L^2 (X,E)} = \left(\sigma, -D_{X_{MA}}^2 k_t^N s \right)_{L^2 (X,E)} 
\] 
and
\[
\lim_{t \downarrow 0}\left(\sigma, k_t^N s \right)_{L^2 (X,E)}  = \left(\sigma, s \right)_{L^2 (X,E)}. 
\]
Using the uniqueness of the heat equation with usual initial data,
we conclude that $k_t^N$ is the Schwartz kernel of $\exp(-tD_{X_{MA}}^2)$. 
\end{proof}
\end{lemma}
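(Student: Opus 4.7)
The plan is to verify that $k_t^N$ satisfies the heat equation on $X_{MA}$ with the correct initial condition, and then invoke uniqueness of the heat kernel on a complete manifold of bounded geometry. The two key inputs are already packaged in the preceding Lemma \ref{lem:heatkernel}: a duality formula $(k_t^\ast\sigma,s)_{L^2(X,E)}=(\sigma,k_t^N s)_{L^2(X,E)}$, valid for $\sigma\in C^\infty_c(X,E)$ and $s\in C^\infty(X,E)^{N,c}$, together with the identity $\tfrac{d}{dt}(\sigma,k_t^N s)_{L^2(X,E)}=(\sigma,-D^2\exp(-tD^2)s)_{L^2(X,E)}$.

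First I would fix the identification $C^\infty(X,E)^{N,c}\cong C^\infty_c(X_{MA},E|_{X_{MA}})$ induced by the quotient map $X\to X_{MA}=X/N$. Under this identification the Dirac operator $D$, restricted to $N$-invariant sections, descends to the induced Dirac operator $D_{X_{MA}}$ on $X_{MA}$, because both the Clifford structure and the Clifford connection used in \eqref{def Dirac} are $N$-invariant and the vertical $N$-directions carry no curvature contribution after the averaging. Consequently the semigroup $\exp(-tD^2)$ preserves the subspace of $N$-invariant sections, and restricted to it equals $\exp(-tD_{X_{MA}}^2)$ \emph{once we know it is a semigroup with the right generator}; this last identification is precisely what the lemma asserts, so we must argue by uniqueness rather than by functional calculus.

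Next, using the duality and time-derivative identities of Lemma \ref{lem:heatkernel}, I would read the right-hand side of $\tfrac{d}{dt}(\sigma,k_t^N s)_{L^2(X,E)}=(\sigma,-D^2\exp(-tD^2)s)_{L^2(X,E)}$ on the quotient: for $N$-invariant $s$ the function $\exp(-tD^2)s$ is again $N$-invariant, $D^2\exp(-tD^2)s=D_{X_{MA}}^2(\exp(-tD^2)s)$ on $X_{MA}$, and the pairing $(\sigma,\cdot)_{L^2(X,E)}$ restricted to $N$-invariant second argument descends to the $L^2$ pairing on $X_{MA}$ (against the push-down of $\sigma$). Combining this with the duality formula, $(k_t^N s)_{t>0}$ solves, in the weak sense, the heat equation $\partial_t u_t=-D_{X_{MA}}^2 u_t$ on $X_{MA}$, while the initial condition $\lim_{t\downarrow 0}(\sigma,k_t^N s)_{L^2(X,E)}=(\sigma,s)_{L^2(X,E)}$ follows from the corresponding initial condition for $k_t$ on $X$ (the $N$-averaging is harmless here because $\sigma$ has compact support in $X$).

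Finally I would invoke uniqueness of the heat semigroup for the essentially self-adjoint operator $D_{X_{MA}}^2$ on the complete Riemannian manifold $X_{MA}$ (of bounded geometry, being cocompact proper for $MA$) to conclude that $k_t^N$ is the Schwartz kernel of $\exp(-tD_{X_{MA}}^2)$. The main obstacle, and the only point that is not purely formal, is the identification of $D$ on $N$-invariant sections with $D_{X_{MA}}$: one has to check that the Clifford connection $\nabla^E$ of \eqref{Clifford connection} really descends to a Clifford connection on $E|_{X_{MA}}$, since this is what guarantees that the generator of the semigroup $s\mapsto k_t^N s$ is literally $D_{X_{MA}}^2$ and not a first-order perturbation thereof (a subtlety the paper itself flagged for the split Dirac operator \eqref{dirac-split}). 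Once this verification is in place, the rest is a routine application of heat-equation uniqueness.
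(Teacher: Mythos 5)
Your proof takes essentially the same route as the paper's: identify $N$-invariant sections with sections over $X_{MA}$, apply the two identities of Lemma \ref{lem:heatkernel} to see that $t\mapsto k_t^N s$ is a weak solution of the $D_{X_{MA}}^2$-heat equation with the right initial data, and conclude by uniqueness. The extra attention you pay to whether the Clifford connection genuinely descends is a reasonable caution, though in the paper it is somewhat moot since $D_{X_{MA}}$ is \emph{defined} as the induced operator on $N$-invariant sections (with the remark after the lemma recording that it is a twisted Dirac operator); the essential content you correctly identify, namely that this induced operator has the right form so that uniqueness of the heat semigroup applies, is exactly what the paper relies on.
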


\begin{remark} Lemma \ref{lem-Schwartz} is stated in \cite[Lemma 4.5]{hst} for the wrong operator $D_{X_{MA}}$ introduced in \cite[Eq. (4.10)]{hst}. It is the operator $D_{X_{MA}}$ introduced in Lemma \ref{lem-Schwartz}
that carries the right property for the development. It is not hard to see that $D_{X_{MA}}$ is a twisted Dirac operator on $X_{MA}$.
\end{remark}

\begin{remark}
As in \cite[Lemma 4.6]{hst}, one can similarly show that $V(tD)^N = V(tD_{X_{MA}})$, where $V$ denotes the Connes-Moscovici projection. 
\end{remark}

\begin{lemma}
\label{3-invertibility}
If the Dirac operator $D$ on $X$ is invertible, then the corresponding Dirac operator $D_{X_{MA}}$ is invertible as well. 
\begin{proof}
Suppose that 
\[
\text{Spec}(D) \cap (-\delta, \delta) = \emptyset
\]	
for some $\delta>0$. We have shown in Part II (we should move the proof to Part I later) that the Schwartz kernel 
\[
e^{\frac{t\delta}{2}} \cdot k_t \in \mathcal{L}^\infty_{G,s}(X,E)
\]
converges to zero in Frechet topology as $t \to \infty$. Now the key observation is that the map 
\[
 \mathcal{L}^\infty_{G,s}(X,E) \ni k_t \longrightarrow  k_t^N \in \mathcal{L}^\infty_{G,s}(X_{MA},E|_{X_{MA}})
\]
 is continuous by Lemma \ref{integral N lem}. Thus, $k_t^N$, which is the Schwartz kernel of $e^{-tD_{X_{MA}}^2}$ by Lemma \ref{lem-Schwartz}, converges to zero faster than $e^{\frac{-t\delta}{2}}$ as $t \to \infty$. It follows that the operator $e^{\frac{t\delta}{2} -tD_{X_{MA}}^2}$ is uniformly bounded. Thus,  we can find a constant $C$ such that for  $t \gg 0$, and any $s \in L^2(X_{MA}, E|_{X_{MA}})$, 
\begin{equation}
\label{exp strum}
\langle e^{-tD_{X_{MA}}^2}s, s \rangle   \leq  e^{-\frac{t\delta}{3}} \cdot \|s\|^2.	
\end{equation}
We denote by $E_\lambda$ the spectrum measure associated to the self-adjoint operator $D_{X_{MA}}^2$, that is 
\[
E_\lambda \colon \lambda \to \text{projection on }L^2(X_{MA}, E|_{X_{MA}}), \quad \lambda \in \text{Spec}(D_{X_{MA}}^2) \subseteq [0, +\infty). 
\]
For any $s \in L^2(X_{MA}, E|_{X_{MA}})$, 
\begin{align*}
\langle e^{-tD_{X_{MA}}^2} s, s\rangle =& \int_0^\infty e^{-t\lambda} \; d\langle E_\lambda s, s \rangle	\\
&\geq \int_0^{\frac{\delta}{4}} e^{-t\lambda} \; d\langle E_\lambda s, s \rangle \geq e^{-\frac{t\delta}{4}} \cdot \int_0^{\frac{\delta}{4}}  \; d\langle E_\lambda s, s \rangle
\end{align*}
If $\text{Spec}(D_{X_{MA}}^2) \cap \left[0, \frac{\delta}{4}\right) \neq \emptyset$, then 
\[
P \colon = \int_0^{\frac{\delta}{4}}  \; d  E_\lambda 
\]
defines a non-zero projection on $L^2(X_{MA}, E|_{X_{MA}})$. If we take $s_0 \in \text{Image}\left(P\right)$, then
\[
\langle e^{-tD_{X_{MA}}^2}s_0, s_0 \rangle  \geq e^{-\frac{t\delta}{4}} \cdot  \|s_0\|^2, 
\]
which contradicts to (\ref{exp strum}). This completes the proof.  
\end{proof}
\end{lemma}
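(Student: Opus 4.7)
The plan is to transport the exponential decay of the heat kernel of $D$ on $X$ to the heat kernel of $D_{X_{MA}}$ on $X_{MA}$ via the $N$-integration map $\phi^N$, and then use the spectral theorem to rule out any spectrum of $D_{X_{MA}}$ near zero.

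First I would fix $\delta > 0$ such that $\mathrm{Spec}(D) \cap (-\delta, \delta) = \emptyset$, so that by Proposition \ref{prop:large-yes-boundary-unperturbed} the heat kernel $k_t$ of $\exp(-tD^2)$ satisfies $e^{t\delta/2} k_t \to 0$ in the Fr\'echet topology of $\mathcal{L}^\infty_{G,s}(X,E)$ as $t \to \infty$ (with weight $a = \delta^2/2$, where the weighted exponential convergence is explicitly built into the large-time analysis). Next, using the continuity of the multiplicative map $\phi^N \colon \mathcal{L}^\infty_{G,s}(X,E) \to \mathcal{L}^\infty_{G,s}(X_{MA}, E|_{X_{MA}})$ established in Lemma \ref{integral N lem}, I would conclude that $e^{t\delta/2} k_t^N \to 0$ in the Fr\'echet topology of $\mathcal{L}^\infty_{G,s}(X_{MA}, E|_{X_{MA}})$, and in particular that there is a uniform bound
\[
\|k_t^N\|_{\mathrm{op}} \leq C e^{-t\delta/2}, \qquad t \gg 0,
\]
in the $L^2$-operator norm (using that $\mathcal{L}^\infty_{G,s}(X_{MA}, E|_{X_{MA}})$ sits continuously inside $C^*(X_{MA}, E|_{X_{MA}})^{MA}$ and hence acts boundedly on $L^2$).

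By Lemma \ref{lem-Schwartz}, $k_t^N$ is the Schwartz kernel of $\exp(-tD_{X_{MA}}^2)$, so this translates into the pointwise estimate
\[
\langle \exp(-tD_{X_{MA}}^2) s, s \rangle \leq C e^{-t\delta/2} \|s\|^2
\]
for all $s \in L^2(X_{MA}, E|_{X_{MA}})$ and all $t \gg 0$. The final step is the spectral argument given in the excerpt: letting $E_\lambda$ denote the spectral measure of the non-negative self-adjoint operator $D_{X_{MA}}^2$, if there were spectrum in $[0, \delta/4)$ one could pick $s_0$ in the range of the spectral projection $P = \int_0^{\delta/4} dE_\lambda$ and obtain the lower bound $\langle \exp(-tD_{X_{MA}}^2) s_0, s_0 \rangle \geq e^{-t\delta/4} \|s_0\|^2$, which contradicts the upper bound for $t$ large enough. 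Hence $\mathrm{Spec}(D_{X_{MA}}^2) \subset [\delta/4, \infty)$ and $D_{X_{MA}}$ is invertible.

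The main technical obstacle is the passage from convergence in the Fr\'echet topology of $\mathcal{L}^\infty_{G,s}(X_{MA}, E|_{X_{MA}})$ to $L^2$-operator norm bounds on $\exp(-tD_{X_{MA}}^2)$. This is not a formal consequence of Fr\'echet convergence in an abstract sense, but rather relies on the fact (implicit throughout the paper, since $\mathcal{L}^\infty_{G,s}$ is a dense holomorphically closed subalgebra of the Roe algebra) that one of the defining seminorms dominates the $L^2$-operator norm; I would verify this using the slice isomorphism $\mathcal{L}^\infty_{G,s}(X_{MA}, E|_{X_{MA}}) \cong (\mathcal{L}_s(MA) \hat{\otimes} \Psi^{-\infty}(S, E|_S))^{(K\cap M) \times (K \cap M)}$ together with the standard estimate of the $C^*_r$-norm on $MA$ by the Lafforgue seminorm $\nu_s$ for $s$ large enough.
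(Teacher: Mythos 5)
Your proof is correct and follows essentially the same route as the paper: push the exponential decay of $e^{-tD^2}$ forward along the continuous multiplicative map $\phi^N$ of Lemma \ref{integral N lem}, identify $k_t^N$ with the heat kernel of $D_{X_{MA}}^2$ via Lemma \ref{lem-Schwartz}, and then run the spectral-measure contradiction. The one place where you go beyond the paper is in flagging and sketching the justification for why Fr\'echet convergence in $\mathcal{L}^\infty_{G,s}(X_{MA}, E|_{X_{MA}})$ controls the $L^2$-operator norm of $\exp(-tD_{X_{MA}}^2)$; the paper simply asserts that ``$e^{t\delta/2 - tD_{X_{MA}}^2}$ is uniformly bounded,'' whereas you correctly point out that this relies on a seminorm of $\mathcal{L}^\infty_{G,s}$ dominating the $C^*$-norm (holomorphic closedness in the Roe algebra / the Lafforgue estimate), which is a worthwhile clarification.
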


\subsection{First reduction for manifold with boundary}
Suppose that $Y$ is a smooth $G$-proper manifold with boundary, denoted  $\partial Y$. Let 
$$
g \in \mathcal{L}^\infty_{G,s}(Y,E)\,;\quad k\in {^b \mathcal{L}}^\infty_{G,s}(Y,E)\,; \quad  k_\RR\in {}^b \mathcal{L}^\infty_{G,s,\RR}(\cyl(\partial Y),p^*E_{\partial Y}),$$
where for the sake of clarity we have now included the bundles in the notation; $p:\RR\times \partial Y\to \partial Y$ is 
the obvious projection.
We see these kernels as functions on $G$ with values in pseudodifferential operators on the slice $S$ satisfying a $K\times K$-equivariance.
We can then define 
$$g^N\in  \mathcal{L}^\infty_{G,s}(Y_{MA},E|_{Y_{MA}})\,;\quad k^N\in {^b \mathcal{L}}^\infty_{G,s}(Y_{MA},E|_{Y_{MA}})\,; 
\quad  k^N_\RR\in {}^b \mathcal{L}^\infty_{G,s,\RR}(\cyl(\partial Y_{MA}), p^* E|_{\partial Y_{MA}}).$$
These are functions on $MA$ with values in psudodifferential operators on the slice $Z$ satisfying a $(K\cap M)\times (K\cap M)$-equivariance.
See \cite{hst}, Section 4.1. We can of course extend this map to $M_{n\times n} (\mathcal{L}^\infty_{G,s}(Y,E))$, 
$M_{n\times n} ( {^b \mathcal{L}}^\infty_{G,s}(Y,E) )$ and  $M_{n\times n} ({}^b \mathcal{L}^\infty_{G,s,\RR}(\cyl(\partial Y),p^*E_{\partial Y}))$.
By an argument similar to that given  in Lemma \ref{integral N lem}, we have that the following three maps
\begin{enumerate}
\item ${\mathcal{L}}^\infty_{G,s}(Y,E)\ni g\longrightarrow g^N \in  {\mathcal{L}}^\infty_{G,s}(Y_{MA},E|_{Y_{MA}})$
	\item ${^b \mathcal{L}}^\infty_{G,s}(Y,E)\ni k\longrightarrow k^N \in  {^b \mathcal{L}}^\infty_{G,s}(Y_{MA},E|_{Y_{MA}})$
	\item $ {}^b \mathcal{L}^\infty_{G,s,\RR}(\cyl(\partial Y),p^*E_{\partial Y}) \ni k_\RR  \longrightarrow k_\RR^N \in {}^b \mathcal{L}^\infty_{G,s,\RR}(\cyl(\partial Y_{MA}), p^* E|_{\partial Y_{MA}})$
\end{enumerate}
are all multiplicative maps and continuous with respect to Fr\'echet topologies. As before, we have the following result:
\begin{proposition}\label{prop:reduction-of-CM}
We have that 
\begin{itemize}
\item $k^N_t$  is equal to the Schwartz kernel  of 
$\exp (-t D^2_{Y_{MA}})$; an analogous result  holds for $k^N_{\mathbb{R}}$.
\item $V(tD)^N= V(t D_{Y_{MA}})$.
\item If the boundary operator of $D$ is $L^2$-invertible, then  $({}^b V(tD))^N = {}^b V(D_{Y_{MA}})$.
\end{itemize}
\end{proposition}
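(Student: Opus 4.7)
The proof follows the scheme of Lemmas \ref{integral N lem}--\ref{3-invertibility} from the boundaryless setting, now adapted to the $b$-calculus framework developed in Section \ref{sect:aps}.

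For part (1), I would first reproduce the pairing argument of Lemma \ref{lem:heatkernel}: for $\sigma\in C^\infty_c(Y,E)$ and $s\in C^\infty(Y,E)^{N,c}$, unimodularity of $N$ together with a change of variables gives $(k^*_t\sigma, s)_{L^2(Y,E)} = (\sigma, k_t^N s)_{L^2(Y,E)}$. Since the $G$-action on $Y$ is of product type in the collar and $N$ acts freely and properly, the quotient $Y_{MA}=Y/N$ inherits a $b$-structure together with the induced $b$-metric, and the restriction of $D$ to $N$-invariant sections is precisely $D_{Y_{MA}}$ (as follows from the local description of the Dirac operator in the slice model). Differentiating in $t$ then shows that $k_t^N s$ satisfies the heat equation for $D^2_{Y_{MA}}$ with the correct initial condition; uniqueness, valid on complete manifolds of bounded geometry, identifies it with the Schwartz kernel of $\exp(-tD^2_{Y_{MA}})$. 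The same argument carried out on the model cylinder $\cyl(\partial Y)$ yields the analogous statement for $k^N_\RR$.

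Part (2) follows from part (1) together with the multiplicativity and continuity of $\phi^N$ on ${}^b\mathcal{L}^\infty_{G,s}(Y,E)$, a $b$-analogue of Lemma \ref{integral N lem} (proved in the same way, using that $MA$ normalizes $N$). Each entry of $V(tD)$ has the form $f(tD^2)$ or $Df(tD^2)$ for an entire function $f$ of rapid decay on the real axis, and can be written as a Bochner integral $\frac{1}{2\pi i}\int_\gamma f(t\mu)(D^2-\mu)^{-1}d\mu$ (or its $D$-twisted version) with values in ${}^b\mathcal{L}^\infty_{G,s}(Y,E)$. Commuting the continuous linear map $\phi^N$ with the integral and applying part (1) entrywise gives $V(tD)^N = V(tD_{Y_{MA}})$.

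For part (3), the improved projector ${}^b V(tD)$ is obtained from $V(tD)$ by replacing the parametrix $Q=(I-\exp(-\tfrac{1}{2}D^-D^+))/(D^-D^+)\cdot D^-$ by a true $b$-parametrix $Q^b$, constructed as in \cite{PP2} via the Neumann series based on the correction term $\varphi(I(D^2-\mu)^{-1}I(R^\sigma(\mu)))$. Since $N$ acts transversally to $\partial Y$ and commutes with the $\RR$-translation on $\cyl(\partial Y)$, the map $\phi^N$ intertwines the indicial homomorphisms on $Y$ and $Y_{MA}$; by choosing the collar cut-off used in $\varphi$ to be $N$-invariant (possible because the $G$-action is of product type near $\partial Y$), $\phi^N$ also commutes with $\varphi$. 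Applying Lemma \ref{3-invertibility} to the closed manifold $\partial Y$ shows that $L^2$-invertibility of $D_{\partial Y}$ implies $L^2$-invertibility of $D_{\partial Y_{MA}}=(D_{Y_{MA}})_{\partial Y_{MA}}$, so the same construction produces a true $b$-parametrix on $Y_{MA}$. The multiplicativity of $\phi^N$ then propagates through the Neumann series and yields $({}^b V(tD))^N = {}^b V(tD_{Y_{MA}})$.

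The main obstacle will be verifying that $\phi^N$ commutes with both the indicial operation and the section $\varphi$ used in the construction of $Q^b$, and that the Neumann series expressing $Q^b$ in terms of $Q^\sigma$ converges in a topology that is respected by $\phi^N$. Both points ultimately reduce to the product-type nature of the $G$-action in the collar together with the continuity results of Proposition \ref{prop:extension}, so no essentially new analytic input beyond that of Section \ref{sect:aps} and the closed-case lemmas should be needed.
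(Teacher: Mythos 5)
The paper actually provides no written proof of this proposition: after establishing the $b$-analogues of the multiplicativity and continuity properties of $\phi^N$, it simply says ``As before, we have the following result,'' pointing the reader back to Lemmas \ref{lem:heatkernel}, \ref{lem-Schwartz}, and the remark after them (which invokes \cite[Lemma 4.6]{hst} for $V(tD)^N = V(tD_{X_{MA}})$). Your proposal supplies exactly the details the paper leaves implicit, and it does so along the intended route: the $b$-adaptation of the pairing argument for the heat kernel in part (1); commuting the continuous multiplicative map $\phi^N$ past the Bochner integral of the resolvent for part (2); and, for part (3), tracking $\phi^N$ through the construction of the true $b$-parametrix via the indicial operator, the section $\varphi$, and the Neumann inversion, with invertibility of $D_{\partial Y_{MA}}$ supplied by Lemma \ref{3-invertibility} applied to the closed manifold $\partial Y$. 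Two small remarks: the phrase ``$N$ acts transversally to $\partial Y$'' should read ``$N$ preserves $\partial Y$ and acts in a product-type way on the collar,'' which is what the argument actually uses; and the third bullet of the proposition as printed has $V(D_{Y_{MA}})$ on the right, almost certainly a typo for $V(tD_{Y_{MA}})$, which is what you correctly prove.
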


We shall now consider various index classes. We consider $Y_{MA}$ and $Y_M$.
The manifold $Y_{MA}$ is a product, i.e. 
\[
Y_{MA}=Y_M\times A,
\]   
where $M$ acts properly on $Y_M$ and trivially on $A$, and $A$ acts properly and freely on $A$ and trivially on $Y_M$.
\begin{proposition}\label{prop:existence-index-reduced}
Assume  that the metric on $Y$ is slice compatible and that the boundary operator of $D$
is $L^2$-invertible.  Then the boundary operator of $D_{Y_{MA}}$ and of $D_{Y_M}$ are also $L^2$-invertible.
Consequently there are well defined index classes:
\begin{equation}\label{existence-index-reduced}
\Ind_\infty (D_{Y_{MA}})\in K_0 (\mathcal{L}^\infty_{G,s}(Y_{MA},E|_{Y_{MA}}))\,,\quad 
 \Ind_\infty (D_{Y_M}) \in K_0 (\mathcal{L}^\infty_{G,s}(Y_M,E_M))
\end{equation} 
with $E_M$ as in (\ref{EM}) below.
\end{proposition}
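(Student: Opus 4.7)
The plan is to combine the $N$-reduction developed in the previous subsection with the product decomposition furnished by the slice-compatible hypothesis, and then to apply the general construction of the smooth index class from Section \ref{sect:aps} in the $MA$- and $M$-proper settings. The key point in both cases is to reduce the $L^2$-invertibility of the boundary operator to the already known $L^2$-invertibility of $D_{\partial Y}$.

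First I would address $D_{\partial Y_{MA}}$. Since the $G$-action and the $b$-metric are both of product type in a collar of $\partial Y$, the $N$-reduction commutes with restriction to the boundary: one has $\partial Y_{MA}=(\partial Y)/N$ and the boundary operator of $D_{Y_{MA}}$ coincides with the operator on $(\partial Y)/N$ obtained by $N$-averaging $D_{\partial Y}$ as in Lemma \ref{lem-Schwartz}. Since $\partial Y$ is a \emph{closed} cocompact $G$-proper manifold, Lemma \ref{3-invertibility} applies verbatim and gives that $D_{\partial Y_{MA}}$ is $L^2$-invertible. Next I would tackle $D_{\partial Y_M}$ using the slice-compatibility hypothesis: this gives $Y_{MA}\simeq Y_M\times A$ as a Riemannian product (of product type near the boundary), so $\partial Y_{MA}\simeq \partial Y_M\times A$, and the Dirac operator built from the Clifford connection splits as an exact graded tensor sum
\[
D_{\partial Y_{MA}}=D_{\partial Y_M}\mathbin{\hat\otimes} 1+1\mathbin{\hat\otimes} D_A,
\]
where $D_A$ is the flat Euclidean Dirac operator on $A\cong\RR^m$; here the splitting really is exact (in contrast with the phenomenon pointed out after \eqref{dirac-split}) precisely because $A$ is flat. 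Squaring and using $\spec_{L^2}(D_A^2)=[0,\infty)$ gives
\[
\spec_{L^2}(D_{\partial Y_{MA}}^2)=\spec_{L^2}(D_{\partial Y_M}^2)+[0,\infty),
\]
so $D_{\partial Y_{MA}}^2$ has a strictly positive spectral gap at $0$ if and only if $D_{\partial Y_M}^2$ does, yielding $L^2$-invertibility of $D_{\partial Y_M}$ from the previous step.

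With both boundary operators $L^2$-invertible, the existence of the two smooth index classes in \eqref{existence-index-reduced} follows from Theorems \ref{theo:smooth-index-CS}--\ref{theo:smooth-index-CM} applied to the connected linear real reductive groups $MA$ and $M$ acting cocompactly and properly on $Y_{MA}$ and $Y_M$ respectively, once one observes that every ingredient used in Section \ref{sect:aps}---the Lafforgue Schwartz algebra and its holomorphic stability inside $C^*_r(\cdot)$, the functional-analytic properties of the algebras ${}^b\mathcal{L}^\infty_{\cdot,s}$, the large-time heat-kernel decay on $b$-manifolds (Proposition \ref{prop:large-yes-boundary-unperturbed-b}), and the construction of the Connes--Moscovici / Connes--Skandalis projector---is formulated for an arbitrary connected linear real reductive group and therefore transfers without change from $G$ to $MA$ and $M$. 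The main subtle point, and the only step requiring genuine attention beyond quoting earlier results, is the exactness of the splitting $D_{\partial Y_{MA}}=D_{\partial Y_M}\mathbin{\hat\otimes}1+1\mathbin{\hat\otimes}D_A$ for the Dirac operator associated with the Clifford connection: one has to verify that, because the $A$-factor carries the flat metric and trivial spinor bundle structure, the Clifford connection on the product coincides with the tensor-product connection, so that no lower-order correction term (of the type distinguishing $D_{Y_0}$ from $D_{\text{split}}$) appears in the boundary operator.
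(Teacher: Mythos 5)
Your proof is correct and follows essentially the same route as the paper: first pass to $D_{\partial Y_{MA}}$ via the continuity of $N$-averaging (Lemma \ref{3-invertibility}), then use the slice-compatible product splitting $Y_{MA}\cong Y_M\times A$ and the resulting graded tensor decomposition of the Dirac operator, together with $\spec_{L^2}(D_A^2)=[0,\infty)$, to deduce $L^2$-invertibility of $D_{\partial Y_M}$. You are in fact somewhat more careful than the paper's terse proof, which elides the distinction between $D_{Y_{MA}}$ and its boundary operator, and you rightly flag that the exactness of the splitting here (unlike the $D_{\text{split}}$ vs.\ $D_{Y_0}$ discrepancy) hinges on $A$ carrying a genuine flat Riemannian factor.
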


\begin{proof}
If $D_{\partial Y}$ is $L^2$-invertible, then we can use Lemma \ref{3-invertibility} in order to see
directly that the boundary operator of $D_{Y_{MA}}$ is also $L^2$-invertible.
Next, as the metric is slice compatible, we have that the induced metric on $Y_{MA}=Y_M\times A$ is a product metric. It follows that the Dirac operator $D_{Y_{MA}}$ decomposes as 
\begin{equation}\label{eq:Dirac-MA}
D_{Y_{MA}} = D_{Y_M} \hat{\otimes} 1 + 1 \hat{\otimes} D_A,
\end{equation}
where we use the graded tensor products. This means that 
$$D^2_{Y_{MA}} = D^2_{Y_M}\otimes 1 + 1 \otimes D^2_A$$
As $D^2_{Y_{MA}}$ is $L^2$-invertible and $D^2_A$ is {\it not} $L^2$-invertible ($A$ is isomorphic to
$\mathbb{R}^n$), we see that $D^2_{Y_M}$ and thus $D_{Y_{M}}$ must be $L^2$-invertible.
\end{proof}

Consider now the cyclic cocycle $\Phi_{MA,g}$ on $\mathcal{L}_s(MA)$, see \cite[Section 3.1]{hst}, and the associated cyclic cocycle
on  $ \mathcal{L}^\infty_{G,s}(Y_{MA},E|_{Y_{MA}})$, denoted  $\Phi_{Y_{MA},g}$. Consider the index class
$\Ind_\infty (D)\equiv [{}^b V(D)]$ and the index class $\Ind_\infty (D_{Y_{MA}})\equiv [{}^b V(D_{Y_{MA}})]$, which is well defined
because of Proposition \ref{prop:existence-index-reduced}.

\begin{proposition}\label{prop:reduction-1}
 The following equality holds
 \begin{equation}\label{reduction1}
 \langle \Phi_{Y,g}, \Ind_\infty (D) \rangle = \langle \Phi_{Y_{MA},g}, \Ind_\infty (D_{Y_{MA}})  \rangle\,.
 \end{equation}
 \end{proposition}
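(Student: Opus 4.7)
The plan is to prove the identity by combining three ingredients: the functoriality of the Chern--Connes pairing, the compatibility of the index class with the $N$-integration homomorphism $\phi^N$, and a direct identification of $\Phi_{Y,g}$ as the pullback of $\Phi_{Y_{MA},g}$ along $\phi^N$. Concretely, since the continuous algebra homomorphism
\[
\phi^N:\mathcal{L}^\infty_{G,s}(Y,E)\longrightarrow \mathcal{L}^\infty_{G,s}(Y_{MA},E|_{Y_{MA}}),\qquad k\longmapsto k^N,
\]
is multiplicative, it induces maps $\phi^N_*$ on $K$-theory and $(\phi^N)^*$ on cyclic cohomology, and naturality gives
\[
\langle \Phi_{Y,g},\Ind_\infty(D)\rangle
=\langle (\phi^N)^*\Phi_{Y_{MA},g},\Ind_\infty(D)\rangle
=\langle \Phi_{Y_{MA},g},\phi^N_*\Ind_\infty(D)\rangle,
\]
as soon as $(\phi^N)^*\Phi_{Y_{MA},g}=\Phi_{Y,g}$ and $\phi^N_*\Ind_\infty(D)=\Ind_\infty(D_{Y_{MA}})$.

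The first task is the $K$-theoretic compatibility. The index class is represented by the modified $b$-Connes--Moscovici projector ${}^b V(D)$ of Theorem \ref{theo:smooth-index-CM}(2), whose entries lie in $\mathcal{L}^\infty_{G,s}(Y,E)$. Applying $\phi^N$ entrywise and invoking the third bullet of Proposition \ref{prop:reduction-of-CM}, namely $({}^b V(D))^N={}^b V(D_{Y_{MA}})$ (which is well-defined thanks to Proposition \ref{prop:existence-index-reduced}), we obtain at once
\[
\phi^N_*[{}^b V(D)]=[{}^b V(D_{Y_{MA}})]\quad\text{in }K_0(\mathcal{L}^\infty_{G,s}(Y_{MA},E|_{Y_{MA}})),
\]
so that $\phi^N_*\Ind_\infty(D)=\Ind_\infty(D_{Y_{MA}})$.

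The second, and main, task is the cocycle identity
\[
\Phi_{Y,g}(A_0,\ldots,A_m)=\Phi_{Y_{MA},g}(A_0^N,\ldots,A_m^N),
\qquad A_i\in\mathcal{L}^\infty_{G,s}(Y,E).
\]
Here I would follow the argument of \cite[Proposition 4.8]{hst}, adapted to the present residual algebra on the $b$-manifold $Y$ (where the trace is the ordinary one, not the $b$-trace, since the $A_i$ are residual). Write each variable $g_i\in G$ via the Iwasawa decomposition $g_i=k_im_ia_in_i$, and use the following three observations: (i) the function $H_i:G\to\mathbb{R}$ factors through the $A$-component, so the Jacobian weights $H_{\tau(i)}(g_i\cdots g_mk)$ depend only on the $MA$-parts of the $g_j$ and on $k$; (ii) the $K\times K$-equivariance of each $A_i$ on the slice allows us to absorb the $K$-factors of the $g_i$ into the $K$-integration variable $k$ already present in the definition of $\Phi_{Y,g}$; (iii) since $MA$ normalizes $N$, the $N$-parts of the $g_i$ can be successively moved to the right through the composition $A_0\circ\cdots\circ A_m$ and, together with the $N$-parts coming from the first argument $khgh^{-1}nk^{-1}(g_1\cdots g_m)^{-1}$, reassembled as an integration over $N^{\times(m+1)}$ that collapses into the $N$-integrals defining $A_i^N(m_ia_i)$. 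After this reorganization the resulting expression is exactly $\Phi_{Y_{MA},g}(A_0^N,\ldots,A_m^N)$.

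The hard part is step (iii) of this rearrangement: carrying out the Fubini exchanges and the normalization computations in a way that is consistent with the $K\times K$-equivariance and does not disturb the $h\in M/Z_M(g)$ integral. The bookkeeping is entirely parallel to the closed-manifold argument in \cite{hst}, and the only additional point to check is that all the intermediate integrals remain absolutely convergent in the algebras $\mathcal{L}^\infty_{G,s}$ and $\mathcal{L}^\infty_{MA,s}$; this follows from the continuity of $\phi^N$ (Lemma \ref{integral N lem} and its analogues) together with the estimate of Proposition \ref{prop:extension} on the continuity of $\Phi^P_{\,\cdot\,,g}$. Once the cocycle identity is established, combining it with step one yields (\ref{reduction1}).
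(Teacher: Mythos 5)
Your proposal is correct and follows essentially the same route as the paper: both factor the identity through the multiplicative map $\phi^N$, using Proposition \ref{prop:reduction-of-CM} to get $\phi^N_*\Ind_\infty(D)=\Ind_\infty(D_{Y_{MA}})$ and a pullback identity for the cocycles (the paper cites \cite[Proposition 3.2]{hst} for this rather than Proposition 4.8, but it is the same underlying computation) to conclude. The only difference is one of exposition: the paper invokes \cite[Prop.~3.2]{hst} directly, noting it transfers verbatim to the residual algebra $\mathcal{L}^\infty_{G,s}(Y,E)$, whereas you sketch the Iwasawa/Fubini rearrangement that the citation packages.
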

 
 \begin{proof}
  We must prove that 
  \begin{equation}\label{reduction1-bis}
 \langle \Phi_{Y,g}, [{}^b V(D)] \rangle = \langle \Phi_{Y_{MA},g},   [{}^b V(D_{Y_{MA}})] \rangle\,.
 \end{equation}
 This follows combining \cite[Proposition 3.2]{hst}, which clearly holds for the algebra 
 of residual operators
 $\mathcal{L}^\infty_{G,s}(Y,E)$,
 and Proposition \ref{prop:reduction-of-CM} above.
\end{proof}

\subsection{Second reduction}The Lie group $M$ is of equal rank and acts properly on $Y_M$. Consider the orbital integral  $\tau^M_{g}$ on $\mathcal{L}_s(M)$ associated to a semisimple element $g$ in $M$. As in subsection \ref{section:0cocycle}, we can associated the orbital integral $\tau^M_{g}$ a  $0$-cocycle
on $Y_M$, denoted  $\Phi_{Y_M,g}$. More precisely, $\Phi_{Y_M,g}$ is a cyclic 0-cocycle on the algebra
$\mathcal{L}^\infty_{G,s}(Y_M,E_M)$, with $E_M$ as in (\ref{EM}).

\begin{proposition}\label{label:reduction-final}
If the metric on $Y$ is slice compatible and the boundary operator on $Y$ is $L^2$-invertible, then the following equality holds:
\begin{equation}\label{reduction2}
 \langle \Phi_{Y_{MA},g}, \Ind_\infty (D_{Y_{MA}})  \rangle= \langle \Phi_{Y_M,g}, \Ind_\infty (D_{Y_M}) \rangle 
 \end{equation}
\end{proposition}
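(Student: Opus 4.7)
The plan is to adapt the closed case reduction of Hochs–Song–Tang \cite[Lemma 5.3]{hst} to the $b$-calculus setting by exploiting the product structure $Y_{MA}=Y_M\times A$ that emerges from slice compatibility, combined with the fact (already established in Proposition \ref{prop:existence-index-reduced}) that all the index classes involved are well defined.

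First, I would record the geometric consequences of slice compatibility. Because the $G$-invariant metric on $Y$ comes from a $K$-invariant metric on $Z$ and a $K$-invariant metric on $\mathfrak{p}$, the descended metric on $Y_{MA}=Y_M\times A$ is a Riemannian product, with $A$ carrying its flat translation-invariant metric. As in \eqref{eq:Dirac-MA}, the Dirac operator decomposes as
\[
D_{Y_{MA}} = D_{Y_M}\,\hat\otimes\,1 + 1\,\hat\otimes\, D_A,
\]
where $D_A$ is the (twisted) translation-invariant Dirac on $A\cong\RR^m$. Since $\partial Y_{MA}=\partial Y_M\times A$, the $b$-structure lives entirely in the $Y_M$ factor; combined with the product isomorphism $\mathcal{L}_s(MA)\cong \mathcal{L}_s(M)\,\hat\otimes\,\mathcal{L}_s(A)$, this gives topological tensor decompositions of the relevant Fréchet operator algebras, such as ${}^b\mathcal{L}^\infty_{G,s}(Y_{MA})\cong{}^b\mathcal{L}^\infty_{G,s}(Y_M)\,\hat\otimes\,\mathcal{L}_s(A)$ (with the obvious analogues for the residual and cylindrical algebras).

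Second, I would factorize both the projector and the cocycle. On the projector side, the commuting operators $D_{Y_M}^2\otimes 1$ and $1\otimes D_A^2$ allow one to represent ${}^b V(D_{Y_{MA}})$ via the Cauchy-integral construction of Proposition \ref{prop:large-yes-boundary-unperturbed-b} as an entrywise product of ${}^b V(D_{Y_M})$ with a standard $\mathcal{L}_s(A)$-valued family built from $D_A$. On the cocycle side, since $g\in M$ the Iwasawa integrations over $KN$ trivialize in the $A$-direction, and the determinantal structure of \eqref{eq:PhiPg} shows that $\Phi_{MA,g}$ factors as the cup product of the orbital integral $\tau_g^M\in HC^0(\mathcal{L}_s(M))$ and a top-degree cocycle $\phi_A\in HC^m(\mathcal{L}_s(A))$ built from the coordinate functions $H_1,\dots,H_m$; this transfers to a cup-product factorization $\Phi_{Y_{MA},g}=\tau_g^{Y_M}\#\phi_A$ at the level of cocycles on ${}^b\mathcal{L}^\infty_{G,s}(Y_{MA})$.

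Third, I would invoke compatibility of the index pairing with exterior/cup products: the product formula yields
\[
\langle \Phi_{Y_{MA},g},\Ind_\infty(D_{Y_{MA}})\rangle \;=\; \langle \tau_g^{Y_M},\Ind_\infty(D_{Y_M})\rangle\cdot\langle\phi_A,[\mathrm{Bott}_A]\rangle,
\]
where $[\mathrm{Bott}_A]\in K_m(\mathcal{L}_s(A))$ is the class of the translation-invariant Dirac operator $D_A$ on $A\cong\RR^m$. The second factor is the classical Connes–Moscovici evaluation of the Bott class against the $H_i$-wedge cocycle on $\RR^m$, and with the normalization baked into $c_m=(-1)^{m/2}\tfrac{m!}{(m/2)!}$ of Definition \ref{defn:higher-eta} (as in \cite{hst}) this evaluates to $1$. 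Recognizing that $\tau_g^{Y_M}$ is by construction the cocycle $\Phi_{Y_M,g}$ associated to the orbital integral on $M$ produces exactly \eqref{reduction2}.

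The main obstacle I expect is the rigorous tensor-product bookkeeping in the $b$-calculus: while the $b$-structure is inert in the $A$-direction, the residual ideal $\Ker I$ on $Y_{MA}$ is not \emph{literally} a tensor product of the residual ideal on $Y_M$ with $\mathcal{L}_s(A)$, and one must check that the factorized projector obtained from functional calculus represents the same relative $K$-class as ${}^b V(D_{Y_{MA}})$—equivalently, that the improvement of the symbolic parametrix to a true $b$-parametrix (Theorem \ref{theo:smooth-index-CS}) respects the product decomposition up to residual corrections that vanish in the pairing. A secondary concern is the interchange of $A$-integration with the $b$-trace on $Y_M$ present in $\Phi_{Y_{MA},g}$; this follows from the continuity statements of Proposition \ref{prop:extension}, together with Fubini applied to the absolutely convergent integrals estimated there.
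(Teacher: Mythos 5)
Your cocycle-factorization step (splitting $\Phi_{Y_{MA},g}$ into the orbital integral on $M$ cupped with a degree-$m$ cocycle on $A$, and evaluating the $A$-part to $1$) matches the paper's Step~II essentially verbatim: the paper writes out $\Phi_{Y_{MA},g}=\widetilde\Phi_{Y_M,g}\cdot\Phi_{A,e}$, uses \cite[Theorem~4.6]{PPT} to get $\langle\Phi_{A,e},\Ind_\infty(D_A)\rangle=1$, and observes that $\widetilde\Phi_{Y_M,g}$ on idempotents reduces to the $0$-cocycle $\Phi_{Y_M,g}$.

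Where you diverge from the paper, and where your argument has a real gap, is in establishing the index-class factorization $\Ind_\infty(D_{Y_{MA}})=\Ind_\infty(D_{Y_M})\otimes\Ind_\infty(D_A)$. You propose to do this directly at the level of the $b$-Connes--Moscovici projectors, by representing ${}^b V(D_{Y_{MA}})$ as an entrywise product of ${}^b V(D_{Y_M})$ with an $\mathcal{L}_s(A)$-valued family, and you explicitly flag the obstacle: the residual ideal $\Ker I$ on $Y_{MA}$ is not a tensor product of the residual ideal on $Y_M$ with $\mathcal{L}_s(A)$, and one would have to verify that the improvement of the symbolic parametrix to a true $b$-parametrix respects the product decomposition up to residual corrections. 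You do not resolve this, and it is precisely the point at which the paper's authors judge the direct $b$-calculus approach to break down: as they note, the $b$-index classes do \emph{not} define $KK$-elements because the resolvent is not $C^*$-compact, so there is no off-the-shelf external product for them. The paper instead passes from the residual smooth algebras to the ambient Roe $C^*$-algebras, uses the strong Morita equivalences with $C^*_r(MA)$, $C^*_r(M)$, $C^*_r(A)$ to move to group $C^*$-algebra $K$-theory, replaces the $b$-index classes by APS-boundary-condition index classes (following \cite{Wu,LP-JFA}), which \emph{do} define unbounded $KK$-classes, and then invokes Wahl's product formula \cite[Theorem~2.2]{Wahl-product} for APS index classes; compatibility of these $K$-theory identifications finally gives the desired equality in $K_*(\mathcal{L}^\infty_{MA,s}(Y_{MA},E_{MA}))$. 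To repair your argument you would either have to carry out the $b$-parametrix bookkeeping you deferred (nontrivial, and not done anywhere in the paper), or take the paper's detour through the APS classes and $KK$-theory.
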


\begin{proof} 

We prove this identity in two steps. 

\noindent{\bf Step I}: Recall that $\Ind(D_{Y_{MA}})$ is an $K$-theory element of the algebra $\mathcal{L}_{MA,s}^{\infty}(Y_{MA}, E_{MA})$. Recall that the manifold $Y_{MA}$ is a product, i.e. 
$Y_{MA}=Y_M\times A$, 
where $M$ acts properly on $Y_M$ and trivially on $A$, and $A$ acts properly and freely on $A$ and trivially on $Y_M$. With this decomposition, we can write the algebra $\mathcal{L}_{MA,s}^{\infty}(Y_{MA}, E_{MA})$ as follows,  
\begin{equation}\label{eq:product-algebra}
\mathcal{L}_{MA,s}^{\infty}(Y_{MA}, E_{MA})=\mathcal{L}_{M,s}^\infty(Y_M, E_M)\hat{\otimes} \mathcal{L}_s(A, \operatorname{End}(\mathcal{S}_A)), 
\end{equation}
where $\mathcal{S}_A$ is the space of spinors on $\mathfrak{a}$, the Lie algebra of $A$, and $\operatorname{End}(\mathcal{S}_A)$ is the algebra of endomorphisms on $\mathcal{S}_A$. 
Now, we consider the index elements $\Ind_\infty (D_{Y_{MA}} )$, $\Ind_\infty(D_{Y_M})$, and $\Ind_\infty (D_A)$, i.e.
\[
\Ind_\infty (D_{Y_{MA}})\in K_*(\mathcal{L}_{MA,s}^\infty(Y_{MA}, E_{MA})),\ \Ind_\infty (D_{Y_M})\in K_*(\mathcal{L}^{\infty}_{M,s}(Y_M, E_M)),\ \Ind_\infty (D_A)\in K_*(\mathcal{L}_s(A, \operatorname{End}(\mathcal{S}_A))).
\]

We notice that the decomposition Eq. (\ref{eq:product-algebra}) of $\mathcal{L}_{MA,s}^\infty(Y_{MA}, E_{MA})$ defines an element 
\[
\Ind_\infty (D_{Y_M})\otimes \Ind_\infty (D_{A})
\]
through the external product 
\[
K_*(\mathcal{L}^{\infty}_{M,s}(Y_M, E_M))\otimes K_*(\mathcal{L}_s(A, \operatorname{End}(\mathcal{S}_A)))\to K_*(\mathcal{L}^{\infty}_{MA,s}(Y_{MA}, E_{MA})). 
\]

\noindent
We claim that 
\begin{equation}\label{index dec}
\Ind_\infty (D_{Y_{MA}})= \Ind_\infty (D_{Y_{M}})\otimes \Ind _\infty(D_{A}) \in K_*(\mathcal{L}^{\infty}_{MA,s}(Y_{MA}, E_{MA})). 
\end{equation}
To prove this formula we argue as follows.\\
Recall that $\mathcal{L}^\infty_{MA,s}(Y_{MA}, E_{MA})$  (respectively $\mathcal{L}^\infty_{M,s}(Y_M, E_M)$ and $\mathcal{L}_s(A, \operatorname{End}(\mathcal{S}_A))$) is a dense subalgebra of the Roe algebra $C^* (Y_{MA}, E_{MA})^{MA}$ (respectively $C^*(Y_M, E_M)^{M}$ and $C^*_r(A, \operatorname{End}(\mathcal{S}_A))$) closed under holomorphic functional calculus. 
The image of the smooth index classes in the K-theory of the respective Roe $C^*$-algebras
define the (isomorphic)  index classes
$$\Ind (D_{Y_{MA}})\in K_* (C^* (Y_{MA}, E_{MA})^{MA})\,,\;\;\; \Ind (D_{Y_{M}})\in K_* (  C^*(Y_M, E_M)^{M}),\  \Ind (D_{A})\in K_* ( C^*_r(A, \operatorname{End}(\mathcal{S}_A))\,.$$
Furthermore, the Roe algebra  $C^* (Y_{MA}, E_{MA})^{MA}$ (respectively $C^*(Y_M, E_M)^{M}$ and $C^*_r(A, \operatorname{End}(\mathcal{S}_A))$)  is strongly Morita equivalent to $C^*_r (MA)$ (respectively $C^*_r(M)$ and $C^*_r(A)$) and as explained in \cite{PP2} we have explicit representatives of the
(strongly) Morita equivalent  index classes 
\begin{equation}\label{3-index-classes}
\Ind_{C^*_r (MA) } (D_{Y_{MA}})  \in K_* (C^*_r (MA))\,,\quad \Ind_{C^*_r (M) }(D_{Y_{M}})\in K_* (C^*_r (M))\,\,\text{ and }\,\, \Ind_{C^*_r (A)} (D_{A})\in  K_* (C^*_r (A)).
\end{equation} Indeed, these index classes
are defined in terms of our operators acting on suitable $C^*_r H$-modules, with $H$ one of the above 3 groups. Now, following \cite{Wu, LP-JFA} and 
\cite{Wahl-product} we
can also express these index classes in terms of unbounded KK-classes associated to a  Atiyah-Patodi-Singer boundary condition, denoted here 
\begin{equation}\label{3-KK-classes}
\begin{split}
\Ind^{{\rm APS}}_{C^*_r (MA) } (D_{Y_{MA}}) & \in KK_* (\CC,C^*_r (MA))\ ,\ \  \Ind^{{\rm APS}}_{C^*_r (M) }(D_{Y_{M}})\in KK_* (\CC,C^*_r (M)),\\
& \Ind^{{\rm APS}}_{C^*_r (A)} (D_{A})\in  KK_* (\CC,C^*_r (A)).
 \end{split}
\end{equation}
Notice that our $b$-index classes do {\it not} define KK-elements (the resolvent is not $C^*$-compact); this is why we need to pass to the APS-index classes
defined through the well-known boundary condition.
Using  formula (\ref{eq:Dirac-MA}) of the Dirac operator $D_{Y_{MA}}$ and
proceeding exactly as in \cite[Theorem 2.2]{Wahl-product}
 we can prove the following identity 
\[
 \Ind^{{\rm APS}}_{C^*_r (MA) }  (D_{Y_{MA}}) =[ \Ind^{{\rm APS}}_{C^*_r (M) } (D_{Y_{M}})\otimes \Ind^{{\rm APS}}_{C^*_r (A)} (D_{A}) ]\in KK_*(\CC,C^*_r (MA))\equiv KK_* (C^*_r (MA))
\]
Following the stated  isomorphisms of  $K$-theory groups and the compatibility of the various index classes we obtain finally 
\begin{equation*}
\Ind_\infty (D_{Y_{MA}})= \Ind_\infty (D_{Y_{M}})\otimes \Ind _\infty(D_{A}) \in K_*(\mathcal{L}^{\infty}_{MA,s}(Y_{MA}, E_{MA})). 
\end{equation*}
This is  precisely the claim we wanted to prove.

\noindent{\bf Step II}: Using the product structure (\ref{eq:product-algebra}) of the algebra $\mathcal{L}^\infty_{MA,s}(Y_{MA}, E_{MA})$, we compute the cocycle $\Phi_{Y_{MA},g}$ as follows.  For any $f_i \otimes g_i \in \mathcal{L}^{\infty}_{M,s}(Y_{MA}) \otimes\mathcal{L}_s(A, \operatorname{End}(\mathcal{S}_A))$, 
\begin{equation}
\label{MA com}
\begin{aligned}
&\Phi_{Y_{MA},g}(f_0 \otimes g_0, \dots f_n \otimes g_n) \\
=&\int_{M/Z_{M,g}} \int_{(MA)^n} \text{det}(a_1, \dots, a_n) \cdot \operatorname{Tr}\Big( f_0(hgh^{-1}(m_1 \dots m_n)^{-1})g_0((a_1\cdot \dots \cdot a_n)^{-1})\\
&f_1(m_1) g_1(a_1) \cdot \dots \cdot  f_n(m_n)g_n(a_n) \Big)
\; dh da_1 \dots da_n dm_1 \dots dm_n\\
=&\left(\int_{M/Z_{M,g}} \int_{(M)^n} \operatorname{Tr}\Big(f_0(hgh^{-1}(m_1 \dots m_n)^{-1})f_1(m_1) \cdot \dots \cdot  f_n(m_n) \Big)
\; dh dm_1 \dots dm_n \right)\\
\times& \left(\int_{(A)^n} \text{det}(a_1, \dots, a_n) \cdot 
\operatorname{Tr}\Big(g_0((a_1\cdot \dots \cdot a_n)^{-1}) g_1(a_1) \cdot \dots \cdot g_n(a_n)\Big) \; da_1 \dots da_n  \right). 
\end{aligned}
\end{equation}
In the above equation, we denote 
\[
\widetilde{\Phi}_{Y_M,g}(f_0, \dots, f_n) = \int_{M/Z_{M,g}} \int_{(M)^n} \operatorname{Tr}\Big( f_0(hgh^{-1}(m_1 \dots m_n)^{-1})f_1(m_1) \cdot \dots \cdot  f_n(m_n) \Big)
\; dh dm_1 \dots dm_n 
\]
and
\[
\Phi_{A, e} (g_0, \dots, g_n) = \int_{(A)^n} \text{det}(a_1, \dots, a_n) \cdot 
\operatorname{Tr}\Big( g_0((a_1\cdot \dots \cdot a_n)^{-1}) g_1(a_1) \cdot \dots \cdot g_n(a_n)\Big) \; da_1 \dots da_n 
\]
By (\ref{index dec}) and (\ref{MA com}), we conclude that 
\[
 \langle \Phi_{Y_{MA},g}, \Ind_\infty (D_{Y_{MA}})  \rangle=
 \langle \widetilde{\Phi}_{Y_M,g}, \Ind_\infty (D_{Y_M}) \rangle  \cdot \langle \Phi_{A,e}, \Ind_\infty (D_{A}) \rangle.
\]
By a special case \cite[Theorem 4.6]{PPT} (the case for G = A), we know that 
\[
\langle \Phi_{A,e}, \Ind_\infty (D_{A}) \rangle =1. 
\]
On the other hand, we can directly  check that if $f \star f = f$, then 
\[
\begin{aligned}
\widetilde{\Phi}_{Y_M,g}(f, \dots, f) =&\int_{M/Z_{M, g}} \int_{M^{\times n}} \operatorname{Tr}\Big( f(hgh^{-1}(m_1 \dots m_n)^{-1})f(m_1) \cdot \dots \cdot  f(m_n) \Big)
\; dh dm_1 \dots dm_n \\
=& \int_{M/Z_{M,g}} \operatorname{Tr}\big( f(hgh^{-1}) \big)dh = \Phi_{Y_M,g}(f).
\end{aligned}
\]
Thus, 
\[
 \langle \widetilde{\Phi}_{Y_M,g}, \Ind_\infty (D_{Y_M}) \rangle  =  \langle \Phi_{Y_M,g}, \Ind_\infty (D_{Y_M}) \rangle . 
 \]
This completes the proof. 
\end{proof}

\section{An index theorem for higher orbital integral through reduction}\label{sec:reduction}
We shall now put things together and give a formula for the higher delocalized
Atiyah-Patodi-Singer index $\langle \Phi^P_{Y,g}, \Ind_\infty (D)\rangle$.

\medskip
\noindent
Let $G$ be connected, linear real reductive, $P=MAN$ a cuspidal parabolic subgroup  and $g\in M$ a semisimple element.
Let $(Y_0,\mathbf{h}_0)$ be a cocompact $G$-proper manifold with boundary. We fix a slice $Z_0$ (and $Z$) for the $G$ action on $Y_0$ (and $Y$), so that 
$$Y_0\cong G\times_K Z_0, \quad Y \cong G \times_K Z. $$
with $Z_0$ a smooth compact manifold {\bf with} boundary. 
We assume that $D_{\partial Y}$ is $L^2$-invertible. Consider $Y/AN$, an $M$-proper manifold. Recall that we have the following $K\cap M$-invariant decomposition
\[
\begin{aligned}
\mathfrak{p} =&(\mathfrak{p} \cap \mathfrak{m})\oplus \mathfrak{a} \oplus \left(\left(\mathfrak{p} \cap \mathfrak{m}\right)\oplus \mathfrak{a} \right)^\perp\\
\cong& (\mathfrak{p} \cap \mathfrak{m})\oplus \mathfrak{a} \oplus \left(\mathfrak{k}/(\mathfrak{k}\cap \mathfrak{m})\right).
\end{aligned}
\]
Accordingly, the spinor bundle
\[
S_\mathfrak{p} \cong S_{\mathfrak{p} \cap \mathfrak{m}} \otimes S_\mathfrak{a} \otimes S_{\mathfrak{k}/(\mathfrak{k}\cap \mathfrak{m})},
\]
where $S_\mathfrak{a}$ is a vector space of dimension $2^{\lceil\frac{\dim \mathfrak{a}}{2}\rceil}$ on which $M\cap K$ acts trivially. The following facts will play a central role in our study:
\begin{enumerate}
\item
We realize $Y/N$ in terms of a slice:
\[
Y_{MA} \colon = Y/N\cong AM\times_{K\cap M} Z. 
\]
Restricting to $Y_{MA}$, 
\[
E_{MA}\colon  =E|_{Y_{MA}} \cong AM \times_{M \cap K}\left(S_{\mathfrak{p} \cap \mathfrak{m}} \otimes S_\mathfrak{a} \otimes S_{\mathfrak{k}/(\mathfrak{k}\cap \mathfrak{m}) }\otimes E_Z \right).
\]
For convenience, we introduce the following
\begin{equation}
\label{tildeE}
\widetilde{E_Z} := E_Z \otimes S_{\mathfrak{k}/(\mathfrak{k}\cap \mathfrak{m})}.
\end{equation}
Thus
\begin{equation}
\label{EM}
E_M = M \times_{M \cap K}\left(S_{\mathfrak{p} \cap \mathfrak{m}} \otimes S_{\mathfrak{k}/(\mathfrak{k}\cap \mathfrak{m}) }\otimes E_Z \right) \cong M \times_{M \cap K}\left(S_{\mathfrak{p} \cap \mathfrak{m}} \otimes \widetilde{E_Z} \right).
\end{equation}
By the assumption (\ref{dim ass}) on the dimension of $Y$, we can see that $Y_M$ is even dimensional because both $Z$ and $M/M\cap K$ are even dimensional. 
The quotient Dirac operator $D_{Y_M}$ on $Y_M$ acts on
\[
L^2(Y_M, E_M) \cong \left[L^2(M) \otimes  S_{\mathfrak{p} \cap \mathfrak{m}} \otimes L^2(Z, \widetilde{E_Z}) \right]^{K\cap M}.
\] 

We have assumed that $E$ is a $G$-equivariant  twisted spinor bundle on $Y$. Accordingly, $E_M$ is an $M$-equivariant twisted spinor bundle on $Y_M$, and we can write 
\begin{equation}\label{eq WM}
E_M= \mathcal{E}_M \otimes W_M. 
\end{equation}
where $\mathcal{E}_M$ is the spinor bundle associated to the induced Spin$^c$-structure on $Y_M$ and $W_M=M\times_{M\cap M} \widetilde{E}_Z$. It is important to point out that since 
\[
S_{\mathfrak{k}/(\mathfrak{k}\cap \mathfrak{m}) } = S^+_{\mathfrak{k}/(\mathfrak{k}\cap \mathfrak{m}) } \oplus S^-_{\mathfrak{k}/(\mathfrak{k}\cap \mathfrak{m}) }
\]
has a $\mathbb{Z}_2$-grading, the auxiliary $M$-equivariant vector $W_M$ is equipped with a $\mathbb{Z}_2$-grading as well, denoted by 
\begin{equation}\label{WM grading}
W_M = W_M^+ \oplus W_M^-
\end{equation}
\end{enumerate}

\begin{remark}\label{vanish rmk}
Unless $P$ is maximal cuspidal parabolic, $K \cap M$ has a lower rank than $K$. In this case,  there exists a $K \cap M$-equivariant isomorphism $\lambda$ defined by the action of an element in a Cartan subgroup of $K$ but not in $K \cap M$,
\[
\lambda: S^+_{\mathfrak{k}/(\mathfrak{k}\cap \mathfrak{m})}\cong S^-_{\mathfrak{k}/(\mathfrak{k}\cap \mathfrak{m})}.
\]
Such an isomorphism $\lambda$ is compatible with the respective connections and metrics. In the constructions in (\ref{tildeE}), (\ref{EM}) and (\ref{eq WM}),  the tensor products are all graded and grading compatible. Hence, $\lambda$ gives an $M$-equivariant isomorphism between the two vector bundles 
\[
W_M^+ \cong W_M^-,
\] 
compatible with the respective connections and metrics.
\end{remark}

\begin{remark}
\label{rmk:supertrace} As $W_M$ in Equation (\ref{WM grading}) is $\mathbb{Z}_2$-graded, the vector bundle $E_M$ in Equation (\ref{eq WM})  can be written as follows with respect to the gradings on $\mathcal{E}_M$ and $W_M$, 
\[
E_M=\big(\mathcal{E}_M^+\oplus \mathcal{E}_M^-\big)\otimes W_M^+\oplus \big(\mathcal{E}_M^+ \oplus \mathcal{E}_M^-\big)\otimes W^-_{M}.
\]
Accordingly, the $\mathbb{Z}_2$-grading  on $E_M$ gives the following decomposition,
\begin{equation}\label{EM grading}
E_M=E_M^+\oplus E_M^-,\qquad 
E_M^+=\mathcal{E}_M^+\otimes W^+_M\oplus \mathcal{E}_M^-\otimes W^-_{M},\qquad E_M^-=\mathcal{E}_M^-\otimes W^+_{M}\oplus \mathcal{E}_M^+\otimes W_M^-. 
\end{equation}
Given a linear operator $T$ on $E_M$, we write $T$ into a $4\times 4$ block matrix, 
\begin{equation}\label{eq:T decomp}
T=\left[\begin{array}{cccc}T^{++}_{++}&T^{++}_{-+}&T^{++}_{+-}&T^{++}_{--}\\ 
T^{-+}_{++}& T^{-+}_{-+}& T^{-+}_{+-}&T^{-+}_{--}\\
T^{+-}_{++}&T^{+-}_{-+}&T^{+-}_{+-}&T^{+-}_{--}\\
T^{--}_{++}&T^{--}_{-+}&T^{--}_{+-}&T^{--}_{--}\\
\end{array}\right],
\end{equation}
where $T^{\alpha\beta}_{\gamma\delta}$ is a linear operator from $\mathcal{E}_M^\gamma\otimes W_M^{\delta}$ to $\mathcal{E}_M^{\alpha}\otimes W^{\beta}_M$, for $\alpha, \beta, \gamma, \delta=\pm$. The $\mathbb{Z}_2$-grading, c.f. Equation (\ref{EM grading}), on $E_M$ introduces a supertrace on $T$ as follows,
\begin{equation}\label{eq:supertrace}
\begin{split}
\operatorname{Str}(T):=&\operatorname{tr}(T^{++}_{++})+\operatorname{tr}(T^{--}_{--})-\big[\operatorname{tr}(T^{-+}_{-+})+\operatorname{tr}(T^{+-}_{+-})\big]\\
=&\operatorname{tr}(T^{++}_{++})-\operatorname{tr}(T^{-+}_{-+})-\big[\operatorname{tr}(T^{+-}_{+-})-\operatorname{tr}(T^{--}_{--})\big]. 
\end{split}
\end{equation}
\end{remark}

The following theorem is one of the main results of this paper:

\begin{theorem}
\label{main thm}
Suppose that the metric on $Y$ is slice compatible.
Assume that  $D_{\partial Y}$ is $L^2$-invertible and consider the higher 
index $\langle \Phi^P_{Y,g}, \Ind_\infty (D)\rangle$.
The following formula holds:
\begin{equation}\label{eq:higherdelocalizedaps}
\langle \Phi^P_{Y,g}, \Ind_\infty (D)\rangle=
\int_{(Y_0/AN)^g} c^g_{(Y_0/AN)^g} {\rm AS}(Y_0/AN)_g-\frac{1}{2} \eta_g (D_{\partial Y_{M}}) 
\end{equation}
with 
$$\eta_g (D_{\partial Y_{M}}) =\frac{1}{\sqrt{\pi}} \int_0^\infty \tau^{\partial Y_{M}}_g (D_{\partial Y_{M}} \exp (-tD^2_{\partial Y_{M}}) \frac{dt}{\sqrt{t}}.$$
Recall that $c^g_{(Y_0/AN)^g}$ is a compactly supported smooth cutoff function on $(Y_0/AN)^g$ associated to the $Z_{M, g}$ action on $(Y_0/AN)^g$. 
\end{theorem}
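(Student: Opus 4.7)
The plan is to combine the two reduction steps already established in the paper with the $0$-degree delocalized APS index theorem on $G$-proper manifolds (Theorem \ref{theo:0-delocalized-aps}). Given the assumption that the metric is slice-compatible and that $D_{\partial Y}$ is $L^2$-invertible, Proposition \ref{prop:existence-index-reduced} already guarantees that $D_{\partial Y_{MA}}$ and $D_{\partial Y_M}$ are $L^2$-invertible, so all the index classes appearing in the argument are well defined.

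First I would apply Proposition \ref{prop:reduction-1}, which handles the $N$-integration reduction, to obtain
\[
\langle \Phi^P_{Y,g}, \Ind_\infty(D)\rangle = \langle \Phi_{Y_{MA},g}, \Ind_\infty(D_{Y_{MA}})\rangle.
\]
Here one uses that Proposition \ref{prop:reduction-of-CM}(3) identifies $({}^b V(D))^N$ with ${}^b V(D_{Y_{MA}})$, so the smooth index class is preserved under the $N$-integration map $\phi^N$; Proposition \ref{prop:extension} assures that $\Phi^P_{Y,g}$ extends continuously and pairs with both index classes. Next I would invoke Proposition \ref{label:reduction-final} to perform the second reduction, eliminating the abelian factor $A$:
\[
\langle \Phi_{Y_{MA},g}, \Ind_\infty(D_{Y_{MA}})\rangle = \langle \Phi_{Y_M,g}, \Ind_\infty(D_{Y_M})\rangle = \langle \tau^{Y_M}_g, \Ind_\infty(D_{Y_M})\rangle,
\]
where $\Phi_{Y_M,g}$ is the $0$-cyclic cocycle defined by the orbital integral $\tau^M_g$ on the $M$-proper $b$-manifold $Y_M = Y/AN$. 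Crucial here are the slice-compatibility of the metric (which produces the product structure $D_{Y_{MA}} = D_{Y_M}\hat\otimes 1 + 1\hat\otimes D_A$) and the external product decomposition of the index classes via the Morita equivalence with $C^*_r(M)\otimes C^*_r(A)$ together with the fact that $\langle \Phi_{A,e},\Ind_\infty(D_A)\rangle = 1$.

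At this stage the problem has been completely reduced to a $0$-degree delocalized APS pairing on the $M$-proper $b$-manifold $Y_M$. Since $M$ is a connected linear real reductive group, $g\in M$ is semisimple, and $D_{\partial Y_M}$ is $L^2$-invertible, Theorem \ref{theo:0-delocalized-aps} applies directly to $D_{Y_M}$ and yields
\[
\langle \tau^{Y_M}_g, \Ind_\infty(D_{Y_M})\rangle = \int_{(Y_{0,M})^g} c^g_{Y_{0,M}} \mathrm{AS}_g(D_{Y_{0,M}}) - \frac{1}{2}\eta_g(D_{\partial Y_M}),
\]
which is the structure on the right-hand side of \eqref{eq:higherdelocalizedaps}. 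The final step is to identify this with the stated expression: since $Y_M = Y/AN$, one has $Y_{0,M} = Y_0/AN$, and the characteristic form $\mathrm{AS}_g(D_{Y_{0,M}})$ is precisely $\mathrm{AS}(Y_0/AN)_g$ of \eqref{eq:geom-form} (which is how the latter was defined in the closed case in \cite{hst} and which transfers verbatim since the structures are of product type near the boundary). The cutoff $c^g_{Y_{0,M}}$ for the $Z_{M,g}$-action on $(Y_{0,M})^g$ agrees with $c^g_{(Y_0/AN)^g}$.

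The main obstacle is ensuring that the two reduction statements, which are formulated in the paper for the $b$-calculus setting, are rigorously compatible with the delocalized pairing. Concretely, one must verify that the external product decomposition of the index class used in Step~II of Proposition \ref{label:reduction-final}, proved there via a passage through unbounded KK-classes with APS boundary conditions, remains valid after applying the continuous trace-like map induced by $\Phi^P_g$; this requires tracking carefully that the multiplicative structure of $\phi^N$ on the residual $b$-algebras interacts well with the $b$-trace defining the relative cocycle. Once this compatibility is in place, the three-step assembly above produces \eqref{eq:higherdelocalizedaps}. The remark concerning non-maximal $P$ (where both sides vanish for parity reasons, cf.\ Remark \ref{vanish rmk}) is then handled separately by the symmetry argument indicated there.
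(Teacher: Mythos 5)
Your proposal is correct and follows essentially the same route as the paper: first reduction via Proposition \ref{prop:reduction-1}, second reduction via Proposition \ref{label:reduction-final}, and then the $0$-degree delocalized APS index theorem (Theorem \ref{theo:0-delocalized-aps}) applied to $Y_M = Y/AN$. Your cautious remarks about $b$-calculus compatibility are sound but simply restate what those two reduction propositions (and Proposition \ref{prop:reduction-of-CM}) already deliver, so no further argument is needed beyond what you wrote.
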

To explain the right-hand side of (\ref{formula-main}) more explicitly, we fix the following data.
\begin{itemize} 
\item In $(\ref{eq WM})$, we constructed a $M$-equivariant, $\mathbb{Z}_2$-graded vector bundle 
\[
W_M = W_M^+ \oplus W_M^-. 
\]  
We define
\[
R^{W^\pm_M} = \text{the curvature form of the Hermitian connection on } W_M^\pm. 
\]

\item  $R^\mathcal{N}$, the curvature form associated to the Hermitian connection on $\mathcal{N}_{(Y_0/AN)^g}\otimes \mathbb{C}$ ($\mathcal{N}_{(Y_0/AN)^g}$ is the normal bundle of the $g$-fixed point submanifold $(Y_0/AN)^g$ in $Y_0/AN$); 
\item  $R^{L}$, the curvature form associated to the Hermitian connection on $L_{\operatorname{det}}|_{(Y_0/AN)^g}$ ($L_{\operatorname{det}}$ is  the determinant line bundle of the Spin$^c$-structure on $Y_0/AN$ and $L_{\operatorname{det}}|_{(Y_0/AN)^g}$ is its restriction to $(Y_0/AN)^g$);
\item $R_{(Y_0/AN)^g}$, the Riemannian curvature form associated to the Levi-Civita connection on the tangent bundle of  $(Y_0/AN)^g$;
\item The form ${\rm AS}(Y_0/AN)_g$ is then given by  the following expression:

\begin{equation}\label{eq:geom-form}
\frac{\widehat{A}\left(\frac{R_{(Y_0/AN)^g}}{2\pi i}\right) \left[\operatorname{tr}\left(g \exp\left(\frac{R^{W^+_M}}{2\pi i}\right)\right) - \operatorname{tr}\left(g \exp\left(\frac{R^{W^-_M}}{2\pi i}\right)\right)\right]\exp(\operatorname{tr}(\frac{R^L}{2\pi i}))}
{\operatorname{det}\left(1-g \exp(-\frac{R^{\mathcal{N}}}{2\pi i})\right)^{\frac{1}{2}}}
\end{equation}

\item Since the auxiliary vector bundle $W_M$ on $Y_M$ is $\mathbb{Z}_2$-graded, the Dirac operator $D_{Y_M}$ is equipped with a $\mathbb{Z}_2$-grading as well. Following the decomposition (\ref{EM grading}) and (\ref{eq:T decomp}), we have the decomposition for $D_{Y_M}$,
\[
\begin{split}
&D_{Y_M}=\left[\begin{array}{cccc}0&D^{+}_{Y_M, +}&0&0\\
				D^{-}_{Y_M, +}&0&0&0\\
				0&0&0&D^{+}_{Y_{M},-}\\
				0&0&D^{-}_{Y_M,-}&0\\
	\end{array}\right]=\left[\begin{array}{cc}
				D_{Y_M,+}&0\\ 
				0& D_{Y_M,-}\end{array}
				\right] \\
&D_{Y_M, +}=\left[\begin{array}{cc}0&D^{+}_{Y_M, +}\\
				D^{-}_{Y_M, +}&0
				\end{array}\right], \qquad D_{Y_M, -}=\left[\begin{array}{cc}0&D^{+}_{Y_M, -}\\
				D^{-}_{Y_M, -}&0
				\end{array}\right],
\end{split}
\]
where
\[
D^\alpha_{Y_M, \beta} \colon L^2\left(Y_M, \mathcal{E}^\alpha_{Y_M} \otimes W^\beta_M\right) \to L^2\left(Y_M, \mathcal{E}^{-\alpha}_{Y_M} \otimes W^\beta_M\right),\quad \alpha, \beta = \pm. 
\]
Accordingly, since $\partial Y_M$ is odd dimensional, the Dirac operator $D_{\partial Y_M}$ is a self-adjoint {\bf even} differential operator on the $\mathbb{Z}_2$-graded spinor bundle $E_{\partial Y_M}$, which is defined as follows, 
\[
E_{\partial Y_M}:= \mathcal{E}_{\partial Y_M}\otimes W_{M}|_{\partial Y_M},\qquad E_{\partial Y_M}^+:= \mathcal{E}_{\partial Y_M}\otimes W^+_{M}|_{\partial Y_M},\qquad E^-_{\partial Y_M}:= \mathcal{E}_{\partial Y_M}\otimes W^-_{M}|_{\partial Y_M}.
\]
We can write $D_{\partial Y_M}$ into a $2\times 2$ block diagonal matrix with respect to the grading on $E|_{\partial Y_M}$, 
\[
\left[\begin{array}{cc}
	D_{\partial Y_M, +}&0\\ 0&D_{\partial Y_M, -}\\
	\end{array}
\right],
\] 
where $D_{\partial Y_M, +}$ (and $D_{\partial Y_M, -}$) is the Dirac operator on $E_{\partial Y_M}^+$ (and $E_{\partial Y_M }^-$).
Following the definition of the supertrace (\ref{eq:supertrace}) and the $\mathbb{Z}_2$-grading of $D_{\partial Y_M}$, 
 we obtain the following expression for the delocalized eta invariant of $D_{\partial Y_M}$:
\begin{equation}\label{eq:etasuper}
\eta_g(D_{\partial Y_M})=\eta_g(D_{\partial Y_M, +})-\eta_{g}(D_{\partial Y_M, -}). 
\end{equation}
This point is  slightly different from the 0-degree pairing case, where the operator $D_{\partial Y_M}$ is not graded. 
\end{itemize}

\begin{proof}
Denote $\Phi^P_{Y,g}$ briefly by $\Phi_{Y,g}$. By \eqref{reduction1} we know that 
\begin{equation*} \langle \Phi_{Y,g}, \Ind_\infty (D) \rangle = \langle \Phi_{Y_{MA},g}, \Ind_\infty (D_{Y_{MA}})  \rangle\,.
 \end{equation*}
On the other hand, by equation \eqref{reduction2} we have that 
\begin{equation*}
 \langle \Phi_{Y_{MA},g}, \Ind_\infty (D_{Y_{MA}})  \rangle= \langle \Phi_{Y_M,g}, \Ind_\infty (D_{Y_M}) \rangle 
 \end{equation*}
 so that 
 \begin{equation*} \langle \Phi_{Y,g}, \Ind_\infty (D) \rangle =  \langle \Phi_{Y_M,g}, \Ind_\infty (D_{Y_M}) \rangle 
 \end{equation*}
 It suffices to apply now the 0-degree delocalized APS index theorem, Theorem \ref{theo:0-delocalized-aps}, to the right hand side of the above equation
 and recall that  $Y/AN$ is diffeomorphic to  $Y_M:=M\times_{K\cap M} Z$ and that is obtained by addition of a cylindrical end
 to $Y_0/AN$. 
\end{proof}

When $P$ is not a maximal parabolic subgroup, we have the following vanishing result:
\begin{remark}\label{vanish rmk-2}
If $P$ is not a maximal parabolic subgroup, then $\Phi^P_{Y,g}$ is a trivial class in cyclic cohomology and 
\[
\langle \Phi^P_{Y,g}, \Ind_\infty (D)\rangle = 0.
\]
For the right-hand side of Equation (\ref{eq:higherdelocalizedaps}), we know from Remark \ref{vanish rmk} that 
\[
W_M^+ \cong W_M^-
\]
compatible with the respective connections and metrics. Thus,  $D_{\partial Y_M,+}$ is unitary equivalent to  $D_{\partial Y_M,-}$ under the above isomorphism.
By using the expression of $\eta_g(D_{\partial Y_M})$ as $\eta_g(D_{\partial Y_M, +})-\eta_{g}(D_{\partial Y_M, -})$,
 we obtain the following 
\[
\eta_g (D_{\partial Y_{M}})  = 0\,.
\]
Moreover, the integral
\[
\int_{(Y_0/AN)^g} c^g_{(Y_0/AN)^g} {\rm AS}(Y_0/AN)_g = 0
\]
as follows from the fact that
\[
\operatorname{tr}\left(g \exp\left(\frac{R^{W^+_M}}{2\pi i}\right)\right) - \operatorname{tr}\left(g \exp\left(\frac{R^{W^-_M}}{2\pi i}\right)\right) = 0,
\]
because of the the isomorphism $W^+_M\cong W^-_M$.
\end{remark}
\section{Numeric rho invariants on $G$-proper manifolds}\label{sect:numeric}

In this section we shall introduce (higher) rho numbers associated to positive scalar curvature (psc) metrics.

\subsection{Rho numbers associated to delocalized 0-cocycles}
We consider a closed $G$-proper manifold $X$ {\bf without} boundary, $G$ connected, linear real reductive, $g\in G$ a semisimple element,
$D_X$ a $G$-equivariant $L^2$-invertible Dirac operator defined in  (\ref{def Dirac}). We 
know that the following integral is convergent:
$$\eta_g (D_X):=\frac{1}{\sqrt{\pi}} \int_0^\infty \tau^{X}_{g} (D_{X} \exp (-tD^2_{X}) \frac{dt}{\sqrt{t}}.$$

\begin{definition}
Let $X$ be $G$-equivariantly spin  and $D_X\equiv D_{\mathbf{h}}$, the spin Dirac operator
 associated to a $G$-equivariant PSC metric $\mathbf{h}$.
  We know that $D_\mathbf{h}$ is $L^2$-invertible. We define 
$$\rho_g (\mathbf{h}):= \eta_g (D_{\mathbf{h}})\,.$$
\end{definition}

%
%

\subsection{Rho numbers associated to higher delocalized cocycles}
We can generalize the example of the previous subsection and define rho numbers associated
 the higher cocycles $\Phi^P_{g}$.

\begin{definition}
Let $P=MNA$ be a cuspidal parabolic subgroup, $g\in M$ as above and consider $\Phi_g^P$ and $\Phi_{X,g}^P$ (we recall that $X$ is without boundary). 
Assume that $\mathbf{h}$ is a $G$-invariant PSC metric on $X$. In this case we also assume that
$\mathbf{h}$ is slice-compatible.
Then  
\begin{equation}\label{rho-P}
\rho_g^P (\mathbf{h}):= \eta_g (D_{X_M})
\end{equation}
(with $X_M$ the reduced manifold associated to $X$)
 is well defined.
 \end{definition}
 
 \noindent
 Notice that as $D_X$ is invertible, we have that  $D_{X_M}$ is also invertible, see 
 Proposition \ref{prop:existence-index-reduced}.
 
\subsection{Bordism properties}
The APS index theorems proved in this article can be used in order to study the bordism properties 
of these rho invariants.

We assume, unless otherwise stated that we are on a $G$-proper manifold which is endowed 
with a $G$-invariant metric and a $G$-equivariant spin structure. If needed, we shall assume slice-compatibility
of the metric and of the spin structure. 
  \\

\begin{definition}
Consider a cocompact proper $G$-manifold $(W,\mathbf{h})$, possibly with boundary, endowed 
with an equivariant spin structure. Let $S$ be the spinor bundle.
Let $g\in G$ be semisimple. We shall say that $g$ is {\it geometrically-simple
on $W$}
if 
$$\int_{W^g} c^g {\rm AS}_g (W,S)=0\,.$$
with ${\rm AS}_g (W,S)$ the usual integrand appearing in the 0-degree delocalized APS-index theorem.
\end{definition}

\begin{example}
The following proposition provides many examples of geometrically-simple elements $g$ on an arbitrary
$G$-proper manifold $W$.

\begin{proposition}\label{prop:fixed point}If $g$ is non-elliptic, that is, does not conjugate to a compact element, then every element of the conjugacy class $C(g):=\{hgh^{-1}|h\in G\}$ in $G$ does not have any fixed point on $W$.
\end{proposition}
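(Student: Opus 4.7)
The plan is to argue by contrapositive: I will show that if some conjugate $hgh^{-1}$ has a fixed point on $W$, then $g$ must be elliptic. The key inputs are just two: (i) properness of the $G$-action on $W$, which forces all stabilizers to be compact, and (ii) the fact that in a connected linear real reductive group any element contained in a compact subgroup is conjugate to an element of the maximal compact subgroup $K$ (equivalently, all maximal compact subgroups of $G$ are conjugate to $K$).

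First I would assume that there exists $h\in G$ and $w\in W$ such that $hgh^{-1}\cdot w=w$. Then $hgh^{-1}$ belongs to the isotropy group $G_w:=\{x\in G : x\cdot w=w\}$. Since the action of $G$ on $W$ is proper by assumption, the stabilizer $G_w$ is a compact subgroup of $G$. Any compact subgroup of $G$ is contained in some maximal compact subgroup, and in a connected linear real reductive group all maximal compact subgroups are conjugate to $K$. Consequently there exists $h_1\in G$ with $h_1 (hgh^{-1}) h_1^{-1}\in K$, and hence $(h_1h)\, g\, (h_1 h)^{-1}\in K$, showing that $g$ is conjugate to an element of $K$, i.e., elliptic.

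This is the required contradiction with the hypothesis that $g$ is non-elliptic; hence no element of the conjugacy class $C(g)$ can have a fixed point on $W$. I do not foresee any obstacle: once properness is invoked the argument is essentially one line. Let me also observe that the same proof applies verbatim to $W$ with or without boundary, and in particular to all the geometric situations in which the proposition is applied elsewhere in the paper (e.g.\ to the fixed-point set $W^g$ being empty, from which the vanishing $\int_{W^g} c^g\,{\rm AS}_g(W,S)=0$ and hence geometric simplicity of $g$ follows immediately).
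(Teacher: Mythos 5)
Your proof is correct, and it takes a genuinely different route from the paper's. The paper works directly with the slice decomposition $W = G\times_K S$: if $g$ fixes $x=(g_1,s)$, then $gg_1 = g_1 k$ for some $k\in K$ with $ks=s$, so $g_1^{-1}gg_1\in K$ and $g$ is elliptic. You instead argue entirely abstractly: a fixed point of $hgh^{-1}$ places $hgh^{-1}$ in the isotropy group $G_w$, properness forces $G_w$ to be compact, and since all maximal compact subgroups of a connected linear reductive group are conjugate to $K$, the element $hgh^{-1}$ (hence $g$) is conjugate into $K$. Your argument has the small advantage of not invoking the slice decomposition at all — only properness and the conjugacy of maximal compacts — which makes it visibly robust in the $b$-manifold and with-boundary settings, as you note. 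The paper's argument is more hands-on and reuses the slice picture that is already entrenched throughout. One tiny remark: the two formulations (a conjugate of $g$ has a fixed point versus $g$ itself has a fixed point) are equivalent since $hgh^{-1}\cdot w = w$ iff $g\cdot(h^{-1}w)=h^{-1}w$; the paper's ``WLOG $W^g\neq\emptyset$'' is implicitly using this, and your phrasing handles it explicitly — either way there is no gap.
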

\begin{proof}
Without loss of generality, we assume that $W^g \neq \emptyset$. Take 
\[
x = (g_1, s) \in W = G \times_K S
\] 
such that 
\[
gx = (gg_1, s) = (g_1, s) = x. 
\] 
As $(g_1,s)$ and $(gg_1,s)$ correspond to the same point in $G\times_K S$, we can find some $k \in K$ such that 
\[
gg_1 = g_1 k, \quad k s = s. 
\]	
Thus, $g_1^{-1}g g_1 \in K$, that is, $g$ has to be elliptic, which contradicts to our assumption on $g$. 
\end{proof}
 \end{example}

\begin{definition}\label{def:concordant}
Let $(Y,\mathbf{h}_0)$ and $(Y,\mathbf{h}_1)$ be two psc metrics. We say that they are $G$-concordant if there exists a $G$-invariant metric
$\mathbf{h}$ on $Y\times [0,1]$ which is of psc, product-type near the boundary and restricts to $\mathbf{h}_0$ at $Y\times \{0\}$ and to
$\mathbf{h}_1$ at $Y\times \{1\}$. If $\mathbf{h}_0$ and $\mathbf{h}_1$ are slice compatible, we also require the metric $\mathbf{h}$ on $Y\times [0,1]$ to be slice compatible. 
\end{definition}

\begin{definition}\label{def:bordant}
Let $(Y_0,\mathbf{h}_0)$ and $(Y_1,\mathbf{h}_1)$ be two $G$-proper manifolds with psc metrics. 
We shall say that they are $G$-psc-bordant  if there exists a $G$-manifold with boundary $W$ with a $G$-invariant metric $\mathbf{h}$
such that:\\
(i) $\partial W=Y_0\sqcup Y_1$;\\
(ii) $\mathbf{h}$ is product-type near the boundary and of psc;\\
(iii) $\mathbf{h}$ restricts to $\mathbf{h}_0\sqcup \mathbf{h}_1$ on  $\partial W=Y_0\sqcup Y_1$.

\noindent{If} $\mathbf{h}_0$ and $\mathbf{h}_1$ are slice compatible, we also require the metric $\mathbf{h}$ to be slice compatible. 
\end{definition}

 \begin{proposition}\label{prop:bordism}$\;$\\
 1] Assume that the 
 psc metrics  $\mathbf{h}_0$ and $\mathbf{h}_1$ on $Y$  are $G$-concordant.
 Assume that $g$ is geometrically-simple on $Y\times [0,1]$, Then $\rho_g (\mathbf{h}_0)=\rho_g (\mathbf{h}_1)$.\\
 2] Let $P=MAN<G$ be a cuspidal parabolic subgroup and let $g\in M$ be a semisimple element.
 Assume that the slice-compatible psc metrics  $\mathbf{h}_0$ and $\mathbf{h}_1$ on $Y$  are $G$-concordant.
 Assume that $g$ is geometrically-simple on $Y_M\times [0,1]$. Then $\rho^P_g (\mathbf{h}_0)=\rho^P_g (\mathbf{h}_1)$.\\
 3] Let  $(Y_0,\mathbf{h}_0)$ and $(Y_1,\mathbf{h}_1)$ be $G$-psc-bordant through $(W,h)$.
 Assume that $g$ is geometrically-simple on $W$.  
 Then $\rho_g (\mathbf{h}_0)=\rho_g (\mathbf{h}_1)$.
  \end{proposition}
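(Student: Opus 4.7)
The strategy is to apply the delocalized APS index formulas (Theorem~\ref{theo:0-delocalized-aps} for parts (1) and (3), Theorem~\ref{main thm} for part (2)) to the bordism itself, exploiting the fact that positive scalar curvature forces the interior index class to vanish. In each of the three cases, let $(W,\mathbf{h})$ denote the bordism, with $\partial W = Y_0 \sqcup Y_1$ (where $Y_0 = Y_1 = Y$ in parts (1) and (2)), and pass to the associated $b$-manifold $\widehat{W}$. The Lichnerowicz formula together with the psc hypothesis gives $D_{\widehat{W}}^2 = \nabla^*\nabla + \mathrm{scal}/4 \geq c > 0$ pointwise, so $D_{\widehat{W}}$ is $L^2$-invertible; in particular both boundary operators $D_{\mathbf{h}_0}, D_{\mathbf{h}_1}$ are $L^2$-invertible, so that the rho invariants and the smooth index class $\Ind_\infty(D_{\widehat{W}}) \in K_0(\mathcal{L}^\infty_{G,s}(\widehat{W}))$ of Theorem~\ref{theo:smooth-index-CS} are well defined.

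The first key step is to show that this smooth index class vanishes. Since $D_{\widehat W}$ is $L^2$-invertible, Proposition~\ref{prop:large-yes-boundary-unperturbed-b} gives that the Connes--Moscovici projector $V(tD_{\widehat W})$ converges weighted-exponentially to $e_1$ in the Fr\'echet topology of ${}^b\mathcal{L}^\infty_{G,s}(\widehat{W})$ as $t \to \infty$. Combined with smoothness of $t \mapsto V(tD_{\widehat W})$ on $[1,\infty)$, this produces a path of projectors in the smooth algebra from $V(D_{\widehat W})$ to $e_1$, showing $\Ind_\infty(D_{\widehat{W}}) = 0$; a fortiori, its pairing with any cyclic cocycle vanishes.

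Next, for parts (1) and (3), apply Theorem~\ref{theo:0-delocalized-aps} to $D_{\widehat W}$:
\[
0 \;=\; \langle \tau^{\widehat{W}}_g,\, \Ind_\infty(D_{\widehat W}) \rangle \;=\; \int_{W^g} c^g\, \mathrm{AS}_g(D_{\mathbf{h}}) \;-\; \tfrac{1}{2}\, \eta_g(D_{\partial W}).
\]
With the orientation convention $\partial W = Y_0 \sqcup (-Y_1)$ and the oddness $\eta_g(-D) = -\eta_g(D)$ of the delocalized eta invariant, $\eta_g(D_{\partial W}) = \rho_g(\mathbf{h}_0) - \rho_g(\mathbf{h}_1)$. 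Geometric-simplicity of $g$ on $W$ kills the $\mathrm{AS}_g$ integral, yielding $\rho_g(\mathbf{h}_0) = \rho_g(\mathbf{h}_1)$. For part (2), slice-compatibility of the concordance is built into Definition~\ref{def:concordant}, so Theorem~\ref{main thm} applies directly; since $(Y \times [0,1])/AN = Y_M \times [0,1]$, the reduced boundary is $\partial W_M = Y_{M,0} \sqcup (-Y_{M,1})$, and the same oddness argument gives $\eta_g(D_{\partial W_M}) = \rho^P_g(\mathbf{h}_0) - \rho^P_g(\mathbf{h}_1)$, while geometric-simplicity on $Y_M \times [0,1]$ kills the $\mathrm{AS}(W/AN)_g$ integral.

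The main obstacle is the second step: passing from $L^2$-invertibility of $D_{\widehat W}$ to the vanishing of the \emph{smooth} index class in $K_0(\mathcal{L}^\infty_{G,s}(\widehat{W}))$, rather than merely in $K_0$ of the ambient Roe $C^*$-algebra. This requires the weighted-exponential convergence of $\exp(-tD^2_{\widehat W})$ and of the Connes--Moscovici projector in the Fr\'echet topology of the smooth algebra, which is exactly what Proposition~\ref{prop:large-yes-boundary-unperturbed-b} supplies via the resolvent analysis of Section~\ref{subsect:large}. Once this is granted, the remaining steps are essentially formal consequences of the APS theorems already proved and the parity of $\eta_g$.
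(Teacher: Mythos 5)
Your proof is correct and follows essentially the same route as the paper's one-sentence proof, which simply invokes geometric simplicity and the delocalized APS theorems. You have filled in the details the paper leaves implicit — most importantly that the PSC hypothesis on the bordism itself (not merely on the boundary) forces $L^2$-invertibility of $D_{\widehat W}$ via Lichnerowicz, hence the vanishing of $\Ind_\infty(D_{\widehat W})$, so that the APS formula leaves only the AS term and the eta term, which are then killed by geometric simplicity and the oddness $\eta_g(-D)=-\eta_g(D)$ respectively. (A slightly more direct route to $\Ind_\infty(D_{\widehat W})=0$ than the large-time homotopy of $V(tD)$: use the exact inverse $D^{-1}$ as the true $b$-parametrix in Theorem~\ref{theo:smooth-index-CS}, so that the remainders ${}^bS^\pm$ vanish and the Connes--Skandalis projector is literally $e_1$.)
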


\begin{proof}
In both cases the proof is an immediate consequence of the geometric-simplicity of $g\in G$ and the relevant delocalized APS index theorems.
\end{proof}

\begin{remark}
Examples of geometrically-simple $g$  as in Proposition \ref{prop:bordism}
are given by elements in the conjugacy class
  of a non-elliptic element, c.f. Proposition \ref{prop:fixed point}.
  Notice that this applies to {\it any} bordism 
  entering into Definition \ref{def:concordant} and Definition \ref{def:bordant}.
  Thus  for such $g$ 
  our (higher) rho invariants are in fact {\it concordance and bordism invariants}.
\end{remark}

\begin{remark}
It is a challenge to understand whether these results can be employed in studying the spaces
$ \mathcal{R}^+_{G,{\rm slice}} (Y)$ of slice-compatible $G$-invariant  metrics of psc
and the larger space $ \mathcal{R}^+_{G} (Y)$ of  $G$-invariant  metrics of psc (if non-empty).
It would be also interesting to understand the relationship between the following 3 spaces, especially from the point of view of homotopy theory:\\
(i) $ \mathcal{R}^+_{G,{\rm slice}} (Y)$;\\(ii) $\mathcal{R}^+_G (Y)$;\\  (iii) $\mathcal{R}^+_K (S)$, the space of 
$K$-invariant metrics of psc on the slice $S$.\\ 
Clearly there is an inclusion $ \mathcal{R}^+_{G,{\rm slice}}(Y)\hookrightarrow \mathcal{R}^+_G (Y)$;
there is also a natural map   $\mathcal{R}^+_K (S) \to  \mathcal{R}^+_{G,{\rm slice}} (Y)$, see \cite{MGW}.
For all these questions it would be interesting to develop a $G$-equivariant Stolz' sequence and  investigate
 its basic properties. We leave this task to future research.
\end{remark}

\begin{remark}
If $Y_1$ and $Y_2$ are two oriented $G$-proper manifolds and $\mathbf{f}:Y_1\to Y_2$ is an oriented
 $G$-equivariant homotopy equivalence then we can also define rho invariants associated to $f$ and to a semisimple element $g$:   
$$\rho_g (\mathbf{f}):= \eta_g (D^{{\rm sign}}_X+ A({\mathbf{f}}))$$
with $D^{{\rm sign}}_X$ the signature operator on $X=Y_1\cup (-Y_2)$ and $A({\mathbf{f}})$ the Hilsum-Skandalis perturbation, as developed by Fukumoto in the $G$-proper
context \cite{Fukumoto}. Notice however that both the definition of this invariant and the proof of the associated APS index theorem
are not a routine extension of the case treated here. Especially, the large time behaviour of a perturbed
heat kernel and the proof of the analogue of Proposition \ref{prop:from-cocycle-to-eta} are particularly intricate.
We plan to report on our results in this direction in a separate article.
 \end{remark}
%
\appendix
\section{Dirac operator on $G$}

In this appendix, we discuss the difference between the Dirac operator, Eq. (\ref{def Dirac}), considered in this article and the split Dirac operator operator, Eq. (\ref{dirac-split}), on the manifold $G$ with the free proper left $G$-action. 

Suppose that $G$ is a connected reductive Lie group with maximal compact subgroup $K$.  We denote by $\mathfrak{g} = \mathfrak{k} \oplus \mathfrak{p}$ the Cartan decomposition of the Lie algebra and $\mathfrak{g}_\CC = \mathfrak{g} \otimes \CC$ its complexification. Let $B$ be a non-degenerate bilinear symmetric form on $\mathfrak{g}$, which is invariant under the adjoint action of $G$ on $\mathfrak{g}$. We assume $B$ to be negative on $\mathfrak{p}$, and positive on $\mathfrak{k}$. Let $S_\fg$ and $S_\fk$ be the irreducible $\mathbb{Z}_2$-graded spin representation for $\text{Cliff}(\fg_\CC)$ and $\text{Cliff}(\fk_\CC)$ respectively. Then we have the following decomposition
\[
S_\fg \cong S_\fk \otimes S_\fp. 
\]
The adjoint actions on $\fg$ induce maps
\begin{align*}
\text{ad}^\fg \colon& \fg_\CC \to \End(S_\fg) \cong \text{Cliff}(\fg_\CC)	\\
\text{ad}^\fk \colon& \fk_\CC \to \End(S_\fk) \cong \text{Cliff}(\fk_\CC)
\end{align*}
defined by the following formulas: for any $X \in \fg_\CC$, 
\[
\text{ad}^\fg(X) \colon = \frac{1}{4}\sum_{i}^{\dim \fg} c\left([X, e_i]\right)\cdot c(e_i), \quad \text{ad}^\fk(X) \colon =\frac{1}{4}\sum_{i}^{\dim \fk} c\left([X, e_i]\right)\cdot c(e_i) 	
\]
where 
\[
e_1, \cdots e_{\text{dim} \mathfrak{k}}, \quad \text{and}  \quad  e_{\text{dim} \mathfrak{k}+1}, \cdots, e_{\text{dim} \mathfrak{g}}
\]
are the orthonormal bases for $\fk$ and $\fp$ respectively. We have the following relation
\[
\text{ad}^\fg(X) = \text{ad}^\fk(X) + \text{ad}^\fp(X), \quad X \in \fk_\CC,
\]
where 
\[
\text{ad}^\fp(X)  = -\frac{1}{4}\sum_{i, j=\dim \fk+1}^{\dim \fg} B(X, [e_i, e_j]) \cdot c(e_i)\cdot c(e_j) \in \text{Cliff}(\fp), \quad X \in \fk_\CC
\]

By left-trivialization, we can identify the vector field $\mathfrak{X}(G)$ with $C^\infty(G) \otimes \mathfrak{g}$. In particular, for any $X \in \mathfrak{g}$, we can identify it with a left-invariant vector field  on $G$. We define a $K$-invariant inner product on $\mathfrak{g}$ by 
\[
\langle \ ,  \ \rangle = -B|_{\mathfrak{k}} \oplus + B|_{\mathfrak{p}}.
\]
Our computation below is similar to those in \cite[Chapter 9]{MR3052646}, while the author considers the pseudo Riemannian metric B, introduced above, instead of a Riemannian metric. 

By the Koszul formula, the Levi-Civita connection is determined by  
\[
\langle \nabla_X Y, Z \rangle = \frac{1}{2}\left( \langle [X, Y], Z \rangle - \langle [Y, Z], X\rangle  +\langle [Z, X], Y \rangle \right), \quad X, Y, Z \in \mathfrak{g}. 
\]
More general, for any vector field $V$ on $G$, it can be written as 
\[
V = \sum_{i=1}^{\dim \fg} f_i \cdot V_i
\]
where $f_i \in C^\infty(G)$ and $\{V_i\}$ is a basis for $\fg$. Then 
\[
\nabla_X V = \sum_{i=1}^{\dim \fg} X(f_i) \cdot V_i + \sum_{i=1}^{\dim \fg} f_i \cdot \nabla_X V_i. 
\]
If $X, Y \in \mathfrak{p}$ and $Z \in \mathfrak{k}$, then 
\[
\langle [Y, Z], X\rangle  = -B([Y, Z], X) = B(Z, [Y, X]) = \langle Z, [Y, X]\rangle = -\langle [X, Y], Z\rangle
\]
and 
\[
\langle [Z, X], Y\rangle  = -B([Z, X], Y)  = -B(Z, [X, Y]) = -\langle [X, Y], Z\rangle. 
\]
Thus, 
\[
\nabla_X Y = \frac{1}{2}[X, Y]. 
\]
Similarly, one can check the following:
\begin{itemize}
	\item $\nabla_X Y = \frac{1}{2}[X, Y], \quad X, Y \in \mathfrak{k}$. 
	\item $\nabla_X Y = \frac{3}{2}[X, Y], \quad X \in \mathfrak{k}, Y \in \mathfrak{p}$.
	\item $\nabla_X Y = -\frac{1}{2}[X, Y], \quad X \in \mathfrak{p}, Y \in \mathfrak{k}$.
\end{itemize}
Therefore, the induced Clifford connection $\nabla^{S_\mathfrak{g}}$ acts on left $G$-invariant section of $S_\mathfrak{g}$ by the following formula: 
\[
\nabla^{S_\mathfrak{g}}_X = \frac{1}{8}\sum_{i=1}^{\dim \fk} c\left([X, e_i]\right)\cdot c(e_i)+\frac{3}{8}\sum_{i= \dim \fk+1 }^{\dim \fg} c\left([X, e_i]\right)\cdot c(e_i) , \quad X \in \mathfrak{k}
\]
and 
\[
\nabla^{S_\mathfrak{g}}_X = -\frac{1}{8}\sum_{i=1}^{\dim \fk} c\left([X, e_i]\right)\cdot c(e_i)+\frac{1}{8}\sum_{i= \dim \fk+1 }^{\dim \fg} c\left([X, e_i]\right)\cdot c(e_i) , \quad X \in \mathfrak{p}.
\]
Note that  
\[
e_1, \cdots e_{\text{dim} \mathfrak{k}}, \quad \text{and}  \quad  \sqrt{-1}e_{\text{dim} \mathfrak{k}+1}, \cdots, \sqrt{-1} e_{\text{dim} \mathfrak{g}}
\]
give an orthonormal basis for $\fg_\CC$ with respect to the inner product $\langle \ , \ , \rangle$. The corresponding Dirac operator on $L^2(G, S_\mathfrak{g})$ is given by 
\begin{align*}
D_G= \sum_{i=1}^{\dim \fk}	c(e_i) \nabla^{S_\mathfrak{g}}_{e_i} - \sum_{i=\dim \fk+1}^{\dim \fg}	c(e_i) \nabla^{S_\mathfrak{g}}_{e_i}.
\end{align*}
As an element in $\mathcal{U}(\fg_\CC)\otimes \text{Cliff}(\fg_\CC)$, 
\begin{equation}
\label{Dirac compute}
\begin{aligned}
D_G= &\sum_{i=1}^{\dim \fk} e_i \otimes c(e_i) - \sum_{i=\dim \fk+1}^{\dim \fg} e_i \otimes c(e_i)\\
+& \frac{1}{8}\sum_{j=1}^{\dim \fk}\sum_{i=1}^{\dim \fk} c\left([e_j, e_i]\right)\cdot c(e_i)c(e_j) + \frac{3}{8}\sum_{j=1}^{\dim \fk} \sum_{i= \dim \fk+1 }^{\dim \fg} c\left([e_j, e_i]\right)\cdot c(e_i)c(e_j)\\
-&\frac{1}{8}\sum_{j= \dim \fk+1 }^{\dim \fg} \sum_{i=1}^{\dim \fk} c\left([e_j, e_i]\right)\cdot c(e_i)c(e_j)+\frac{1}{8}\sum_{j= \dim \fk+1 }^{\dim \fg} \sum_{i= \dim \fk+1 }^{\dim \fg} c\left([e_j, e_i]\right)\cdot c(e_i)c(e_j)
\end{aligned}
\end{equation}
Let us introduce a \emph{cubic term} 
\[
\phi^\fk = \frac{1}{4}\sum_{j=1}^{\dim \fk}\sum_{i=1}^{\dim \fk} c\left([e_j, e_i]\right)\cdot c(e_i)c(e_j) = \sum_{i=1}^{\dim \fk} \text{ad}^\fk(e_j) \cdot c(e_j)
\]
and a \emph{torsion term} by 
\[
\Omega =  \frac{1}{4}\sum_{j=1}^{\dim \fk} \sum_{i= \dim \fk+1 }^{\dim \fg} c\left([e_j, e_i]\right)\cdot c(e_i)c(e_j) =  \sum_{i=1}^{\dim \fk} \text{ad}^\fp(e_j) \cdot c(e_j)
\]
By straightforward computation, we check that 
\[
\frac{1}{4}\sum_{j= \dim \fk+1 }^{\dim \fg} \sum_{i=1}^{\dim \fk} c\left([e_j, e_i]\right)\cdot c(e_i)c(e_j) = \Omega
\]
and
\begin{align*}
\frac{1}{4}\sum_{j= \dim \fk+1 }^{\dim \fg} \sum_{i= \dim \fk+1 }^{\dim \fg} c\left([e_j, e_i]\right)\cdot c(e_i)c(e_j)= \Omega. 
\end{align*}
Thus, (\ref{Dirac compute}) can be simplified as follow
\[
D_G = \left(\sum_{i=1}^{\dim \fk} e_i \otimes c(e_i) +\frac{1}{2}\phi^\fk \right)- \sum_{i=\dim \fk+1}^{\dim \fg} e_i \otimes c(e_i) +\frac{3}{2}\Omega. 
\]
If we identify 
\[
L^2(G, S_\fg) \cong \left[L^2(G) \otimes S_\fp \otimes L^2(K, S_\fk)\right]^K,
\]
the split Dirac operator is given by 
\begin{align*}
D_{G, \text{split}} =& D_{G,K} \otimes 1+ 1 \otimes D_K \\
=&- \sum_{i=\dim \fk+1}^{\dim \fg} e_i \otimes c(e_i)
 + \left(\sum_{i=1}^{\dim \fk} e_i \otimes c(e_i) +\frac{1}{2}\phi^\fk \right)\end{align*}
Therefore, the difference between the Dirac operator associated to the Clifford connection and split Dirac operator is given by the torsion term $\frac{3}{2}\cdot \Omega$ (see also \cite[Theorem 10.38]{BGV}).

\begin{bibdiv}
\begin{biblist}
	\bib{BGV}{book}{
   author={Berline, Nicole},
   author={Getzler, Ezra},
   author={Vergne, Mich\`ele},
   title={Heat kernels and Dirac operators},
   series={Grundlehren Text Editions},
   note={Corrected reprint of the 1992 original},
   publisher={Springer-Verlag, Berlin},
   date={2004},
   pages={x+363},
   isbn={3-540-20062-2},
   review={\MR{2273508}},
}

		\bib{CareyGRS}{article}{
   author={Carey, Alan L.},
   author={Gayral, Victor},
   author={Rennie, Adam},
   author={Sukochev, Feller A.},
   title={Index theory for locally compact noncommutative geometries},
   journal={Mem. Amer. Math. Soc.},
   volume={231},
   date={2014},
   number={1085},
   pages={vi+130},
   issn={0065-9266},
   isbn={978-0-8218-9838-3},
   review={\MR{3221983}},
}

		\bib{ChenWangXieYu}{unpublished}{
			author={Chen,  Xiaoman},
			author={Wang, Jinmin},
			author={Xie, Zhizhang},
			author={Yu, Guoliang},
			title={Delocalized eta invariants, cyclic cohomology and higher rho invariants, {\rm arXiv:1901.02378}}, 
		}
		
	\bib{bookconnes}{book}{
    AUTHOR = {Connes, Alain},
     TITLE = {Noncommutative geometry},
 PUBLISHER = {Academic Press, Inc., San Diego, CA},
      YEAR = {1994},
     PAGES = {xiv+661},
      ISBN = {0-12-185860-X},
   MRCLASS = {46Lxx (19K56 22D25 58B30 58G12 81T13 81V22 81V70)},
  MRNUMBER = {1303779},
MRREVIEWER = {John Roe},
}

\bib{ConnesMoscovici}{article}{
  author={Connes, Alain},
  author={Moscovici, Henri},
  title={Cyclic cohomology, the Novikov conjecture and hyperbolic groups},
  journal={Topology},
  volume={29},
  date={1990},
  number={3},
  pages={345--388},
  issn={0040-9383},
  review={\MR {1066176}},
  doi={10.1016/0040-9383(90)90003-3},
}

\bib{Fukumoto}{article}{
    AUTHOR = {Fukumoto, Yoshiyasu},
     TITLE = {{$G$}-homotopy invariance of the analytic signature of proper
              co-compact {$G$}-manifolds and equivariant {N}ovikov
              conjecture},
   JOURNAL = {J. Noncommut. Geom.},
  FJOURNAL = {Journal of Noncommutative Geometry},
    VOLUME = {15},
      YEAR = {2021},
    NUMBER = {3},
     PAGES = {761--795},
      ISSN = {1661-6952},
   MRCLASS = {19L47 (19K35 19K56 46L80 58A12)},
  MRNUMBER = {4345201},
MRREVIEWER = {No\'{e} B\'{a}rcenas Torres},
       DOI = {10.4171/jncg/420},
       URL = {https://doi.org/10.4171/jncg/420},
}
	
	\bib{gilkey-book}{article}{
    AUTHOR = {Gilkey, Peter B.},
     TITLE = {Invariance theory, the heat equation, and the
              {A}tiyah-{S}inger index theorem},
    SERIES = {Studies in Advanced Mathematics},
   EDITION = {Second},
 PUBLISHER = {CRC Press, Boca Raton, FL},
      YEAR = {1995},
     PAGES = {x+516},
      ISBN = {0-8493-7874-4},
   MRCLASS = {58Gxx (58G10 58G11)},
  MRNUMBER = {1396308},
MRREVIEWER = {Matthias Lesch},
}
		
		\bib{GMPi}{article}{
    AUTHOR = {Gorokhovsky, Alexander} 
    AUTHOR ={Moriyoshi, Hitoshi}
    AUTHOR ={Piazza, Paolo},
    title={A note on the higher Atiyah-Patodi-Singer index theorem on Galois
   coverings},
   journal={J. Noncommut. Geom.},
   volume={10},
   date={2016},
   number={1},
   pages={265--306},
   issn={1661-6952},
   review={\MR{3500822}},
   doi={10.4171/JNCG/234},
}

\bib{Grieser}{article}{
    AUTHOR = {Grieser, Daniel},
     TITLE = {Basics of the {$b$}-calculus},
 booktitle = {Approaches to singular analysis ({B}erlin, 1999)},
    SERIES = {Oper. Theory Adv. Appl.},
    VOLUME = {125},
     PAGES = {30--84},
 PUBLISHER = {Birkh\"{a}user, Basel},
      YEAR = {2001},
   MRCLASS = {58J40 (35B25 35B40 35C20 35S05 58J37)},
  MRNUMBER = {1827170},
MRREVIEWER = {Robert Lauter},
}

\bib{MGW}{article}{,
   AUTHOR = {Guo, Hao}
    AUTHOR = {Mathai, Varghese}
    AUTHOR = {Wang, Hang}
   title={Positive scalar curvature and Poincar\'{e} duality for proper actions},
   journal={J. Noncommut. Geom.},
   volume={13},
   date={2019},
   number={4},
   pages={1381--1433},
   issn={1661-6952},
   review={\MR{4059824}},
   doi={10.4171/jncg/321},
}

	\bib{Harish-Chandra-dis}{article}{,
	author={Harish-Chandra}
     TITLE = {Discrete series for semisimple {L}ie groups. {II}. {E}xplicit
              determination of the characters},
journal={Acta Math.},
   volume={116},
   date={1966},
   pages={1--111},
   issn={0001-5962},
   review={\MR{219666}},
   doi={10.1007/BF02392813},
}
	
\bib{Hochs2009}{article}{
   author={Hochs, Peter},
   title={Quantisation commutes with reduction at discrete series
   representations of semisimple groups},
   journal={Adv. Math.},
   volume={222},
   date={2009},
   number={3},
   pages={862--919},
   issn={0001-8708},
   review={\MR{2553372}},
   doi={10.1016/j.aim.2009.05.011},
}

\bib{Hochs-Mathai}{article}{
   author={Hochs, Peter},
   author={Mathai, Varghese},
   title={Quantising proper actions on Spin$^c$-manifolds},
   journal={Asian J. Math.},
   volume={21},
   date={2017},
   number={4},
   pages={631--685},
   issn={1093-6106},
   review={\MR{3691850}},
   doi={10.4310/AJM.2017.v21.n4.a2},
}

\bib{HS}{article}{
    AUTHOR = {Hochs, Peter} 
    AUTHOR = {Song, Yanli},
     TITLE = {An equivariant index for proper actions {III}: {T}he invariant
              and discrete series indices},
   JOURNAL = {Differential Geom. Appl.},
  FJOURNAL = {Differential Geometry and its Applications},
    VOLUME = {49},
      YEAR = {2016},
     PAGES = {1--22},
      ISSN = {0926-2245},
   MRCLASS = {58J20 (19K56 22E45 53C27 53D20 53D50)},
  MRNUMBER = {3573821},
MRREVIEWER = {Peter Haskell},
       DOI = {10.1016/j.difgeo.2016.07.003},
       URL = {https://doi-org.libproxy.wustl.edu/10.1016/j.difgeo.2016.07.003},
}

\bib{hst}{article}{
   author={Hochs, Peter},
   author={Song, Yanli},
   author={Tang, Xiang},
   title={An index theorem for higher orbital integrals},
   journal={Math. Ann.},
   volume={382},
   date={2022},
   number={1-2},
   pages={169--202},
   issn={0025-5831},
   review={\MR{4377301}},
   doi={10.1007/s00208-021-02233-3},
}

\bib{HWW1}{article}{
    AUTHOR = {Hochs, Peter}, 
    author={Wang, Bai-Ling},
    author={Wang, Hang},
     TITLE = {An equivariant {A}tiyah-{P}atodi-{S}inger index theorem for
              proper actions {I}: {T}he index formula},
   JOURNAL = {Int. Math. Res. Not. IMRN},
  FJOURNAL = {International Mathematics Research Notices. IMRN},
      YEAR = {2023},
    NUMBER = {4},
     PAGES = {3138--3193},
      ISSN = {1073-7928},
   MRCLASS = {58J20 (19K56 22E30)},
  MRNUMBER = {4565636},
       DOI = {10.1093/imrn/rnab324},
       URL = {https://doi.org/10.1093/imrn/rnab324},
}

\bib{HWW2}{article}{
    AUTHOR = {Hochs, Peter}, 
    author={Wang, Bai-Ling},
    author={Wang, Hang},
     TITLE = {An equivariant {A}tiyah-{P}atodi-{S}inger index theorem for
              proper actions {II}: the {$K$}-theoretic index},
   JOURNAL = {Math. Z.},
  FJOURNAL = {Mathematische Zeitschrift},
    VOLUME = {301},
      YEAR = {2022},
    NUMBER = {2},
     PAGES = {1333--1367},
      ISSN = {0025-5874},
   MRCLASS = {58J20 (19K56 46L80)},
  MRNUMBER = {4418323},
MRREVIEWER = {\Dbar \cftil{o} Ng\d{o}c Di\cfudot{e}p},
       DOI = {10.1007/s00209-021-02942-0},
       URL = {https://doi.org/10.1007/s00209-021-02942-0},
}

		\bib{Hochs-Wang-HC}{article}{
    AUTHOR = {Hochs, Peter}
    AUTHOR = {Wang, Hang},
     TITLE = {A fixed point formula and {H}arish-{C}handra's character
              formula},
    title={A fixed point formula and Harish-Chandra's character formula},
   journal={Proc. Lond. Math. Soc. (3)},
   volume={116},
   date={2018},
   number={1},
   pages={1--32},
   issn={0024-6115},
   review={\MR{3747042}},
   doi={10.1112/plms.12066},
}
\bib{Hochs-Wang-KT}{article}{
    AUTHOR = {Hochs, Peter}
    AUTHOR = {Wang, Hang},
     TITLE = {Orbital integrals and {$K$}-theory classes},
 journal={Ann. K-Theory},
   volume={4},
   date={2019},
   number={2},
   pages={185--209},
   issn={2379-1683},
   review={\MR{3990784}},
   doi={10.2140/akt.2019.4.185},
}
\bib{Knapp}{book}{
   author={Knapp, Anthony W.},
   title={Lie groups beyond an introduction},
   series={Progress in Mathematics},
   volume={140},
   edition={2},
   publisher={Birkh\"{a}user Boston, Inc., Boston, MA},
   date={2002},
   pages={xviii+812},
   isbn={0-8176-4259-5},
   review={\MR{1920389}},
}
	\bib{Lafforgue}{article}{,
    AUTHOR = {Lafforgue, Vincent},
     TITLE = {Banach {$KK$}-theory and the {B}aum-{C}onnes conjecture},
      booktitle={Proceedings of the International Congress of Mathematicians,
      Vol. II},
      address={Beijing},
      date={2002},
      publisher={Higher Ed. Press, Beijing},
   date={2002},
   pages={795--812},
   review={\MR{1957086}},
}

		\bib{LeichtnamPiazzaMemoires}{article}{
			author={Leichtnam, Eric},
			author={Piazza, Paolo},
			title={The $b$-pseudodifferential calculus on Galois coverings and a
				higher Atiyah-Patodi-Singer index theorem},
			language={English, with English and French summaries},
			journal={M\'{e}m. Soc. Math. Fr. (N.S.)},
			number={68},
			date={1997},
			pages={iv+121},
			issn={0249-633X},
			review={\MR{1488084}},
		}

		\bib{LeichtnamPiazzaBSMF}{article}{
			author={Leichtnam, Eric},
			author={Piazza, Paolo},
			title={Homotopy invariance of twisted higher signatures on manifolds with
				boundary},
			language={English, with English and French summaries},
			journal={Bull. Soc. Math. France},
			volume={127},
			date={1999},
			number={2},
			pages={307--331},
			issn={0037-9484},
			review={\MR{1708639}},
		}

\bib{LP-JFA}{article}{
   author={Leichtnam, Eric},
   author={Piazza, Paolo},
   title={Dirac index classes and the noncommutative spectral flow},
   journal={J. Funct. Anal.},
   volume={200},
   date={2003},
   number={2},
   pages={348--400},
   issn={0022-1236},
   review={\MR{1979016}},
   doi={10.1016/S0022-1236(02)00044-7},
}
\bib{LLP}{article}{
   author={Leichtnam, Eric},
   author={Lott, John},
   author={Piazza, Paolo},
   title={On the homotopy invariance of higher signatures for manifolds with
   boundary},
   journal={J. Differential Geom.},
   volume={54},
   date={2000},
   number={3},
   pages={561--633},
   issn={0022-040X},
   review={\MR{1823315}},
}

\bib{LMP}{article}{
   author={Lesch, Matthias},
   author={Moscovici, Henri},
   author={Pflaum, Markus J.},
   title={Connes-Chern character for manifolds with boundary and eta
   cochains},
   journal={Mem. Amer. Math. Soc.},
   volume={220},
   date={2012},
   number={1036},
   pages={viii+92},
   issn={0065-9266},
   isbn={978-0-8218-7296-3},
   review={\MR{3025890}},
   doi={10.1090/S0065-9266-2012-00656-3},
}
\bib{lott-delocalized}{article}{
			author={Lott, John},
			title={Delocalized L2-invariants},
			journal={ J. Funct. Anal.},
			volume={169},
			date={1999},
			number={1},
			pages={1--31},
			}

\bib{Loya}{article}{
    AUTHOR = {Loya, Paul},
     TITLE = {The index of {$b$}-pseudodifferential operators on manifolds
              with corners},
   JOURNAL = {Ann. Global Anal. Geom.},
  FJOURNAL = {Annals of Global Analysis and Geometry},
    VOLUME = {27},
      YEAR = {2005},
    NUMBER = {2},
     PAGES = {101--133},
      ISSN = {0232-704X},
   MRCLASS = {58J20 (35S05 47A53 47G30 58J40)},
  MRNUMBER = {2131909},
MRREVIEWER = {Elmar Schrohe},
       DOI = {10.1007/s10455-005-5216-z},
       URL = {https://doi.org/10.1007/s10455-005-5216-z},
}

\bib{MP}{article}{
    AUTHOR = {Mazzeo, R.},
    author = {Piazza, P.},
     TITLE = {Dirac operators, heat kernels and microlocal analysis. {II}.
              {A}nalytic surgery},
   JOURNAL = {Rend. Mat. Appl. (7)},
  FJOURNAL = {Rendiconti di Matematica e delle sue Applicazioni. Serie VII},
    VOLUME = {18},
      YEAR = {1998},
    NUMBER = {2},
     PAGES = {221--288},
      ISSN = {1120-7183},
   MRCLASS = {58J20 (58J28 58J32 58J52)},
  MRNUMBER = {1659838},
MRREVIEWER = {Daniel Grieser},
}

\bib{MR3052646}{book}{,
    AUTHOR = {Meinrenken, Eckhard},
     TITLE = {Clifford algebras and {L}ie theory},
    SERIES = {Ergebnisse der Mathematik und ihrer Grenzgebiete. 3. Folge. A
              Series of Modern Surveys in Mathematics [Results in
              Mathematics and Related Areas. 3rd Series. A Series of Modern
              Surveys in Mathematics]},
    VOLUME = {58},
 PUBLISHER = {Springer, Heidelberg},
      YEAR = {2013},
     PAGES = {xx+321},
      ISBN = {978-3-642-36215-6; 978-3-642-36216-3},
   MRCLASS = {17B35 (53C27)},
  MRNUMBER = {3052646},
MRREVIEWER = {Ian M. Musson},
       DOI = {10.1007/978-3-642-36216-3},
       URL = {https://doi-org.libproxy.wustl.edu/10.1007/978-3-642-36216-3},
}
\bib{Melrose-Book}{book}{
   author={Melrose, Richard B.},
   title={The Atiyah-Patodi-Singer index theorem},
   series={Research Notes in Mathematics},
   volume={4},
   publisher={A K Peters, Ltd., Wellesley, MA},
   date={1993},
   pages={xiv+377},
   isbn={1-56881-002-4},
   review={\MR{1348401}},
   doi={10.1016/0377-0257(93)80040-i},
}

\bib{Mel-P1}{article}{
   author={Melrose, Richard B.},
   author={Piazza, Paolo},
   title={Families of Dirac operators, boundaries and the b-calculus},
   journal={J. Differential Geom.},
   volume={46},
   date={1997},
   number={1},
   pages={99--180},
    review={\MR{1472895}},
}

		\bib{moriyoshi-piazza}{article}{
   author={Moriyoshi, Hitoshi},
   author={Piazza, Paolo},
   title={Eta cocycles, relative pairings and the Godbillon-Vey index
   theorem},
   journal={Geom. Funct. Anal.},
   volume={22},
   date={2012},
   number={6},
   pages={1708--1813},
   issn={1016-443X},
   review={\MR{3000501}},
   doi={10.1007/s00039-012-0197-0},
   }

\bib{PPT}{article}{
   author={Pflaum, Markus J.},
   author={Posthuma, Hessel},
   author={Tang, Xiang},
   title={The transverse index theorem for proper cocompact actions of Lie
   groupoids},
   journal={J. Differential Geom.},
   volume={99},
   date={2015},
   number={3},
   pages={443--472},
   issn={0022-040X},
   review={\MR{3316973}},
}

\bib{PP1}{article}{
   author={Piazza, Paolo},
   author={Posthuma, Hessel B.},
   title={Higher genera for proper actions of Lie groups},
   journal={Ann. K-Theory},
   volume={4},
   date={2019},
   number={3},
   pages={473--504},
   issn={2379-1683},
   review={\MR{4043466}},
   doi={10.2140/akt.2019.4.473},
}
	
	\bib{PP2}{article}{
   AUTHOR = {Piazza, Paolo}
   AUTHOR = {Posthuma, Hessel B.},
     TITLE = {Higher genera for proper actions of {L}ie groups, {II}: {T}he
              case of manifolds with boundary},
   JOURNAL = {Ann. K-Theory},
  FJOURNAL = {Annals of K-Theory},
    VOLUME = {6},
      YEAR = {2021},
    NUMBER = {4},
     PAGES = {713--782},
      ISSN = {2379-1683},
   MRCLASS = {58J20 (19K56 58J22 58J42)},
  MRNUMBER = {4382801},
       DOI = {10.2140/akt.2021.6.713},
       URL = {https://doi-org.libproxy.wustl.edu/10.2140/akt.2021.6.713},
}
		\bib{PiazzaSchick_PJM}{article}{
  author={Piazza, Paolo},
  author={Schick, Thomas},
  title={Groups with torsion, bordism and rho invariants},
  journal={Pacific J. Math.},
  volume={232},
  date={2007},
  number={2},
  pages={355--378},
  issn={0030-8730},
  review={\MR {2366359}},
  doi={10.2140/pjm.2007.232.355},
}

		\bib{PSZ}{unpublished}{
   author={Piazza, Paolo},
   author={Schick, Thomas},
   author={Zenobi, Vito Felice}
   title={Mapping analytic surgery to homology, higher rho numbers and metrics of positive scalar curvature, to appear in {\rm Memoirs of AMS}},
   }

		\bib{Puschnigg}{article}{
			author={Puschnigg, Michael},
			title={New holomorphically closed subalgebras of $C^*$-algebras of
				hyperbolic groups},
			journal={Geom. Funct. Anal.},
			volume={20},
			date={2010},
			number={1},
			pages={243--259},
			issn={1016-443X},
			review={\MR{2647141}},
			doi={10.1007/s00039-010-0062-y},
		}

\bib{Sheagan}{article}{
  author={Sheagan A. K. A. John},
  title={Secondary Higher Invariants and Cyclic Cohomology for Groups of Polynomial Growth},
  journal={J. Noncommut. Geom.},
			volume={16},
			date={2022},
			number={},
			pages={1283--1335},
			issn={},
			review={\MR{4542386}},
			doi={DOI 10.4171/JNCG/456},
}

\bib{Shubin-Book}{book}{
   author={Shubin, Mikhail A.},
   title={Pseudodifferential operators and spectral theory},
   edition={2},
   note={Translated from the 1978 Russian original by Stig I. Andersson},
   publisher={Springer-Verlag, Berlin},
   date={2001},
   pages={xii+288},
   isbn={3-540-41195-X},
   review={\MR{1852334}},
   doi={10.1007/978-3-642-56579-3},
}

\bib{st}{article}{
	author={Song, Yanli}
	author={Tang, Xiang}
	title={Higher Orbit Integrals, Cyclic Cocycles, and $K$-theory of Reduced Group $C^*$-algebra, to appear in {\rm Forum of Math, Sigma.}}
	}


\bib{Vassout}{article}{
author={Vassout, Stephane}
title={Feullitage et r\'esidu non-commutative longitudinal}
journal={Ph.D. Thesis, Universit\'e Paris 6, 2001.}
}

\bib{Wahl-product}{article}{
			author={Wahl, Charlotte},
			title={Product formula for Atiyah-Patodi-Singer index classes and higher signatures},
			journal={J. K-Theory},
   volume={6},
   date={2010},
   number={2},
   pages={285--337},
   issn={1865-2433},
   review={\MR{2735088}},
   doi={10.1017/is010002020jkt106},
			}

		\bib{Wahl1}{article}{
			author={Wahl, Charlotte},
			title={The Atiyah-Patodi-Singer index theorem for Dirac operators over
				$C^\ast$-algebras},
			journal={Asian J. Math.},
			volume={17},
			date={2013},
			number={2},
			pages={265--319},
			issn={1093-6106},
			review={\MR{3078932}},
			doi={10.4310/AJM.2013.v17.n2.a2},
		}

\bib{Wa:Connes-Kasparov}{article}{
   author={Wassermann, Antony},
   title={Une d\'{e}monstration de la conjecture de Connes-Kasparov pour les
   groupes de Lie lin\'{e}aires connexes r\'{e}ductifs},
   language={French, with English summary},
   journal={C. R. Acad. Sci. Paris S\'{e}r. I Math.},
   volume={304},
   date={1987},
   number={18},
   pages={559--562},
   issn={0249-6291},
   review={\MR{894996}},
}

	\bib{Wu}{article}{
  author={Wu, Fangbing},
  title={The higher index theorem for manifolds with boundary},
  journal={J. Funct. Anal.},
  volume={103},
  date={1992},
  number={1},
  pages={160--189},
  issn={0022-1236},
  review={\MR {1144688}},
  doi={10.1016/0022-1236(92)90140-E},
}

\bib{Zhangwp}{article}{
     author={Zhang, Wei Ping},
   title={A note on equivariant eta invariants},
   journal={Proc. Amer. Math. Soc.},
   volume={108},
   date={1990},
   number={4},
   pages={1121--1129},
   issn={0002-9939},
   review={\MR{1004426}},
   doi={10.2307/2047979},
   }

	\end{biblist}
\end{bibdiv}

\end{document}